\theoremstyle{plain}
\newtheorem{theorem}{Theorem}[section]
\newtheorem{lemma}[theorem]{Lemma}
\newtheorem{corollary}[theorem]{Corollary}
\newtheorem{proposition}[theorem]{Proposition}
\theoremstyle{definition}
\newtheorem{definition}[theorem]{Definition}
\newtheorem{hypothesis}[theorem]{Hypotheses}
\newtheorem{remark}[theorem]{Remark}
\newtheorem{example}[theorem]{Example}
\DeclareMathOperator{\sign}{sign}
\DeclareMathOperator{\Real}{Re}
\DeclareMathOperator{\Imag}{Im}
\DeclareMathOperator{\diff}{d\!}
\DeclarePairedDelimiter{\norm}{\lVert}{\rVert}
\DeclarePairedDelimiter{\abs}{\lvert}{\rvert}
\DeclarePairedDelimiter{\floor}{\lfloor}{\rfloor}
\DeclarePairedDelimiter{\ceil}{\lceil}{\rceil}
\newcommand{\Id}{\mathrm{Id}}
\newcommand{\id}{\mathrm{id}}
\newcommand{\suchthat}{\ifnum\currentgrouptype=16 \mathrel{}\middle|\mathrel{}\else\mid\fi}
\renewcommand{\leq}{\leqslant}
\renewcommand{\geq}{\geqslant}
\numberwithin{table}{section}
\numberwithin{figure}{section}
\numberwithin{equation}{section}
\begin{document}

\setlength{\parskip}{1pt plus 1pt minus 1pt}

\setlist[enumerate, 1]{label={\textnormal{(\alph*)}}, ref={(\alph*)}, leftmargin=0pt, itemindent=*}
\newlist{hypolist}{enumerate}{1}
\setlist[hypolist]{label={\textnormal{(H\arabic*)}}, leftmargin=0pt, itemindent=*, resume=hypolist, widest*=10}

\title{Well-posedness and asymptotic behavior of difference equations with a time-dependent delay and applications\thanks{This work was partially supported by France 2030 funding ANR-11-IDEX-0003 H-CODE.}}
\author{Guilherme Mazanti\thanks{Université Paris-Saclay, CNRS, CentraleSupélec, Inria, Laboratoire des signaux et systèmes, 91190 Gif-sur-Yvette, France.} \thanks{Fédération de Mathématiques de CentraleSupélec, 91190, Gif-sur-Yvette, France.} \and Jaqueline G. Mesquita\thanks{Universidade de Brasília, Departamento de Matemática, Campus Universitário Darcy Ribeiro, Asa Norte, Brasília-DF 70910-900, Brazil.}}
\date{\today}

\maketitle

\begin{abstract}
In this paper, we investigate the well-posedness and asymptotic behavior of difference equations of the form
\begin{equation*}
x(t) = A x(t - \tau(t)), \qquad t \geq 0,
\end{equation*}
where the unknown function $x$ takes values in $\mathbb R^d$ for some positive integer $d$, $A$ is a $d \times d$ matrix with real coefficients, and $\tau\colon [0, +\infty) \to (0, +\infty)$ is a time-dependent delay. 

We provide our investigations for three spaces of functions: continuous, regulated, and $L^p$. We compare our results for these three cases, showing how the hypotheses change according to the space that we are treating. Finally, we provide applications of our results to difference equations with state-dependent delays for the cases of continuous and regulated function spaces, as well as to transport equations in one space dimension with time-dependent velocity.

\bigskip

\noindent\textbf{Keywords.} Difference equations, time-dependent delay, regulated functions, well-posedness, asymptotic behavior, stability

\smallskip

\noindent\textbf{Mathematics Subject Classification 2020.} 39A06, 39A30, 39A60, 34K40, 34K43, 35Q49
\end{abstract}

\tableofcontents

\section{Introduction}

Difference equations, sometimes also known as continuous-time difference equations, difference delay equations, or delay difference equations, are equations in which the value of an $\mathbb R^d$-valued unknown function $x$ at time $t$ is expressed in terms of the values of $x$ at previous times. In the linear time-invariant case with finitely many constant delays, a difference equation can be written under the form
\begin{equation}
\label{eq:intro-general-linear-difference}
x(t) = \sum_{j=1}^N A_j x(t - \tau_j),
\end{equation}
where $A_1, \dotsc, A_N$ are $d \times d$ matrices with real coefficients and $\tau_1, \dotsc, \tau_N$ are positive delays. The study of \eqref{eq:intro-general-linear-difference} has been motivated in the literature both from the fact that its properties, such as stability, allow one to obtain information on corresponding properties for neutral time-delay systems of the form
\[
\frac{\diff}{\diff t} \left[x(t) - \sum_{j=1}^N A_j x(t - \tau_j)\right] = L x_t
\]
for some linear operator $L$ (see, e.g., \cite{Henry1974Linear, Hale2002Strong} and \cite[Chapter~9]{Hale1993Introduction}), and also from the fact that some hyperbolic partial differential equations on space dimension one can be put into the form \eqref{eq:intro-general-linear-difference} by using the method of characteristics, a fact widely used in the literature (see, e.g., \cite{Bastin2016Stability, Cooke1968Differential, Slemrod1971Nonexistence, Brayton1967Nonlinear, Coron2015Dissipative, Chitour2016Stability}).

Their links with neutral time-delay systems and hyperbolic PDEs have motivated many works on the difference equation \eqref{eq:intro-general-linear-difference} along the years, such as \cite{Melvin1974Stability, Avellar1980Zeros, Cruz1970Stability, Henry1974Linear, Hale2002Strong, Michiels2009Strong, Chitour2016Stability, Avellar1990Difference, Mazanti2017Relative, Chitour2020Approximate, Chitour2023Hautus, Hale2003Stability, Silkowski1976Star}. In particular, it has been noticed that stability properties of \eqref{eq:intro-general-linear-difference} are not continuous with respect to the delays $\tau_1, \dotsc, \tau_N$, in the sense that \eqref{eq:intro-general-linear-difference} can be exponentially stable for a certain choice of delays $(\tau_1, \dotsc, \tau_N)$, but unstable for delays $(\tau_1^\prime, \dotsc, \tau_N^\prime)$ arbitrarily close to $(\tau_1, \dotsc, \tau_N)$ (see, e.g., \cite[Section~9.6]{Hale1993Introduction}, \cite[Example~1.37]{Mazanti2016Stability}, \cite[Example~1.36]{Michiels2014Stability}). For this reason, some works have considered more robust stability properties, and the main result in this sense is the following one (see, e.g., \cite[Theorem~5.2]{Avellar1980Zeros}, \cite[Chapter~9, Theorem~6.1]{Hale1993Introduction}, and \cite{Silkowski1976Star}), known as the Hale--Silkowski criterion.

\begin{proposition}
\label{prop:Hale-Silkowski}
Consider the difference equation \eqref{eq:intro-general-linear-difference} and define
\begin{equation}
\label{eq:rho-HS}
\rho_{\mathrm{HS}} = \max_{(\theta_1, \dotsc, \theta_N) \in [0, 2\pi]^N} \rho\left(\sum_{j=1}^N A_j e^{i \theta_j}\right),
\end{equation}
where $\rho(M)$ denotes the spectral radius of the matrix $M$. The following assertions are equivalent.
\begin{enumerate}
\item We have $\rho_{\mathrm{HS}} < 1$.
\item There exists a rationally independent family of positive delays $\tau_1, \dotsc, \tau_N$ such that \eqref{eq:intro-general-linear-difference} is exponentially stable.
\item\label{item:HS-local} There exist positive delays $\tau_1, \dotsc, \tau_N$ and $\varepsilon > 0$ such that \eqref{eq:intro-general-linear-difference} is exponentially stable for any choice of positive delays $\tau_1^\prime, \dotsc, \tau_N^\prime$ with $\abs{\tau_j^\prime - \tau_j} < \varepsilon$ for every $j \in \{1, \dotsc, N\}$.
\item \eqref{eq:intro-general-linear-difference} is exponentially stable for every choice of positive delays $\tau_1, \dotsc, \tau_N$.
\end{enumerate}
\end{proposition}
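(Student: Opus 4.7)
The plan is to establish the cyclic chain of implications $(a) \Rightarrow (d) \Rightarrow (c) \Rightarrow (b) \Rightarrow (a)$, with essentially all the technical content concentrated in $(a) \Rightarrow (d)$ and $(b) \Rightarrow (a)$. Throughout, I use that exponential stability of \eqref{eq:intro-general-linear-difference} is equivalent to $\sup\{\Real s : \Delta(s) = 0\} < 0$, where $\Delta(s) := \det\bigl(I - M(s)\bigr)$ and $M(s) := \sum_{j=1}^N A_j e^{-s \tau_j}$ is the characteristic matrix. The implication $(d) \Rightarrow (c)$ is immediate, and $(c) \Rightarrow (b)$ is a density argument: the set of rationally independent $N$-tuples of positive reals is dense in $(0, +\infty)^N$, so one can be selected inside the $\varepsilon$-neighborhood provided by $(c)$.

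For $(a) \Rightarrow (d)$, I would introduce
\[
R(\sigma) := \max_{\theta \in [0, 2\pi]^N} \rho\!\left(\sum_{j=1}^N A_j e^{-\sigma \tau_j} e^{i \theta_j}\right),
\]
which is continuous in $\sigma$ with $R(0) = \rho_{\mathrm{HS}} < 1$. For $s = \sigma + i\omega$, the choice $\theta_j = -\omega \tau_j$ gives the uniform bound $\rho(M(s)) \leq R(\sigma)$, so on any vertical line where $R(\sigma) < 1$ the value $1$ is not an eigenvalue of $M(s)$ and consequently $\Delta(s) \neq 0$. By continuity of $R$ there is $\sigma_0 < 0$ with $R(\sigma) < 1$ on $[\sigma_0, 0]$, while for $\sigma$ sufficiently large positive $\|M(s)\|$ is small and again $\Delta(s) \neq 0$. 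Combined with the fact that $\Delta$ is an exponential polynomial with constant term $1$, so that its zeros are trapped in a vertical strip, one concludes that every zero of $\Delta$ lies in $\{\Real s < \sigma_0\}$, giving exponential stability for the arbitrary delays $\tau_1, \dotsc, \tau_N$.

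For $(b) \Rightarrow (a)$, I argue contrapositively. Assume $\rho_{\mathrm{HS}} \geq 1$ and pick $\theta^* \in [0, 2\pi]^N$ together with an eigenvalue $\mu = r e^{i\phi}$ of $\sum_j A_j e^{i \theta_j^*}$ with $r \geq 1$. Given any rationally independent $\tau_1, \dotsc, \tau_N$, Kronecker's equidistribution theorem produces $\omega_n \to +\infty$ with $-\omega_n \tau_j \equiv \theta_j^* - \phi \pmod{2\pi}$ up to arbitrarily small error, so that $M(i \omega_n)$ converges to a matrix admitting the real positive eigenvalue $r \geq 1$. Tracking this eigenvalue as a holomorphic branch $\lambda(s)$ on a small disk around each $i \omega_n$, and moving $\Real s$ so as to modulate the factors $e^{-\sigma \tau_j}$, a Rouché-type argument produces $s_n$ close to $i \omega_n$ with $\lambda(s_n) = 1$, equivalently $\Delta(s_n) = 0$, and $\Real s_n$ tending to a nonnegative value. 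This contradicts exponential stability.

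The principal obstacle is precisely this last passage: upgrading the approximate eigenvalue produced by Kronecker's theorem to an exact zero of the exponential polynomial $\Delta$ while controlling its real part. The difficulty is that $\lambda(s)$ may be multi-valued at algebraic singularities and that $\omega_n \to +\infty$, so the analytic perturbation must be performed on a family of disks with constants uniform in $n$; applying the argument principle directly to $\Delta$, rather than to the eigenvalue branch, is in practice the cleanest way to finish the job.
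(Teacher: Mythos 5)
The paper itself does not prove Proposition~\ref{prop:Hale-Silkowski}; it recalls it from the literature (Avellar--Hale, Hale--Verduyn Lunel, Silkowski), so your attempt can only be measured against the standard argument. Your architecture is the right one — the cycle (a)$\Rightarrow$(d)$\Rightarrow$(c)$\Rightarrow$(b)$\Rightarrow$(a), reduction to the characteristic function $\Delta(s)=\det\bigl(I-\sum_{j=1}^N A_j e^{-s\tau_j}\bigr)$ together with the (citable, Henry/Hale) equivalence of exponential stability with $\sup\{\Real s:\Delta(s)=0\}<0$, density of rationally independent tuples for (c)$\Rightarrow$(b), and Kronecker for the converse. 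But (a)$\Rightarrow$(d) has a genuine hole: your bound $\rho(M(s))\le R(\sigma)$ excludes zeros only on the lines $\Real s=\sigma$ with $\sigma\in[\sigma_0,0]$ and for $\Real s$ large, and nothing you say excludes zeros with $\Real s$ in the intermediate range $(0,M)$; ``zeros trapped in a vertical strip'' does not close this. What is needed is $R(\sigma)<1$ for \emph{all} $\sigma\ge 0$, i.e.\ $\rho\bigl(\sum_j A_j w_j\bigr)\le\rho_{\mathrm{HS}}$ for every $w$ in the closed unit polydisc. This is true but not free: it is a maximum-principle statement for the spectral radius of a holomorphic family (Vesentini's subharmonicity of $\log\rho$, or an argument via $\rho(B)=\lim_k\abs{B^k}^{1/k}$, the scalar maximum principle applied entrywise to the polynomials $\bigl(\sum_j A_j z_j\bigr)^k$, and a Fekete/compactness step), and it is precisely the ingredient your sketch omits.

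The second gap is the one you flag yourself in (b)$\Rightarrow$(a), and in the order you propose it is fatal rather than technical: if $r=\abs{\mu}>1$, then after Kronecker has locked the phases near $\theta_j^*-\phi$ you must move $\Real s$ by a fixed, not small, amount to bring the modulus of the tracked eigenvalue down to $1$, and doing so rotates its argument, so there is no reason the branch ever passes through $1$ near $i\omega_n$; Rouché applied to $\Delta$ needs an exact zero of a limiting function, which you have not produced. The standard repair reverses the order of operations. First work on the torus-extended characteristic equation: by continuity of $\sigma\mapsto\rho\bigl(\sum_j A_j e^{-\sigma\tau_j}e^{i\theta_j^*}\bigr)$, which equals $r\ge 1$ at $\sigma=0$ and tends to $0$ as $\sigma\to+\infty$, pick $\sigma_1\ge 0$ where this spectral radius equals $1$, with unimodular eigenvalue $e^{i\psi}$, and use the phase shift $\theta_j\mapsto\theta_j-\psi$ (which multiplies the matrix by $e^{-i\psi}$) to obtain $\theta'$ with $\det\bigl(I-\sum_j A_j e^{-\sigma_1\tau_j}e^{i\theta_j'}\bigr)=0$. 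Only then apply Kronecker to choose $\omega_n\to+\infty$ with $-\omega_n\tau_j\to\theta_j'$ modulo $2\pi$: the translates $\Delta(\cdot+i\omega_n)$ converge locally uniformly to the nontrivial entire function $z\mapsto\det\bigl(I-\sum_j A_j e^{-z\tau_j}e^{i\theta_j'}\bigr)$, which vanishes at $z=\sigma_1$, so Hurwitz's theorem yields zeros $s_n$ of $\Delta$ with $\Real s_n\to\sigma_1\ge 0$, contradicting exponential stability of \eqref{eq:intro-general-linear-difference}. With these two repairs your plan becomes a complete proof; without them, neither of the two substantive implications is established.
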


The most striking fact about the above result is that exponential stability of \eqref{eq:intro-general-linear-difference} for \emph{some} rationally independent family of positive delays $\tau_1, \dotsc, \tau_N$ is equivalent to exponential stability of \eqref{eq:intro-general-linear-difference} for \emph{any} choice of positive delays $\tau_1, \dotsc, \tau_N$ (with no constraints on their rational independence), and that this can be characterized by $\rho_{\mathrm{HS}} < 1$, where the quantity $\rho_{\mathrm{HS}}$ is independent of the delays. This result can also be extended to cases in which the delays satisfy some rational dependence structure, as detailed in \cite{Michiels2009Strong, Chitour2016Stability}, and to time-varying matrices $A_1, \dotsc, A_N$, as detailed in \cite{Chitour2016Stability}.

Since the condition $\rho_{\mathrm{HS}} < 1$ ensures exponential stability of \eqref{eq:intro-general-linear-difference} for every choice of (constant) positive delays $\tau_1, \dotsc, \tau_N$, a natural question is whether the same condition also ensures stability of the difference equation with time-varying delays
\begin{equation}
\label{eq:intro-linear-difference-varying}
x(t) = \sum_{j=1}^N A_j x(t - \tau_j(t)).
\end{equation}
The answer to this question is negative, as illustrated by a counterexample in \cite[Section~4]{Avellar1990Difference}. The article \cite{Avellar1990Difference} also provides sufficient conditions for the stability of \eqref{eq:intro-linear-difference-varying}.

Time-delay systems with time-varying delays have been extensively studied in the literature \cite{Fridman2008Robust, Fridman2007Stability, Shustin2007Delay, Louisell2001Delay, Michiels2005Stabilization, Verriest2012State, Bonnet2020L2, Mazanti2014Stabilization}, in particular through the use of Lyapunov and Lyapunov--Krasovski\u{\i} functions, which commonly provide sufficient conditions for stability, often under the form of linear matrix inequalities. In addition, many works also require conditions on the rate of variation of the delay, such as assuming that a given time-dependent delay $\tau(t)$ satisfies $\dot\tau(t) < 1$ for every $t$, or $\dot\tau(t) \leq c$ for some constant $c < 1$ and every $t$, the main interest of these assumptions being to ensure that $t \mapsto t - \tau(t)$ is increasing. Other strategies for dealing with time-dependent delays include perturbation approaches, used, for instance, in \cite{Bonnet2020L2, Mazanti2014Stabilization}, which consist in establishing first a stability or stabilization result for constant nominal values of the delays, and then proving that such stability properties are preserved if the delays are time-varying and take values in small intervals containing the nominal values.

The goal of this work is to provide more precise results on the stability of \eqref{eq:intro-linear-difference-varying} in the case $N = 1$, that is,
\begin{equation}
\label{eq:diff-eqn-N1-time}
x(t) = A x(t - \tau(t)), \qquad t \geq 0.
\end{equation}
We are interested in studying the well-posedness of \eqref{eq:diff-eqn-N1-time} and obtaining sufficient and, as much as possible, also necessary conditions for its convergence to zero or exponential stability, in three different spaces of functions: continuous functions, regulated functions, and $L^p$ functions, showing how the conditions for well-posedness or stability change in each case. Note that, with respect to \cite{Avellar1990Difference}, even though we consider here the particular case $N = 1$ of \eqref{eq:intro-linear-difference-varying}, we consider continuous, regulated, and $L^p$ solutions, while \cite{Avellar1990Difference} considers only continuous solutions. In addition, \cite{Avellar1990Difference} only provides sufficient conditions for asymptotic stability, while we also consider exponential stability, and we obtain, in some results (see, e.g., Proposition~\ref{prop:necessary-asympt}), that some of our conditions are also necessary \cite{Avellar1990Difference}.

To get the main results of this paper for the case involving regulated functions, we introduce a new class of functions called \emph{well-regulated functions}, that are regulated functions with an additional condition preventing some forms of fast oscillations (see Definition~\ref{def:regulated}\ref{item:well-regulated} below for details). The most important property concerning well-regulated functions that we prove can be found in Theorem~\ref{thm:well-regulated}, which ensures that this class of functions preserves regulatedness by right composition. This property is the key to prove our main results for the equation \eqref{eq:diff-eqn-N1-time}, and it opens several paths for future investigations of regulated functions. Indeed, one of the main challenges when one deals with equations with state-dependent delays in the space of regulated functions is to ensure that the composition which appears in the equation is well-defined, since, due to the state-dependent term, it appears a composition of two regulated functions, and it is a known fact that the regulated functions are not preserved, in general, by right-composition (see also Remark~\ref{remk:composition-regulated} below).

As for the results of the paper involving $L^p$ functions, Section~\ref{sec:Lp} investigates the right composition of a function in $L^p$, describing a class of functions preserving $L^p$ integrability by right composition (Theorem~\ref{thm:composition-Lp}). For that, it is necessary first to investigate the $(\mathfrak L, \mathfrak B)$-measurability of the composition of two $L^p$ functions, which is the subject of Proposition~\ref{prop:preserves-measurability}. 

With these preliminary results in hands, we introduce and explain all the hypotheses and definitions that will be necessary for providing the main results in Section~\ref{sec:Hypo-def}.

Section~\ref{sec:well-posed} is devoted to investigating the well-posedness of the equation \eqref{eq:diff-eqn-N1-time}. We start, in Definition~\ref{def:solutions}, by providing some different notions of solutions: in addition to continuous, regulated, and $L^p$ solutions, we also introduce other auxiliary notions of solutions that are useful for the sequel. Our main strategy for studying both well-posedness and the asymptotic behavior of solution relies on an explicit formula for solutions, which is developed in Section~\ref{sec:explicit-formula} (see Theorems~\ref{theorem-1} and \ref{thm:explicit-ae}), and this explicit formula also yields uniqueness of solutions (see Corollary~\ref{coro:uniqueness}). Existence of solutions is the topic of Section~\ref{sec:exist}. Note that \eqref{eq:diff-eqn-N1-time} can be rewritten as $x(t) = A x(\sigma_1(t))$, where $\sigma_1(t) = t - \tau(t)$, and hence one of the main questions for the existence of solutions $x$ with a given regularity is whether right composition by $\sigma_1$ preserves such regularity. Our assumptions are made in such a way as to ensure that this is indeed the case, and we illustrate the importance of each assumption through comprehensive examples, showing also that some of these assumptions turn out to be necessary for existence of solutions (see Proposition~\ref{prop:h2-necessary}).

Section~\ref{sec:stability} is divided in two parts. The first one is devoted to investigating convergence to the origin of the solutions of \eqref{eq:diff-eqn-N1-time}. Theorem~\ref{thm:cv-to-0} provides sufficient conditions ensuring that solutions with bounded initial condition converge pointwise to $0$ in the topology of $\mathbb R^d$. As an immediate consequence, considering continuous solutions for the problem \eqref{eq:diff-eqn-N1-time}, it is possible to conclude that this convergence also holds in the norm of $\mathsf{C}^b_t$ (Corollary~\ref{Corol-1-C}). Analogously, taking regulated solutions for the problem \eqref{eq:diff-eqn-N1-time}, the convergence can be extended in the norm of $\mathsf{G}^b_t$ (Corollary~\ref{Prop-1-C}). The same investigation can be made for solutions in $L^\infty$, taking into account that the solutions in $L^\infty$ satisfies \eqref{eq:diff-eqn-N1-time} only for a.e.\ $t \in \mathbb R_+$ (Corollary~\ref{coro:cv-to-0-Linfty}). We point out that Corollary~\ref{coro:cv-to-0-Linfty} on asymptotic behavior of the solutions is no longer valid if $L^\infty$ is replaced with $L^p$ for some $p \in [1, +\infty)$ (Example~\ref{example-3.1}). On the other hand, it is possible to provide a result with changes in the assumptions, in order to ensure convergence to the origin of the solutions of \eqref{eq:diff-eqn-N1-time} in $L^p$ with $p \in [1, +\infty)$ (Theorem~\ref{Thm-1-L-P}). The second part of this section is devoted to exponential stability of \eqref{eq:diff-eqn-N1-time}. In the same way as in the preceding subsection, we start by investigating sufficient conditions to ensure exponential convergence to $0$ of $\abs{x(t)}$ for solutions with bounded initial conditions (Theorem~\ref{thm:exp-to-0}). We point out that the conditions concerning the delay function play an important role to guarantee the proof of Theorem~\ref{thm:exp-to-0}. We also discuss about these conditions, showing that they are sufficient, but not necessary. As a consequence of Theorem~\ref{thm:exp-to-0}, we provide a criterion for the exponential convergence to $0$ of continuous solutions of \eqref{eq:diff-eqn-N1-time} (Corollary~\ref{coro:exp-cont}). For this case, we also discuss about the necessity of the assumptions which appears in Corollary~\ref{coro:exp-cont}. Also, Corollaries~\ref{coro:exp-G} and \ref{coro:exp-Linfty} follow as a consequence of Theorem~\ref{thm:exp-to-0} for regulated solutions and solutions in $L^\infty$, respectively. In the same way as before, we show by a counterexample that Corollary~\ref{coro:exp-Linfty} does not remain true in $L^p$ for $p\in [1, +\infty)$. Finally, Theorem~\ref{thm:exp-Lp} provides sufficient conditions for ensuring exponential stability in the case of $L^p$ for $p\in [1, +\infty)$.

Section~\ref{sec:6} is devoted to applications of the results presented in the previous sections. In the first part of Section~\ref{sec:6}, we apply the results to an equation with state-dependent delays. Theorem~\ref{thm:state-dep} provides sufficient conditions for the exponential convergence to zero of solutions, when the solution is continuous as well as regulated.
The second part (Section~\ref{sec:6.2}) applies the results to a transport equation in a bounded interval one space dimension with varying transport speed. Using the method of characteristics, this transport equation is transformed into a difference equation of the form \eqref{eq:diff-eqn-N1-time} (Proposition~\ref{prop:transport}) and hence, as a consequence, one obtains sufficient conditions for the exponential stability of solutions of the transport equation in $L^p$ for $p \in [1, +\infty]$ (Corollary~\ref{coro:appli-transport}).

\medskip

\noindent\textbf{Notation.} In the paper, $\mathbb N$ and $\mathbb N^\ast$ denote the set of nonnegative and positive integers, respectively, while $\mathbb R_+, \mathbb R_-, \mathbb R_+^\ast, \mathbb R_-^\ast$ denote the sets of nonnegative, nonpositive, positive, and negative real numbers, respectively. For convenience, for two real numbers $a, b$ with $a \leq b$, we denote by $\llbracket a, b\rrbracket$ the set of integers between $a$ and $b$, i.e., $\llbracket a, b\rrbracket = [a, b] \cap \mathbb Z$. For $a$ and $b$ in $\mathbb R \cup \{-\infty, +\infty\}$ with $a \leq b$, we set $\lvert a, b]$ as the interval $[a, b]$ if $a > -\infty$ and $(a, b]$ otherwise, and the interval $\lvert a, b)$ is defined in a similar manner. The real and imaginary parts of a complex number $z$ are denoted by $\Real z$ and $\Imag z$, respectively.

Given a square matrix $A$, its spectrum and its spectral radius are denoted by $\sigma(A)$ and $\rho(A)$, respectively. The set of $d \times d$ matrices with real coefficients is denoted by $\mathcal M_d(\mathbb R)$ and, given a norm $\abs{\cdot}$ in $\mathbb R^d$, we use the same notation $\abs{\cdot}$ for the corresponding induced matrix norm in $\mathcal M_d(\mathbb R)$.

Given two topological spaces $A, B$, we use both $C(A, B)$ and $C^0(A, B)$ to denote the set of continuous functions defined in $A$ and taking values in $B$. When $A$ is an open subset of $\mathbb R^{d_1}$ and $B \subset \mathbb R^{d_2}$ for some positive integers $d_1, d_2$, for $k \in \mathbb N^\ast \cup \{+\infty\}$, we use $C^k(A, B)$ to denote the set of $k$ times continuously differentiable functions defined in $A$ and taking values in $B$. For a function $f$ defined in a nonempty interval $I \subset \mathbb R$, we denote its lateral limits by $f(t_0^-)$ and $f(t_0^+)$, defined, respectively, by $f(t_0^-) = \lim_{t \to t_0^-} f(t)$ and $f(t_0^+) = \lim_{t \to t_0^+} f(t)$, when these limits make sense.

Given measurable spaces $(X, \mathfrak S_X)$ and $(Y, \mathfrak S_Y)$, we recall that a function $f\colon X \to Y$ is said to be $(\mathfrak S_X, \mathfrak S_Y)$-measurable if $f^{-1}(A) \in \mathfrak S_X$ for every $A \in \mathfrak S_Y$. If $f$ is $(\mathfrak S_X, \mathfrak S_Y)$-measurable and $\mu$ is a measure in $(X, \mathfrak S_X)$, we denote by $f_{\#} \mu$ the push-forward of the measure $\mu$ by $f$, which is a measure in $(Y, \mathfrak S_Y)$. We use $\mathfrak B$ (respectively, $\mathfrak L$) to denote the Borel (respectively, Lebesgue) $\sigma$-algebra in $\mathbb R^d$ or in any of its (measurable) subsets, and $\mathcal L$ to denote the Lebesgue measure in $(\mathbb R^d, \mathfrak L)$.

\section{Preliminary results}

Note that \eqref{eq:diff-eqn-N1-time} can be rewritten as $x(t) = A x(\sigma_1(t))$, where $\sigma_1(t) = t - \tau(t)$. If one wishes to consider solutions $t \mapsto x(t)$ of \eqref{eq:diff-eqn-N1-time} with a certain regularity in $t$, it is important to ensure that such a regularity is preserved by the composition $x \circ \sigma_1$. For instance, a necessary and sufficient condition for $x \circ \sigma_1$ to be continuous for every continuous $x$ is the continuity of $\sigma_1$. This section studies such a question for other types of regularity. More precisely, in Section~\ref{sec:regulated}, we provide the definition of regulated functions and identify the class of functions preserving regulatedness by right composition, while, in Section~\ref{sec:Lp}, we address a similar question for functions in $L^p$.

\subsection{Regulated functions}
\label{sec:regulated}

\begin{definition}
\label{def:regulated}
Let $a, b \in \mathbb R$ with $a < b$ and $f \colon [a, b] \to \mathbb R$.
\begin{enumerate}
\item We say that $f$ is \emph{regulated} if, for every $t_0 \in [a, b)$, the lateral limit $f(t_0^+)$ exists and, for every $t_0 \in (a, b]$, the lateral limit $f(t_0^-)$ exists. The set of regulated functions defined in $[a, b]$ is denoted by $G([a, b], \mathbb R)$.

\item\label{item:well-regulated} We say that $f$ is \emph{well-regulated} if $f$ is regulated and, in addition, the following two assertions hold true:
\begin{enumerate}[label={(\roman*)}, ref={\roman*}]
\item\label{item:well-regulated-+} For every $t_0 \in [a, b)$, there exists $\varepsilon \in (0, b - t_0)$ such that one of the following three assertions holds true:
\begin{enumerate}[label={(\theenumii-\arabic*)}]
    \item\label{item:well-regulated-++} $f(t) > f(t_0^+)$ for every $t \in (t_0, t_0 + \varepsilon)$;
    \item\label{item:well-regulated-+0} $f(t) = f(t_0^+)$ for every $t \in (t_0, t_0 + \varepsilon)$;
    \item\label{item:well-regulated-+-} $f(t) < f(t_0^+)$ for every $t \in (t_0, t_0 + \varepsilon)$.
\end{enumerate}
\item\label{item:well-regulated--} For every $t_0 \in (a, b]$, there exists $\varepsilon \in (0, t_0 - a)$ such that one of the following three assertions holds true:
\begin{enumerate}[label={(\theenumii-\arabic*)}]
    \item\label{item:well-regulated--+} $f(t) > f(t_0^-)$ for every $t \in (t_0 - \varepsilon, t_0)$;
    \item\label{item:well-regulated--0} $f(t) = f(t_0^-)$ for every $t \in (t_0 - \varepsilon, t_0)$;
    \item\label{item:well-regulated---} $f(t) < f(t_0^-)$ for every $t \in (t_0 - \varepsilon, t_0)$.
\end{enumerate}
\end{enumerate}
The set of well-regulated functions defined in $[a, b]$ is denoted by $WG([a, b], \mathbb R)$.
\end{enumerate}
\end{definition}

We extend the above definitions to functions defined in an interval of the form $[a, +\infty)$ by saying that $f$ is regulated (respectively, well-regulated) in $[a, +\infty)$ if it is regulated (respectively, well-regulated) in $[a, b]$ for every $b > 0$, and we denote the corresponding functional space by\footnote{In the literature, $G([a, +\infty), \mathbb R)$ is usually defined as the set of \emph{bounded} regulated functions in order for it to be a Banach space, but we do not make this requirement here.} $G([a, +\infty), \mathbb R)$ (respectively, $WG([a, +\infty), \mathbb R)$). We also extend the definitions similarly to functions defined in an interval of the form $(-\infty, a]$ or in $\mathbb R$. A vector-valued function is said to be regulated if all of its components are regulated, and the space of regulated functions defined in a nonempty closed interval $I \subset \mathbb R$ and taking values in $\mathbb R^d$ is denoted by $G(I, \mathbb R^d)$. Finally, we denote by $G(I, A)$ (respectively, $WG(I, A)$) the set of all regulated (respectively, well-regulated) functions defined in the nonempty closed interval $I$ and taking values in $A \subset \mathbb R^d$ (respectively, $A \subset \mathbb R$).

\begin{remark}
If $I \subset \mathbb R$ is a nonempty compact interval, then $G(I, \mathbb R^d)$ is a Banach space when endowed with the norm $\norm{f}_{G(I, \mathbb R^d)} = \sup_{t \in I} \abs{f(t)}$. If $I$ is unbounded, the above function is no longer a norm in $G(I, \mathbb R^d)$ since the latter set may contain unbounded functions, but $G(I, \mathbb R^d)$ is a Fréchet space when endowed with the family of seminorms $\norm{f}_{G(I, \mathbb R^d), n} = \sup_{t \in I \cap [-n, n]} \abs{f(t)}$ for $n \in \mathbb N^\ast$.
\end{remark}

\begin{remark}
\label{remk:cinfty-not-well-regulated}
Well-regulated functions include several classes of functions, such as monotone functions, piecewise monotone functions (with a finite number of changes in monotonicity in every bounded interval), or analytic functions. However, continuous functions, and even $C^{\infty}$ functions, may fail to be well-regulated. Indeed, the function $f\colon [-1, 1] \to \mathbb R$ defined by $f(t) = e^{-1/t^2} \sin(1/t)$ for $t \neq 0$ and $f(0) = 0$ belongs to $C^{\infty}([-1, 1], \mathbb R)$ but is not well-regulated.
\end{remark}

\begin{remark}
\label{remk:sum-not-well-regulated}
The sum of well-regulated functions may fail to be well-regulated. Indeed, the function $f\colon \mathbb R \to \mathbb R$ defined by $f(0) = 0$ and $f(t) = -t \left(1 - \frac{1}{2} \sin\left(\frac{1}{t}\right)\right)$ for $t \neq 0$ is well-regulated, as well as the identity function $\id$, but $f + \id$ is not well-regulated.
\end{remark}

The main interest of the class of well-regulated functions is the next result, which identifies well-regulated functions as the class of functions preserving regulatedness by right composition.

\begin{theorem}
\label{thm:well-regulated}
Let $I, J$ be two nonempty closed intervals in $\mathbb R$ and $d$ be a positive integer. Then
\begin{equation}
\label{eq:charact-well-regulated}
WG(J, I) = \{\varphi\colon J \to I \suchthat f \circ \varphi \in G(J, \mathbb R^d) \text{ for every } f \in G(I, \mathbb R^d)\}.
\end{equation}
\end{theorem}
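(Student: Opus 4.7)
The plan is to prove the two inclusions in \eqref{eq:charact-well-regulated} separately. For $\subseteq$, I fix $\varphi \in WG(J, I)$, $f \in G(I, \mathbb R^d)$, and $t_0 \in J$ not equal to the right endpoint of $J$, and I establish the existence of $(f \circ \varphi)(t_0^+)$ (the left limit, when applicable, is symmetric). Setting $c := \varphi(t_0^+) \in I$, the well-regulatedness of $\varphi$ furnishes a right-neighborhood $(t_0, t_0 + \varepsilon)$ on which $\varphi$ is either everywhere strictly above $c$, everywhere equal to $c$, or everywhere strictly below $c$. Combined with $\varphi(t) \to c$ as $t \to t_0^+$, these three scenarios force $(f \circ \varphi)(t)$ to converge respectively to $f(c^+)$, $f(c)$, or $f(c^-)$, all of which are well defined since $f \in G(I, \mathbb R^d)$ (in the two strict scenarios $c$ cannot coincide with the corresponding endpoint of $I$, so the relevant lateral limit of $f$ exists).

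For $\supseteq$, I argue by contrapositive: from $\varphi \notin WG(J, I)$ I produce $f \in G(I, \mathbb R^d)$ with $f \circ \varphi \notin G(J, \mathbb R^d)$. If $\varphi$ is not itself regulated, then the continuous map $f(x) := x \, e_1$, with $e_1 = (1, 0, \dotsc, 0)$, already works, since $f \circ \varphi$ inherits from $\varphi$ any failure of one-sided limits. Otherwise $\varphi \in G(J, I)$ but one of the two conditions of Definition~\ref{def:regulated}\ref{item:well-regulated} fails at some $t_0$; suppose the right-sided one fails and set $c = \varphi(t_0^+)$. Negating the three-way disjunction shows that in \emph{every} right-neighborhood $(t_0, t_0 + \delta)$ of $t_0$, at least two of the sets $\{\varphi > c\}$, $\{\varphi = c\}$, $\{\varphi < c\}$ have non-empty intersection with it. I then consider the regulated indicator-type functions $f_1 := \mathbbm 1_{[c, +\infty)} \, e_1$ and $f_2 := \mathbbm 1_{(c, +\infty)} \, e_1$ in $G(I, \mathbb R^d)$: $f_1$ separates $\{\varphi < c\}$ from $\{\varphi \geq c\}$, whereas $f_2$ separates $\{\varphi \leq c\}$ from $\{\varphi > c\}$. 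A short case split over which pair among $\{<, =, >\}$ is realized arbitrarily close to $t_0$ shows that at least one of $f_1 \circ \varphi$, $f_2 \circ \varphi$ takes both values $0$ and $e_1$ on sequences converging to $t_0^+$, so its right limit at $t_0$ cannot exist.

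The main obstacle is this final case analysis: picking the correct separating indicator among $f_1$ and $f_2$ depending on which pair in $\{<, =, >\}$ accumulates at $t_0^+$, and verifying that the argument remains consistent when $c$ is a boundary point of $I$ (in that case, one of the three categories is automatically empty, which simplifies rather than complicates the analysis). The possible unboundedness of $J$ does not cause any extra difficulty, since regulatedness is a purely local property and can be checked on compact subintervals.
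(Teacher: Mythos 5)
Your proof is correct and follows essentially the same route as the paper: the forward inclusion exploits the three-case dichotomy from well-regulatedness exactly as the paper does, and the converse composes $\varphi$ with the identity to get regulatedness of $\varphi$ and then with a step function jumping at $c = \varphi(t_0^+)$ to contradict regulatedness of the composition. The only cosmetic difference is that the paper uses the single test function $s \mapsto \sign(s - c)$, which separates all three categories $\{\varphi > c\}$, $\{\varphi = c\}$, $\{\varphi < c\}$ at once and thus avoids your case split between $\mathbbm 1_{[c, +\infty)}$ and $\mathbbm 1_{(c, +\infty)}$.
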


\begin{proof}
With no loss of generality, we assume in this proof that $d = 1$.

Let $\varphi \in WG(J, I)$ and $f \in G(I, \mathbb R)$. Let $t_0$ be either an interior point of $J$ or its left extremity, if the latter is finite. Since $\varphi$ is well-regulated, $\varphi(t_0^+)$ exists and there exists $\eta > 0$ such that $\varphi$ satisfies \ref{item:well-regulated-++}, \ref{item:well-regulated-+0}, or \ref{item:well-regulated-+-} from Definition~\ref{def:regulated}\ref{item:well-regulated} in the interval $(t_0, t_0 + \eta)$.

Let us consider first the case \ref{item:well-regulated-++}, and assume thus that $\varphi(t) > \varphi(t_0^+)$ for every $t \in (t_0, t_0 + \eta)$. Let $L = \lim_{s \to \varphi(t_0^+)^+} f(s)$, which exists since $f$ is regulated. Take $\varepsilon > 0$. Then there exists $\delta > 0$ such that $\abs{f(s) - L} < \varepsilon$ for every $s \in (\varphi(t_0^+), \varphi(t_0^+) + \delta)$. On the other hand, by definition of $\varphi(t_0^+)$, there exists $\rho > 0$ such that $\abs{\varphi(t) - \varphi(t_0^+)} < \delta$ for every $t \in (t_0, t_0 + \rho)$. Letting $\widetilde\rho = \min\{\rho, \eta\}$, we deduce that, for every $t \in (t_0, t_0 + \widetilde\rho)$, we have $\varphi(t) \in (\varphi(t_0^+), \varphi({t_0^+}) + \delta)$, and hence $\abs{f(\varphi(t)) - L} < \varepsilon$, proving that $\lim_{t \to t_0^+} f(\varphi(t)) = L$. Cases~\ref{item:well-regulated-+0} and \ref{item:well-regulated-+-} are treated by similar arguments, setting $L = f(\varphi(t_0^+))$ and $L = f(\varphi(t_0^+)^-)$, respectively. Hence, $\lim_{t \to t_0^+} f(\varphi(t))$ exists.

One shows by a completely similar argument that $\lim_{t \to t_0^-} f(\varphi(t))$ exists for every $t_0$ in the interior of $J$ or equal to its right extremity if it is finite. Hence, $f \circ \varphi$ is regulated, as required.

Assume now that $\varphi\colon J \to I$ is such that $f \circ \varphi \in G(J, \mathbb R)$ for every $f \in G(I, \mathbb R)$. Notice that, since the identity function $\id$ is regulated, we have $\varphi = \id \circ \varphi \in G(J, \mathbb R)$, i.e., $\varphi$ is regulated.

Assume, to obtain a contradiction, that \eqref{item:well-regulated-+} or \eqref{item:well-regulated--} is false. We only treat the first case, the other case being treated similarly. Then there exists $t_0$ in the interior of $J$ or equal to its left extremity if the latter is finite and a sequence $(t_n)_{n \in \mathbb N^\ast}$ in $J$ with $t_n \to t_0^+$ as $n \to +\infty$ such that at least two of the sets
\begin{align*}
I_+ & = \{n \in \mathbb N^\ast \suchthat \varphi(t_n) > \varphi(t_0^+)\}, \\
I_0 & = \{n \in \mathbb N^\ast \suchthat \varphi(t_n) = \varphi(t_0^+)\}, \\
I_- & = \{n \in \mathbb N^\ast \suchthat \varphi(t_n) < \varphi(t_0^+)\},
\end{align*}
are infinite. Let $f\colon I \to \mathbb R$ be defined for $s \in I$ by $f(s) = \sign(s - \varphi(t_0^+))$, and notice that $f \in G(I, \mathbb R)$. We have
\begin{align*}
I_+ & = \{n \in \mathbb N^\ast \suchthat f(\varphi(t_n)) = 1\}, \\
I_0 & = \{n \in \mathbb N^\ast \suchthat f(\varphi(t_n)) = 0\}, \\
I_- & = \{n \in \mathbb N^\ast \suchthat f(\varphi(t_n)) = -1\},
\end{align*}
and since at least two of the above sets are infinite, the sequence $(f(\varphi(t_n)))_{n \in \mathbb N}$ does not converge. Since $t_n \to t_0^+$, this implies that $\lim_{t \to t_0^+} f(\varphi(t))$ does not exist, and hence $f \circ \varphi$ is not regulated, which is a contradiction since $\varphi$ belongs to the set in the right-hand side of \eqref{eq:charact-well-regulated}.
\end{proof}

\begin{remark}
The above proof can be easily adapted to regulated functions taking values in a Banach space $\mathsf X$. Indeed, the proof that any well-regulated function preserves regulatedness by right composition is identical, while, for the proof of the converse statement, one replaces in the argument provided above the identity function and the sign function by the functions $t \mapsto t v$ and $t \mapsto \sign(t) v$, respectively, where $v$ is an arbitrary nonzero element of $\mathsf X$.
\end{remark}

\begin{remark}
\label{remk:composition-regulated}
This property of well-regulatedness plays important role, since the composition of regulated functions in general is not a regulated function. For instance, consider two regulated functions $f,g \colon [0,1] \to \mathbb R$ given by $f(x)= x\sin(1/x)$ and $g(x) = \sign (x)$, both are regulated, however the composition $g \circ f$ has no one-sided limits at $0$. Also, even a
composition of a regulated and continuous functions need not to
be regulated.
\end{remark}

\subsection{Right composition of a function in \texorpdfstring{$L^p$}{Lp}}
\label{sec:Lp}

We are now interested in identifying the class of functions $g$ such that $f \circ g$ belongs to $L^p$ for every $f \in L^p$, i.e., the operator of composition with $g$ on the right preserves the $L^p$ space. As a preliminary step, we provide the following result on the $(\mathfrak L, \mathfrak B)$-measurability of the composition $f \circ g$.

\begin{proposition}
\label{prop:preserves-measurability}
For $i \in \{1, 2\}$, let $X_i \subset \mathbb R^{d_i}$, $m \in \mathbb N^\ast$, and $g\colon X_1 \to X_2$. The following assertions are equivalent.
\begin{enumerate}
\item\label{item:preserves-measurability} For every $f\colon X_2 \to \mathbb R^m$ $(\mathfrak L, \mathfrak B)$-measurable, $f \circ g$ is $(\mathfrak L, \mathfrak B)$-measurable.
\item\label{item:g-is-LL-measurable} The function $g$ is $(\mathfrak L, \mathfrak L)$-measurable.
\end{enumerate}
\end{proposition}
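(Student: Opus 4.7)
The plan is to split the equivalence into its two implications and treat each directly through preimage computations.

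For \ref{item:g-is-LL-measurable}$\Rightarrow$\ref{item:preserves-measurability}, I would simply write, for any Borel set $B \subset \mathbb R^m$,
\[
(f \circ g)^{-1}(B) = g^{-1}\bigl(f^{-1}(B)\bigr),
\]
observe that $f^{-1}(B) \in \mathfrak L$ by the $(\mathfrak L, \mathfrak B)$-measurability of $f$, and conclude that $g^{-1}(f^{-1}(B)) \in \mathfrak L$ by the $(\mathfrak L, \mathfrak L)$-measurability of $g$. This direction is a short verification with no real content.

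For the reverse implication \ref{item:preserves-measurability}$\Rightarrow$\ref{item:g-is-LL-measurable}, my strategy is to probe each Lebesgue set $A \subset X_2$ by a well-chosen test function. Fix an arbitrary nonzero vector $v \in \mathbb R^m$ and define $f \colon X_2 \to \mathbb R^m$ by $f(y) = \mathbbm{1}_A(y)\, v$. A short case analysis according to whether $0$ and $v$ belong to a Borel set $B \subset \mathbb R^m$ shows that $f^{-1}(B)$ is one of the four sets $\emptyset, A, X_2 \setminus A, X_2$, all lying in $\mathfrak L$, so $f$ is $(\mathfrak L, \mathfrak B)$-measurable. Applying hypothesis \ref{item:preserves-measurability} yields that $f \circ g = \mathbbm{1}_{g^{-1}(A)}\, v$ is $(\mathfrak L, \mathfrak B)$-measurable; taking the preimage under $f \circ g$ of any Borel set separating $v$ from $0$ (for instance, the open ball of radius $\abs{v}/2$ centered at $v$) then recovers $g^{-1}(A) \in \mathfrak L$, as required.

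There is no substantial obstacle here; the only mild technicality is that the target is the vector space $\mathbb R^m$ rather than $\mathbb R$, which is absorbed entirely by the choice of $v$ and of a Borel neighborhood of $v$ isolating it from the origin. Everything else reduces to the elementary identity $(f \circ g)^{-1}(B) = g^{-1}(f^{-1}(B))$ together with the observation that indicator functions of Lebesgue sets are $(\mathfrak L, \mathfrak B)$-measurable.
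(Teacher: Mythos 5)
Your proposal is correct and follows essentially the same route as the paper: the forward direction via $(f\circ g)^{-1}(B)=g^{-1}(f^{-1}(B))$, and the converse by testing with $f=\mathbbm 1_A\, v$ for a nonzero $v$ and recovering $g^{-1}(A)$ as a preimage of a Borel set isolating $v$ from $0$ (the paper simply uses $\{v\}$ where you use a small ball). No gaps.
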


\begin{proof}
The fact that \ref{item:g-is-LL-measurable} implies \ref{item:preserves-measurability} follows immediately from the definition of measurable functions. Conversely, assume that \ref{item:preserves-measurability} holds true and let $A \subset X_2$ be Lebesgue measurable. Let $v \in \mathbb R^m \setminus \{0\}$ and consider the function $f\colon X_2 \to \mathbb R^m$ be given for $x \in X_2$ by $f(x) = \mathbbm 1_A(x) v$. By assumption, $f \circ g$ is $(\mathfrak L, \mathfrak B)$-measurable and thus, in particular, $(f \circ g)^{-1}(\{v\}) = g^{-1}(A)$ is Lebesgue measurable, yielding \ref{item:g-is-LL-measurable}.
\end{proof}

We are now in position to characterize the set of functions preserving $L^p$ by right composition.

\begin{theorem}
\label{thm:composition-Lp}
For $i \in \{1, 2\}$, let $X_i \subset \mathbb R^{d_i}$ be an open set, $m \in \mathbb N^\ast$, and $g\colon X_1 \to X_2$.
\begin{enumerate}[ref=\alph*]
\item\label{item:composition-Linfty-equivs} The following assertions are equivalent.
\begin{enumerate}[label=\textup{(\theenumi-\roman*)}, leftmargin=*]
\item\label{item:composition-Linfty} For every $f \in L^\infty(X_2, \mathbb R^m)$, we have $f \circ g \in L^\infty(X_1, \mathbb R^m)$.
\item\label{item:g-absolutely-continuous} The function $g$ is $(\mathfrak L, \mathfrak L)$-measurable and the measure $g_\# \mathcal L$ is absolutely continuous with respect to $\mathcal L$.
\end{enumerate}
\item\label{item:composition-Lp-equivs} Assume further that $\mathcal L(X_1) < +\infty$ and take $p \in [1, +\infty)$. Then the following assertions are equivalent.
\begin{enumerate}[label=\textup{(\theenumi-\roman*)}, leftmargin=*]
\item\label{item:composition-Lp} For every $f \in L^p(X_2, \mathbb R^m)$, we have $f \circ g \in L^p(X_1, \mathbb R^m)$.
\item\label{item:g-radon-nikodym} The function $g$ is $(\mathfrak L, \mathfrak L)$-measurable, the measure $g_\# \mathcal L$ is absolutely continuous with respect to $\mathcal L$, and the Radon--Nikodym derivative of $g_\# \mathcal L$ with respect to $\mathcal L$ is bounded.
\end{enumerate}
\end{enumerate}
\end{theorem}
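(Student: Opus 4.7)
The plan is to treat both equivalences in parallel, since the sufficient directions reduce to a change-of-variables computation while the necessary directions combine testing against indicator functions with, for part (b), the closed graph theorem. For the sufficient directions, measurability of $f\circ g$ follows from Proposition~\ref{prop:preserves-measurability} applied to $g$. In (a), the set $N = \{\abs{f} > \norm{f}_{L^\infty}\}$ is Lebesgue null, so $\mathcal L(g^{-1}(N)) = 0$ by $g_\#\mathcal L \ll \mathcal L$, yielding $\norm{f \circ g}_{L^\infty} \le \norm{f}_{L^\infty}$. In (b), writing $\phi = \diff g_\#\mathcal L/\diff\mathcal L$, the push-forward identity
\[
\int_{X_1} \abs{f \circ g}^p \diff\mathcal L = \int_{X_2} \abs{f}^p \diff g_\#\mathcal L = \int_{X_2} \abs{f}^p \phi \diff\mathcal L \le \norm{\phi}_{L^\infty} \norm{f}_{L^p}^p
\]
delivers $f \circ g \in L^p(X_1, \mathbb R^m)$.

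For the necessary directions, I would recover both the measurability of $g$ and the absolute continuity $g_\#\mathcal L \ll \mathcal L$ by testing against $f = \mathbbm 1_A v$ with a fixed nonzero $v \in \mathbb R^m$. When $A \subset X_2$ is Lebesgue measurable (of finite measure in part (b), the general case being reduced by exhausting $X_2$ with sets of finite measure and invoking $\sigma$-additivity), the fact that $f \circ g = \mathbbm 1_{g^{-1}(A)} v$ is measurable forces $g^{-1}(A) \in \mathfrak L$. When $A = N$ is Lebesgue null, $f$ is the zero element of $L^\infty$ (resp.\ $L^p$), so the well-definedness of $f \mapsto f \circ g$ as a map between equivalence classes forces $\mathcal L(g^{-1}(N)) = 0$, i.e., $g_\#\mathcal L \ll \mathcal L$.

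The remaining and main step is the boundedness of $\phi$ in part (b). I plan to apply the closed graph theorem to the linear operator $T \colon L^p(X_2, \mathbb R^m) \to L^p(X_1, \mathbb R^m)$ defined by $Tf = f \circ g$. Closedness of the graph is verified by extracting an a.e.\ convergent subsequence from any $L^p$-convergent sequence on $X_2$, transferring the a.e.\ convergence through $g$ via the absolute continuity already obtained, and passing to a further a.e.\ convergent subsequence of the images to identify the limits. The resulting bound $\norm{f \circ g}_{L^p} \le C \norm{f}_{L^p}$ combined with the push-forward identity gives $\int_E \phi \diff\mathcal L \le C^p \mathcal L(E)$ for every Lebesgue measurable $E \subset X_2$ of finite measure, which forces $\phi \le C^p$ almost everywhere. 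The main obstacle is precisely this conversion of the qualitative well-definedness of $T$ into a quantitative norm bound; the closed graph theorem is tailor-made for it, and the hypothesis $\mathcal L(X_1) < +\infty$ enters implicitly by ensuring that $g_\#\mathcal L$ is a finite, hence $\sigma$-finite, measure, so that $\phi$ exists as an a.e.-finite measurable function.
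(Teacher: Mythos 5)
Your proposal is correct, and the sufficiency directions together with the recovery of measurability and of the absolute continuity of $g_\#\mathcal L$ coincide with the paper's argument (the paper invokes Proposition~\ref{prop:preserves-measurability} and the null-set test with $f = \mathbbm 1_N v$ exactly as you do; your extra care in part (b) of restricting to sets of finite measure and exhausting is in fact a point where you are slightly more precise than the paper). Where you genuinely diverge is the quantitative step of part (b), the boundedness of $\varphi = \diff g_\#\mathcal L / \diff \mathcal L$. The paper does not touch the composition operator itself: it introduces the functional $L\psi = \int_{X_2} \psi \varphi \diff\mathcal L$ on $L^1(X_2, \mathbb R)$, shows it is well defined by splitting $\psi = \psi_+ - \psi_-$ and choosing $f_\pm$ with $\abs{f_\pm}^p = \psi_\pm$ so that hypothesis \ref{item:composition-Lp} gives finiteness, approximates $L$ by the truncated functionals $L_n\psi = \int \psi \min(\varphi, n)$, passes to the limit by monotone convergence, and concludes continuity of $L$ by Banach--Steinhaus, whence $\varphi \in L^\infty$ by $L^1$--$L^\infty$ duality. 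You instead apply the closed graph theorem to $T f = f \circ g$ on $L^p$, verifying closedness through a.e.-convergent subsequences and the already-established absolute continuity (which is precisely what makes the a.e. convergence transfer through $g$ legitimate), and then bound $\varphi$ by testing $T$ on indicators of finite-measure sets. Both arguments ultimately rest on Baire category; yours is arguably the more standard route for composition operators and avoids the $\psi_\pm$/truncation bookkeeping, while the paper's route avoids any discussion of closedness and lands directly on $\varphi \in L^\infty$ via duality. Your closing remark about $\mathcal L(X_1) < +\infty$ being used only to guarantee that $g_\#\mathcal L$ is finite, so that $\varphi$ exists, matches the paper's use of that hypothesis.
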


\begin{proof}
Let us first prove \eqref{item:composition-Linfty-equivs}. Assume that \ref{item:g-absolutely-continuous} holds true and let $f \in L^\infty(X_2, \mathbb R^m)$. Recall that, by Proposition~\ref{prop:preserves-measurability}, $f \circ g$ is $(\mathfrak L, \mathfrak B)$-measurable. In addition, since $f$ is bounded, so is $f \circ g$. One is only left to show that $f \circ g$ defines an element in $L^\infty(X_1, \mathbb R^m)$, i.e., that, if $\widetilde f = f$ a.e.\ in $X_2$, then $\widetilde f \circ g = f \circ g$ a.e.\ in $X_1$. Let $N = \{x \in X_2 \suchthat \widetilde f(x) \neq f(x)\}$ and notice that $\mathcal L(N) = 0$. In addition, $\widetilde f \circ g(x) \neq f \circ g(x)$ if and only if $x \in g^{-1}(N)$. Since $g_\# \mathcal L$ is absolutely continuous with respect to $\mathcal L$, we have $\mathcal L(g^{-1}(N)) = 0$, showing that $\widetilde f \circ g = f \circ g$ a.e.\ in $X_1$ and yielding the conclusion.

Assume now that \ref{item:composition-Linfty} holds true. In particular, by Proposition~\ref{prop:preserves-measurability}, $g$ is $(\mathfrak L, \mathfrak L)$-measurable. Let $N \subset X_2$ be a Lebesgue-measurable set satisfying $\mathcal L(N) = 0$, $v \in \mathbb R^m \setminus \{0\}$, and $f\colon X_2 \to \mathbb R^m$ be given for $x \in X_2$ by $f(x) = \mathbbm 1_N(x) v$. Then $f \in L^\infty(X_2, \mathbb R^m)$, $f = 0$ a.e.\ in $X_2$, and thus $f \circ g = 0$ a.e.\ in $X_1$, showing that $\mathcal L(g^{-1}(N)) = \mathcal L(\{x \in X_1 \suchthat f \circ g(x) = v\}) = 0$. This proves that $g_\# \mathcal L$ is absolutely continuous with respect to $\mathcal L$.

Let us now prove \eqref{item:composition-Lp-equivs}. Assume that \ref{item:g-radon-nikodym} holds true and let $f \in L^p(X_2, \mathbb R^m)$. As before, the $(\mathfrak L, \mathfrak B)$-measurability of $f \circ g$ follows from Proposition~\ref{prop:preserves-measurability} and, arguing as in the proof of \eqref{item:composition-Linfty-equivs}, we obtain that, if $\widetilde f = f$ a.e.\ in $X_2$, then $\widetilde f \circ g = f \circ g$ a.e.\ in $X_1$. In addition, denoting by $\varphi\colon X_2 \to \mathbb R^m$ the Radon--Nikodym derivative of $g_\# \mathcal L$ with respect to $\mathcal L$, we have
\[
\int_{X_1} \abs{f \circ g(x)}^p \diff x = \int_{X_2} \abs{f(x)}^p \diff g_\# \mathcal L(x) = \int_{X_2} \abs{f(x)}^p \varphi(x) \diff x < +\infty,
\]
since $\varphi$ is bounded. Hence $f \circ g \in L^p(X_1, \mathbb R^m)$.

Assume now that \ref{item:composition-Lp} holds true. By Proposition~\ref{prop:preserves-measurability}, $g$ is $(\mathfrak L, \mathfrak L)$-measurable and, arguing as in the proof of \eqref{item:composition-Linfty-equivs}, $g_\# \mathcal L$ is absolutely continuous with respect to $\mathcal L$. Since $\mathcal L(X_1) < +\infty$, the measure $g_{\#} \mathcal L$ is finite, and hence its Radon--Nikodym derivative $\varphi\colon X_2 \to \mathbb R_+ \cup \{+\infty\}$ with respect to $\mathcal L$ exists.

Let $L\colon L^1(X_2, \mathbb R) \to \mathbb R$ be the linear map defined for $\psi \in L^1(X_2, \mathbb R)$ by
\[
L \psi = \int_{X_2} \psi(x) \varphi(x) \diff x.
\]
Let us prove first that $L$ is well-defined. Indeed, write $\psi = \psi_+ - \psi_-$, where $\psi_+ = \max(\psi, 0)$ and $\psi_- = \max(-\psi, 0)$, and note that $\psi_+$ and $\psi_-$ are nonnegative functions in $L^1(X_2, \mathbb R)$. Let $f_+$ and $f_-$ be functions in $L^p(X_2, \mathbb R^m)$ such that $\psi_+ = \abs{f_+}^p$ and $\psi_- = \abs{f_-}^p$. By \ref{item:composition-Lp}, both $f_+ \circ g$ and $f_- \circ g$ belong to $L^p(X_1, \mathbb R^m)$, and we compute
\[
\int_{X_2} \psi_{\pm}(x) \varphi(x) \diff x = \int_{X_2} \abs{f_{\pm}(x)}^p \diff g_\# \mathcal L(x) = \int_{X_1} \abs{f_{\pm} \circ g(x)}^p \diff x < +\infty.
\]
Hence, $\psi \varphi \in L^1(X_2, \mathbb R)$, and thus $L \psi$ is well-defined.

For $n \in \mathbb N^\ast$, let $\varphi_n = \min(\varphi, n)$. Let $L_n\colon L^1(X_2, \mathbb R) \to \mathbb R$ be the continuous linear operator defined for $\psi \in L^1(X_2, \mathbb R)$ by $L_n \psi = \int_{X_2} \psi(x) \varphi_n(x) \diff x$. For a given $\psi \in L^1(X_2, \mathbb R)$, we have
\begin{align*}
L_n \psi & = \int_{X_2} \psi(x) \varphi_n(x) \diff x \\
& = \int_{X_2} \psi_+(x) \varphi_n(x) \diff x - \int_{X_2} \psi_-(x) \varphi_n(x) \diff x \\
& \xrightarrow[n \to +\infty]{} \int_{X_2} \psi_+(x) \varphi(x) \diff x - \int_{X_2} \psi_-(x) \varphi(x) \diff x \\
& = \int_{X_2} \psi(x) \varphi(x) \diff x = L \psi,
\end{align*}
where the convergence holds true thanks to the Lebesgue monotone convergence theorem. Hence, by the Banach--Steinhaus theorem, $L$ is a continuous linear operator, which implies that $L$ belongs to the topological dual of $L^1(X_2, \mathbb R)$, and thus $\varphi \in L^\infty(X_2, \mathbb R)$, as required.
\end{proof}

\section{Hypotheses and definitions}
\label{sec:Hypo-def}

Let us now present and comment the main definitions and hypotheses used in the sequel of the paper. We start with the definition of delay function used in this paper.

\begin{definition}
A function $\tau\colon \mathbb R_+ \to \mathbb R_+^\ast$ is said to be a \emph{delay function}.
\end{definition}

The study of the well-posedness and the asymptotic behavior of \eqref{eq:diff-eqn-N1-time} requires several assumptions. For simplicity, we state all such assumptions here, and refer to them whenever needed in the sequel.

\begin{hypothesis}
\label{hypo:hypos}
Let $A \in \mathcal M_d(\mathbb R)$, $p \in [1, +\infty]$, and $\tau$ be a delay function and define $\sigma_1\colon \mathbb R_+ \to \mathbb R$ by $\sigma_1(t) = t - \tau(t)$.
\begin{hypolist}
\item\label{hypo:tau-infimum} For every $t \geq 0$, we have $\inf_{s \in [0, t]} \tau(s) > 0$.
\item\label{hypo:tau-cont} The function $\tau$ is continuous.
\item\label{hypo:regulated} The function $\sigma_1$ is well-regulated.
\item\label{hypo:tau-measurable} The function $\tau$ is $(\mathfrak L, \mathfrak L)$-measurable.
\item\label{hypo:null} We have $\mathcal L(\sigma_1^{-1}(N)) = 0$ for every $N \in \mathfrak L$ such that $\mathcal L(N) = 0$.
\item\label{hypo:radon-nik-bdd} Hypotheses~\ref{hypo:tau-measurable} and \ref{hypo:null} hold true and, for every $T > 0$, the Radon--Nikodym derivative of the measure $\left(\sigma_1\rvert_{[0, T]}\right)_{\#}\mathcal L$ with respect to $\mathcal L$ is bounded.
\item\label{hypo:spr-A-less-1} We have $\rho(A) < 1$.
\item\label{hypo:delay-infinity} We have $\lim_{t \to +\infty} \sigma_1(t) = +\infty$.
\item\label{hypo:radon-nik-and-rho} Hypotheses~\ref{hypo:tau-measurable} and \ref{hypo:null} hold true, $p < +\infty$, ${\sigma_1}_{\#} \mathcal L$ is $\sigma$-finite, and the Radon--Nikodym derivative $\varphi$ of ${\sigma_1}_{\#}\mathcal L$ with respect to $\mathcal L$ is bounded and satisfies the inequality $\norm{\varphi}_{L^\infty(\mathbb R, \mathbb R)} \rho(A)^p < 1$.
\item\label{hypo:delay-bounded} The delay function $\tau$ is bounded.
\end{hypolist}
\end{hypothesis}

Assumption~\ref{hypo:tau-infimum} is used to obtain the representation formula \eqref{eq:explicit-formula} from Theorem~\ref{theorem-1}, and it is important for the uniqueness of solutions of \eqref{eq:diff-eqn-N1-time} (see Remark~\ref{remk:H2}). The continuity of $\tau$ assumed in \ref{hypo:tau-cont} is important if one wants to ensure the existence of continuous solutions of \eqref{eq:diff-eqn-N1-time} (see Theorem~\ref{thm:exist-continuous}), while, in view of Theorem~\ref{thm:well-regulated}, \ref{hypo:regulated} is used to ensure existence of solutions of \eqref{eq:diff-eqn-N1-time} in the class of regulated functions (see Theorem~\ref{thm:exist-regulated}).

In view of Theorem~\ref{prop:preserves-measurability}, the measurability of $\tau$ assumed in \ref{hypo:tau-measurable} is important to obtain existence of measurable solutions of \eqref{eq:diff-eqn-N1-time} (see Theorem~\ref{thm:exist-measurable}). Assumption~\ref{hypo:null} states that $\sigma_1$ preserves the class of sets of Lebesgue measure zero, and it turns out to be a necessary assumption to provide a notion of solution verifying \eqref{eq:diff-eqn-N1-time} only for almost every $t \in \mathbb R_+$ (see Theorem~\ref{thm:exist-ae} and Proposition~\ref{prop:h2-necessary}). This is fundamental in order to consider solutions of \eqref{eq:diff-eqn-N1-time} in $L^p$ spaces. Note that \ref{hypo:tau-measurable} ensures that ${\sigma_1}_{\#} \mathcal L$ is a measure in $(\mathbb R, \mathfrak L)$ and, under this assumption, \ref{hypo:null} is equivalent to requiring ${\sigma_1}_{\#} \mathcal L$ to be absolutely continuous with respect to the Lebesgue measure, which, by Theorem~\ref{thm:composition-Lp}\eqref{item:composition-Linfty-equivs}, is fundamental for ensuring that any $L^\infty$ function remains in $L^\infty$ after a right composition with $\sigma_1$. In addition, \ref{hypo:tau-measurable} and \ref{hypo:null} ensure the existence of the Radon--Nikodym derivative in \ref{hypo:radon-nik-bdd}, since $\left(\sigma_1\rvert_{[0, T]}\right)_{\#}\mathcal L$ is a finite measure. Finally, taking into account Theorem~\ref{thm:composition-Lp}, \ref{hypo:radon-nik-bdd} allows one to prove existence of solutions of \eqref{eq:diff-eqn-N1-time} in $L^p$ for $p \in [1, +\infty)$.

Note that \ref{hypo:tau-cont} is equivalent to assuming continuity of $\sigma_1$, and \ref{hypo:tau-measurable} is equivalent to assuming $(\mathfrak L, \mathfrak L)$-measurability of $\sigma_1$, but, due to Remark~\ref{remk:sum-not-well-regulated}, \ref{hypo:regulated} is not equivalent to assuming well-regulatedness of $\tau$. 

Assumptions~\ref{hypo:spr-A-less-1}--\ref{hypo:delay-bounded} are useful for the analysis of the asymptotic behavior of solutions of \eqref{eq:diff-eqn-N1-time} carried out in Section~\ref{sec:stability}. Indeed, in order to obtain that solutions of \eqref{eq:diff-eqn-N1-time} converge to $0$ as time tends to infinity, it seems natural from the form of \eqref{eq:diff-eqn-N1-time} to require \ref{hypo:spr-A-less-1}, since one may expect the existence of solutions diverging to infinity in norm in the case $\rho(A) > 1$, and not converging to $0$ in the case $\rho(A) = 1$. In addition, if $\sigma_1(t) = t - \tau(t)$ were to remain bounded as $t \to +\infty$, then, from \eqref{eq:diff-eqn-N1-time}, $x(t)$ would depend, as $t \to +\infty$, only on the past values of $x(\cdot)$ computed in a bounded time interval, and one might expect solutions of \eqref{eq:diff-eqn-N1-time} to not necessarily converge to $0$ in this case. These intuitions will be made precise later on in Theorem~\ref{thm:cv-to-0} and Proposition~\ref{prop:necessary-asympt}, which show that, under \ref{hypo:tau-infimum}, assumptions~\ref{hypo:spr-A-less-1} and \ref{hypo:delay-infinity} are necessary\footnote{The necessity of \ref{hypo:delay-infinity} requires also the assumption that $A$ is not nilpotent.} and sufficient to have $\abs{x(t)} \to 0$ as $t \to +\infty$ for any solution $x$ of \eqref{eq:diff-eqn-N1-time} with bounded initial condition.

In addition to the convergence of $\abs{x(t)}$ to $0$ as time tends to infinity, we are also interested in Section~\ref{sec:stability} in the convergence to $0$ of functional norms of $x(\cdot)$ computed at time intervals tending to infinity. While \ref{hypo:spr-A-less-1} and \ref{hypo:delay-infinity} remain sufficient for the $L^\infty$ norm, we will show (Example~\ref{example-3.1}) that this is not the case for $L^p$ norms with $p \in [1, +\infty)$, and it turns out that requiring also \ref{hypo:radon-nik-and-rho} to hold true is sufficient to retrieve convergence to $0$ also in $L^p$. Note that, in \ref{hypo:radon-nik-and-rho}, the assumption that ${\sigma_1}_{\#}\mathcal L$ is $\sigma$-finite is used to ensure that the Radon--Nikodym derivative $\varphi$ exists, and it is satisfied if \ref{hypo:delay-infinity} holds true.

Finally, assumption~\ref{hypo:delay-bounded} is useful in Section~\ref{sec:exponential} to obtain exponential decay rates of solutions of \eqref{eq:diff-eqn-N1-time}. Note that, trivially, \ref{hypo:delay-bounded} implies \ref{hypo:delay-infinity}.

\begin{remark}
A sufficient condition for \ref{hypo:radon-nik-bdd} to hold is that, in addition to \ref{hypo:tau-measurable} and \ref{hypo:null}, ${\sigma_1}_{\#} \mathcal L$ is $\sigma$-finite and its Radon--Nikodym derivative $\varphi$ with respect to $\mathcal L$ is bounded on $(-\infty, T]$ for every $T > 0$. Indeed, using the fact that $\sigma_1(t) < t$ for every $t \in \mathbb R_+$, we deduce that, for every $T > 0$, the Radon--Nikodym derivative of $\left(\sigma_1\rvert_{[0, T]}\right)_{\#} \mathcal L$ is supported on $(-\infty, T]$ and upper bounded by $\varphi$ on this set.

However, the previous condition is not necessary for \ref{hypo:radon-nik-bdd} to hold true: for instance, if $\tau(t) = \floor{t + 1}$ for $t \in \mathbb R_+$, we have $\sigma_1(t) = t - \floor{t + 1}$ for $t \in \mathbb R_+$, and we can verify that \ref{hypo:radon-nik-bdd} is satisfied, but ${\sigma_1}_{\#}\mathcal L$ is not $\sigma$-finite, and hence it does not admit a Radon--Nikodym derivative with respect to $\mathcal L$.
\end{remark}

\begin{remark}
\label{remk:radon-nikodym-derivative-tau}
If $\sigma_1$ is defined from a delay function $\tau$ as in Hypotheses~\ref{hypo:hypos} and it is a diffeomorphism between $\mathbb R_+$ and an interval $I \subset \mathbb R$, then ${\sigma_1}_{\#} \mathcal L$ is absolutely continuous with respect to $\mathcal L$, it is $\sigma$-finite, and its Radon--Nikodym derivative with respect to $\mathcal L$ is the function $\varphi\colon \mathbb R \to \mathbb R_+$ defined by $\varphi(t) = \abs{(\sigma_1^{-1})'(t)} = \frac{1}{\abs{\sigma_1'(\sigma_1^{-1}(t))}}$ if $t \in I$ and $\varphi(t) = 0$ otherwise. Given $T > 0$, $\varphi$ is bounded on $(-\infty, T]$ if and only if $\abs{\sigma_1'}$ is bounded from below on $(-\infty, T]$ by a positive constant.

In terms of $\tau$, a sufficient condition for \ref{hypo:radon-nik-bdd} to hold true is that $\tau$ is differentiable and, for every $T > 0$ there exists $\alpha \in [0, 1)$ such that $\tau'(t) \leq \alpha$ for every $t \in [0, T]$. In that case, $\varphi$ is bounded on $(-\infty, T]$ by $\frac{1}{1 - \alpha}$.
\end{remark}

We now introduce several definitions used in the sequel of the paper.

\begin{definition}
\label{def:h}
Let $\tau$ be a delay function. The \emph{largest delay function} associated with $\tau$ is the function $h\colon \mathbb R_+ \to \mathbb R_+^\ast \cup \{+\infty\}$ defined for $t \in \mathbb R_+$ by
\[
h(t) = t - \inf_{s \in [t, +\infty)} (s - \tau(s)).
\]
\end{definition}

The largest delay function $h$ introduced in Definition~\ref{def:h} can be interpreted as follows: if a function $x$ satisfies \eqref{eq:diff-eqn-N1-time} for every $t \geq t_0$, for some $t_0 \in \mathbb R$, then the right-hand side of \eqref{eq:diff-eqn-N1-time} depends on evaluations of $x$ at the interval $\lvert t_0 - h(t_0), +\infty)$. In particular, if one is to define a solution of \eqref{eq:diff-eqn-N1-time} in an interval of the form $[t_0, +\infty)$, then the corresponding initial condition should be a function defined in an interval of the form $\lvert t_0 - h(t_0), t_0)$. Notice also that, from the definition of $h$, the function $t_0 \mapsto t_0 - h(t_0)$ is nondecreasing.

\begin{definition}
Let $\tau$ be a delay function and $h$ be its associated largest delay function.
\begin{enumerate}
\item Given $x\colon \lvert -h(0), +\infty) \to \mathbb R^d$ and $t \geq 0$, we denote by $x_t\colon \lvert -h(t), 0) \to \mathbb R^d$ the function defined by $x_t(s) = x(t + s)$ for every $s \in \lvert -h(0), 0)$. Whenever needed, we also make the slight abuse of notation of considering $x_t$ to be defined on $\lvert -h(t), 0]$ by setting $x_t(0) = x(t)$.

\item For every $t \in \mathbb R_+$, we set $\mathsf C_t = \{x \in C(\lvert -h(t), 0], \mathbb R^d) \suchthat x(0) = A x(-\tau(t))\}$ and $\mathsf C_t^{\mathrm b} = \{x \in \mathsf C_t \suchthat x \text{ is bounded}\}$.

\item For every $t \in \mathbb R_+$, we set $\mathsf G_t = \{x \in G(\lvert -h(t), 0], \mathbb R^d) \suchthat x(0) = A x(-\tau(t))\}$ and $\mathsf G_t^{\mathrm b} = \{x \in \mathsf G_t \suchthat x \text{ is bounded}\}$.
\end{enumerate}
\end{definition}

Note that the sets $\mathsf C_t$ and $\mathsf G_t$ depend on $\tau$ and $A$, but we do not make this dependence explicit in their notation. We also remark that the interval $\lvert -h(t), 0]$ is not compact when $h(t) = +\infty$, motivating the boundedness requirements in the definitions of $\mathsf C_t^{\mathrm b}$ and $\mathsf G_t^{\mathrm b}$. In particular, $\mathsf C_t^{\mathrm b}$ and $\mathsf G_t^{\mathrm b}$ are Banach spaces with norms
\[
\norm{x}_{\mathsf C_t^{\mathrm b}} = \sup_{t \in \lvert -h(t), 0]} \abs{x(t)} \quad \text{ and } \quad \norm{x}_{\mathsf G_t^{\mathrm b}} = \sup_{t \in \lvert -h(t), 0]} \abs{x(t)},
\]
respectively.

\section{Well-posedness}
\label{sec:well-posed}

We next address questions on the existence and uniqueness of solutions of \eqref{eq:diff-eqn-N1-time}, and we start by introducing the three main notions of solutions used along this paper, namely continuous, regulated, and $L^p$ solutions, as well as some other auxiliary notions.

\begin{definition}
\label{def:solutions}
Let $\tau$ be a delay function and $h$ be its associated maximal delay function.
\begin{enumerate}
\item We say that $x\colon \lvert -h(0), +\infty) \to \mathbb R^d$ is a \emph{solution} of \eqref{eq:diff-eqn-N1-time} if the equality $x(t) = A x(t - \tau(t))$ is satisfied for every $t \in \mathbb R_+$.

\item A solution $x$ of \eqref{eq:diff-eqn-N1-time} is said to be a \emph{continuous} (respectively, \emph{regulated}) \emph{solution} of \eqref{eq:diff-eqn-N1-time} if it is continuous (respectively, regulated).

\item A solution $x$ of \eqref{eq:diff-eqn-N1-time} is said to be a \emph{measurable solution} of \eqref{eq:diff-eqn-N1-time} if it is $(\mathfrak L, \mathfrak B)$-measurable.

\item We say that $x\colon\lvert -h(0), +\infty) \to \mathbb R^d$ is an \emph{almost everywhere (a.e.) solution} of \eqref{eq:diff-eqn-N1-time} if, for every function $\widetilde x\colon \lvert -h(0), +\infty) \to \mathbb R^d$ equal to $x$ almost everywhere on $\lvert -h(0), +\infty)$, we have $\widetilde x(t) = A \widetilde x(t - \tau(t))$ for almost every $t \in \mathbb R_+$.

\item For $p \in [1, +\infty]$, a measurable a.e.\ solution $x$ of \eqref{eq:diff-eqn-N1-time} is said to be an \emph{$L^p$ solution} of \eqref{eq:diff-eqn-N1-time} if $x \in L^p_{\mathrm{loc}}(\lvert -h(0), +\infty), \mathbb R^d)$.
\end{enumerate}

If $x$ is a solution, a measurable solution, an a.e.\ solution, or an $L^p$ solution of \eqref{eq:diff-eqn-N1-time}, we say that its \emph{initial condition} is its restriction to the interval $\lvert -h(0), 0)$. If $x$ is a continuous or regulated solution of \eqref{eq:diff-eqn-N1-time}, we say that its \emph{initial condition} is its restriction to the interval $\lvert -h(0), 0]$.
\end{definition}

\subsection{Representation formula for solutions}
\label{sec:explicit-formula}

The goal of this section is to provide a representation formula for solutions of \eqref{eq:diff-eqn-N1-time}, expressing a solution $x$ at a given time $t$ in terms of its initial condition $x_0$ evaluated at well-identified times and suitable powers of the matrix $A$. For that purpose, we start by a suitable definition of iterates of the function $t \mapsto t - \tau(t)$.

\begin{definition}
\label{def:Dn-sigman}
Let $\tau$ be a delay function. For $n \in \mathbb N$, we define the \emph{iterated delay functions} $\sigma_n\colon D_n \to \mathbb R$ inductively by setting $D_0 = \mathbb R$ and $\sigma_0(t) = t$ for $t \in \mathbb R$ and defining, for $n \in \mathbb N^\ast$,
\begin{equation}
\label{eq:defi-Dn}
D_n = \{t \in \mathbb R_+ \suchthat t - \tau(t) \in D_{n-1}\}
\end{equation}
and
\begin{equation}
\label{eq:defi-sigman}
\sigma_n(t) = \sigma_{n-1}(t - \tau(t)) \qquad \text{ for } t \in D_n.
\end{equation}
\end{definition}

Note that $D_1 = \mathbb R_+$ and $\sigma_1(t) = t - \tau(t)$. Moreover, we may have $D_n = \emptyset$ for some $n \in \mathbb N$: this is the case, for instance, if the delay function $\tau$ is defined for $t \in \mathbb R_+$ by $\tau(t) = t + 1$, in which case $\sigma_1(t) = -1$ for every $t \in \mathbb R_+$ and hence $D_2 = \emptyset$. The empty set, however, is not necessarily attained by some $D_n$: for instance, if $\tau(t) = 1$ for every $t \in \mathbb R_+$, one immediately computes that $D_n = [n-1, +\infty)$ for every $n \in \mathbb N^\ast$.

We now provide immediate properties of iterated delay functions. The first one is an immediate consequence of \eqref{eq:defi-Dn}.

\begin{lemma}
\label{lemma:D_n-sigma1}
Let $\tau$ be a delay function, $t \in \mathbb R_+$, and $n \in \mathbb N$. Then $t \in D_{n+1}$ if and only if $\sigma_1(t) \in D_n$.
\end{lemma}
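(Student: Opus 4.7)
The plan is to proceed by direct unwinding of Definition~\ref{def:Dn-sigman}, since the statement is essentially a restatement of the defining recursion for $D_{n+1}$. Because $n \in \mathbb N$ gives $n+1 \geq 1$, the recursive clause~\eqref{eq:defi-Dn} always applies to $D_{n+1}$, and no induction is needed.

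Concretely, I would fix $n \in \mathbb N$ and $t \in \mathbb R_+$, and then write
\[
t \in D_{n+1} \iff t \in \mathbb R_+ \text{ and } t - \tau(t) \in D_n,
\]
which is exactly~\eqref{eq:defi-Dn} applied at index $n+1 \in \mathbb N^\ast$. Using the identity $\sigma_1(t) = t - \tau(t)$ noted right after Definition~\ref{def:Dn-sigman}, and the standing assumption $t \in \mathbb R_+$, this collapses to the desired equivalence $t \in D_{n+1} \iff \sigma_1(t) \in D_n$.

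There is no substantive obstacle here; the only mild subtlety is the edge case $n = 0$, where $D_0 = \mathbb R$ and the right-hand condition $\sigma_1(t) \in D_0$ is automatic, while the left-hand condition $t \in D_1 = \mathbb R_+$ is also automatic under the hypothesis $t \in \mathbb R_+$, so the equivalence is trivially true. I would either handle this case in one sentence or simply let it be subsumed by the uniform application of~\eqref{eq:defi-Dn}, since that formula is valid for every $n \in \mathbb N^\ast$, including $n+1 = 1$. Overall, the whole argument fits in one or two lines.
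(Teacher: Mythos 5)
Your proof is correct and matches the paper's treatment: the paper also regards this lemma as an immediate consequence of the recursion \eqref{eq:defi-Dn}, which you simply unwind at index $n+1 \in \mathbb N^\ast$ and simplify using the standing hypothesis $t \in \mathbb R_+$ and the identity $\sigma_1(t) = t - \tau(t)$. Your remark on the case $n = 0$ is accurate but, as you note, already subsumed by the uniform application of \eqref{eq:defi-Dn}.
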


The next result shows that the sequence of sets $(D_n)_{n \in \mathbb N}$ is nonincreasing.

\begin{lemma}
\label{lemma:Dn-decreasing}
Let $\tau$ be a delay function. Then, for every $n \in \mathbb N^\ast$, we have $D_n \subset D_{n-1}$.
\end{lemma}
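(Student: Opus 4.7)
The plan is to prove this by induction on $n$, leveraging the recursive definition of $D_n$ given in \eqref{eq:defi-Dn} and the fact proven in Lemma~\ref{lemma:D_n-sigma1} (which is really just the definition restated).

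For the base case $n = 1$, I would simply observe that $D_1 = \mathbb R_+ \subset \mathbb R = D_0$ by the definitions preceding the lemma.

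For the inductive step, assume $D_{n-1} \subset D_{n-2}$ for some $n \geq 2$, and take $t \in D_n$. By \eqref{eq:defi-Dn}, this means $t \in \mathbb R_+$ and $\sigma_1(t) = t - \tau(t) \in D_{n-1}$. By the inductive hypothesis, $\sigma_1(t) \in D_{n-2}$, and hence applying \eqref{eq:defi-Dn} at level $n-1$ gives $t \in D_{n-1}$, as required.

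There is no real obstacle here: the result is a direct consequence of the inductive definition of $D_n$, since membership in $D_n$ is obtained from membership in $D_{n-1}$ by pulling back under $\sigma_1$, so the nestedness propagates automatically from the trivial inclusion $D_1 \subset D_0$.
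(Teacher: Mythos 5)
Your proof is correct and follows essentially the same induction as the paper: the base case $D_1 = \mathbb R_+ \subset \mathbb R = D_0$ and the inductive step that pulls membership back through $\sigma_1$ using \eqref{eq:defi-Dn}, just with the indices shifted by one. No issues.
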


\begin{proof}
We prove the result by induction on $n$. The case $n = 1$ is trivially true. Now, let $n \in \mathbb N^\ast$ be such that $D_n \subset D_{n-1}$. Let $t \in D_{n+1}$ and note that, by definition, $t \in \mathbb R_+$ and $t - \tau(t) \in D_n$, and hence $t - \tau(t) \in D_{n-1}$, implying that $t \in D_n$. Thus $D_{n+1} \subset D_n$, yielding the conclusion.
\end{proof}

\begin{lemma}
\label{lemma:caract-Dn-sigman}
Let $\tau$ be a delay function. Then, for every $n \in \mathbb N$ and $k \in \llbracket 0, n\rrbracket$, we have
\begin{equation}
\label{eq:rec-Dn}
D_n = \{t \in D_k \suchthat \sigma_k(t) \in D_{n-k}\}
\end{equation}
and
\begin{equation}
\label{eq:rec-sigman}
\sigma_n(t) = \sigma_{n-k}(\sigma_k(t)) \qquad \text{ for every } t \in D_n.
\end{equation}
\end{lemma}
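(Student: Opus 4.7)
The plan is to prove both \eqref{eq:rec-Dn} and \eqref{eq:rec-sigman} simultaneously by induction on $n$. The base case $n = 0$ forces $k = 0$, and both identities reduce to the tautologies $D_0 = \{t \in \mathbb R : t \in D_0\}$ and $\sigma_0(t) = \sigma_0(\sigma_0(t))$, so there is nothing to prove.

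For the inductive step, I would fix $n \geq 1$, assume the lemma holds at $n - 1$, and fix $k \in \llbracket 0, n\rrbracket$. The case $k = 0$ is again immediate. For $k \geq 1$, note that $k - 1 \in \llbracket 0, n - 1\rrbracket$, which is the index at which the induction hypothesis will be applied. Starting from \eqref{eq:defi-Dn}, $t \in D_n$ is equivalent to $t \in \mathbb R_+$ together with $\sigma_1(t) \in D_{n-1}$. Applying the inductive hypothesis \eqref{eq:rec-Dn} at $n - 1$ and $k - 1$ to $\sigma_1(t)$ rewrites $\sigma_1(t) \in D_{n-1}$ as the conjunction of $\sigma_1(t) \in D_{k-1}$ and $\sigma_{k-1}(\sigma_1(t)) \in D_{n-k}$. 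Now the definition \eqref{eq:defi-Dn} identifies "$t \in \mathbb R_+$ and $\sigma_1(t) \in D_{k-1}$" with "$t \in D_k$", while \eqref{eq:defi-sigman} gives $\sigma_k(t) = \sigma_{k-1}(\sigma_1(t))$ for such $t$. Chaining these equivalences yields $t \in D_n \iff t \in D_k \text{ and } \sigma_k(t) \in D_{n-k}$, establishing \eqref{eq:rec-Dn} at $n$ and $k$.

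For \eqref{eq:rec-sigman} at $n$ and $k$, I would use the definition $\sigma_n(t) = \sigma_{n-1}(\sigma_1(t))$ for $t \in D_n$ (from \eqref{eq:defi-sigman}), observe that $\sigma_1(t) \in D_{n-1}$ by the first part, and apply the inductive hypothesis \eqref{eq:rec-sigman} at $n - 1$ and $k - 1$ to compute
\[
\sigma_{n-1}(\sigma_1(t)) = \sigma_{(n-1)-(k-1)}(\sigma_{k-1}(\sigma_1(t))) = \sigma_{n-k}(\sigma_k(t)),
\]
where the last equality uses once more the definition $\sigma_k(t) = \sigma_{k-1}(\sigma_1(t))$ valid on $D_k$.

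I do not expect any real obstacle; the proof is essentially bookkeeping. The only delicate point is to apply the induction hypothesis at the correct index $k - 1$ (which lies in $\llbracket 0, n-1\rrbracket$ precisely because $k \in \llbracket 1, n\rrbracket$) and to keep track of the two ways of peeling off one iteration of $\sigma_1$: from \eqref{eq:defi-sigman} at the "outer" level ($\sigma_n = \sigma_{n-1} \circ \sigma_1$), which is used on $t$, and from the inductive hypothesis at the "inner" level, which splits $\sigma_{n-1}$ itself. Everything else is forced by Definition~\ref{def:Dn-sigman}.
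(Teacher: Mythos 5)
Your proposal is correct and follows essentially the same route as the paper: induction on $n$, applying the inductive hypothesis at indices $(n-1, k-1)$ to $\sigma_1(t)$ and recombining via the defining relations \eqref{eq:defi-Dn} and \eqref{eq:defi-sigman}. The only (immaterial) differences are that the paper phrases it as strong induction and treats $k = n$ as a separate trivial case, whereas your argument handles it uniformly.
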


\begin{proof}
We prove the statement by strong induction on $n$. Notice first that the result is immediate for $n = 0$ and $n = 1$. Let $N \in \mathbb N^\ast$ be such that \eqref{eq:rec-Dn} and \eqref{eq:rec-sigman} hold for every $n \in \llbracket 0, N-1\rrbracket$ and $k \in \llbracket 0, n\rrbracket$. We notice that \eqref{eq:rec-Dn} and \eqref{eq:rec-sigman} trivially hold with $n$ replaced by $N$ and $k$ replaced by either $0$ or $N$. We then fix $k \in \llbracket 1, N-1\rrbracket$.

By \eqref{eq:defi-Dn}, we have
\[
D_{N} = \{t \in D_1 \suchthat \sigma_1(t) \in D_{N-1}\}.
\]
Using the induction assumption \eqref{eq:rec-Dn} with $n$ replaced by $N-1$ and $k$ replaced by $k-1$, we deduce that $\sigma_1(t) \in D_{N-1}$ if and only if $\sigma_1(t) \in D_{k-1}$ and $\sigma_{k-1}(\sigma_1(t)) \in D_{N - k}$, i.e.,
\[
D_{N} = \{t \in D_1 \suchthat \sigma_1(t) \in D_{k-1} \text{ and } \sigma_{k-1}(\sigma_1(t)) \in D_{N - k}\}.
\]
By \eqref{eq:defi-Dn}, we have $t \in D_k$ if and only if $t \in D_1$ and $\sigma_1(t) \in D_{k-1}$, and thus
\[
D_{N} = \{t \in D_k \suchthat \sigma_{k-1}(\sigma_1(t)) \in D_{N - k}\}.
\]
Since $k \in \llbracket 1, N-1\rrbracket$, the induction assumption \eqref{eq:rec-sigman} also shows that $\sigma_{k-1}(\sigma_1(t))$ for every $t \in D_k$, yielding
\[
D_{N} = \{t \in D_k \suchthat \sigma_{k}(t) \in D_{N - k}\},
\]
as required.

Finally, to obtain \eqref{eq:rec-sigman} with $n$ replaced by $N$, notice that, by definition, for every $t \in D_N$, we have $\sigma_N(t) = \sigma_{N-1}(\sigma_1(t))$. On the other hand, by the induction assumption \eqref{eq:rec-sigman} with $n$ replaced by $N-1$ and $k$ replaced by $k-1$, we deduce that $\sigma_{N-1}(s) = \sigma_{N-k}(\sigma_{k-1}(s))$ for every $s \in D_{N-1}$. Since $\sigma_1(t) \in D_{N-1}$ for every $t \in D_N$, we finally deduce that, for every $t \in D_N$, we have
\[
\sigma_{N}(t) = \sigma_{N-1}(\sigma_1(t)) = \sigma_{N-k}(\sigma_{k-1}(\sigma_1(t))) = \sigma_{N-k}(\sigma_k(t)),
\]
as required.
\end{proof}

\begin{lemma}
\label{lemma:intersect-Dn-empty}
Let $\tau$ be a delay function satisfying \ref{hypo:tau-infimum}. Then
\[\bigcap_{n \in \mathbb N} D_n = \emptyset.\]
\end{lemma}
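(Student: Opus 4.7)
The plan is to argue by contradiction: suppose $t \in \bigcap_{n \in \mathbb{N}} D_n$ and construct the orbit $t_n := \sigma_n(t)$ under iteration of $\sigma_1$. The point is that the orbit cannot both stay nonnegative and keep decreasing by a uniformly positive amount, which is exactly what \ref{hypo:tau-infimum} forces.

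First, I would establish the recursion for the orbit. Using Lemma~\ref{lemma:caract-Dn-sigman} with $k = n$, for every $n$ we have $t \in D_{n+1}$ if and only if $t \in D_n$ and $\sigma_n(t) \in D_1 = \mathbb{R}_+$, and moreover $\sigma_{n+1}(t) = \sigma_1(\sigma_n(t)) = \sigma_n(t) - \tau(\sigma_n(t))$. Therefore, setting $t_n := \sigma_n(t)$, the hypothesis $t \in \bigcap_n D_n$ yields an infinite sequence $(t_n)_{n \in \mathbb N}$ of nonnegative real numbers satisfying the recursion $t_{n+1} = t_n - \tau(t_n)$, with $t_0 = t \geq 0$.

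Since $\tau$ takes values in $\mathbb R_+^\ast$, the sequence $(t_n)_{n \in \mathbb N}$ is strictly decreasing, and in particular $t_n \in [0, t]$ for every $n \in \mathbb N$. Now I invoke \ref{hypo:tau-infimum}: setting $c := \inf_{s \in [0, t]} \tau(s) > 0$, we obtain $\tau(t_n) \geq c$ for every $n$, and hence, by induction, $t_n \leq t - n c$. Letting $n \to +\infty$ gives $t_n \to -\infty$, contradicting $t_n \geq 0$. Thus no such $t$ exists and $\bigcap_{n \in \mathbb N} D_n = \emptyset$.

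There is no real obstacle here; the only point requiring a bit of care is the bookkeeping that legitimizes writing $t_{n+1} = t_n - \tau(t_n)$, which is handled by Lemma~\ref{lemma:caract-Dn-sigman} (or, more directly, by an easy induction from the definition \eqref{eq:defi-Dn}--\eqref{eq:defi-sigman} combined with Lemma~\ref{lemma:D_n-sigma1}). Everything else is just the observation that a strictly decreasing sequence in $[0, t]$ whose consecutive gaps are bounded below by a positive constant cannot exist.
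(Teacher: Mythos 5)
Your proposal is correct and follows essentially the same argument as the paper: both assume a point $t$ in the intersection, use Lemma~\ref{lemma:caract-Dn-sigman} to justify the orbit recursion $t_{n+1} = t_n - \tau(t_n)$ with $t_n \geq 0$, and derive the contradiction $t_n \leq t - n\delta$ from the positive lower bound on $\tau$ over $[0, t]$ provided by \ref{hypo:tau-infimum}. No gaps.
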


\begin{proof}
Assume, to obtain a contradiction, that there exists $t \in \bigcap_{n \in \mathbb N} D_n$, and consider the sequence $(t_k)_{k \in \mathbb N}$ defined for $k \in \mathbb N$ by $t_k = \sigma_k(t)$. By Lemma~\ref{lemma:caract-Dn-sigman}, we deduce that $t_k \in \bigcap_{n \in \mathbb N} D_n$ for every $k \in \mathbb N$. In particular, $t_k \geq 0$ for every $k \in \mathbb N$. Since $t_k = \sigma_1(\sigma_{k-1}(t)) = t_{k-1} - \tau(t_{k-1})$ and $\tau$ takes values in $\mathbb R_+^\ast$, we also have that $(t_k)_{k \in \mathbb N}$ is a decreasing sequence.

Let $\delta = \inf_{s \in [0, t]} \tau(s) > 0$. Since $t_k \in [0, t]$ for every $k \in \mathbb N$, we deduce that $\tau(t_{k-1}) \geq \delta$ for every $k \in \mathbb N^\ast$, and thus $t_k \leq t_{k-1} - \delta$, which yields $t_k \leq t - k \delta$ for every $k \in \mathbb N$. This, however, contradicts the fact that $t_k \geq 0$ for every $k \in \mathbb N$, establishing the result.
\end{proof}

As an immediate consequence of Lemmas~\ref{lemma:Dn-decreasing} and \ref{lemma:intersect-Dn-empty}, we obtain the following result.

\begin{corollary}
\label{coro:integer-iterations}
Let $\tau$ be a delay function satisfying \ref{hypo:tau-infimum}. Then there exists a unique function $\mathbf n\colon \mathbb R \to \mathbb N$ such that, for every $t \in \mathbb R$, we have $t \in D_{\mathbf n(t)} \setminus D_{\mathbf n(t) + 1}$.
\end{corollary}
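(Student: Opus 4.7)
The plan is to extract both existence and uniqueness of $\mathbf n(t)$ from the structural information about the sets $D_n$ collected in the two preceding lemmas, by looking at the set $S(t) = \{n \in \mathbb N \suchthat t \in D_n\}$ for each fixed $t \in \mathbb R$.

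First I would observe that $S(t)$ is always nonempty, since $D_0 = \mathbb R$, so $0 \in S(t)$ for every $t$. Next, by Lemma~\ref{lemma:Dn-decreasing}, the sequence $(D_n)_{n \in \mathbb N}$ is nonincreasing, which means that $S(t)$ is downward closed in $\mathbb N$: if $n \in S(t)$ and $0 \le k \le n$, then $t \in D_n \subset D_k$, so $k \in S(t)$. Hence $S(t)$ is either $\mathbb N$ itself or an initial segment $\llbracket 0, m\rrbracket$ for some $m \in \mathbb N$.

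The case $S(t) = \mathbb N$ is excluded by Lemma~\ref{lemma:intersect-Dn-empty}: indeed, $S(t) = \mathbb N$ would mean $t \in \bigcap_{n \in \mathbb N} D_n$, contradicting the assumption~\ref{hypo:tau-infimum} used in that lemma. Therefore, $S(t)$ is a finite initial segment of $\mathbb N$, and I can define $\mathbf n(t) = \max S(t)$. By construction, $t \in D_{\mathbf n(t)}$ and $\mathbf n(t) + 1 \notin S(t)$, i.e., $t \notin D_{\mathbf n(t) + 1}$, which gives $t \in D_{\mathbf n(t)} \setminus D_{\mathbf n(t) + 1}$ as required. For uniqueness, if some $n \in \mathbb N$ satisfies $t \in D_n \setminus D_{n+1}$, then $n \in S(t)$ while $n + 1 \notin S(t)$; by downward closedness, $S(t) = \llbracket 0, n\rrbracket$, and thus $n = \max S(t) = \mathbf n(t)$.

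There is essentially no obstacle here: the argument is a direct bookkeeping consequence of the monotonicity of $(D_n)_{n \in \mathbb N}$ (Lemma~\ref{lemma:Dn-decreasing}) and of the fact that their intersection is empty under \ref{hypo:tau-infimum} (Lemma~\ref{lemma:intersect-Dn-empty}). The only minor point to keep in mind is handling $t < 0$: for such $t$ we have $t \in D_0 = \mathbb R$ but $t \notin D_1 = \mathbb R_+$, so $\mathbf n(t) = 0$, consistent with the general construction above.
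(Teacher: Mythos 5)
Your argument is correct and follows the same route as the paper, which obtains the corollary precisely as an immediate consequence of Lemma~\ref{lemma:Dn-decreasing} (monotonicity of the sets $D_n$, giving that $\{n \in \mathbb N \suchthat t \in D_n\}$ is a nonempty initial segment) and Lemma~\ref{lemma:intersect-Dn-empty} (empty intersection, giving finiteness). Your write-up simply makes the "immediate" bookkeeping explicit, including the trivial case $t < 0$, and the uniqueness argument via downward closedness is exactly right.
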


Our next result provides several properties of the function $\mathbf n$.

\begin{lemma}
\label{lemma:properties_n}
Let $\tau$ be a delay function satisfying \ref{hypo:tau-infimum} and $\mathbf n\colon \mathbb R \to \mathbb N$ be the function from Corollary~\ref{coro:integer-iterations}. Then, for every $t \in \mathbb R_+$, we have
\begin{enumerate}
\item\label{item:prop-n-nonzero} $\mathbf n(t) \geq 1$,
\item\label{item:n_sigma_1} $\mathbf n(\sigma_1(t)) = \mathbf n(t) - 1$, 
\item\label{item:sigma-k-nonnegative} $\sigma_k(t) \in \mathbb R_+$ for every $k \in \llbracket 0, \mathbf n(t) - 1\rrbracket$, and
\item\label{item:sigma-nt-negative} $\sigma_{\mathbf n(t)}(t) \in \lvert -h(0), 0)$.
\end{enumerate}
\end{lemma}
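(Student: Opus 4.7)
The plan is to extract (a)--(d) in order, leaning on Lemmas~\ref{lemma:D_n-sigma1}, \ref{lemma:Dn-decreasing}, \ref{lemma:caract-Dn-sigman} and the uniqueness statement in Corollary~\ref{coro:integer-iterations}. The key observation is that, because $(D_n)_{n \in \mathbb N}$ is nonincreasing, $\mathbf n(t)$ is exactly the largest $n$ with $t \in D_n$, so membership statements in the $D_n$'s translate directly into numerical statements about $\mathbf n$.

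First I would settle \ref{item:prop-n-nonzero}: since $t \in \mathbb R_+ = D_1$ and by Lemma~\ref{lemma:Dn-decreasing} one cannot have $t \in D_1$ while $\mathbf n(t) = 0$, we must have $\mathbf n(t) \geq 1$. For \ref{item:n_sigma_1}, I would apply Lemma~\ref{lemma:D_n-sigma1} twice: $t \in D_{\mathbf n(t)}$ gives $\sigma_1(t) \in D_{\mathbf n(t)-1}$, and $t \notin D_{\mathbf n(t)+1}$ gives $\sigma_1(t) \notin D_{\mathbf n(t)}$. The uniqueness in Corollary~\ref{coro:integer-iterations} then forces $\mathbf n(\sigma_1(t)) = \mathbf n(t) - 1$.

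For \ref{item:sigma-k-nonnegative}, I would apply Lemma~\ref{lemma:caract-Dn-sigman} with $n = \mathbf n(t)$: the characterization $D_{\mathbf n(t)} = \{s \in D_k \suchthat \sigma_k(s) \in D_{\mathbf n(t)-k}\}$ shows $\sigma_k(t) \in D_{\mathbf n(t)-k}$ for each $k \in \llbracket 0, \mathbf n(t)\rrbracket$. For $k \leq \mathbf n(t) - 1$ we have $\mathbf n(t) - k \geq 1$, so by Lemma~\ref{lemma:Dn-decreasing} $\sigma_k(t) \in D_1 = \mathbb R_+$.

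The last part is where a little care is needed, mainly to handle the two cases in the notation $\lvert -h(0), 0)$. For the upper bound $\sigma_{\mathbf n(t)}(t) < 0$, I would use Lemma~\ref{lemma:caract-Dn-sigman} applied with $n = \mathbf n(t)+1$ and $k = \mathbf n(t)$: since $t \notin D_{\mathbf n(t)+1}$ while $t \in D_{\mathbf n(t)}$, we must have $\sigma_{\mathbf n(t)}(t) \notin D_1 = \mathbb R_+$, i.e.\ $\sigma_{\mathbf n(t)}(t) < 0$. For the lower bound, I would use \ref{item:sigma-k-nonnegative} with $k = \mathbf n(t) - 1$, giving $s \coloneqq \sigma_{\mathbf n(t)-1}(t) \in \mathbb R_+$, and then write $\sigma_{\mathbf n(t)}(t) = \sigma_1(s)$, so
\[
\sigma_{\mathbf n(t)}(t) \geq \inf_{s \in \mathbb R_+} \sigma_1(s) = -h(0),
\]
by the definition of $h(0)$ in Definition~\ref{def:h}. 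If $h(0) < +\infty$, then $\lvert -h(0), 0) = [-h(0), 0)$ and both bounds give the conclusion; if $h(0) = +\infty$, then $\lvert -h(0), 0) = (-\infty, 0)$ and the upper bound alone suffices. There is no real obstacle here; the only subtlety is keeping track of the finite/infinite cases of the bracket notation, which is handled uniformly by the inequality $\sigma_{\mathbf n(t)}(t) \geq -h(0)$ (vacuous when $h(0) = +\infty$).
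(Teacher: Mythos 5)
Your proposal is correct and follows essentially the same route as the paper: it uses Lemma~\ref{lemma:D_n-sigma1}, Lemma~\ref{lemma:Dn-decreasing}, the identities \eqref{eq:rec-Dn}--\eqref{eq:rec-sigman} from Lemma~\ref{lemma:caract-Dn-sigman}, and the uniqueness in Corollary~\ref{coro:integer-iterations}, with only cosmetic differences (e.g., applying \eqref{eq:rec-Dn} at level $\mathbf n(t)$ instead of $k+1$ for \ref{item:sigma-k-nonnegative}, and invoking uniqueness at $\sigma_1(t)$ rather than at $t$ for \ref{item:n_sigma_1}). Your explicit handling of the finite/infinite cases of $h(0)$ in \ref{item:sigma-nt-negative} matches the paper's bound $\sigma_{\mathbf n(t)}(t) = \sigma_1(\sigma_{\mathbf n(t)-1}(t)) \geq -h(0)$, so nothing is missing.
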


\begin{proof}
Since $t \in \mathbb R_+$ and $D_0 \setminus D_1 = \mathbb R_-^\ast$, we have $t \notin D_0 \setminus D_1$, and hence $\mathbf n(t) \neq 0$, implying \ref{item:prop-n-nonzero}.

To prove \ref{item:n_sigma_1}, note that, by Corollary~\ref{coro:integer-iterations}, we have
\begin{equation}
\label{eq:n_sigma_1}
\sigma_1(t) \in D_{\mathbf n(\sigma_1(t))} \setminus D_{\mathbf n(\sigma_1(t)) + 1}.
\end{equation}
Recall that, by Lemma~\ref{lemma:D_n-sigma1}, for every $n \in \mathbb N$, we have $t \in D_{n+1}$ if and only if $\sigma_1(t) \in D_n$. Hence, \eqref{eq:n_sigma_1} is equivalent to
\[
t \in D_{\mathbf n(\sigma_1(t)) + 1} \setminus D_{\mathbf n(\sigma_1(t)) + 2}.
\]
Since the function $\mathbf n$ from Corollary~\ref{coro:integer-iterations} is unique, we deduce that $\mathbf n(t) = \mathbf n(\sigma_1(t)) + 1$, yielding the conclusion.

For $k \in \llbracket 0, \mathbf n(t) - 1\rrbracket$, it follows from the definition of $\mathbf n$ and Lemma~\ref{lemma:Dn-decreasing} that $t \in D_{k+1}$. By \eqref{eq:rec-Dn}, $D_{k+1} = \{ s \in D_k \suchthat \sigma_k (s) \in D_1\}$, which implies that $\sigma_k (t) \in D_1 = \mathbb R_+$, proving \ref{item:sigma-k-nonnegative}. 

Let us finally prove \ref{item:sigma-nt-negative}. Note that $t \in D_{\mathbf n(t)}$, but $t \notin D_{\mathbf n(t) +1}$. By \eqref{eq:rec-Dn}, we have $D_{\mathbf n(t) + 1} = \{ s \in D_{\mathbf n(t)} \suchthat \sigma_{\mathbf n(t)} (s) \in D_1 \}.$ Therefore, $t \notin D_{\mathbf n(t) + 1}$ implies that $t \notin D_{\mathbf n(t)}$ or $\sigma_{\mathbf n(t)} (t) \notin D_1 = \mathbb R_+$. Since $t \in D_{\mathbf n(t)}$, then the only possibility is $\sigma_{\mathbf n(t)} (t) \notin D_1 = \mathbb R_+$. This implies that $\sigma_{\mathbf n(t)} (t) < 0$.

On the other hand, $h(0) = - \inf_{s \geq 0} \sigma_1 (s) = - \inf_{s \geq 0} (s - \tau(s)) \in (0, +\infty]$. Therefore, $\sigma_1 (s) \geq -h(0)$ for every $s \in \mathbb R_+$. By \ref{item:prop-n-nonzero}, we have $\mathbf n(t) \geq 1$, then, by \eqref{eq:rec-sigman},
\[\sigma_{\mathbf n(t)} (t) = \sigma_1 (\sigma_{\mathbf n(t) - 1} (t)) \geq - h(0),\]
proving the desired result.
\end{proof}

Another useful result regarding the function $\mathbf n$ is the following.

\begin{lemma}
\label{lemma:n-loc-bounded}
Let $\tau$ be a delay function satisfying \ref{hypo:tau-infimum} and $\mathbf n\colon \mathbb R \to \mathbb N$ be the function from Corollary~\ref{coro:integer-iterations}. Then, for every $T > 0$, there exists $N_T > 0$ such that $\mathbf n(t) \leq N_T$ for every $t \in (-\infty, T]$.
\end{lemma}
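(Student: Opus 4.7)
The plan is to reduce the bound on $\mathbf n(t)$ to the observation that the orbit $\sigma_0(t), \sigma_1(t), \dotsc, \sigma_{\mathbf n(t)-1}(t)$ lives in the compact set $[0, T]$ (on which $\tau$ has a uniform positive lower bound), and hence can only have finitely many terms before the next iteration takes the value below zero.

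First, I would dispose of the trivial range: if $t < 0$, then $t \notin D_1 = \mathbb R_+$, so by Corollary~\ref{coro:integer-iterations} we have $\mathbf n(t) = 0$, which is bounded by any $N_T \geq 0$. It remains to bound $\mathbf n(t)$ on $[0, T]$.

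Fix $t \in [0, T]$ and set $\delta_T = \inf_{s \in [0, T]} \tau(s)$, which is strictly positive by~\ref{hypo:tau-infimum}. By Lemma~\ref{lemma:properties_n}\ref{item:sigma-k-nonnegative}, $\sigma_k(t) \in \mathbb R_+$ for every $k \in \llbracket 0, \mathbf n(t) - 1\rrbracket$, and since the recursion $\sigma_k(t) = \sigma_{k-1}(t) - \tau(\sigma_{k-1}(t))$ (valid for $k \in \llbracket 1, \mathbf n(t)\rrbracket$ thanks to Lemmas~\ref{lemma:Dn-decreasing} and \ref{lemma:caract-Dn-sigman}) strictly decreases the sequence, we have $\sigma_k(t) \leq t \leq T$ for all these $k$. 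Hence $\sigma_k(t) \in [0, T]$ for $k \in \llbracket 0, \mathbf n(t) - 1\rrbracket$, which in turn yields $\tau(\sigma_{k-1}(t)) \geq \delta_T$ for $k \in \llbracket 1, \mathbf n(t)\rrbracket$. A straightforward induction then gives $\sigma_k(t) \leq T - k\, \delta_T$ for every $k \in \llbracket 0, \mathbf n(t)\rrbracket$.

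Applying this with $k = \mathbf n(t) - 1$ (still nonnegative by Lemma~\ref{lemma:properties_n}\ref{item:prop-n-nonzero}, since $t \in \mathbb R_+$) and combining with $\sigma_{\mathbf n(t) - 1}(t) \geq 0$, I obtain $0 \leq T - (\mathbf n(t) - 1)\delta_T$, i.e., $\mathbf n(t) \leq 1 + T/\delta_T$. Thus $N_T := 1 + T/\delta_T$ works for all $t \in (-\infty, T]$. There is no real obstacle here; the only subtlety is keeping careful track of the valid range of indices $k$ when invoking Lemma~\ref{lemma:properties_n}, but this is exactly what parts~\ref{item:prop-n-nonzero} and~\ref{item:sigma-k-nonnegative} of that lemma provide.
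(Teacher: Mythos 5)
Your proof is correct and follows essentially the same argument as the paper's: both rest on the uniform lower bound $\delta_T = \inf_{s \in [0,T]} \tau(s) > 0$ from \ref{hypo:tau-infimum} and the fact that each application of $\sigma_1$ decreases the time by at least $\delta_T$ while the orbit stays in $[0, T]$, yielding the same bound $N_T = T/\delta_T + 1$. The only (cosmetic) difference is that you track the orbit $\sigma_k(t)$ of an individual $t$ via Lemmas~\ref{lemma:caract-Dn-sigman} and \ref{lemma:properties_n}, whereas the paper proves the set inclusion $D_k \cap [0, T] \subset [(k-1)\delta_T, T]$ by induction and then evaluates it at $k = \mathbf n(t)$.
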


\begin{proof}
Let $T > 0$ and set $\delta = \inf_{t \in [0, T]} \tau(t)$. Note that $\delta > 0$ thanks to \ref{hypo:tau-infimum}.

We claim that, for every $k \in \mathbb N^\ast$, we have $D_k \cap [0, T] \subset [(k-1)\delta, T]$. Indeed, this is trivially verified for $k = 1$. Now, let $k \in \mathbb N^\ast$ bet such that $D_k \cap [0, T] \subset [(k-1)\delta, T]$ and take $t \in D_{k+1} \cap [0, T]$. By \eqref{eq:defi-Dn}, we have $\sigma_1(t) \in D_k$. Since $k \neq 0$, we have $D_k \subset \mathbb R_+$, hence $\sigma_1(t) \geq 0$. Moreover, $\sigma_1(t) = t - \tau(t) < t \leq T$, hence $\sigma_1(t) \in D_k \cap [0, T] \subset [(k-1)\delta, T]$. Thus $t = \sigma_1(t) + \tau(t) \geq (k-1)\delta + \delta = k \delta$, proving that $t \in [k\delta, T]$. Hence, by induction, $D_k \cap [0, T] \subset [(k-1)\delta, T]$ for every $k \in \mathbb N^\ast$.

Let $t \in (-\infty, T]$. If $t < 0$, we have $\mathbf n(t) = 0$. Otherwise, for $t \in [0, T]$, we have $t \in D_{\mathbf n(t)} \cap [0, T] \subset [(\mathbf n(t) - 1)\delta, T]$, and thus $t \geq (\mathbf n(t) - 1)\delta$, implying that $\mathbf n(t) \leq \frac{t}{\delta} + 1 \leq \frac{T}{\delta} + 1$. This implies the conclusion with $N_T = \frac{T}{\delta} + 1$.
\end{proof}

Our next result provides an expression for a solution $x$ of \eqref{eq:diff-eqn-N1-time} in terms of the $k$-th power of $A$ and $\sigma_k$.

\begin{lemma}\label{lemma-solution} Let $\tau$ be a delay function satisfying \ref{hypo:tau-infimum} and $\mathbf n\colon \mathbb R \to \mathbb N$ be the function from Corollary~\ref{coro:integer-iterations}. If $x$ is a solution of \eqref{eq:diff-eqn-N1-time}, then 
\begin{equation}\label{equation-lemma}
x(t) = A^k x(\sigma_k (t)), \qquad \text{for every } t \in \mathbb R_+ \text{ and } k \in \llbracket 0, \mathbf n(t)\rrbracket.
\end{equation}
\end{lemma}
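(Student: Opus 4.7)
\bigskip

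The plan is to proceed by finite induction on $k \in \llbracket 0, \mathbf n(t)\rrbracket$, for an arbitrary fixed $t \in \mathbb R_+$. The base case $k = 0$ reduces to $x(t) = A^0 x(\sigma_0(t)) = x(t)$, which holds by the convention $A^0 = \Id$ and $\sigma_0 = \id$.

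For the induction step, suppose $k \in \llbracket 0, \mathbf n(t) - 1\rrbracket$ and that \eqref{equation-lemma} holds at level $k$. By Lemma~\ref{lemma:properties_n}\ref{item:sigma-k-nonnegative}, $\sigma_k(t) \in \mathbb R_+$, so the defining identity \eqref{eq:diff-eqn-N1-time} applied at time $\sigma_k(t)$ yields
\[
x(\sigma_k(t)) = A\, x\bigl(\sigma_k(t) - \tau(\sigma_k(t))\bigr) = A\, x\bigl(\sigma_1(\sigma_k(t))\bigr).
\]
To rewrite $\sigma_1(\sigma_k(t))$ as $\sigma_{k+1}(t)$, I invoke \eqref{eq:rec-sigman} from Lemma~\ref{lemma:caract-Dn-sigman} with $n = k+1$ and the inner index equal to $k$; this requires $t \in D_{k+1}$, which follows from $t \in D_{\mathbf n(t)}$ (by definition of $\mathbf n$) together with the inclusion $D_{\mathbf n(t)} \subset D_{k+1}$ given by Lemma~\ref{lemma:Dn-decreasing}, since $k+1 \leq \mathbf n(t)$. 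Multiplying both sides by $A^k$ and using the induction hypothesis gives
\[
x(t) = A^k x(\sigma_k(t)) = A^{k+1} x(\sigma_{k+1}(t)),
\]
closing the induction.

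There is no real obstacle here: the argument is essentially bookkeeping, and all the technical content, namely the fact that the iterates $\sigma_k(t)$ stay in $\mathbb R_+$ up to $k = \mathbf n(t) - 1$ and that the composition identity $\sigma_{k+1} = \sigma_1 \circ \sigma_k$ holds on $D_{k+1}$, has already been isolated in Lemmas~\ref{lemma:Dn-decreasing}, \ref{lemma:caract-Dn-sigman}, and \ref{lemma:properties_n}. The only point to be careful about is that one must perform the induction only up to $k = \mathbf n(t)$ and not further, since for $k = \mathbf n(t)$ the value $\sigma_k(t)$ lies in $\lvert -h(0), 0)$ by Lemma~\ref{lemma:properties_n}\ref{item:sigma-nt-negative}, where \eqref{eq:diff-eqn-N1-time} is no longer applicable.
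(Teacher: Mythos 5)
Your proof is correct and takes essentially the same route as the paper's: a finite induction on $k$, using Lemma~\ref{lemma:properties_n}\ref{item:sigma-k-nonnegative} to justify applying \eqref{eq:diff-eqn-N1-time} at time $\sigma_k(t)$ and the identity $\sigma_{k+1}(t) = \sigma_1(\sigma_k(t))$ to close the step. The only difference is that you are slightly more explicit than the paper in checking $t \in D_{k+1}$ (via Lemma~\ref{lemma:Dn-decreasing}) before invoking \eqref{eq:rec-sigman}, which is a harmless refinement of the same argument.
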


\begin{proof}
Fix $t \in \mathbb R_+$. We prove \eqref{equation-lemma} by induction on $k$. Consider first $k =0$, then $A^0 = \Id$ and $\sigma_0 = \id$, thus clearly we get that \eqref{equation-lemma} is satisfied.

Now, let $k \in \llbracket 0, \mathbf n(t) - 1\rrbracket$ be such that the equation \eqref{equation-lemma} is fulfilled, that is,
\begin{equation}\label{equation-induction-sigma-k}
x(t) = A^k x(\sigma_k(t)).
\end{equation}
By Lemma~\ref{lemma:properties_n}\ref{item:sigma-k-nonnegative}, we have $\sigma_k(t) \geq 0$. On the other hand, since $x$ is a solution of \eqref{eq:diff-eqn-N1-time}, we have
\begin{equation*}
x(\sigma_k (t)) = A x(\sigma_k(t) - \tau(\sigma_k (t)))) = A x(\sigma_1 \circ \sigma_k(t)) = A x(\sigma_{k+1} (t)).
\end{equation*}
Combining with \eqref{equation-induction-sigma-k}, we get
\[x(t) = A^{k+1} x(\sigma_{k+1} (t)),\]
proving our desired result.
\end{proof}

Taking $k = \mathbf n(t)$ in \eqref{equation-lemma} and noticing that $\sigma_{\mathbf n(t)} \in \lvert -h(0), 0)$ thanks to Lem\-ma~\ref{lemma:properties_n}\ref{item:sigma-nt-negative}, we obtain our main result of this section, containing a representation formula for solutions of \eqref{eq:diff-eqn-N1-time} in terms of the initial condition $x_0$, the matrix $A$, and the delay $\tau$.

\begin{theorem}\label{theorem-1} Let $\tau$ be a delay function satisfying \ref{hypo:tau-infimum} and $\mathbf n\colon \mathbb R \to \mathbb N$ be the function from Corollary~\ref{coro:integer-iterations}. If $x$ is a solution of \eqref{eq:diff-eqn-N1-time} with initial condition $x_0$, then
\begin{equation}
\label{eq:explicit-formula}
x(t) = A^{\mathbf n(t)} x_0(\sigma_{\mathbf n(t)}(t)), \qquad \text{for every } t \in \mathbb R_+.
\end{equation}
\end{theorem}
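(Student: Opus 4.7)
The plan is to apply Lemma~\ref{lemma-solution} directly, specializing the index $k$ to the maximal admissible value $k = \mathbf n(t)$. By Lemma~\ref{lemma:properties_n}\ref{item:prop-n-nonzero}, $\mathbf n(t) \geq 1$ for every $t \in \mathbb R_+$, so $\mathbf n(t) \in \llbracket 0, \mathbf n(t)\rrbracket$ is a legal choice, and Lemma~\ref{lemma-solution} yields
\[
x(t) = A^{\mathbf n(t)} x(\sigma_{\mathbf n(t)}(t)), \qquad t \in \mathbb R_+.
\]
It only remains to replace $x$ by $x_0$ on the right-hand side, and for this it suffices to check that the argument $\sigma_{\mathbf n(t)}(t)$ lies in the domain of the initial condition, namely $\lvert -h(0), 0)$.

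That inclusion is exactly Lemma~\ref{lemma:properties_n}\ref{item:sigma-nt-negative}, which was already proved using Corollary~\ref{coro:integer-iterations} and the structural identities \eqref{eq:rec-Dn}--\eqref{eq:rec-sigman} from Lemma~\ref{lemma:caract-Dn-sigman}, together with assumption \ref{hypo:tau-infimum} (which enters via Lemma~\ref{lemma:intersect-Dn-empty} to guarantee that $\mathbf n(t)$ is finite in the first place). Since by Definition~\ref{def:solutions} the initial condition $x_0$ is the restriction of $x$ to $\lvert -h(0), 0)$, we have $x(s) = x_0(s)$ for every $s \in \lvert -h(0), 0)$, and in particular for $s = \sigma_{\mathbf n(t)}(t)$. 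Substituting this into the previous display gives exactly the representation formula \eqref{eq:explicit-formula}.

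There is essentially no obstacle in this final step: all the substantive work has been carried out in the preceding preliminary lemmas, which together establish that the iterates $\sigma_k(t)$ remain in $\mathbb R_+$ for $k < \mathbf n(t)$ and then fall into the initial-condition interval at precisely $k = \mathbf n(t)$, while the semigroup-type identity in Lemma~\ref{lemma-solution} propagates the difference equation through exactly $\mathbf n(t)$ applications. The theorem is therefore a short assembly of these components, and the proof should read as a single paragraph invoking Lemma~\ref{lemma-solution} at $k = \mathbf n(t)$ and Lemma~\ref{lemma:properties_n}\ref{item:sigma-nt-negative} for the domain check.
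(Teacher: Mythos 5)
Your proposal is correct and follows essentially the same route as the paper: the paper's own proof of Theorem~\ref{theorem-1} consists precisely of taking $k = \mathbf n(t)$ in \eqref{equation-lemma} from Lemma~\ref{lemma-solution} and invoking Lemma~\ref{lemma:properties_n}\ref{item:sigma-nt-negative} to place $\sigma_{\mathbf n(t)}(t)$ in $\lvert -h(0), 0)$, where $x$ coincides with $x_0$. (The appeal to $\mathbf n(t) \geq 1$ is harmless but unnecessary, since $k = \mathbf n(t)$ always lies in $\llbracket 0, \mathbf n(t)\rrbracket$.)
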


Note that the right-hand side of \eqref{eq:explicit-formula} is a function of $t$ uniquely determined from $x_0$, $A$, and the delay $\tau$ satisfying \ref{hypo:tau-infimum}. In particular, we deduce at once the following uniqueness result.

\begin{corollary}
\label{coro:uniqueness}
Let $\tau$ be a delay function satisfying \ref{hypo:tau-infimum}, $h$ be its associated largest delay function, and $x_0\colon \lvert -h(0), 0) \to \mathbb R^d$. Then \eqref{eq:diff-eqn-N1-time} admits at most one solution with initial condition $x_0$.
\end{corollary}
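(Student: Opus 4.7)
The plan is to read the uniqueness statement directly off the explicit representation formula \eqref{eq:explicit-formula} proved in Theorem~\ref{theorem-1}. Suppose $x$ and $y$ are both solutions of \eqref{eq:diff-eqn-N1-time} with the same initial condition $x_0\colon \lvert -h(0), 0) \to \mathbb R^d$. By definition of the initial condition, $x$ and $y$ already agree on $\lvert -h(0), 0)$, so the only thing to check is that they agree on $\mathbb R_+$ as well.

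For this, I would fix $t \in \mathbb R_+$ and apply Theorem~\ref{theorem-1} to each solution. Since the function $\mathbf n\colon \mathbb R \to \mathbb N$ given by Corollary~\ref{coro:integer-iterations} and the iterated delay functions $\sigma_n$ depend only on $\tau$ (and use \ref{hypo:tau-infimum} for their construction), not on the choice of solution, the formula yields
\begin{equation*}
x(t) = A^{\mathbf n(t)} x_0\bigl(\sigma_{\mathbf n(t)}(t)\bigr) = y(t).
\end{equation*}
Here I implicitly use Lemma~\ref{lemma:properties_n}\ref{item:sigma-nt-negative}, which guarantees $\sigma_{\mathbf n(t)}(t) \in \lvert -h(0), 0)$, so that the evaluation $x_0(\sigma_{\mathbf n(t)}(t))$ is meaningful and lies within the domain where $x$ and $y$ are prescribed to coincide. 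Combining with the agreement on $\lvert -h(0), 0)$, we conclude $x = y$ on $\lvert -h(0), +\infty)$.

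There is essentially no obstacle: the work has all been done in Lemmas~\ref{lemma:D_n-sigma1}--\ref{lemma:n-loc-bounded} and Lemma~\ref{lemma-solution}, which together establish Theorem~\ref{theorem-1}. Uniqueness is just the observation that the right-hand side of \eqref{eq:explicit-formula} is a function of $t$ determined entirely by $x_0$, $A$, and $\tau$, with no freedom left for the solution itself. The only point worth spelling out in the write-up is that the hypothesis \ref{hypo:tau-infimum} is exactly what is needed for $\mathbf n(t)$ to be finite for every $t \in \mathbb R_+$, via Lemma~\ref{lemma:intersect-Dn-empty}, so that the explicit formula makes sense unconditionally.
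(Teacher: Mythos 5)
Your proposal is correct and follows exactly the paper's argument: the paper also deduces the corollary immediately from Theorem~\ref{theorem-1}, observing that the right-hand side of \eqref{eq:explicit-formula} is determined solely by $x_0$, $A$, and $\tau$, so any two solutions with the same initial condition must coincide on $\mathbb R_+$ as well. Your additional remarks on Lemma~\ref{lemma:properties_n}\ref{item:sigma-nt-negative} and on \ref{hypo:tau-infimum} guaranteeing finiteness of $\mathbf n(t)$ are accurate and consistent with the paper.
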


\begin{remark}
\label{remk:H2}
Assumption~\ref{hypo:tau-infimum} is important to ensure uniqueness of solutions in Corollary~\ref{coro:uniqueness}. Consider, for instance, the delay function $\tau$ defined by $\tau(0) = 1$ and $\tau(t) = \left(1 - \frac{1}{e}\right)t$ for $t > 0$. In particular, \eqref{eq:diff-eqn-N1-time} reads $x(t) = A x(\frac{t}{e})$ for $t > 0$. Assume that $A$ is not nilpotent. Let $x_0\colon [-1, 0) \to \mathbb R^d$ be a given initial condition. Let $\lambda \in \mathbb C \setminus \{0\}$ be a nonzero eigenvector of $A$, $v \in \mathbb C^d \setminus \{0\}$ be a corresponding eigenvector, $\alpha \in \mathbb C$ be such that $e^\alpha = \lambda$, and $\rho \in \mathbb C$. Let $x\colon [-1, +\infty) \to \mathbb R^d$ be the function defined by
\[
x(t) = \begin{dcases*}
x_0(t) & if $t \in [-1, 0)$, \\
A x_0(-1) & if $t = 0$, \\
\Real\left(\rho e^{\alpha \ln t} v\right) & if $t > 0$.
\end{dcases*}
\]
It is immediate to verify that $x$ is a solution of \eqref{eq:diff-eqn-N1-time} with initial condition $x_0$ and, since $\rho \in \mathbb C$ is arbitrary, \eqref{eq:diff-eqn-N1-time} admits infinitely many solutions with the same initial condition $x_0$.

Assumption~\ref{hypo:tau-infimum}, however, is not necessary for the uniqueness of solutions of \eqref{eq:diff-eqn-N1-time}. Indeed, consider the delay function $\tau$ given by $\tau(0) = 1$ and $\tau(t) = t$ for $t > 0$. In this case, \eqref{eq:diff-eqn-N1-time} reads $x(0) = A x(-1)$ and $x(t) = A x(0)$ for $t > 0$. It is then immediate to verify that, given an initial condition $x_0\colon [-1, 0) \to \mathbb R^d$, \eqref{eq:diff-eqn-N1-time} admits a unique solution with initial condition $x_0$, which is given by $x(t) = x_0(t)$ for $t \in [-1, 0)$, $x(0) = A x_0(-1)$, and $x(t) = A^2 x_0(-1)$ for $t > 0$.
\end{remark}

Since continuous, measurable, and regulated solutions of \eqref{eq:diff-eqn-N1-time} are particular cases of solutions of \eqref{eq:diff-eqn-N1-time}, the representation formula \eqref{eq:explicit-formula} from Theorem~\ref{theorem-1} also applies to such notions of solutions, and Corollary~\ref{coro:uniqueness} also implies uniqueness in such classes of solutions. It is also interesting to consider the counterparts of Theorem~\ref{theorem-1} and Corollary~\ref{coro:uniqueness} for a.e.\ solutions of \eqref{eq:diff-eqn-N1-time}. This is the topic of our next result.

\begin{theorem}
\label{thm:explicit-ae}
Let $\tau$ be a delay function satisfying \ref{hypo:tau-infimum} and \ref{hypo:null} and $\mathbf n\colon \mathbb R \to \mathbb N$ be the function from Corollary~\ref{coro:integer-iterations}. If $x$ is an a.e.\ solution of \eqref{eq:diff-eqn-N1-time} with initial condition equal to $x_0$ a.e., then
\begin{equation}
\label{eq:explicit-formula-ae}
x(t) = A^{\mathbf n(t)} x_0(\sigma_{\mathbf n(t)}(t)), \qquad \text{for almost every } t \in \mathbb R_+.
\end{equation}
In particular, one has uniqueness of solutions of \eqref{eq:diff-eqn-N1-time} with respect to a.e.\ equality: if $x_1, x_2\colon \lvert -h(0), +\infty) \to \mathbb R^d$ are a.e.\ solutions of \eqref{eq:diff-eqn-N1-time} with $x_1(t) = x_2(t)$ for a.e.\ $t \in \lvert -h(0), 0)$, then $x_1(t) = x_2(t)$ for a.e.\ $t \in \lvert -h(0), +\infty)$.
\end{theorem}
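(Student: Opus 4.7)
The plan is to adapt the pointwise induction from Lemma~\ref{lemma-solution} and Theorem~\ref{theorem-1} to the almost-everywhere setting, with the main new ingredient being the propagation of null sets through iterated compositions with $\sigma_1$.

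The key preliminary fact I would establish is the following: under \ref{hypo:null}, for every $k \in \mathbb N^\ast$ and every Lebesgue null set $N \subset \mathbb R$, the preimage $\sigma_k^{-1}(N)$ is a Lebesgue null set. I would prove this by induction on $k$. The base case $k = 1$ is precisely \ref{hypo:null}. For the inductive step, I apply Lemma~\ref{lemma:caract-Dn-sigman} with $n = k+1$ to write $\sigma_{k+1}(t) = \sigma_k(\sigma_1(t))$ on $D_{k+1}$, giving $\sigma_{k+1}^{-1}(N) \subseteq \sigma_1^{-1}(\sigma_k^{-1}(N))$; by the inductive hypothesis $\sigma_k^{-1}(N)$ is null, and another application of \ref{hypo:null} shows that $\sigma_1^{-1}(\sigma_k^{-1}(N))$ is null, so $\sigma_{k+1}^{-1}(N)$ is null by completeness of the Lebesgue measure.

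Applying the definition of a.e.\ solution with $\widetilde x := x$ itself yields a null set $N_0 \subset \mathbb R_+$ such that $x(t) = A x(\sigma_1(t))$ for every $t \in \mathbb R_+ \setminus N_0$. I would then prove by induction on $k \in \mathbb N$ the a.e.\ analogue of Lemma~\ref{lemma-solution}: there exists a null set $M_k$ with $x(t) = A^k x(\sigma_k(t))$ for every $t \in D_k \setminus M_k$. The inductive step mirrors the proof of Lemma~\ref{lemma-solution}, the only new point being the choice $M_{k+1} = M_k \cup \sigma_k^{-1}(N_0)$, which is null by the preliminary claim; for $t \in D_{k+1} \setminus M_{k+1}$, Lemma~\ref{lemma:properties_n}\ref{item:sigma-k-nonnegative} gives $\sigma_k(t) \in \mathbb R_+ \setminus N_0$, hence $x(\sigma_k(t)) = A x(\sigma_{k+1}(t))$, which combines with the inductive formula to yield $x(t) = A^{k+1} x(\sigma_{k+1}(t))$.

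To conclude \eqref{eq:explicit-formula-ae}, let $N_1 \subset \lvert -h(0), 0)$ be a null set outside which $x = x_0$, and define the full-measure set $E = \mathbb R_+ \setminus \bigcup_{k \in \mathbb N} \bigl(M_k \cup \sigma_k^{-1}(N_1)\bigr)$, which is a countable union of null sets and hence null. For $t \in E$, setting $k = \mathbf n(t)$: one has $t \in D_k \setminus M_k$, so $x(t) = A^k x(\sigma_k(t))$; Lem\-ma~\ref{lemma:properties_n}\ref{item:sigma-nt-negative} gives $\sigma_k(t) \in \lvert -h(0), 0)$; and $t \notin \sigma_k^{-1}(N_1)$ gives $x(\sigma_k(t)) = x_0(\sigma_k(t))$, proving \eqref{eq:explicit-formula-ae}. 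For uniqueness, given two a.e.\ solutions $x_1, x_2$ agreeing a.e.\ on $\lvert -h(0), 0)$, I would pick a common representative $x_0$ on that interval and apply \eqref{eq:explicit-formula-ae} to each, deducing $x_1 = x_2$ a.e.\ on $\mathbb R_+$, and thus on $\lvert -h(0), +\infty)$. The only real obstacle is the preliminary claim on preservation of null sets by $\sigma_k^{-1}$; everything else is careful bookkeeping on top of the pointwise arguments already developed.
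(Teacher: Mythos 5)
Your proof is correct and follows essentially the same route as the paper's: both reduce the set where \eqref{eq:explicit-formula-ae} fails to a countable union of preimages $\sigma_k^{-1}$ of null sets and show these are null by iterating \ref{hypo:null} together with completeness of the Lebesgue measure, using the composition identity \eqref{eq:rec-sigman} and Lemma~\ref{lemma:properties_n}. The differences are purely organizational (an explicit induction producing the sets $M_k$ instead of a single union, and uniqueness via a common representative of the initial condition rather than via linearity), and do not affect the validity of the argument.
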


\begin{proof}
Let
\begin{equation}
\label{eq:def-N0-N1}
\begin{aligned}
N_0 & = \{t \in \lvert -h(0), 0) \suchthat x(t) \neq x_0(t)\}, \\
N_1 & = \{t \in \mathbb R_+ \suchthat x(t) \neq A x(t - \tau(t))\}, \\
N & = N_0 \cup N_1, \\
M & = \{t \in \lvert -h(0), +\infty) \suchthat x(t) \neq A^{\mathbf n(t)} x_0(\sigma_{\mathbf n(t)}(t))\},
\end{aligned}
\end{equation}
and note that, by assumption, $\mathcal L(N_0) = \mathcal L(N_1) = \mathcal L(N) = 0$. Showing \eqref{eq:explicit-formula-ae} amounts to showing that $\mathcal L(M) = 0$. We claim that
\begin{equation}
\label{eq:M-inclusion}
M \subset \bigcup_{k = 0}^\infty \sigma_k^{-1}(N).
\end{equation}
Indeed, assume that $t \in \mathbb R$ is such that $t \notin \bigcup_{k = 0}^\infty \sigma_k^{-1}(N)$. Then, for every $k \in \mathbb N$, we have $t \notin D_k$ or $\sigma_k(t) \notin N$. In particular, for every $k \in \llbracket 0, \mathbf n(t)\rrbracket$, we have $\sigma_k(t) \notin N$. Using Lemma~\ref{lemma:properties_n}\ref{item:sigma-k-nonnegative} and \ref{item:sigma-nt-negative}, \eqref{eq:rec-sigman}, and \eqref{eq:def-N0-N1}, we deduce that $x(\sigma_k(t)) = A x(\sigma_{k+1}(t))$ for every $k \in \llbracket 0, \mathbf n(t) - 1\rrbracket$ and $x(\sigma_{\mathbf n(t)}(t)) = x_0(\sigma_{\mathbf n(t)}(t))$. An immediate inductive argument shows that $x(t) = x(\sigma_0(t)) = A^{\mathbf n(t)} x(\sigma_{\mathbf n(t)}(t)) = A^{\mathbf n(t)} x_0(\sigma_{\mathbf n(t)}(t))$, showing that $t \notin M$, and hence concluding the proof of \eqref{eq:M-inclusion}.

Notice now that, by \eqref{eq:rec-sigman}, we have $\sigma_k^{-1}(N) = \sigma_{k-1}^{-1}(\sigma_1^{-1}(N))$ for every $k \in \mathbb N^\ast$, and thus, using \ref{hypo:null} and an immediate inductive argument, we deduce that $\mathcal L(\sigma_k^{-1}(N)) = 0$ for every $k \in \mathbb N^\ast$. Hence, \eqref{eq:M-inclusion} implies that $\mathcal L(N) = 0$, yielding the conclusion.

To prove uniqueness, note that, since \eqref{eq:diff-eqn-N1-time} is linear, if $x_1, x_2\colon \lvert -h(0), +\infty) \to \mathbb R^d$ are a.e.\ solutions of \eqref{eq:diff-eqn-N1-time} with $x_1(t) = x_2(t)$ for a.e.\ $t \in \lvert -h(0), 0)$, then $x_1 - x_2$ is an a.e.\ solution of \eqref{eq:diff-eqn-N1-time} with initial condition equal a.e.\ to $0$. Hence, by \eqref{eq:explicit-formula-ae}, $x_1(t) - x_2(t) = 0$ for a.e.\ $t \in \lvert -h(0), +\infty)$, yielding the conclusion.
\end{proof}

\subsection{Existence of solutions}
\label{sec:exist}

We now turn to the question of existence of solutions of \eqref{eq:diff-eqn-N1-time}. Our first result is the following consequence of the representation formula for the solutions from Theorem~\ref{theorem-1}.

\begin{theorem}
\label{thm:exist-general}
Let $\tau$ be a delay function satisfying \ref{hypo:tau-infimum}, $h$ be its associated largest delay function, and $x_0\colon \lvert -h(0), 0) \to \mathbb R^d$. Then there exists a unique solution $x\colon \lvert -h(0), +\infty) \to \mathbb R^d$ of \eqref{eq:diff-eqn-N1-time} with initial condition $x_0$.
\end{theorem}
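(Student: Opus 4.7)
Uniqueness is immediate from Corollary~\ref{coro:uniqueness}, so the task reduces to constructing a solution. The natural candidate is dictated by the representation formula of Theorem~\ref{theorem-1}: define $x\colon \lvert -h(0), +\infty) \to \mathbb R^d$ by
\[
x(t) = \begin{dcases*} x_0(t) & if $t \in \lvert -h(0), 0)$, \\ A^{\mathbf n(t)} x_0(\sigma_{\mathbf n(t)}(t)) & if $t \geq 0$, \end{dcases*}
\]
where $\mathbf n$ is the function from Corollary~\ref{coro:integer-iterations}. Note that, by Lemma~\ref{lemma:properties_n}\ref{item:sigma-nt-negative}, $\sigma_{\mathbf n(t)}(t) \in \lvert -h(0), 0)$, so $x_0(\sigma_{\mathbf n(t)}(t))$ is well-defined for every $t \geq 0$. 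What remains is to check that $x$ satisfies $x(t) = A x(t - \tau(t))$ for every $t \in \mathbb R_+$.

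I would fix $t \in \mathbb R_+$ and split according to the sign of $\sigma_1(t)$. If $\sigma_1(t) < 0$, then necessarily $\mathbf n(t) = 1$ (by Lemma~\ref{lemma:properties_n}\ref{item:prop-n-nonzero} we have $\mathbf n(t) \geq 1$, and if $\mathbf n(t) \geq 2$ then $\sigma_1(t) \in \mathbb R_+$ by Lemma~\ref{lemma:properties_n}\ref{item:sigma-k-nonnegative}), so $\sigma_{\mathbf n(t)}(t) = \sigma_1(t) \in \lvert -h(0), 0)$ and directly
\[
A x(t - \tau(t)) = A x_0(\sigma_1(t)) = A^{\mathbf n(t)} x_0(\sigma_{\mathbf n(t)}(t)) = x(t).
\]
If instead $\sigma_1(t) \geq 0$, then by Lemma~\ref{lemma:properties_n}\ref{item:n_sigma_1} we have $\mathbf n(\sigma_1(t)) = \mathbf n(t) - 1$, so by definition of $x$ and by \eqref{eq:rec-sigman} applied with $n = \mathbf n(t)$ and $k = 1$,
\[
A x(\sigma_1(t)) = A \cdot A^{\mathbf n(t) - 1} x_0\bigl(\sigma_{\mathbf n(t) - 1}(\sigma_1(t))\bigr) = A^{\mathbf n(t)} x_0(\sigma_{\mathbf n(t)}(t)) = x(t),
\]
which concludes the verification.

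The construction and verification are purely algebraic, so no real obstacle is expected: the substantive work has already been carried out in Section~\ref{sec:explicit-formula}, where the iterated delay functions $\sigma_n$ and the integer-valued function $\mathbf n$ were introduced, Lemma~\ref{lemma:intersect-Dn-empty} guaranteed (under \ref{hypo:tau-infimum}) that $\mathbf n$ is well-defined and finite everywhere, and Lemma~\ref{lemma:properties_n} and Lemma~\ref{lemma:caract-Dn-sigman} provided the recursive identities needed here. The role of \ref{hypo:tau-infimum} is precisely to ensure that $\mathbf n(t) < +\infty$ for every $t$, so that the formula $A^{\mathbf n(t)} x_0(\sigma_{\mathbf n(t)}(t))$ makes sense without any regularity requirement on $x_0$.
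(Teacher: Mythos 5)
Your proposal is correct and follows essentially the same route as the paper's proof: the same explicit construction of $x$ via $A^{\mathbf n(t)} x_0(\sigma_{\mathbf n(t)}(t))$, the same case split on the sign of $\sigma_1(t)$, and the same use of Lemma~\ref{lemma:properties_n} and the identity $\sigma_{\mathbf n(t)}(t) = \sigma_{\mathbf n(t)-1}(\sigma_1(t))$. Your short justification that $\mathbf n(t)=1$ when $\sigma_1(t)<0$ is a detail the paper leaves implicit, but otherwise the arguments coincide.
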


\begin{proof}
Uniqueness of the solution was already established in Corollary~\ref{coro:uniqueness}, and so we are left to prove existence. Let $x\colon \lvert -h(0), +\infty) \to \mathbb R^d$ be defined by
\begin{equation}
\label{eq:def-x-sol}
x(t) = \begin{dcases*}
x_0(t) & if $t \in \lvert -h(0), 0)$, \\
A^{\mathbf n(t)} x_0(\sigma_{\mathbf n(t)}(t)) & if $t \geq 0$,
\end{dcases*}
\end{equation}
where $\mathbf n\colon \mathbb R \to \mathbb N$ is the function from Corollary~\ref{coro:integer-iterations}. Note that, thanks to Lem\-ma~\ref{lemma:properties_n}\ref{item:sigma-nt-negative}, we have $\sigma_{\mathbf n(t)}(t) \in \lvert -h(0), 0)$ for every $t \geq 0$, and thus $x$ is well-defined. We want to prove that $x$ satisfies $x(t) = A x(t - \tau(t))$ for every $t \geq 0$.

Let $t \geq 0$ and assume first that $\sigma_1(t) < 0$. Then $\mathbf n(t) = 1$, and we have $x(\sigma_1(t)) = x_0(\sigma_{\mathbf n(t)}(t))$. Thus, using \eqref{eq:def-x-sol}, we have
\[
x(t) = A^{\mathbf n(t)} x_0(\sigma_{\mathbf n(t)}(t)) = A x(\sigma_1(t)) = A x(t - \tau(t)).
\]
Consider now the case $\sigma_1(t) \geq 0$. Using \eqref{eq:def-x-sol} and Lemmas~\ref{lemma:caract-Dn-sigman} and \ref{lemma:properties_n}\ref{item:n_sigma_1}, we have
\begin{align*}
x(\sigma_1(t)) & = A^{\mathbf n(\sigma_1(t))} x_0(\sigma_{\mathbf n(\sigma_1(t))} (\sigma_1(t))) \\
& = A^{\mathbf n(t) - 1} x_0(\sigma_{\mathbf n(t) - 1} (\sigma_1(t))) \\
& = A^{\mathbf n(t) - 1} x_0(\sigma_{\mathbf n(t)}(t)).
\end{align*}
Hence
\[
A x(t - \tau(t)) = A x(\sigma_1(t)) = A^{\mathbf n(t)} x_0(\sigma_{\mathbf n(t)}(t)) = x(t),
\]
where the last equality follows from \eqref{eq:def-x-sol}.
\end{proof}

\begin{remark}
As highlighted in Remark~\ref{remk:H2}, assumption~\ref{hypo:tau-infimum} is important in Theorem~\ref{thm:exist-general} to deduce uniqueness of solutions, but it is not needed for getting existence of solutions. Without \ref{hypo:tau-infimum}, one cannot guarantee that the result of Lemma~\ref{lemma:intersect-Dn-empty} is true. However, setting $D_\infty = \bigcap_{n \in \mathbb N} D_n \subset \mathbb R_+$, we still get the conclusion of Corollary~\ref{coro:integer-iterations} with a function $\mathbf n$ defined on $\mathbb R \setminus D_\infty$ instead of $\mathbb R$. The results of Lemmas~\ref{lemma:properties_n} and \ref{lemma-solution} and Theorem~\ref{theorem-1} remain true without assuming \ref{hypo:tau-infimum} after replacing $t \in \mathbb R_+$ by $t \in \mathbb R_+ \setminus D_\infty$ in their statements and proofs. Then, given an initial condition $x_0\colon \lvert -h(0), 0) \to \mathbb R^d$, we define $x\colon \lvert -h(0), +\infty) \to \mathbb R^d$ by
\[
x(t) = \begin{dcases*}
x_0(t) & if $t \in \lvert -h(0), 0)$, \\
A^{\mathbf n(t)} x_0(\sigma_{\mathbf n(t)}(t)) & if $t \in \mathbb R_+ \setminus D_\infty$, \\
0 & if $t \in D_\infty$.
\end{dcases*}
\]
By the same argument used in the proof of Theorem~\ref{thm:exist-general}, we obtain that $x(t) = A x(t - \tau(t))$ for every $t \in \mathbb R_+ \setminus D_\infty$. In addition, if $t \in D_\infty$, then $t - \tau(t) = \sigma_1(t) \in D_\infty$, and thus the identity $x(t) = A x(t - \tau(t))$ is verified for every $t \in D_\infty$. This shows that $x$ is a solution of \eqref{eq:diff-eqn-N1-time} with initial condition $x_0$, and hence proves the existence assertion of Theorem~\ref{thm:exist-general} without the need of \ref{hypo:tau-infimum}.

In this paper, we are only interested in cases where the solution of \eqref{eq:diff-eqn-N1-time} is unique, and hence we shall always assume \ref{hypo:tau-infimum} in the sequel. However, by arguments similar to those in this remark, some of our next results on existence of solutions can also be generalized to settings where \ref{hypo:tau-infimum} is not assumed to hold true.
\end{remark}

Our next result considers the existence of continuous solutions of \eqref{eq:diff-eqn-N1-time}.

\begin{theorem}
\label{thm:exist-continuous}
Let $\tau$ be a delay function satisfying \ref{hypo:tau-infimum} and \ref{hypo:tau-cont}, $h$ be its associated largest delay function, and $x_0 \in \mathsf C_0$. Then there exists a unique continuous solution $x \in C(\lvert -h(0), +\infty), \mathbb R^d)$ of \eqref{eq:diff-eqn-N1-time} with initial condition $x_0$. In addition, $x_t \in \mathsf C_t$ for every $t \in \mathbb R_+$.
\end{theorem}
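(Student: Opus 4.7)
The plan is to invoke Theorem~\ref{thm:exist-general} to produce the unique solution $x\colon \lvert -h(0), +\infty) \to \mathbb R^d$ with initial condition $x_0$, and then to upgrade it to a continuous function by a connectedness/bootstrap argument using the relation $x(t) = A x(\sigma_1(t))$. The guiding observation is that \ref{hypo:tau-infimum} forces $\sigma_1(t) < t$ strictly, so continuity of $x$ on an initial segment propagates to a slightly larger segment via right composition by the continuous function $\sigma_1$ (which is continuous thanks to \ref{hypo:tau-cont}).

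Set $E = \{T \in \mathbb R_+ \suchthat x|_{\lvert -h(0), T]}\text{ is continuous}\}$. First I would check $0 \in E$: on $\lvert -h(0), 0)$ we have $x = x_0$ which is continuous, and the compatibility condition $x_0(0) = A x_0(-\tau(0))$ built into $x_0 \in \mathsf C_0$ matches $x(0) = A x_0(\sigma_1(0))$ since $\sigma_1(0) = -\tau(0) \in \lvert -h(0), 0)$. Next, I would show the propagation step: if $T \in E$, then $T + \delta \in E$ for some $\delta > 0$. Since $\sigma_1(T) = T - \tau(T) < T$ and $\sigma_1$ is continuous, there exists $\delta > 0$ with $\sigma_1(t) \in \lvert -h(0), T]$ for every $t \in [T, T+\delta]$ (the lower bound $\sigma_1(t) \geq t - h(t) \geq -h(0)$ comes from the nondecreasing nature of $t \mapsto t - h(t)$ noted after Definition~\ref{def:h}). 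Hence $x \circ \sigma_1$ is continuous on $[T, T+\delta]$ by continuity of both factors, and the equation $x(t) = A x(\sigma_1(t))$ shows $x$ is continuous on $[T, T+\delta]$; this matches the left value $x(T) = A x(\sigma_1(T))$, giving $T + \delta \in E$.

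Then I would show $E$ is closed: if $T_n \in E$ with $T_n \to T$, take any sequence $t \to T^-$; for $t$ close enough to $T$, $\sigma_1(t)$ is close to $\sigma_1(T) < T$, so $\sigma_1(t) < T$, and by continuity of $x$ on $\lvert -h(0), t']$ for any $t' < T$, we get $x(\sigma_1(t)) \to x(\sigma_1(T))$, hence $x(t) \to A x(\sigma_1(T)) = x(T)$, so $x$ is continuous at $T$. Combining closedness of $E$ with the propagation step, $\sup E = +\infty$: otherwise $T^\star := \sup E \in E$ by closedness, but then $T^\star + \delta \in E$ contradicts maximality. Thus $E = \mathbb R_+$ and $x \in C(\lvert -h(0), +\infty), \mathbb R^d)$. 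Uniqueness among continuous solutions is immediate from Corollary~\ref{coro:uniqueness}.

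Finally, $x_t \in \mathsf C_t$ for every $t \geq 0$ is essentially bookkeeping: continuity of $x_t$ on $\lvert -h(t), 0]$ follows from continuity of $x$, and the compatibility condition is just the equation evaluated at $t$, namely $x_t(0) = x(t) = A x(t - \tau(t)) = A x_t(-\tau(t))$, where $-\tau(t) \in \lvert -h(t), 0)$ since $0 < \tau(t) \leq h(t)$. The only mildly delicate point in the whole argument is ensuring that the domain $\lvert -h(0), 0]$ really does supply values of $x$ at $\sigma_1(t)$ for $t$ near $0$ and more generally that $\sigma_1(t) \geq -h(0)$ for all $t \geq 0$; this is handled by the monotonicity of $t \mapsto t - h(t)$ recorded after Definition~\ref{def:h}, so no genuine obstacle arises.
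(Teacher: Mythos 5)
Your proposal is correct and follows essentially the same route as the paper's proof: invoke Theorem~\ref{thm:exist-general} for existence and uniqueness, use the compatibility condition in $\mathsf C_0$ at $t = 0$, and propagate continuity forward through $x(t) = A x(\sigma_1(t))$ using continuity of $\sigma_1$ and the strict inequality $\sigma_1(t) < t$. The only difference is bookkeeping: the paper advances by a uniform step $\delta$ obtained from $\inf_{t \in [0, T]} \tau(t) > 0$ (via \ref{hypo:tau-infimum}) in a finite induction on $[0, T]$, whereas you run a continuous-induction argument (local propagation plus closedness of the set $E$); both are valid and rest on the same ingredients.
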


\begin{proof}
By Theorem~\ref{thm:exist-general}, \eqref{eq:diff-eqn-N1-time} admits a unique solution $x\colon \lvert -h(0), +\infty) \to \mathbb R^d$ with initial condition $x_0$, and we are thus only left to show that such a solution is continuous. To do so, we will prove that $x$ is continuous in $\lvert -h(0), T]$ for every $T > 0$.

Fix $T > 0$ and let $\widetilde\delta = \frac{1}{2}\inf_{t \in [0, T]} \tau(t)$. Note that, by \ref{hypo:tau-infimum}, we have $\widetilde\delta > 0$. Set $\delta = \frac{T}{\ceil{T / \widetilde\delta}}$ and note that $0 < \delta \leq \widetilde\delta$. By definition, $\tau(t) > \delta$ for every $t \in [0, T]$ and $T/\delta$ is a positive integer, which we denote by $M$.

We prove by induction that, for every $k \in \llbracket 0, M\rrbracket$, $x$ is continuous on $\lvert -h(0), k \delta]$, which will conclude the proof of the result since $\lvert -h(0), M \delta] = \lvert -h(0), T]$. This is trivially true for $k = 0$ since $x$ coincides with $x_0$ in $\lvert -h(0), 0)$ by definition, their values also coincide at $t = 0$ since $x(0) = A x_0(-\tau(0)) = x_0(0)$, and $x_0$ is continuous by assumption.

Let $k \in \llbracket 0, M-1\rrbracket$ be such that $x$ is continuous on $\lvert -h(0), k \delta]$. Define $\widehat x = x \rvert_{\lvert -h(0), k \delta]}$ and $y = x \rvert_{[k \delta, (k+1) \delta]}$ and notice that, thanks to the definition of $\delta$, for every $t \in [k \delta, (k+1) \delta]$, we have $y(t) = A \widehat x(t - \tau(t))$. Since $\widehat x$ and $\tau$ are continuous, $y$ is thus continuous as well. Hence $x$ is continuous on $\lvert -h(0), (k+1) \delta]$, as required.

Finally, for every $t \in \mathbb R_+$, we have $x_t(0) = x(t) = A x(t - \tau(t)) = A x_t(-\tau(t))$, showing that $x_t \in \mathsf C_t$.
\end{proof}

It is immediate to verify the importance of \ref{hypo:tau-cont} in Theorem~\ref{thm:exist-continuous}, as one can easily come up with examples of discontinuous delay functions yielding a discontinuous solution of \eqref{eq:diff-eqn-N1-time} for some initial condition in $\mathsf C_0$. A natural question is whether \ref{hypo:tau-cont} is necessary, and our next example provides a negative answer to that question.

\begin{example}
\label{expl:exist-continuous}
Let $\tau\colon \mathbb R_+ \to \mathbb R_+^\ast$ be the delay function defined by
\[
\tau(t) = \begin{dcases*}
t + 1 & if $t \in [0, 1)$, \\
2 & if $t \in [1, 2)$, \\
t - 1 & if $t \in [2, +\infty)$.
\end{dcases*}
\]
Note that $\tau$ is discontinuous at $t = 2$. Let $x_0 \in \mathsf C_0$ and $x\colon [-1, +\infty) \to \mathbb R^d$ be the unique solution of \eqref{eq:diff-eqn-N1-time} with initial condition $x_0$ provided by Theorem~\ref{thm:exist-general}. Then $x$ is continuous in $[-1, +\infty)$. Indeed, a straightforward computation from \eqref{eq:diff-eqn-N1-time} shows that
\begin{equation}
\label{eq:x-continuous}
x(t) = \begin{dcases*}
x_0(t) & if $t \in [-1, 0)$, \\
A x_0(-1) & if $t \in [0, 1)$, \\
A x_0(t - 2) & if $t \in [1, 2)$, \\
A^2 x_0(-1) & if $t \in [2, +\infty)$,
\end{dcases*}
\end{equation}
and it is immediate to verify that $x$ is continuous using the compatibility condition $x_0(0) = A x_0(-1)$, since $x_0 \in \mathsf C_0$.
\end{example}

Our next result provides conditions for the existence and uniqueness of regulated solutions of \eqref{eq:diff-eqn-N1-time}.

\begin{theorem}
\label{thm:exist-regulated}
Let $\tau$ be a delay function satisfying \ref{hypo:tau-infimum} and \ref{hypo:regulated}, $h$ be its associated largest delay function, and $x_0 \in \mathsf G_0$. Then there exists a unique regulated solution $x \in G(\lvert -h(0), +\infty), \mathbb R^d)$ of \eqref{eq:diff-eqn-N1-time} with initial condition $x_0$. In addition, $x_t \in \mathsf G_t$ for every $t \in \mathbb R_+$.
\end{theorem}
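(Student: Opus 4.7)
The plan is to invoke Theorem~\ref{thm:exist-general} to produce the unique solution $x\colon \lvert -h(0), +\infty) \to \mathbb R^d$ of \eqref{eq:diff-eqn-N1-time} with initial condition $x_0$, and then show that $x$ inherits regulated regularity. The structure mirrors the proof of Theorem~\ref{thm:exist-continuous}, but with continuity of $\sigma_1$ replaced by well-regulatedness (\ref{hypo:regulated}); the crucial step is the conversion provided by Theorem~\ref{thm:well-regulated}, which says that right composition by a well-regulated function preserves regulatedness.

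Fix $T > 0$ and prove by finite induction that $x$ is regulated on $\lvert -h(0), T]$. Set $\widetilde\delta = \frac{1}{2}\inf_{t \in [0,T]}\tau(t)$, which is positive by \ref{hypo:tau-infimum}, and take $\delta = T/\ceil*{T/\widetilde\delta}$, so that $0 < \delta \leq \widetilde\delta$, the integer $M := T/\delta$ is a positive integer, and $\tau(t) > \delta$ strictly for all $t \in [0, T]$. The base case $k = 0$ is immediate: $x$ coincides with $x_0$ on $\lvert -h(0), 0]$ (using $x(0) = A x_0(-\tau(0)) = x_0(0)$, which is the compatibility condition built into $\mathsf G_0$), and $x_0$ is regulated by assumption.

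For the inductive step, assume $x$ is regulated on $\lvert -h(0), k\delta]$, and denote this restriction by $\widehat x$. For $t \in [k\delta, (k+1)\delta]$ we have $\sigma_1(t) = t - \tau(t) < (k+1)\delta - \delta = k\delta$, and $\sigma_1(t) \geq -h(0)$ by definition of $h$. Hence $\sigma_1$ restricts to a function $[k\delta, (k+1)\delta] \to \lvert -h(0), k\delta]$, and this restriction inherits well-regulatedness from \ref{hypo:regulated}. Since \eqref{eq:diff-eqn-N1-time} reads $x(t) = A\,\widehat x(\sigma_1(t))$ on this slab, Theorem~\ref{thm:well-regulated} (in its vector-valued form, as noted in the remark following it) ensures that $\widehat x \circ \sigma_1$, and therefore $x$, is regulated on $[k\delta, (k+1)\delta]$. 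Regulatedness on the two closed pieces $\lvert -h(0), k\delta]$ and $[k\delta, (k+1)\delta]$ agrees at their common point $k\delta$ (both give the solution value), so it glues to regulatedness on $\lvert -h(0), (k+1)\delta]$. After $M$ steps one obtains regulatedness on $\lvert -h(0), T]$, and $T > 0$ being arbitrary yields $x \in G(\lvert -h(0), +\infty), \mathbb R^d)$.

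For the final claim $x_t \in \mathsf G_t$, regulatedness of $x_t$ on $\lvert -h(t), 0]$ is immediate from the restriction $x\rvert_{[t-h(t),t]}$, and the compatibility $x_t(0) = x(t) = A x(t-\tau(t)) = A x_t(-\tau(t))$ is exactly the equation evaluated at $t$. The only real obstacle is the inductive step, where one must be careful that the image of $\sigma_1$ on $[k\delta, (k+1)\delta]$ lands in the already-controlled interval and that Theorem~\ref{thm:well-regulated} applies componentwise; both are handled cleanly by the choice of $\delta$ forcing $\tau > \delta$ and by well-regulatedness being stable under restriction. This pinpoints \ref{hypo:regulated} as the correct hypothesis here, since by Remark~\ref{remk:composition-regulated} mere regulatedness of $\sigma_1$ would be insufficient.
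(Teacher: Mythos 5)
Your proof is correct and follows essentially the same route as the paper's: the same choice of $\delta$ and step-by-step induction borrowed from the proof of Theorem~\ref{thm:exist-continuous}, with Theorem~\ref{thm:well-regulated} applied to the restriction of $\sigma_1$ on each slab $[k\delta,(k+1)\delta]$ to propagate regulatedness, and the same verification of the compatibility condition to conclude $x_t \in \mathsf G_t$. The extra care you take (image of $\sigma_1$ landing in $\lvert -h(0), k\delta]$, stability of well-regulatedness under restriction, gluing at $k\delta$) only makes explicit what the paper leaves implicit.
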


\begin{proof}
As in the proof of Theorem~\ref{thm:exist-continuous}, we only need to show that unique solution $x\colon \lvert -h(0), +\infty) \to \mathbb R^d$ of \eqref{eq:diff-eqn-N1-time} with initial condition $x_0$ provided by Theorem~\ref{thm:exist-general} is regulated. The argument is very similar to that in the proof of Theorem~\ref{thm:exist-continuous}: we set $T > 0$, $\delta > 0$, and $M \in \mathbb N^\ast$ as in that proof, and we remark that the restriction of $x$ to $\lvert -h(0), 0]$ is regulated by assumption. If $k \in \llbracket 0, M-1\rrbracket$ is such that the restriction of $x$ to $\lvert -h(0), k \delta]$ is regulated, we define $\widehat x$ and $y$ as in the proof of Theorem~\ref{thm:exist-continuous}. For every $t \in [k\delta, (k+1)\delta]$, we have $y(t) = A \widehat x(t - \tau(t))$. Since $\widehat x$ is regulated and $t \mapsto t - \tau(t)$ is well-regulated, we deduce from Theorem~\ref{thm:well-regulated} that $y$ is regulated, yielding that the restriction of $x$ to $\lvert -h(0), (k+1)\delta]$ is regulated, as required.

Finally, for every $t \in \mathbb R_+$, we have $x_t(0) = x(t) = A x(t - \tau(t)) = A x_t(-\tau(t))$, showing that $x_t \in \mathsf G_t$.
\end{proof}

Assumption~\ref{hypo:regulated} is important in Theorem~\ref{thm:exist-regulated} to guarantee regulatedness of the solution $x$ by using Theorem~\ref{thm:well-regulated}. Note that, due to Remark~\ref{remk:sum-not-well-regulated}, the well-regulatedness assumption cannot be done in $\tau$, since this is not sufficient to guarantee the well-regulatedness of $t \mapsto t - \tau(t)$. Even though \ref{hypo:regulated} is important, such an assumption is not necessary, since one can construct an example similar in spirit to Example~\ref{expl:exist-continuous}, as shown below.

\begin{example}
Let $\varphi\colon [1, +\infty) \to [0, 1)$ be a continuous function that is not well-regulated (such a function can be constructed, for instance, from a suitable translation of the function from Remark~\ref{remk:cinfty-not-well-regulated}). We define the delay function $\tau\colon \mathbb R_+ \to \mathbb R_+^\ast$ by
\[
\tau(t) = \begin{dcases*}
t + 1 & if $t \in [0, 1)$, \\
t - \varphi(t) & if $t \in [1, \infty)$.
\end{dcases*}
\]
Note that $\tau$ does not satisfy \ref{hypo:regulated}. Let $x_0 \in \mathsf G_0$ and $x\colon [-1, +\infty) \to \mathbb R^d$ be the unique solution of \eqref{eq:diff-eqn-N1-time} with initial condition $x_0$ provided by Theorem~\ref{thm:exist-general}. A straightforward computation from \eqref{eq:diff-eqn-N1-time} shows that
\begin{equation*}
x(t) = \begin{dcases*}
x_0(t) & if $t \in [-1, 0)$, \\
A x_0(-1) & if $t \in [0, 1)$, \\
A x(\varphi(t)) = A^2 x_0(-1) & if $t \in [1, +\infty)$,
\end{dcases*}
\end{equation*}
where we use the fact that $\varphi$ takes values in $[0, 1)$. Hence $x$ is regulated in $[-1, +\infty)$.
\end{example}

We now turn to the existence of measurable solutions of \eqref{eq:diff-eqn-N1-time}.

\begin{theorem}
\label{thm:exist-measurable}
Let $\tau$ be a delay function satisfying \ref{hypo:tau-infimum} and \ref{hypo:tau-measurable}, $h$ be its associated largest delay function, and $x_0\colon \lvert -h(0), 0) \to \mathbb R^d$ be $(\mathfrak L, \mathfrak B)$-measurable. Then there exists a unique measurable solution $x\colon \lvert -h(0), +\infty) \to \mathbb R^d$ of \eqref{eq:diff-eqn-N1-time} with initial condition $x_0$.
\end{theorem}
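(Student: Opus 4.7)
The plan is to invoke Theorem~\ref{thm:exist-general} to produce the (unique) solution $x$ with initial condition $x_0$ via the explicit formula
\[
x(t) = \begin{dcases*}
x_0(t) & if $t \in \lvert -h(0), 0)$, \\
A^{\mathbf n(t)} x_0(\sigma_{\mathbf n(t)}(t)) & if $t \in \mathbb R_+,$
\end{dcases*}
\]
so that uniqueness is immediate from Corollary~\ref{coro:uniqueness}. The entire task then reduces to verifying that this formula defines an $(\mathfrak L, \mathfrak B)$-measurable function. The initial condition $x_0$ is measurable by assumption, so the real content is to show that the right-hand side on $\mathbb R_+$ is measurable.

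The key step is to prove, by induction on $k$, that $D_k$ is Lebesgue measurable and $\sigma_k \colon D_k \to \mathbb R$ is $(\mathfrak L, \mathfrak L)$-measurable. The base case is clear: $D_0 = \mathbb R$ and $\sigma_0 = \id$, and $D_1 = \mathbb R_+$ with $\sigma_1(t) = t - \tau(t)$, which is $(\mathfrak L, \mathfrak L)$-measurable by \ref{hypo:tau-measurable} (since $t \mapsto t$ is $(\mathfrak L, \mathfrak L)$-measurable and $(\mathfrak L,\mathfrak L)$-measurability is preserved by subtraction of measurable functions, which one verifies directly, or else by noting that continuous operations on $(\mathfrak L,\mathfrak L)$-measurable maps can be pulled through since Borel sets are Lebesgue). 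For the inductive step, $D_{k+1} = \sigma_1^{-1}(D_k) \cap \mathbb R_+$ is Lebesgue measurable by $(\mathfrak L, \mathfrak L)$-measurability of $\sigma_1$, and $\sigma_{k+1} = \sigma_k \circ \sigma_1\rvert_{D_{k+1}}$ is $(\mathfrak L,\mathfrak L)$-measurable as the composition of two $(\mathfrak L,\mathfrak L)$-measurable maps (composition of $(\mathfrak L,\mathfrak L)$-measurable functions being $(\mathfrak L,\mathfrak L)$-measurable follows immediately from the definition).

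Granted this, I would partition $\mathbb R_+ = \bigsqcup_{k \in \mathbb N^\ast}(D_k \setminus D_{k+1}) = \bigsqcup_{k \in \mathbb N^\ast}\{t \in \mathbb R_+ \suchthat \mathbf n(t) = k\}$, where each piece is Lebesgue measurable by the induction above. On each piece $E_k := \{t \in \mathbb R_+ \suchthat \mathbf n(t) = k\}$, the solution is given by $x(t) = A^k x_0(\sigma_k(t))$, and by Lemma~\ref{lemma:properties_n}\ref{item:sigma-nt-negative} the map $\sigma_k$ actually sends $E_k$ into $\lvert -h(0), 0)$, where $x_0$ is defined. Since $x_0$ is $(\mathfrak L, \mathfrak B)$-measurable and $\sigma_k\rvert_{E_k}$ is $(\mathfrak L, \mathfrak L)$-measurable, Proposition~\ref{prop:preserves-measurability} gives that $x_0 \circ \sigma_k\rvert_{E_k}$ is $(\mathfrak L, \mathfrak B)$-measurable; multiplication by the constant matrix $A^k$ preserves this. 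Gluing over the countable partition of $\mathbb R_+$ and with the measurable initial condition on $\lvert -h(0), 0)$ yields the $(\mathfrak L, \mathfrak B)$-measurability of $x$ on its full domain.

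The main technical obstacle I anticipate is the inductive step establishing that the iterated map $\sigma_k$ preserves $(\mathfrak L, \mathfrak L)$-measurability---this is precisely why the formulation of Proposition~\ref{prop:preserves-measurability} was chosen to use $(\mathfrak L, \mathfrak L)$-measurability (rather than $(\mathfrak L, \mathfrak B)$-measurability) of $\sigma_1$: only the stronger hypothesis is compatible with iteration and with right composition by a Lebesgue-measurable $x_0$. Beyond this, the argument is essentially bookkeeping: keeping track of domains $D_k$, using that $\sigma_1(E_k) \subset E_{k-1}$ so the formula on each piece composes well, and invoking Corollary~\ref{coro:uniqueness} for uniqueness.
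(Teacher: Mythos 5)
Your proposal is correct, but it takes a genuinely different route from the paper's proof. The paper never touches the iterated maps $\sigma_k$: exactly as in Theorem~\ref{thm:exist-continuous}, it fixes $T > 0$, uses \ref{hypo:tau-infimum} to choose $\delta > 0$ with $\tau > \delta$ on $[0, T]$, and shows by induction on $k$ that the restriction of $x$ to $\lvert -h(0), k\delta]$ is $(\mathfrak L, \mathfrak B)$-measurable, the inductive step being only that $y(t) = A \widehat x(t - \tau(t))$ on $[k\delta, (k+1)\delta]$ is the composition of the already-measurable $\widehat x$ with the $(\mathfrak L, \mathfrak L)$-measurable $\sigma_1$. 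You instead argue globally from the representation formula of Theorem~\ref{theorem-1}: an induction on the iteration index gives that each $D_k$ is Lebesgue measurable and each $\sigma_k$ is $(\mathfrak L, \mathfrak L)$-measurable (composition does preserve $(\mathfrak L, \mathfrak L)$-measurability, so this step is sound), and you then glue over the measurable partition of $\mathbb R_+$ by the level sets $\{t \in \mathbb R_+ \suchthat \mathbf n(t) = k\} = D_k \setminus D_{k+1}$, $k \geq 1$ (available by \ref{hypo:tau-infimum}, Corollary~\ref{coro:integer-iterations} and Lemma~\ref{lemma:properties_n}), using Lemma~\ref{lemma:properties_n}\ref{item:sigma-nt-negative} to land in the domain of $x_0$ and the easy direction of Proposition~\ref{prop:preserves-measurability} on each piece; uniqueness comes from Corollary~\ref{coro:uniqueness} in both proofs. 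Your route yields a bit more (measurability of the sets $D_k$, of $\mathbf n$, and of the representation formula itself, in the spirit of Theorem~\ref{thm:explicit-ae}), while the paper's time-stepping argument is leaner, needing only $\sigma_1$ and no bookkeeping of iterates. One caution: both arguments ultimately rest on $\sigma_1 = \id - \tau$ being $(\mathfrak L, \mathfrak L)$-measurable, which the paper reads off from \ref{hypo:tau-measurable} via its stated equivalence; your parenthetical justification (subtraction, or pulling continuous operations through, preserves $(\mathfrak L, \mathfrak L)$-measurability) is not routine --- such arguments in general only deliver $(\mathfrak L, \mathfrak B)$-measurability --- so this point deserves the same care in your write-up as it implicitly receives in the paper.
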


\begin{proof}
As in the proof of Theorems~\ref{thm:exist-continuous} and \ref{thm:exist-regulated}, we only need to show that unique solution $x\colon \lvert -h(0), +\infty) \to \mathbb R^d$ of \eqref{eq:diff-eqn-N1-time} with initial condition $x_0$ provided by Theorem~\ref{thm:exist-general} is $(\mathfrak L, \mathfrak B)$-measurable. We proceed by an inductive argument similar to those of the proofs of Theorems~\ref{thm:exist-continuous} and \ref{thm:exist-regulated}.

Let $T > 0$, $\delta > 0$, and $M \in \mathbb N^\ast$ be as in those proofs. Clearly, the restriction of $x$ to $\lvert -h(0), 0]$ is $(\mathfrak L, \mathfrak B)$-measurable, since it coincides with $x_0$ in $\lvert -h(0), 0)$ and $x_0$ is $(\mathfrak L, \mathfrak B)$-measurable.

Now, let $k \in \llbracket 0, M-1\rrbracket$ be such that the restriction of $x$ to $\lvert -h(0), k \delta]$ is $(\mathfrak L, \mathfrak B)$-measurable and define $\widehat x$ and $y$ as in the proofs of Theorems~\ref{thm:exist-continuous} and \ref{thm:exist-regulated}. For every $t \in [k\delta, (k+1)\delta]$, we have $y(t) = A \widehat x(t - \tau(t))$. Since $\widehat x$ is $(\mathfrak L, \mathfrak B)$-measurable and $t \mapsto t - \tau(t)$ is $(\mathfrak L, \mathfrak L)$-measurable, we deduce that $y$ is $(\mathfrak L, \mathfrak B)$-measurable, yielding the $(\mathfrak L, \mathfrak B)$-measurability of the restriction of $x$ to $\lvert -h(0), (k+1)\delta]$, as required.
\end{proof}

The counterpart in Theorem~\ref{thm:exist-measurable} of assumption \ref{hypo:tau-cont} from Theorem~\ref{thm:exist-continuous} and of assumption \ref{hypo:regulated} from Theorem~\ref{thm:exist-regulated} is \ref{hypo:tau-measurable}. Similarly to what was shown in Example~\ref{expl:exist-continuous}, we provide now an example showing that \ref{hypo:tau-measurable} is not necessary for the existence of measurable solutions.

\begin{example}
\label{expl:exist-measurable}
Let $P \subset [2, +\infty)$ be a set that is not Lebesgue measurable. Let $\tau\colon \mathbb R_+ \to \mathbb R_+^\ast$ be the delay function defined by
\[
\tau(t) = \begin{dcases*}
t + 1 & if $t \in [0, 1)$, \\
2 & if $t \in [1, 2)$, \\
t - \frac{2}{3} + \frac{1}{3} \mathbbm 1_P(t) & if $t \in [2, +\infty)$,
\end{dcases*}
\]
where $\mathbbm 1_P$ is the indicator function of $P$. Since $P \notin \mathfrak L$, the function $\tau$ is not $(\mathfrak L, \mathfrak L)$-measurable. Let $x_0\colon [-1, 0) \to \mathbb R^d$ be a $(\mathfrak L, \mathfrak B)$-measurable function and $x\colon [-1, +\infty) \to \mathbb R^d$ be the unique solution of \eqref{eq:diff-eqn-N1-time} with initial condition $x_0$ provided by Theorem~\ref{thm:exist-general}. Then $x$ is $(\mathfrak L, \mathfrak B)$-measurable in $[-1, +\infty)$. Indeed, similarly to Example~\ref{expl:exist-continuous}, $x$ satisfies \eqref{eq:x-continuous} for $t \in [-1, 2)$. For $t \geq 2$, we have from \eqref{eq:diff-eqn-N1-time} that $x(t) = A x(\frac{2}{3} - \frac{1}{3} \mathbbm 1_P(t))$ and, since $\frac{2}{3} - \frac{1}{3} \mathbbm 1_P(t) \in [0, 1)$ for every $t \geq 2$, we have $x(t) = A^2 x_0(-1)$. Hence $x$ satisfies \eqref{eq:x-continuous} for every $t \in [-1, +\infty)$, and it is immediate to verify that the right-hand side of \eqref{eq:x-continuous} defines a $(\mathfrak L, \mathfrak B)$-measurable function of $t$.
\end{example}

Our next result concerns existence and uniqueness of a.e.\ solutions.

\begin{theorem}
\label{thm:exist-ae}
Let $\tau$ be a delay function satisfying \ref{hypo:tau-infimum} and \ref{hypo:null}, $h$ be its associated largest delay function, and $x_0\colon \lvert -h(0), 0) \to  \mathbb R^d$. Then the unique solution $x\colon \lvert -h(0), +\infty) \to \mathbb R^d$ of \eqref{eq:diff-eqn-N1-time} with initial condition $x_0$ from Theorem~\ref{thm:exist-general} is also an a.e.\ solution of \eqref{eq:diff-eqn-N1-time} with initial condition $x_0$. In addition, such an a.e.\ solution is unique with respect to a.e.\ equality (in the sense of Theorem~\ref{thm:explicit-ae}).
\end{theorem}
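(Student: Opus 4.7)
The plan is to reduce both claims to results already proved. By Theorem~\ref{thm:exist-general}, under \ref{hypo:tau-infimum} there is a unique function $x\colon \lvert -h(0), +\infty) \to \mathbb R^d$ that satisfies $x(t) = A x(t - \tau(t))$ for \emph{every} $t \in \mathbb R_+$ with initial condition $x_0$. The only real work is to verify that this $x$ satisfies the stronger requirement in Definition~\ref{def:solutions}(d), namely that for \emph{every} $\widetilde x$ equal to $x$ almost everywhere one still has $\widetilde x(t) = A \widetilde x(t - \tau(t))$ for a.e.\ $t \in \mathbb R_+$. Once this is done, uniqueness with respect to a.e.\ equality is immediate from the second assertion of Theorem~\ref{thm:explicit-ae}, which is available since we assume \ref{hypo:tau-infimum} and \ref{hypo:null}.

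To prove the a.e.\ solution property, fix an arbitrary $\widetilde x\colon \lvert -h(0), +\infty) \to \mathbb R^d$ with $\widetilde x = x$ almost everywhere, and pick a Lebesgue-measurable set $N \subset \lvert -h(0), +\infty)$ with $\mathcal L(N) = 0$ containing $\{t \suchthat \widetilde x(t) \neq x(t)\}$. For every $t \in \mathbb R_+$ lying outside $N \cup \sigma_1^{-1}(N)$, both $t$ and $\sigma_1(t) = t - \tau(t)$ lie outside $N$, so
\[
\widetilde x(t) = x(t) = A\, x(\sigma_1(t)) = A\, \widetilde x(t - \tau(t)),
\]
where the middle equality is the pointwise equation satisfied by $x$. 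Hence the set of $t \in \mathbb R_+$ where $\widetilde x(t) \neq A \widetilde x(t - \tau(t))$ is contained in $(N \cap \mathbb R_+) \cup (\sigma_1^{-1}(N) \cap \mathbb R_+)$.

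The first set has Lebesgue measure zero by the choice of $N$. The second set has Lebesgue measure zero by Hypothesis~\ref{hypo:null}, which is precisely the statement that $\sigma_1$ pulls back Lebesgue-null sets to Lebesgue-null sets. Thus the defect set has measure zero, proving that $x$ is an a.e.\ solution with initial condition $x_0$.

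For uniqueness, suppose $x_1, x_2$ are a.e.\ solutions with $x_1 = x_2$ a.e.\ on $\lvert -h(0), 0)$. The uniqueness statement of Theorem~\ref{thm:explicit-ae}, which applies under hypotheses \ref{hypo:tau-infimum} and \ref{hypo:null}, directly yields $x_1 = x_2$ a.e.\ on $\lvert -h(0), +\infty)$. The main (and only) conceptual point in the whole argument is recognizing that \ref{hypo:null} is exactly what is needed to rule out that the composition with $\sigma_1$ could enlarge a null set; the rest is unwinding Definition~\ref{def:solutions}(d).
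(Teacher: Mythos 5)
Your proposal is correct and follows essentially the same route as the paper's proof: uniqueness is delegated to Theorem~\ref{thm:explicit-ae}, and the a.e.\ solution property is obtained by observing that the defect set is contained in $N \cup \sigma_1^{-1}(N)$, which is null thanks to \ref{hypo:null}. Your extra step of enclosing $\{t \suchthat \widetilde x(t) \neq x(t)\}$ in a measurable null set is a harmless refinement (the paper uses the difference set directly, which is legitimate by completeness of the Lebesgue measure), so there is no substantive difference.
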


\begin{proof}
Notice first that the assertion on uniqueness follows from Theorem~\ref{thm:explicit-ae}.

Let $x\colon \lvert -h(0), +\infty) \allowbreak \to \mathbb R^d$ be the solution of \eqref{eq:diff-eqn-N1-time} with initial condition $x_0$, whose existence and uniqueness is asserted in Theorem~\ref{thm:exist-general}. Let us prove that $x$ is also an a.e.\ solution of \eqref{eq:diff-eqn-N1-time}. Let $\widetilde x\colon \lvert -h(0), +\infty) \to \mathbb R^d$ be such that $\widetilde x(t) = x(t)$ for a.e.\ $t \in \lvert -h(0), +\infty)$. Let $N_0 = \{t \in \lvert -h(0), +\infty) \suchthat x(t) \neq \widetilde x(t)\}$ and notice that $x(t - \tau(t)) \neq \widetilde x(t - \tau(t))$ if and only if $t \in \sigma_1^{-1}(N_0)$. Let $N = N_0 \cup \sigma_1^{-1}(N_0)$ and note that, thanks to \ref{hypo:null}, $\mathcal L(N) = 0$. For every $t \in \mathbb R_+ \setminus N$, we have
\[
\widetilde x(t) = x(t) = A x(t - \tau(t)) = A \widetilde x(t - \tau(t)).
\]
Hence $x$ is an a.e.\ solution of \eqref{eq:diff-eqn-N1-time} with initial condition $x_0$.
\end{proof}

It turns out that \ref{hypo:null} is also a necessary condition for the existence of a.e.\ solutions of \eqref{eq:diff-eqn-N1-time} in the nontrivial case where $A \neq 0$, as detailed in the next result.

\begin{proposition}
\label{prop:h2-necessary}
Let $\tau$ be a delay function and assume that $A \neq 0$ and there exists an a.e.\ solution of \eqref{eq:diff-eqn-N1-time}. Then $\tau$ satisfies \ref{hypo:null}.
\end{proposition}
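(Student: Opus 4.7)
The plan is to prove the contrapositive: if \ref{hypo:null} fails, then no a.e.\ solution of \eqref{eq:diff-eqn-N1-time} can exist (given $A \neq 0$). So I fix $N \in \mathfrak L$ with $\mathcal L(N) = 0$ but $\mathcal L(\sigma_1^{-1}(N)) > 0$, suppose for contradiction that $x\colon \lvert -h(0), +\infty) \to \mathbb R^d$ is an a.e.\ solution, and aim to construct an $\widetilde x$ that is a.e.-equal to $x$ for which the identity $\widetilde x(t) = A \widetilde x(t - \tau(t))$ fails on a set of positive measure, contradicting the defining property of a.e.\ solutions. The whole point is to exploit the asymmetry between $N$ (null) and $\sigma_1^{-1}(N)$ (non-null): a modification of $x$ on $N$ is invisible under a.e.\ equality, yet it shifts the composition $x \circ \sigma_1$ on a set of positive measure.

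Concretely, using $A \neq 0$ I pick $v \in \mathbb R^d$ with $Av \neq 0$ and set $\widetilde x = x + \mathbbm 1_N v$ on $\lvert -h(0), +\infty)$ (replacing $N$ by $N \cap \lvert -h(0), +\infty)$ if needed; since $\sigma_1(\mathbb R_+) \subset \lvert -h(0), +\infty)$, this intersection does not change $\sigma_1^{-1}(N)$). Then $\widetilde x$ differs from $x$ only on the null set $N$, so $\widetilde x = x$ a.e. Applying the a.e.\ solution definition to $\widetilde x = x$ itself yields a null set $E \subset \mathbb R_+$ such that $x(t) = A x(\sigma_1(t))$ for every $t \in \mathbb R_+ \setminus E$. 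Note that $\sigma_1^{-1}(N) \cap N \subset N$ is null, and so is $E$, so the set
\[
T = \bigl(\sigma_1^{-1}(N) \setminus N\bigr) \setminus E
\]
still has $\mathcal L(T) = \mathcal L(\sigma_1^{-1}(N)) > 0$. For every $t \in T$ we have $\mathbbm 1_N(t) = 0$ while $\mathbbm 1_N(\sigma_1(t)) = 1$, so
\[
\widetilde x(t) - A \widetilde x(\sigma_1(t)) = x(t) - A x(\sigma_1(t)) - A v = -A v \neq 0,
\]
contradicting the a.e.\ solution property applied to $\widetilde x$.

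The argument is essentially a one-line measure-theoretic observation once the right perturbation is spotted, so there is no real obstacle to overcome. The only point that needs a bit of care is making sure the perturbation $\widetilde x$ is well-defined on the correct domain, which is handled by the remark that $\sigma_1(\mathbb R_+) \subset \lvert -h(0), +\infty)$ so we may assume $N \subset \lvert -h(0), +\infty)$ without loss of generality; the role of the hypothesis $A \neq 0$ is precisely to guarantee the existence of a nonzero vector $A v$ serving as the obstruction to the equation.
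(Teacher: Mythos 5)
Your proof is correct and follows essentially the same route as the paper: both perturb the solution by a vector $v$ with $Av \neq 0$ on the null set $N$, observe that the perturbed function is a.e.\ equal to $x$, and use the defining property of a.e.\ solutions to force $\sigma_1^{-1}(N)$ to be null. The only cosmetic differences are your contrapositive framing and your removal of $N$ and the exceptional set $E$ up front, whereas the paper argues directly and extracts the contradiction from the a.e.\ equality of $\widetilde x$ and $x$; the substance is identical.
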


\begin{proof}
Fix an a.e.\ solution $x\colon \lvert -h(0), +\infty) \to \mathbb R^d$ of \eqref{eq:diff-eqn-N1-time}. Let $v \in \mathbb R^d \setminus \{0\}$ be such that $A v \neq 0$. Since the image of $\sigma_1$ is included in $\lvert -h(0), +\infty)$, it suffices to show that $\mathcal L(\sigma_1^{-1}(N)) = 0$ for every Lebesgue-measurable set $N \subset \lvert -h(0), +\infty)$ such that $\mathcal L(N) = 0$.

Let $N \subset \lvert -h(0), +\infty)$ be Lebesgue measurable with $\mathcal L(N) = 0$. Let $\widetilde x\colon \lvert -h(0),\allowbreak +\infty) \to \mathbb R^d$ be given by $\widetilde x(t) = x(t) + v$ if $t \in N$ and $\widetilde x(t) = x(t)$ otherwise. Hence $\widetilde x$ is equal a.e.\ to $x$ and, since $x$ is an a.e.\ solution of \eqref{eq:diff-eqn-N1-time}, there exist Lebesgue-measurable sets $N_0$ and $\widetilde N_0$ included in $\mathbb R_+$ with $\mathcal L(N_0) = \mathcal L(\widetilde N_0) = 0$ and such that
\begin{equation*}
\begin{aligned}
\widetilde x(t) & = A \widetilde x(\sigma_1(t)) & \qquad & \text{ for every } t \in \mathbb R_+ \setminus \widetilde N_0, \\
x(t) & = A x(\sigma_1(t)) & & \text{ for every } t \in \mathbb R_+ \setminus N_0. \\
\end{aligned}
\end{equation*}
For every $t \in \sigma_1^{-1}(N) \setminus (N_0 \cup \widetilde N_0)$, we then have
\[
\widetilde x(t) = A \widetilde x(\sigma_1(t)) = A x(\sigma_1(t)) + A v = x(t) + A v \neq x(t).
\]
Since $\widetilde x$ is equal a.e.\ to $x$, we deduce that $\mathcal L(\sigma_1^{-1}(N) \setminus (N_0 \cup \widetilde N_0)) = 0$. In addition,
\[
\sigma_1^{-1}(N) = \left(\sigma_1^{-1}(N) \setminus (N_0 \cup \widetilde N_0)\right) \cup \left(\sigma_1^{-1}(N) \cap (N_0 \cup \widetilde N_0)\right).
\]
Since $\mathcal L(\sigma_1^{-1}(N) \cap (N_0 \cup \widetilde N_0)) = 0$, we finally deduce that $\mathcal L(\sigma_1^{-1}(N)) = 0$.
\end{proof}

We now turn to the question of existence of $L^p$ solutions of \eqref{eq:diff-eqn-N1-time}. The conditions for the existence of such solutions turn out to depend on whether $p$ is finite or infinite, and we start with the case $p = +\infty$.

{\sloppy
\begin{theorem}
\label{thm:exist-L-infty}
Let $\tau$ be a delay function satisfying \ref{hypo:tau-infimum}, \ref{hypo:tau-measurable}, and \ref{hypo:null}, $h$ be its associated largest delay function, and $x_0 \in L^\infty(\lvert -h(0), 0], \mathbb R^d)$. Then there exists a unique $L^\infty$ solution $x \in L^\infty_{\mathrm{loc}}(\lvert -h(0), +\infty),\allowbreak \mathbb R^d)$ of \eqref{eq:diff-eqn-N1-time} with initial condition $x_0$.
\end{theorem}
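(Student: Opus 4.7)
The plan is to assemble the existence result from Theorems~\ref{thm:exist-measurable} and \ref{thm:exist-ae} by choosing a suitable bounded representative of the initial condition, and then derive $L^\infty_{\mathrm{loc}}$-regularity from the explicit representation formula \eqref{eq:explicit-formula} together with the local boundedness of $\mathbf n$ given by Lemma~\ref{lemma:n-loc-bounded}.

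First, I would pick a $(\mathfrak L, \mathfrak B)$-measurable representative $\widetilde x_0\colon \lvert -h(0), 0] \to \mathbb R^d$ of $x_0$ satisfying the pointwise bound $\abs{\widetilde x_0(s)} \leq M$ for every $s \in \lvert -h(0), 0]$, where $M = \norm{x_0}_{L^\infty}$. Such a representative exists: any measurable representative of $x_0$ can be redefined to equal $0$ on the (null) set where its norm exceeds $M$. Under \ref{hypo:tau-infimum} and \ref{hypo:tau-measurable}, Theorem~\ref{thm:exist-measurable} then supplies a unique measurable solution $\widetilde x\colon \lvert -h(0), +\infty) \to \mathbb R^d$ of \eqref{eq:diff-eqn-N1-time} with initial condition $\widetilde x_0$, and, since \ref{hypo:null} holds, Theorem~\ref{thm:exist-ae} guarantees that $\widetilde x$ is automatically an a.e.\ solution as well.

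Next, I would show that $\widetilde x \in L^\infty_{\mathrm{loc}}(\lvert -h(0), +\infty), \mathbb R^d)$. Fix $T > 0$ and let $N_T$ be the integer provided by Lemma~\ref{lemma:n-loc-bounded}, so that $\mathbf n(t) \leq N_T$ for every $t \in (-\infty, T]$. By the explicit formula \eqref{eq:explicit-formula} from Theorem~\ref{theorem-1}, for every $t \in [0, T]$ we obtain
\[
\abs{\widetilde x(t)} = \abs{A^{\mathbf n(t)} \widetilde x_0(\sigma_{\mathbf n(t)}(t))} \leq \max\{1, \abs{A}\}^{N_T} M,
\]
where I use that $\sigma_{\mathbf n(t)}(t) \in \lvert -h(0), 0)$ by Lemma~\ref{lemma:properties_n}\ref{item:sigma-nt-negative}. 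Combined with the bound $\abs{\widetilde x_0} \leq M$ on $\lvert -h(0), 0]$, this produces a pointwise bound on $\widetilde x$ on $\lvert -h(0), T]$, hence $\widetilde x \in L^\infty_{\mathrm{loc}}$.

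Finally, uniqueness up to a.e.\ equality follows directly from the uniqueness statement of Theorem~\ref{thm:explicit-ae}: if $x'$ is another $L^\infty$ solution with initial condition $x_0$, then both $\widetilde x$ and $x'$ are a.e.\ solutions whose initial conditions agree a.e., and they therefore coincide a.e.\ on $\lvert -h(0), +\infty)$. There is no serious obstacle in the argument; the only mild subtlety is that Theorem~\ref{thm:exist-measurable} takes a genuine everywhere-defined (and here pointwise bounded) measurable function as input, which forces the preliminary normalization step replacing the equivalence class $x_0$ by $\widetilde x_0$.
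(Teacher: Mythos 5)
Your proposal is correct and follows essentially the same route as the paper's proof: choose a bounded measurable representative of $x_0$, invoke Theorems~\ref{thm:exist-measurable} and \ref{thm:exist-ae}, bound the solution via the representation formula \eqref{eq:explicit-formula} together with Lemma~\ref{lemma:n-loc-bounded}, and deduce uniqueness from the a.e.\ uniqueness statement (Theorem~\ref{thm:explicit-ae}, as used through Theorem~\ref{thm:exist-ae} in the paper). No gaps.
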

}

\begin{proof}
Since $L^\infty$ solutions are particular cases of a.e.\ solutions, the assertion on the uniqueness of an $L^\infty$ solution follows from the uniqueness assertion for a.e.\ solutions from Theorem~\ref{thm:exist-ae}. We are thus left to prove the existence of an $L^\infty$ solution with a given initial condition.

Let $x_0 \in L^\infty(\lvert -h(0), 0], \mathbb R^d)$. Up to replacing $x_0$ by a suitable element in its equivalence class with respect to a.e.\ equality, we can assume that $x_0$ is bounded, i.e., there exists $M > 0$ such that $\abs{x_0(t)} \leq M$ for every $t \in \lvert -h(0), 0]$.

Since $x_0 \in L^\infty(\lvert -h(0), 0], \mathbb R^d)$, we have in particular that $x_0$ is $(\mathfrak L, \mathfrak B)$-measurable and thus, by Theorem~\ref{thm:exist-measurable}, \eqref{eq:diff-eqn-N1-time} admits a unique measurable solution $x\colon \lvert -h(0),\allowbreak +\infty) \to \mathbb R^d$ with initial condition $x_0$. By Theorem~\ref{thm:exist-ae}, $x$ is also an a.e.\ solution of \eqref{eq:diff-eqn-N1-time} with initial condition $x_0$. By Theorem~\ref{theorem-1}, we have $x(t) = A^{\mathbf n(t)} x_0(\sigma_{\mathbf n(t)}(t))$ for every $t \in \mathbb R_+$, and thus $\abs{x(t)} \leq \abs{A}^{\mathbf n(t)} M$ for every $t \geq 0$. The conclusion now follows thanks to Lemma~\ref{lemma:n-loc-bounded}.
\end{proof}

We conclude the main results of this section by the following result on the existence and uniqueness of solutions in $L^p$ for $p \in [1, +\infty)$.

\begin{theorem}
\label{thm:exist-L-p}
Let $p \in [1, +\infty)$,  $\tau$ be a delay function satisfying \ref{hypo:tau-infimum} and \ref{hypo:radon-nik-bdd}, $h$ be its associated largest delay function, and $x_0 \in L^p(\lvert -h(0), 0], \mathbb R^d)$. Then there exists a unique $L^p$ solution $x \in L^p_{\mathrm{loc}}(\lvert -h(0), +\infty),\allowbreak \mathbb R^d)$ of \eqref{eq:diff-eqn-N1-time} with initial condition $x_0$.
\end{theorem}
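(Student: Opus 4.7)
Uniqueness is essentially free: every $L^p$ solution is, by Definition~\ref{def:solutions}, a measurable a.e.\ solution, so if $x_1, x_2$ are two $L^p$ solutions with the same initial condition, Theorem~\ref{thm:exist-ae} (applicable because \ref{hypo:radon-nik-bdd} subsumes \ref{hypo:null}) forces $x_1 = x_2$ a.e.\ on $\lvert -h(0), +\infty)$. For existence, the plan is to fix a $(\mathfrak L, \mathfrak B)$-measurable representative of $x_0$ on $\lvert -h(0), 0)$, apply Theorem~\ref{thm:exist-measurable} to produce the unique measurable solution $x$, and invoke Theorem~\ref{thm:exist-ae} to promote $x$ to an a.e.\ solution with initial condition $x_0$. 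The remaining task is to show $x \in L^p_{\mathrm{loc}}(\lvert -h(0), +\infty), \mathbb R^d)$; since $x|_{\lvert -h(0), 0)} = x_0$ already lies in $L^p$, it suffices to establish $x \in L^p(\lvert -h(0), T], \mathbb R^d)$ for every $T > 0$.

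Fix such a $T$ and mimic the inductive scheme of Theorems~\ref{thm:exist-continuous}, \ref{thm:exist-regulated}, and \ref{thm:exist-measurable}: pick $\delta > 0$ with $\delta \leq \tfrac{1}{2} \inf_{s \in [0, T]} \tau(s)$ (positive by \ref{hypo:tau-infimum}) and such that $M := T/\delta$ is a positive integer, and induct on $k \in \llbracket 0, M\rrbracket$ the claim that $x \in L^p(\lvert -h(0), k\delta], \mathbb R^d)$. The base case $k = 0$ is the hypothesis on $x_0$. For the inductive step, the strict inequality $\tau(t) > \delta$ on $[0, T]$ guarantees that $\sigma_1$ sends $(k\delta, (k+1)\delta)$ into $\lvert -h(0), k\delta)$, and $x(t) = A x(\sigma_1(t))$ holds pointwise there because $x$ is a measurable solution. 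Applying Theorem~\ref{thm:composition-Lp}\eqref{item:composition-Lp-equivs} with $g = \sigma_1|_{(k\delta, (k+1)\delta)}$ and $f = x|_{\lvert -h(0), k\delta)} \in L^p$ (the induction hypothesis) then yields $x \circ \sigma_1 \in L^p((k\delta, (k+1)\delta), \mathbb R^d)$, whence $x \in L^p([k\delta, (k+1)\delta], \mathbb R^d)$, closing the induction.

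The only genuinely new ingredient, and the main obstacle, is checking the three hypotheses of Theorem~\ref{thm:composition-Lp}\eqref{item:composition-Lp-equivs} for $g = \sigma_1|_{(k\delta, (k+1)\delta)}$: $(\mathfrak L, \mathfrak L)$-measurability comes from \ref{hypo:tau-measurable}, absolute continuity of the pushforward from \ref{hypo:null} (which, incidentally, also implies $\mathcal L(\sigma_1^{-1}(\{-h(0)\})) = 0$, so the fact that $\sigma_1$ can touch $-h(0)$ on a null set is harmless for the composition), and boundedness of the Radon--Nikodym derivative from \ref{hypo:radon-nik-bdd} via the obvious monotonicity
\[
(\sigma_1|_I)_{\#} \mathcal L(B) = \mathcal L(\sigma_1^{-1}(B) \cap I) \leq \mathcal L(\sigma_1^{-1}(B) \cap [0, T]) = (\sigma_1|_{[0, T]})_{\#} \mathcal L(B)
\]
valid for every Borel set $B$ and every $I \subset [0, T]$, so the Radon--Nikodym derivative on the subinterval is pointwise dominated by that on $[0, T]$ and therefore bounded. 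Once this observation is in place, Theorem~\ref{thm:composition-Lp} is tuned exactly to what \ref{hypo:radon-nik-bdd} supplies, and the rest of the argument is bookkeeping entirely parallel to the proofs already carried out for continuous, regulated, and merely measurable solutions.
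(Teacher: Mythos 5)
Your proposal is correct and follows essentially the same route as the paper: uniqueness via Theorem~\ref{thm:exist-ae}, existence by taking the measurable/a.e.\ solution from Theorems~\ref{thm:exist-measurable} and \ref{thm:exist-ae}, and then the same $\delta$-step induction as in Theorem~\ref{thm:exist-continuous}, closing each step with Theorem~\ref{thm:composition-Lp}\eqref{item:composition-Lp-equivs} under \ref{hypo:radon-nik-bdd}. Your explicit remark that the pushforward of $\sigma_1$ restricted to a subinterval of $[0,T]$ is dominated by $\left(\sigma_1\rvert_{[0,T]}\right)_{\#}\mathcal L$, so its Radon--Nikodym derivative inherits the bound, is a small detail the paper leaves implicit but does not change the argument.
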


\begin{proof}
As in the proof of Theorem~\ref{thm:exist-L-infty}, uniqueness of solutions is a consequence of Theorem~\ref{thm:exist-ae}.

Let $x_0 \in L^p(\lvert -h(0), 0], \mathbb R^d)$. Arguing as in the proof of Theorem~\ref{thm:exist-L-infty}, \eqref{eq:diff-eqn-N1-time} admits a unique solution $x\colon \lvert -h(0), +\infty) \to \mathbb R^d$ with initial condition $x_0$, which is both a measurable and an a.e.\ solution. We now take $T > 0$ and define $\delta > 0$ and $M \in \mathbb N^\ast$ as in the proof of Theorem~\ref{thm:exist-continuous}. Let $\varphi\colon \mathbb R \to \mathbb R_+$ be the Radon--Nikodym derivative of $\left(\sigma_1\rvert_{[0, T]}\right)_{\#}\mathcal L$ with respect to $\mathcal L$ and denote by $K > 0$ an upper bound of $\varphi$. We prove by induction on $k$ that $x \in L^p(\lvert -h(0), k \delta], \mathbb R^d)$ for every $k \in \llbracket 0, M\rrbracket$, which will conclude the proof since $M\delta = T$ and $T > 0$ is arbitrary. The case $k = 0$ is clear since $x$ coincides with $x_0$ in $\lvert -h(0), 0)$.

Let $k \in \llbracket 0, M-1\rrbracket$ be such that $x \in L^p(\lvert -h(0), k \delta], \mathbb R^d)$. Define $\widehat x$ and $y$ as in the proof of Theorem~\ref{thm:exist-continuous} and note that, as in that proof, $\sigma_1([k\delta, (k+1)\delta]) \subset \lvert -h(0), k\delta]$ and $y(t) = A \widehat x(\sigma_1(t))$ for every $t \in [k\delta, (k+1)\delta]$. By \ref{hypo:radon-nik-bdd} and Theorem~\ref{thm:composition-Lp}\eqref{item:composition-Lp-equivs}, we deduce that $y \in L^p([k\delta, (k+1)\delta], \mathbb R^d)$. Hence $x \in L^p(\lvert -h(0), (k+1) \delta], \mathbb R^d)$, yielding the conclusion.
\end{proof}

Contrarily to the case of $L^\infty$ from Theorem~\ref{thm:exist-L-infty}, assumptions \ref{hypo:tau-infimum}, \ref{hypo:tau-measurable}, and \ref{hypo:null} are not sufficient to ensure existence of solutions in $L^p$ with $p \in [1, +\infty)$, as we detail in the following example.

\begin{example}
\label{expl:H6-important-Lp}
Let $\tau$ be the delay function defined for $t \in \mathbb R_+$ by $\tau(t) = 2 - \frac{1}{t + 1}$. Then $\sigma_1(t) = t - 2 + \frac{1}{t + 1}$ for $t \in \mathbb R_+$. One easily verifies that $\sigma_1$ is increasing, maps $\mathbb R_+$ onto $[-1, +\infty)$, and is a diffeomorphism between $\mathbb R_+^\ast$ and $(-1, +\infty)$, with $\sigma_1^{-1}(s) = \frac{s + 1 + \sqrt{(s + 1)(s + 5)}}{2}$ and $(\sigma_1^{-1})'(s) = \frac{1}{2} + \frac{\sqrt{s + 5}}{4 \sqrt{s + 1}} + \frac{\sqrt{s + 1}}{4 \sqrt{s + 5}}$ for every $s \in (-1, +\infty)$. These facts also allow one to show that \ref{hypo:tau-infimum}, \ref{hypo:tau-measurable}, and \ref{hypo:null} are verified, but \ref{hypo:radon-nik-bdd} is not. One computes in addition that $\mathbf n(t) = 1$ if and only if $t \in [0, \frac{1 + \sqrt{5}}{2})$.

Let $p \in [1, +\infty)$ and $x_0 \in L^p([-1, 0], \mathbb R^d)$. By combining Theorems~\ref{thm:exist-general}, \ref{thm:exist-measurable}, and \ref{thm:exist-ae}, \eqref{eq:diff-eqn-N1-time} admits a unique solution $x\colon [-1, +\infty) \to \mathbb R^d$ with initial condition $x_0$, which is also both a measurable and a.e.\ solution. An immediate computation from \eqref{eq:diff-eqn-N1-time} shows that, for every $t \in [0, \frac{1 + \sqrt{5}}{2})$, we have $x(t) = A x_0\left(t - 2 + \frac{1}{t + 1}\right)$. Hence,
\begin{align*}
\int_0^{\frac{1 + \sqrt{5}}{2}} \abs*{x(t)}^p \diff t & = \int_0^{\frac{1 + \sqrt{5}}{2}} \abs*{A x_0\left(t - 2 + \frac{1}{t + 1}\right)}^p \diff t \\
& = \int_{-1}^{0} \abs*{A x_0(s)}^p \left(\frac{1}{2} + \frac{\sqrt{s + 5}}{4 \sqrt{s + 1}} + \frac{\sqrt{s + 1}}{4 \sqrt{s + 5}}\right) \diff s,
\end{align*}
and the above integral may be equal to $+\infty$, in which case $x \notin L^p_{\mathrm{loc}}([-1, +\infty), \mathbb R^d)$.

For instance, assume that $A \neq 0$ and let $\alpha \in \left[\frac{1}{2p}, \frac{1}{p}\right)$ and $v \in \mathbb R^d \setminus \{0\}$ be such that $A v \neq 0$. Let $x_0$ be defined by $x_0(t) = \frac{1}{(t + 1)^\alpha} v$ for $t \in (-1, 0)$ and $x_0(-1) = 0$. Then $x_0 \in L^p([-1, 0], \mathbb R^d)$ but
\[
\int_{-1}^{0} \abs*{A x_0(s)}^p \frac{\sqrt{s + 5}}{4 \sqrt{s + 1}} \diff s = \abs{A v}^p \int_{-1}^0 \frac{\sqrt{s + 5}}{4 (s + 1)^{\alpha p + \frac{1}{2}}} \diff s = +\infty
\]
since $\alpha p + \frac{1}{2} \geq 1$.
\end{example}

Despite the importance of \ref{hypo:radon-nik-bdd} for Theorem~\ref{thm:exist-L-p}, illustrated by Theorem~\ref{thm:composition-Lp}\eqref{item:composition-Lp-equivs} and Example~\ref{expl:H6-important-Lp}, \ref{hypo:radon-nik-bdd} is not necessary for the existence of $L^p$ solutions of \eqref{eq:diff-eqn-N1-time}, as illustrated by the following example.

\begin{example}
Let $\tau$ be a delay function such that the function $\sigma_1 = \id - \tau$ is given, for $t \in \mathbb R_+$, by
\[
\sigma_1(t) = \begin{dcases*}
\sqrt[3]{t - 1} - 1 & if $t \in [0, 2)$, \\
(t - 3)^3 + 1 & if $t \in [2, 4)$, \\
t - 2 & if $t \in [4, +\infty)$.
\end{dcases*}
\]
Note that ${\sigma_1}_\# \mathcal L$ is $\sigma$-finite and absolutely continuous with respect to $\mathcal L$ and $\sigma_1\colon \mathbb R_+ \to [-2, +\infty)$ is increasing and invertible with
\[
\sigma_1^{-1}(s) = \begin{dcases*}
(s + 1)^3 + 1 & if $s \in [-2, 0)$, \\
\sqrt[3]{s - 1} + 3 & if $s \in [0, 2)$, \\
s + 2 & if $s \in [2, +\infty)$.
\end{dcases*}
\]
Hence, one can compute that the Radon--Nikodym derivative $\varphi$ of ${\sigma_1}_\# \mathcal L$ with respect to $\mathcal L$ is the unbounded function $\varphi$ defined for $s \in \mathbb R \setminus \{1\}$ by
\[
\varphi(s) = \begin{dcases*}
0 & if $s \in (-\infty, -2)$, \\
3 (s + 1)^2 & if $s \in [-2, 0)$, \\
\frac{1}{3 (s - 1)^{2/3}} & if $s \in [0, 2) \setminus \{1\}$, \\
1 & if $s \in [2, +\infty)$.
\end{dcases*}
\]

Even though $\varphi$ is unbounded, \eqref{eq:diff-eqn-N1-time} admits a unique $L^p$ solution for a given initial condition in $L^p$. Indeed, given $x_0 \in L^p([-2, 0], \mathbb R^d)$, it follows from Theorems~\ref{thm:exist-measurable} and \ref{thm:exist-ae} that \eqref{eq:diff-eqn-N1-time} admits a unique solution with initial condition $x_0$, which is both a measurable solution and an a.e.\ solution, and, by Theorem~\ref{thm:explicit-ae}, this solution satisfies \eqref{eq:explicit-formula-ae}. Explicit computations show that, for every $n \in \mathbb N^\ast$, we have $D_n = [2(n-1), +\infty)$, so that $\mathbf n(t) = \floor{t / 2} + 1$ for every $t \in \mathbb R_+$. In addition, by an immediate inductive argument, we have, for $n \geq 2$ and $t \in D_n$,
\[
\sigma_n(t) = \begin{dcases*}
t - 2n & if $t \in [2(n-1), 2n) \cup [2(n+1), +\infty)$, \\
(t - 2n - 1)^3 + 1 & if $t \in [2n, 2(n+1))$.
\end{dcases*}
\]
In particular, for $t \in \mathbb R_+$, we have
\[
\sigma_{\mathbf n(t)}(t) = \begin{dcases*}
\sqrt[3]{t - 1} - 1 & if $t \in [0, 2)$, \\
t - 2n & if $t \in [2(n-1), 2n)$ for some $n \in \mathbb N$ with $n \geq 2$.
\end{dcases*}
\]
Hence, by \eqref{eq:explicit-formula-ae}, for a.e.\ $t \in \mathbb R_+$, we have
\[
x(t) = \begin{dcases*}
A x_0(\sqrt[3]{t - 1} - 1) & if $t \in [0, 2)$, \\
A^n x_0(t - 2n) & if $t \in [2(n-1), 2n)$ for some $n \in \mathbb N$ with $n \geq 2$,
\end{dcases*}
\]
and thus one immediately verifies that $x \in L^p_{\mathrm{loc}}([-2, +\infty), \mathbb R^d)$, yielding that $x$ is an $L^p$ solution of \eqref{eq:diff-eqn-N1-time} with initial condition $x_0$.
\end{example}

We conclude this section by remarking that, by requiring the additional assumption \ref{hypo:radon-nik-and-rho} in Theorem~\ref{thm:exist-L-p}, one obtains the stronger conclusion that the solution $x$ belongs to $L^p(\lvert -h(0), +\infty), \mathbb R^d)$.

\begin{lemma}
\label{lemma:x-in-Lp}
Let $p \in [1, +\infty)$, $A \in \mathcal M_d(\mathbb R)$, $\tau$ be a delay function, $h$ be its associated largest delay function, and $x_0 \in L^p(\lvert -h(0), 0], \mathbb R^d)$. Assume that \ref{hypo:tau-infimum} and \ref{hypo:radon-nik-and-rho} are satisfied. Then the solution $x$ of \eqref{eq:diff-eqn-N1-time} with initial condition $x_0$ satisfies $x \in L^p(\lvert -h(0), +\infty),\allowbreak \mathbb R^d)$.
\end{lemma}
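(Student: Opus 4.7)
The plan is to combine the explicit representation formula from Theorem~\ref{thm:explicit-ae} with an iterated change-of-variables estimate based on the bounded Radon--Nikodym derivative $\varphi$ from \ref{hypo:radon-nik-and-rho}, and then close the argument via Gelfand's formula for the spectral radius.

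First, by Theorem~\ref{thm:explicit-ae}, one has $x(t) = A^{\mathbf n(t)} x_0(\sigma_{\mathbf n(t)}(t))$ for a.e.\ $t \in \mathbb R_+$. Partition $\mathbb R_+ = \bigsqcup_{k \geq 1} E_k$, where $E_k = \{t \in \mathbb R_+ \suchthat \mathbf n(t) = k\}$ (this covers $\mathbb R_+$ by Lemma~\ref{lemma:properties_n}\ref{item:prop-n-nonzero}). Since $\sigma_k(t) \in \lvert -h(0), 0)$ on each $E_k$ by Lemma~\ref{lemma:properties_n}\ref{item:sigma-nt-negative}, after extending $x_0$ by zero outside its domain the problem reduces to controlling
\[
\sum_{k=1}^{+\infty} \abs{A^k}^p \int_{E_k} \abs{x_0(\sigma_k(t))}^p \diff t.
\]

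Next, setting $K = \norm{\varphi}_{L^\infty(\mathbb R, \mathbb R)}$, I would prove by induction on $k$ that
\[
\int_{E_k} \abs{x_0(\sigma_k(t))}^p \diff t \leq K^k \norm{x_0}_{L^p(\lvert -h(0), 0], \mathbb R^d)}^p.
\]
The base case $k = 1$ follows from the pushforward identity $\int_{\mathbb R_+} f(\sigma_1(t)) \diff t = \int_{\mathbb R} f \varphi \diff\mathcal L \leq K \int_{\mathbb R} f \diff \mathcal L$ applied to $f = \abs{x_0}^p$ (extended by $0$), after observing that $\sigma_1(t) \geq 0$ exactly when $t \notin E_1$, so the extended integrand vanishes there. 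The induction step uses the set-theoretic identity $E_{k+1} = \sigma_1^{-1}(E_k)$, which is a direct consequence of Lemma~\ref{lemma:D_n-sigma1} and Lemma~\ref{lemma:properties_n}\ref{item:n_sigma_1}, together with $\sigma_{k+1} = \sigma_k \circ \sigma_1$ from \eqref{eq:defi-sigman}, by applying the same pushforward identity to the nonnegative function $g_k(u) = \mathbbm 1_{E_k}(u) \abs{x_0(\sigma_k(u))}^p$.

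Finally, Gelfand's formula yields, for every $\varepsilon > 0$, a constant $C_\varepsilon > 0$ such that $\abs{A^k} \leq C_\varepsilon (\rho(A) + \varepsilon)^k$ for every $k \in \mathbb N$. Since \ref{hypo:radon-nik-and-rho} gives $K \rho(A)^p < 1$, one can choose $\varepsilon > 0$ small enough that $K(\rho(A) + \varepsilon)^p < 1$, and combining the two estimates yields the convergent geometric series
\[
\int_0^{+\infty} \abs{x(t)}^p \diff t \leq C_\varepsilon^p \norm{x_0}_{L^p}^p \sum_{k=1}^{+\infty} \bigl[K (\rho(A) + \varepsilon)^p\bigr]^k < +\infty,
\]
which together with $x_0 \in L^p(\lvert -h(0), 0], \mathbb R^d)$ gives the conclusion.

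The main obstacle I expect is the inductive pushforward estimate: one must verify the identity $E_{k+1} = \sigma_1^{-1}(E_k)$, chain it correctly with $\sigma_{k+1} = \sigma_k \circ \sigma_1$, and confirm Lebesgue-measurability of the sets $E_k$ and of the intermediate integrands $g_k$ so that the pushforward formula applies at each step. Everything else --- the decomposition, the Gelfand bound, and the resulting geometric sum --- is essentially bookkeeping.
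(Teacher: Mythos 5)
Your proof is correct and follows essentially the same route as the paper: both decompose $\mathbb R_+$ into the layers $D_k \setminus D_{k+1}$ (your sets $E_k$) and transfer the $L^p$ mass layer by layer through the change-of-variables formula for ${\sigma_1}_{\#}\mathcal L$ with its bounded Radon--Nikodym derivative, ending with a convergent geometric series. The only cosmetic differences are that the paper iterates the one-step inequality on $x$ itself in a norm chosen so that $\norm{\varphi}_{L^\infty(\mathbb R, \mathbb R)} \abs{A}^p < 1$, whereas you push everything back to $x_0$ via the representation formula and absorb the powers of $A$ with Gelfand's formula; both devices handle the spectral radius condition equally well.
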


\begin{proof}
Let $\varphi$ denote the Radon--Nikodym derivative of ${\sigma_1}_{\#} \mathcal L$ with respect to $\mathcal L$. Since $\norm{\varphi}_{L^\infty(\mathbb R, \mathbb R)} \rho(A)^p < 1$, there exists a norm $\abs{\cdot}$ in $\mathbb R^d$ for which we have $\norm{\varphi}_{L^\infty(\mathbb R, \mathbb R)} \abs{A}^p \allowbreak < 1$ (see, e.g., \cite[Lemma~5.6.10]{Horn2013Matrix}). We fix such a norm in the sequel.

Let $(D_n)_{n \in \mathbb N}$ be the nonincreasing sequence of sets from Definition~\ref{def:Dn-sigman} and $D_\ast = \lvert -h(0), +\infty)$. Thanks to Lemma~\ref{lemma:D_n-sigma1}, we have $\sigma_1^{-1}(D_{n-1} \setminus D_n) = D_n \setminus D_{n+1}$ for every $n \in \mathbb N^\ast$. In addition, since $\sigma_1$ takes values in $D_\ast$ and $D_n \subset \mathbb R_+ \subset D_\ast$ for every $n \in \mathbb N^\ast$, we have $\sigma_1^{-1}(D_\ast \cap (D_{n-1} \setminus D_n)) = \sigma_1^{-1}(D_{n-1} \setminus D_n) = D_n \setminus D_{n+1} = D_\ast \cap (D_n \setminus D_{n+1})$ for every $n \in \mathbb N^\ast$.

Let $n \in \mathbb N^\ast$. Using \eqref{eq:diff-eqn-N1-time}, we have
\begin{align*}
\int_{D_\ast \cap (D_n \setminus D_{n+1})} \abs{x(t)}^p \diff t & \leq \abs{A}^p \int_{D_\ast \cap (D_n \setminus D_{n+1})} \abs{x(\sigma_1(t))}^p \diff t \displaybreak[0] \\
& = \abs{A}^p \int_{\sigma_1^{-1}(D_\ast \cap (D_{n-1} \setminus D_{n}))} \abs{x(\sigma_1(t))}^p \diff t \displaybreak[0] \\
& = \abs{A}^p \int_{D_\ast \cap (D_{n-1} \setminus D_{n})} \abs{x(t)}^p \diff {\sigma_1}_{\#} \mathcal L(t) \displaybreak[0] \\
& = \abs{A}^p \int_{D_\ast \cap (D_{n-1} \setminus D_{n})} \varphi(t) \abs{x(t)}^p \diff t \displaybreak[0] \\
& \leq \norm{\varphi}_{L^\infty(\mathbb R, \mathbb R)} \abs{A}^p \int_{D_\ast \cap (D_{n-1} \setminus D_{n})} \abs{x(t)}^p \diff t.
\end{align*}
Hence, for every $n \in \mathbb N$, we have
\begin{align*}
\int_{D_\ast \cap (D_n \setminus D_{n+1})} \abs{x(t)}^p \diff t & \leq \left(\norm{\varphi}_{L^\infty(\mathbb R, \mathbb R)} \abs{A}^p\right)^n \int_{D_\ast \cap (D_{0} \setminus D_{1})} \abs{x(t)}^p \diff t \displaybreak[0] \\
& = \left(\norm{\varphi}_{L^\infty(\mathbb R, \mathbb R)} \abs{A}^p\right)^n \norm{x_0}_{L^p(\lvert -h(0), 0), \mathbb R^d)}^p.
\end{align*}
Thus
\begin{align*}
\int_{-h(0)}^{+\infty} \abs{x(t)}^p \diff t = \sum_{n = 0}^{\infty} \int_{D_\ast \cap (D_n \setminus D_{n+1})} \abs{x(t)}^p \diff t \leq \frac{\norm{x_0}_{L^p(\lvert -h(0), 0), \mathbb R^d)}^p}{1 - \norm{\varphi}_{L^\infty(\mathbb R, \mathbb R)} \abs{A}^p},
\end{align*}
yielding the conclusion.
\end{proof}

\subsection{Continuous dependence on parameters}
\label{sec:continuous}

In order to complete the well-posedness analysis of \eqref{eq:diff-eqn-N1-time}, we now address the problem of the continuous dependence of the solution with respect to its initial condition and to the matrix $A$ from \eqref{eq:diff-eqn-N1-time}.

We start with the following preliminary lemma, whose proof, based on simple linear-algebraic computations, is provided here only for sake of completeness.

\begin{lemma}
\label{lemma:Akn-cv-uniform}
Let $A \in \mathcal M_d(\mathbb R)$ be a matrix satisfying \ref{hypo:spr-A-less-1} and $(A_k)_{k \in \mathbb N}$ be a sequence of matrices in $\mathcal M_d(\mathbb R)$ converging to $A$. Then $A_k^n \to A^n$ as $k \to +\infty$ uniformly on $n \in \mathbb N$.
\end{lemma}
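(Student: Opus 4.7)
The plan is to exploit $\rho(A) < 1$ by passing to a norm in which the induced matrix norm of $A$ is strictly less than $1$. By \cite[Lemma~5.6.10]{Horn2013Matrix} (or an equivalent standard result), since $\rho(A) < 1$ we can fix a norm $\abs{\cdot}$ on $\mathbb R^d$ whose induced matrix norm satisfies $\abs{A} < 1$. Choose $\alpha$ with $\abs{A} < \alpha < 1$. Since $A_k \to A$ and the map $M \mapsto \abs{M}$ is continuous, there exists $k_0 \in \mathbb N$ such that $\abs{A_k} \leq \alpha$ for every $k \geq k_0$, and of course $\abs{A} \leq \alpha$ as well.

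Next, I would use the standard telescoping identity
\begin{equation*}
A_k^n - A^n = \sum_{j=0}^{n-1} A_k^{j}(A_k - A) A^{n-1-j}, \qquad n \in \mathbb N^\ast,
\end{equation*}
which follows by induction on $n$ (the case $n=0$ gives $0 = 0$ trivially). Applying the submultiplicativity of the induced matrix norm yields, for every $k \geq k_0$ and every $n \in \mathbb N^\ast$,
\begin{equation*}
\abs{A_k^n - A^n} \leq \abs{A_k - A} \sum_{j=0}^{n-1} \abs{A_k}^{j}\, \abs{A}^{n-1-j} \leq n\, \alpha^{n-1}\, \abs{A_k - A}.
\end{equation*}

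Since $\alpha \in (0, 1)$, the quantity $C := \sup_{n \in \mathbb N^\ast} n\, \alpha^{n-1}$ is finite (it is a standard fact that $n \alpha^{n-1} \to 0$, so the supremum is attained). Therefore, for every $n \in \mathbb N^\ast$ and every $k \geq k_0$,
\begin{equation*}
\abs{A_k^n - A^n} \leq C\, \abs{A_k - A},
\end{equation*}
and the case $n = 0$ is trivial since $A_k^0 = A^0 = \Id$. Because $\abs{A_k - A} \to 0$ as $k \to +\infty$, the right-hand side goes to $0$ independently of $n$, giving the desired uniform convergence. All norms on $\mathcal M_d(\mathbb R)$ being equivalent, the conclusion holds for any choice of norm. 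The only real step that requires invocation of a nontrivial external fact is the existence of the norm for which $\abs{A} < 1$; everything else is a one-line telescoping estimate.
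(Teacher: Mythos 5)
Your proof is correct, but it follows a genuinely different route from the paper's. The paper argues by an $\varepsilon$-splitting: after fixing a norm with $\abs{A} < 1$ and setting $\rho_\ast = \frac{\abs{A}+1}{2}$, it treats the finitely many exponents $n \in \llbracket 0, N\rrbracket$ by continuity of the (finitely many) maps $M \mapsto M^n$, and the exponents $n > N$ by the crude bound $\abs{A_k^n - A^n} \leq \abs{A_k}^n + \abs{A}^n < 2\rho_\ast^n$, which is small because both matrices eventually have norm below $\rho_\ast$. You instead use the telescoping identity $A_k^n - A^n = \sum_{j=0}^{n-1} A_k^{j}(A_k - A)A^{n-1-j}$ together with the boundedness of $n\alpha^{n-1}$ for $\alpha \in (0,1)$, which yields the explicit estimate $\abs{A_k^n - A^n} \leq C \abs{A_k - A}$ uniformly in $n$ for $k$ large. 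Your argument is quantitatively stronger (it gives a Lipschitz-type rate in $\abs{A_k - A}$, not just uniform convergence), at the cost of the small algebraic identity; the paper's argument avoids any identity but only produces the qualitative conclusion. Both hinge on the same nontrivial ingredient, namely the existence of a norm with $\abs{A} < 1$ guaranteed by $\rho(A) < 1$, and your final remark on equivalence of norms correctly disposes of the norm-dependence of the estimate.
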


\begin{proof}
Take a norm $\abs{\cdot}$ in $\mathbb R^d$ such that $\abs{A} < 1$, which exists thanks to \ref{hypo:spr-A-less-1}. Let $\rho_\ast = \frac{\abs{A} + 1}{2} \in (\abs{A}, 1)$. Let $\varepsilon > 0$ and pick $N \in \mathbb N^\ast$ such that $\rho_\ast^n < \frac{\varepsilon}{2}$ for every $n \in \mathbb N$ with $n > N$. We then take $K \in \mathbb N$ such that $\abs*{A_k^n - A^n} < \min\left\{\varepsilon, \frac{1 - \abs{A}}{2}\right\}$ for every $n \in \llbracket 0, N\rrbracket$ and $k \in \mathbb N$ with $k \geq K$. In particular, $\abs*{A_k} < \abs{A} + \frac{1 - \abs{A}}{2} = \rho_\ast$ for every $k \in \mathbb N$ with $k \geq K$. Hence, for every $n \in \mathbb N$ and $k \in \mathbb N$, if $k \geq K$, we have
\[
\begin{aligned}
\abs*{A_k^n - A^n} & < \varepsilon, & \qquad & \text{if } n \in \llbracket 0, N\rrbracket, \\
\abs*{A_k^n - A^n} & \leq \abs*{A_k}^n + \abs*{A}^n < 2 \rho_\ast^n < \varepsilon, & & \text{if } n > N,
\end{aligned}
\]
yielding the conclusion.
\end{proof}

We now turn to the main topic of this section. Let us consider a sequence of problems
\begin{equation}\label{eq:diff-eqn-N1-time-k}
x_k (t) = A_k x_k (t - \tau(t)), \qquad t \geq 0,
\end{equation}
where $(A_k)_{k \in \mathbb N}$ is a sequence of matrices in $\mathcal M_d(\mathbb R)$. Our first result ensures that, under suitable assumptions, if $A_k$ converges to $A$ as $k \to \infty$, and if $x_k$ is a solution of \eqref{eq:diff-eqn-N1-time-k} for each $k \in \mathbb N$ such that the corresponding sequence of initial conditions $x_{0, k}$ converges to some function $x_0$, then $x_k$ converges, in a suitable sense, to a solution $x$ of the limiting problem \eqref{eq:diff-eqn-N1-time}.

\begin{theorem}
\label{thm:cd-on-p-1}
Let $(A_k)_{k \in \mathbb N}$ be a sequence of matrices in $\mathcal M_d(\mathbb R)$ converging to some $A \in \mathcal M_d(\mathbb R)$, $\tau$ be a delay function satisfying \ref{hypo:tau-infimum}, $h$ be its associated largest delay function, and, for each $k \in \mathbb N$, $x_k \colon \lvert -h(0), +\infty) \to \mathbb R^d$ be a solution of \eqref{eq:diff-eqn-N1-time-k} with initial condition $x_{0, k} \colon \lvert -h(0), 0) \to \mathbb R^d$. Assume that there exists a function $x_0 \colon \lvert -h(0), +\infty) \to \mathbb R^d$ such that $\lim_{k \to +\infty} x_{0, k}(t) = x_0(t)$ for every $t \in \lvert -h(0), 0)$. Then the following assertions hold true.
\begin{enumerate}
\item\label{item:cd-simple} We have $\lim_{k \to +\infty} x_k(t) = x(t)$ for every $t \in \lvert -h(0), +\infty)$, where $x$ is the unique solution of \eqref{eq:diff-eqn-N1-time} with initial condition $x_0$.

\item\label{item:cd-uniform-compact} If $\lim_{k \to +\infty} x_{0, k}(t) = x_0(t)$ uniformly for $t \in \lvert -h(0), 0)$ and $x_0$ is bounded, then, for every $T > 0$, we have $\lim_{k \to +\infty} x_k(t) = x(t)$ uniformly for $t \in \lvert -h(0), T]$, where $x$ is as in \ref{item:cd-simple}.

\item\label{item:cd-uniform-all} If $\lim_{k \to +\infty} x_{0, k}(t) = x_0(t)$ uniformly for $t \in \lvert -h(0), 0)$, $x_0$ is bounded, and $A$ satisfies \ref{hypo:spr-A-less-1}, then $\lim_{k \to +\infty} x_k(t) = x(t)$ uniformly for $t \in \lvert -h(0), +\infty)$, where $x$ is as in \ref{item:cd-simple}.
\end{enumerate}
\end{theorem}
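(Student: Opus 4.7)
The plan is to exploit the explicit formula \eqref{eq:explicit-formula} from Theorem~\ref{theorem-1}, noticing that the iterate function $\mathbf n$ and the iterated delay functions $\sigma_n$ depend only on the delay $\tau$, not on $A$ or on the initial condition. Hence, for every $t \in \mathbb R_+$ and every $k \in \mathbb N$, one has the decomposition
\[
x_k(t) - x(t) = \bigl(A_k^{\mathbf n(t)} - A^{\mathbf n(t)}\bigr) x_{0, k}(\sigma_{\mathbf n(t)}(t)) + A^{\mathbf n(t)} \bigl(x_{0, k}(\sigma_{\mathbf n(t)}(t)) - x_0(\sigma_{\mathbf n(t)}(t))\bigr),
\]
where $\sigma_{\mathbf n(t)}(t) \in \lvert -h(0), 0)$ thanks to Lemma~\ref{lemma:properties_n}\ref{item:sigma-nt-negative}. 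All three assertions will be derived from this identity by controlling each of the two terms in a way suitable for the type of convergence sought.

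For \ref{item:cd-simple}, fix $t \in \lvert -h(0), +\infty)$. If $t < 0$, then $x_k(t) = x_{0, k}(t) \to x_0(t) = x(t)$, and otherwise the integer $\mathbf n(t)$ is a fixed constant, so $A_k^{\mathbf n(t)} \to A^{\mathbf n(t)}$ as $k \to +\infty$ by continuity of matrix multiplication, while $x_{0, k}(\sigma_{\mathbf n(t)}(t)) \to x_0(\sigma_{\mathbf n(t)}(t))$ by assumption; in particular the sequence $(x_{0, k}(\sigma_{\mathbf n(t)}(t)))_{k \in \mathbb N}$ is bounded, so both terms in the decomposition tend to $0$.

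For \ref{item:cd-uniform-compact}, let $T > 0$ and apply Lemma~\ref{lemma:n-loc-bounded} to get an integer $N_T$ with $\mathbf n(t) \leq N_T$ for every $t \in [0, T]$. Since $\llbracket 0, N_T\rrbracket$ is finite and $A_k \to A$, one has $\max_{n \in \llbracket 0, N_T\rrbracket} \abs{A_k^n - A^n} \to 0$ and $C := \max_{n \in \llbracket 0, N_T\rrbracket} \abs{A^n} < +\infty$. Uniform convergence of $x_{0, k}$ to $x_0$ on $\lvert -h(0), 0)$ together with boundedness of $x_0$ yields $M := \sup_{k \in \mathbb N} \sup_{s \in \lvert -h(0), 0)} \abs{x_{0, k}(s)} < +\infty$ and $\varepsilon_k := \sup_{s \in \lvert -h(0), 0)} \abs{x_{0, k}(s) - x_0(s)} \to 0$, so that for every $t \in [0, T]$,
\[
\abs{x_k(t) - x(t)} \leq M \max_{n \in \llbracket 0, N_T\rrbracket} \abs{A_k^n - A^n} + C \varepsilon_k,
\]
and this upper bound tends to $0$ independently of $t$; combined with the uniform convergence on $\lvert -h(0), 0)$, this yields uniform convergence on $\lvert -h(0), T]$.

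For \ref{item:cd-uniform-all}, the obstruction is that $\mathbf n(t)$ is no longer bounded as $t$ ranges over $\mathbb R_+$, so that $\abs{A_k^{\mathbf n(t)} - A^{\mathbf n(t)}}$ and $\abs{A^{\mathbf n(t)}}$ must be controlled uniformly in $n \in \mathbb N$. This is exactly where \ref{hypo:spr-A-less-1} enters: Lemma~\ref{lemma:Akn-cv-uniform} gives $\sup_{n \in \mathbb N} \abs{A_k^n - A^n} \to 0$, and $\rho(A) < 1$ also implies that $\sup_{n \in \mathbb N} \abs{A^n}$ is finite (in a suitable equivalent norm). Arguing exactly as in \ref{item:cd-uniform-compact} but with the two constants $M$ and $C$ now controlled uniformly over all $n \in \mathbb N$, the decomposition above bounds $\abs{x_k(t) - x(t)}$ by a quantity independent of $t$ and converging to $0$, yielding uniform convergence on $\lvert -h(0), +\infty)$. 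The only delicate step is Lemma~\ref{lemma:Akn-cv-uniform}, which is precisely the uniform-in-$n$ control needed here.
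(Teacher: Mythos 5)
Your proposal is correct and follows essentially the same route as the paper: the representation formula of Theorem~\ref{theorem-1}, Lemma~\ref{lemma:n-loc-bounded} for the locally uniform case, and Lemma~\ref{lemma:Akn-cv-uniform} together with a norm in which $\abs{A} < 1$ for the globally uniform case, the only difference being that your triangle-inequality splitting attaches $A_k^{\mathbf n(t)} - A^{\mathbf n(t)}$ to $x_{0,k}$ instead of to $x_0$. The one small point to tidy is that $M := \sup_{k \in \mathbb N} \sup_{s \in \lvert -h(0), 0)} \abs{x_{0,k}(s)}$ need not be finite (uniform convergence controls only the tail in $k$, and the first few $x_{0,k}$ could be unbounded), so either take this supremum only over $k$ large enough, or group the terms as in the paper so that only the bound on $x_0$ is needed.
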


\begin{proof}
By Theorem~\ref{theorem-1}, we have, for all $k \in \mathbb N$ and $t \in \mathbb R_+$,
\[
x_k(t) = A_k^{\mathbf n(t)} x_{0, k}(\sigma_{\mathbf n(t)}(t)), \qquad x(t) = A^{\mathbf n(t)} x_{0}(\sigma_{\mathbf n(t)}(t)),
\]
where $\mathbf n \colon \mathbb R \to \mathbb N$ is the function from Corollary~\ref{coro:integer-iterations}. Assertion \ref{item:cd-simple} follows immediately from the above formulas by letting $k \to +\infty$.

Assume now that the convergence of $x_{0, k}$ to $x_0$ is uniform in $\lvert -h(0), 0)$ and that $x_0$ is bounded and take $T > 0$. Fix an arbitrary norm $\abs{\cdot}$ in $\mathbb R^d$ and take $M > 0$ such that $\abs*{x_0(t)} \leq M$ for every $t \in \lvert -h(0), 0)$. Using Lemma~\ref{lemma:n-loc-bounded}, we also take $N > 0$ such that $\mathbf n(t) \leq N$ for every $t \in \lvert -h(0), T]$. Let $C > 0$ be such that $\abs*{A^n} \leq C$ for every $n \in \llbracket 0, N\rrbracket$. Finally, take $\varepsilon \in (0, 2 M C)$.

Since $A_k \to A$ as $k \to +\infty$, there exists $K \in \mathbb N$ such that $\abs*{A_k^n - A^n} < \frac{\varepsilon}{2 M}$ for every $k \in \mathbb N$ with $k \geq K$ and $n \in \llbracket 0, N\rrbracket$. In particular, we have $\abs*{A_k^n} \leq C + \frac{\varepsilon}{2 M} < 2 C$ for every $k \in \mathbb N$ with $k \geq K$ and $n \in \llbracket 0, N\rrbracket$. Using the uniform convergence of $x_{0, k}$ to $x_k$, we deduce that, up to increasing $K$, we have $\abs*{x_{0, k}(s) - x_0(s)} < \frac{\varepsilon}{4 C}$ for every $s \in \lvert -h(0), 0)$ and $k \in \mathbb N$ with $k \geq K$. Hence, for every $k \in \mathbb N$ and $t \in [0, T]$, if $k \geq K$, we then have
\begin{align}
\abs*{x_k(t) - x(t)} & = \abs*{A_k^{\mathbf n(t)} x_{0, k}(\sigma_{\mathbf n(t)}(t)) - A^{\mathbf n(t)} x_{0}(\sigma_{\mathbf n(t)}(t))} \notag \\
& \leq \abs*{A_k^{\mathbf n(t)}} \abs*{x_{0, k}(\sigma_{\mathbf n(t)}(t)) - x_{0}(\sigma_{\mathbf n(t)}(t))} + \abs*{A_k^{\mathbf n(t)} - A^{\mathbf n(t)}} \abs*{x_{0}(\sigma_{\mathbf n(t)}(t))} \notag \\
& < 2 C \frac{\varepsilon}{4 C} + \frac{\varepsilon}{2 M} M = \varepsilon, \label{eq:estim-uniform-cv}
\end{align}
completing the proof of \ref{item:cd-uniform-compact}.

To prove \ref{item:cd-uniform-all}, fix a norm $\abs*{\cdot}$ in $\mathbb R^d$ such that $\abs*{A} < 1$, which exists thanks to \ref{hypo:spr-A-less-1}. We take $K \in \mathbb N$ large enough so that $\abs{A_k} < 1$ for every $k \in \mathbb N$ with $k \geq K$. In particular, $\abs{A_k^n} \leq \abs{A_k}^n < 1$ and $\abs{A^n} \leq \abs{A}^n < 1$ for every $n \in \mathbb N$ and $k \in \mathbb N$ with $k \geq K$. We let $M > 0$ be as in the proof of \ref{item:cd-uniform-compact} and we take $\varepsilon > 0$. Using Lemma~\ref{lemma:Akn-cv-uniform}, up to increasing $K$, we have $\abs*{A_k^n - A^n} < \frac{\varepsilon}{2 M}$ for every $n \in \mathbb N$ and $k \in \mathbb N$ with $k \geq K$. Using the uniform convergence of $x_{0, k}$ to $x_k$, we deduce that, up to increasing $K$, we have $\abs*{x_{0, k}(s) - x_0(s)} < \frac{\varepsilon}{2}$ for every $s \in \lvert -h(0), 0)$ and $k \in \mathbb N$ with $k \geq K$. Hence, for every $k \in \mathbb N$ and $t \in \mathbb R_+$, if $k \geq K$, the same estimate \eqref{eq:estim-uniform-cv} as above holds with $C$ replaced by $\frac{1}{2}$, yielding the conclusion.
\end{proof}

\begin{example}
Let $a \in \mathbb R$. For $k \in \mathbb N^\ast$, consider the scalar equation
\[x_k(t) = \left(a + \dfrac{1}{k}\right) x_k(t - 1), \qquad t \geq 0,\]
with initial condition
\[x_{0, k}(t) = a + \dfrac{1}{k} \qquad \text{ for } t \in [-1, 0).\]
It is immediate to verify that the corresponding solution is given by
\[x_k(t) = \left(a + \dfrac{1}{k}\right)^{\floor{t} + 2} \qquad \text{ for } t \in [-1, +\infty).\]
Since $a + \frac{1}{k} \to a$ as $k \to +\infty$, it follows from Theorem~\ref{thm:cd-on-p-1} that the function $x\colon [-1, +\infty) \to \mathbb R$ defined by
\[
x(t) = \lim_{k \to \infty} x_k(t) = a^{\floor{t} + 2}
\]
is the unique solution of the equation
\[
x(t) = a x(t - 1), \qquad t \geq 0,
\]
with initial condition $x_0\colon [-1, 0) \to \mathbb R$ given by $x_0(t) = a$ for $t \in [-1, 0)$, a fact that can also be verified by an immediate computation. The convergence of $(x_k(t))_{k \in \mathbb N}$ to $x(t)$ is uniform in $t$ in any interval of the form $[-1, T]$ for $T > 0$, and it is also uniform in $[-1, +\infty)$ when $\abs{a} < 1$.
\end{example}

\begin{remark}
Theorem~\ref{thm:cd-on-p-1} deals with continuity with respect to the matrix $A$ and the initial condition $x_0$ of the difference equation \eqref{eq:diff-eqn-N1-time}, but not with respect to the delay function $\tau$, and the main reason for that is that the counterpart of Theorem~\ref{thm:cd-on-p-1} is not true for continuity with respect to $\tau$. Indeed, for $k \in \mathbb N$, consider the scalar difference equation
\begin{equation}
\label{eq:expl-not-cd-tau}
x_k(t) = x_k(t - \tau_k(t)), \qquad t \geq 0,
\end{equation}
where $\tau_k(t) = t + 1 - \frac{1}{k + 2}$ for $t \in \mathbb R_+$. Note that $\tau_k$ satisfies \ref{hypo:tau-infimum} for every $k \in \mathbb N$ and that $\tau_k(t) \to \tau(t)$ as $k \to +\infty$, where $\tau(t) = t + 1$ for $t \in \mathbb R_+$, this convergence being uniform in $t \in \mathbb R_+$. We let $x_0\colon [-1, 0) \to \mathbb R$ be given by $x_0(-1) = 0$ and $x_0(t) = 1$ for $t \in (-1, 0)$.

For $k \in \mathbb N$, the unique solution $x_k$ of \eqref{eq:expl-not-cd-tau} with initial condition $x_0 \rvert_{[-1 + \frac{1}{k + 2}, 0)}$ satisfies $x_k(t) = x_0(-1 + \frac{1}{k + 2}) = 1$ for every $t \geq 0$. On the other hand, the unique solution $x$ of the limit equation
\[x(t) = x(t - \tau(t)), \qquad t \geq 0,\]
with initial condition $x_0$ satisfies $x(t) = x_0(-1) = 0$ for every $t \geq 0$. Hence, for every $t \in \mathbb R_+$, $x_k(t)$ does not converge to $x(t)$ as $k \to +\infty$.

This example can be seen as reminiscent of the fact that, even for time-delay systems with constant delays, some properties of the system, such as exponential stability of the trivial solution, may fail to be robust with respect to variations of the delay, as illustrated for instance in \cite[Section~9.6]{Hale1993Introduction}, \cite[Example~1.37]{Mazanti2016Stability}, and \cite[Example~1.36]{Michiels2014Stability}. Issues of robustness (or continuous dependence of solutions) with respect to the delays are typically much more involved to analyze and deserve a study on their own, being thus out of the scope of the present paper.
\end{remark}

As easy consequences of Theorem~\ref{thm:cd-on-p-1}, we obtain at once the following two results, which deal with continuous dependence of continuous and regulated solutions of \eqref{eq:diff-eqn-N1-time}.

\begin{corollary}
\label{col:cd-on-p-2}
Let $(A_k)_{k \in \mathbb N}$ be a sequence of matrices in $\mathcal M_d(\mathbb R)$ converging to some $A \in \mathcal M_d(\mathbb R)$, $\tau$ be a delay function satisfying \ref{hypo:tau-infimum}, $h$ be its associated largest delay function, and, for each $k \in \mathbb N$, $x_k \in C(\lvert -h(0), +\infty), \mathbb R^d)$ be a continuous solution of \eqref{eq:diff-eqn-N1-time-k} with initial condition $x_{0, k} \in C(\lvert -h(0), 0], \mathbb R^d)$. Assume that there exists a continuous and bounded function $x_0 \in C(\lvert -h(0), 0], \mathbb R^d)$ such that $\lim_{k \to +\infty} x_{0, k}(t) = x_0(t)$ uniformly for $t \in \lvert -h(0), 0]$. Then, for every $T > 0$, we have $\lim_{k \to +\infty} x_{k}(t) = x(t)$ uniformly for $t \in \lvert -h(0), T]$, where $x \in C(\lvert -h(0), +\infty), \mathbb R^d)$ is the unique continuous solution of \eqref{eq:diff-eqn-N1-time} with initial condition $x_0$. In addition, if $A$ satisfies \ref{hypo:spr-A-less-1}, then $\lim_{k \to +\infty} x_{k}(t) = x(t)$ uniformly for $t \in \lvert -h(0), +\infty)$.
\end{corollary}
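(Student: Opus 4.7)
The plan is to reduce Corollary~\ref{col:cd-on-p-2} directly to Theorem~\ref{thm:cd-on-p-1}. Every continuous solution $x_k$ of \eqref{eq:diff-eqn-N1-time-k} is, in particular, a solution in the sense of Definition~\ref{def:solutions}, and the uniform convergence $x_{0,k} \to x_0$ on $\lvert -h(0), 0]$ trivially implies uniform convergence of the restrictions to $\lvert -h(0), 0)$. Together with the boundedness of $x_0$, this meets the hypotheses of Theorem~\ref{thm:cd-on-p-1}\ref{item:cd-uniform-compact}. Letting $x$ denote the unique solution of \eqref{eq:diff-eqn-N1-time} with initial condition $x_0\rvert_{\lvert -h(0), 0)}$ provided by Theorem~\ref{thm:exist-general}, that theorem yields $x_k \to x$ uniformly on $\lvert -h(0), T]$ for every $T > 0$.

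Next, I verify that $x$ is continuous on $\lvert -h(0), +\infty)$ and that its initial condition (in the sense appropriate for continuous solutions, i.e.\ its restriction to $\lvert -h(0), 0]$) is indeed $x_0$. The first is the standard uniform-limit theorem: each $x_k$ is continuous on $\lvert -h(0), T]$ and the convergence is uniform there, so $x$ is continuous on $\lvert -h(0), T]$ for each $T > 0$, hence on all of $\lvert -h(0), +\infty)$. For the second, $x$ agrees with $x_0$ on $\lvert -h(0), 0)$ by construction; at the endpoint $t = 0$, continuity of each $x_k$ at $0$ forces $x_{0,k}(0) = A_k x_{0,k}(-\tau(0))$, and passing to the limit in this identity using the uniform convergence of $x_{0,k}$ and $A_k \to A$ yields $x_0(0) = A x_0(-\tau(0)) = x(0)$. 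Uniqueness of $x$ as a continuous solution with initial condition $x_0$ then follows from Corollary~\ref{coro:uniqueness}.

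For the final assertion, when $A$ additionally satisfies \ref{hypo:spr-A-less-1}, one invokes Theorem~\ref{thm:cd-on-p-1}\ref{item:cd-uniform-all} in place of \ref{item:cd-uniform-compact} to upgrade the conclusion to uniform convergence on the whole half-line $\lvert -h(0), +\infty)$; continuity of $x$ is either already known from the previous step or recovered by the same uniform-limit argument. I do not anticipate any substantive obstacle: the corollary is essentially a packaging of Theorem~\ref{thm:cd-on-p-1} under the extra continuity hypothesis, with the only new input being that the limiting function inherits continuity by a standard argument.
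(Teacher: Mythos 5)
Your proposal is correct and follows essentially the same route as the paper, which presents this corollary as an immediate consequence of Theorem~\ref{thm:cd-on-p-1}\ref{item:cd-uniform-compact} and \ref{item:cd-uniform-all}, with continuity of the limit $x$ recovered from the uniform convergence and the compatibility condition $x_0(0) = A x_0(-\tau(0))$ obtained by passing to the limit in $x_{0,k}(0) = A_k x_{0,k}(-\tau(0))$, exactly as you do. No gaps.
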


\begin{corollary}
\label{col:cd-on-p-3}
Let $(A_k)_{k \in \mathbb N}$ be a sequence of matrices in $\mathcal M_d(\mathbb R)$ converging to some $A \in \mathcal M_d(\mathbb R)$, $\tau$ be a delay function satisfying \ref{hypo:tau-infimum}, $h$ be its associated largest delay function, and, for each $k \in \mathbb N$, $x_k \in G(\lvert -h(0), +\infty), \mathbb R^d)$ be a regulated solution of \eqref{eq:diff-eqn-N1-time-k} with initial condition $x_{0, k} \in G(\lvert -h(0), 0], \mathbb R^d)$. Assume that there exists a bounded regulated function $x_0 \in G(\lvert -h(0), 0], \mathbb R^d)$ such that $\lim_{k \to +\infty} x_{0, k}(t) = x_0(t)$ uniformly for $t \in \lvert -h(0), 0]$. Then, for every $T > 0$, we have $\lim_{k \to +\infty} x_{k}(t) = x(t)$ uniformly for $t \in \lvert -h(0), T]$, where $x \in G(\lvert -h(0), +\infty), \mathbb R^d)$ is the unique regulated solution of \eqref{eq:diff-eqn-N1-time} with initial condition $x_0$. In addition, if $A$ satisfies \ref{hypo:spr-A-less-1}, then $\lim_{k \to +\infty} x_{k}(t) = x(t)$ uniformly for $t \in \lvert -h(0), +\infty)$.
\end{corollary}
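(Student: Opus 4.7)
The plan is to invoke Theorem~\ref{thm:cd-on-p-1} directly, since all of its hypotheses hold in this setting: the matrices $A_k$ converge to $A$, the delay function $\tau$ satisfies \ref{hypo:tau-infimum}, each $x_k$ is in particular a solution of \eqref{eq:diff-eqn-N1-time-k} in the sense of Definition~\ref{def:solutions}, and the sequence of initial conditions $x_{0, k}$ converges uniformly on $\lvert -h(0), 0]$ (hence certainly on $\lvert -h(0), 0)$) to the bounded function $x_0$. Applying Theorem~\ref{thm:cd-on-p-1}\ref{item:cd-uniform-compact} yields the pointwise function $x\colon \lvert -h(0), +\infty) \to \mathbb R^d$ together with uniform convergence $x_k \to x$ on every compact interval $\lvert -h(0), T]$, and, under the additional assumption \ref{hypo:spr-A-less-1}, Theorem~\ref{thm:cd-on-p-1}\ref{item:cd-uniform-all} promotes this to uniform convergence on the whole unbounded interval $\lvert -h(0), +\infty)$.

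The only remaining step is to verify that the limit $x$ is itself regulated, so that it qualifies as the unique regulated solution of \eqref{eq:diff-eqn-N1-time} with initial condition $x_0$. For this, I would invoke the standard fact that a uniform limit on a compact interval of regulated functions is regulated: given $t_0$ in the interior of $\lvert -h(0), T]$, the one-sided limits $x(t_0^\pm)$ exist because, for every $\varepsilon > 0$, choosing $k$ large enough to make $\sup_{t \in \lvert -h(0), T]} \abs{x_k(t) - x(t)} < \varepsilon/3$ and then using the existence of $x_k(t_0^\pm)$, one obtains that the oscillation of $x$ in a suitably small one-sided neighborhood of $t_0$ is bounded by $\varepsilon$. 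Applied on each interval $\lvert -h(0), T]$, this shows that $x \in G(\lvert -h(0), +\infty), \mathbb R^d)$. Uniqueness among regulated solutions is then a special case of Corollary~\ref{coro:uniqueness}.

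I do not foresee any real obstacle: the argument is a packaging of Theorem~\ref{thm:cd-on-p-1} together with the stability of the class of regulated functions under uniform convergence, exactly parallel to how Corollary~\ref{col:cd-on-p-2} packages Theorem~\ref{thm:cd-on-p-1} with the analogous (and more familiar) fact for continuous functions. The only minor points of care are notational: the initial conditions live on $\lvert -h(0), 0]$ rather than on $\lvert -h(0), 0)$, but the compatibility relation $x_{0, k}(0) = A_k x_{0, k}(-\tau(0))$ is automatic from the fact that $x_k$ solves \eqref{eq:diff-eqn-N1-time-k}, so restricting to $\lvert -h(0), 0)$ to apply Theorem~\ref{thm:cd-on-p-1} loses no information and the uniform convergence on the half-open interval is inherited from the one on the closed interval.
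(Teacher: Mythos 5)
Your proposal is correct and follows essentially the same route as the paper, which also obtains this corollary directly from Theorem~\ref{thm:cd-on-p-1} and, as indicated in the remark following it, deduces the regulatedness of the limit $x$ from the uniform convergence on each $\lvert -h(0), T]$, with uniqueness coming from Corollary~\ref{coro:uniqueness} and the compatibility condition $x_0(0) = A x_0(-\tau(0))$ obtained by passing to the limit in $x_{0,k}(0) = A_k x_{0,k}(-\tau(0))$. The only cosmetic point is that $\lvert -h(0), T]$ need not be compact when $h(0) = +\infty$, but your $\varepsilon/3$ argument for the existence of one-sided limits only uses uniformity of the convergence on the interval, not compactness, so nothing is lost.
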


\begin{remark}
In Corollary~\ref{col:cd-on-p-2} (respectively, Corollary~\ref{col:cd-on-p-3}), the fact that $x_k$ is a continuous (respectively, regulated) solution of \eqref{eq:diff-eqn-N1-time-k} implies that its initial condition $x_{0, k}$ satisfies the compatibility condition $x_{0, k}(0) = A_k x_{0, k}(-\tau(0))$. In particular, letting $k \to +\infty$, we deduce that $x_0(0) = A x_0(-\tau(0))$, i.e., $x_0$ also satisfies a compatibility condition.
\end{remark}

\begin{remark}
Note that, contrarily to Theorem~\ref{thm:exist-continuous}, we do not assume that $\tau$ satisfies \ref{hypo:tau-cont} in Corollary~\ref{col:cd-on-p-2}. We assume, however, that continuous solutions $x_k$ of \eqref{eq:diff-eqn-N1-time-k} do exist (and they are unique given their initial conditions $x_{0, k}$ thanks to Corollary~\ref{coro:uniqueness}). The existence and uniqueness of the solution $x$ of \eqref{eq:diff-eqn-N1-time} with initial condition $x_0$ then follows from Theorem~\ref{thm:cd-on-p-1}, and its continuity is a consequence of the uniform convergence from Theorem~\ref{thm:cd-on-p-1}\ref{item:cd-uniform-compact}. A similar remark applies to Corollary~\ref{col:cd-on-p-3}, \ref{hypo:regulated}, and regulated solutions.
\end{remark}

We also obtain, as a consequence of Theorem~\ref{thm:cd-on-p-1}, the counterpart of Corollaries~\ref{col:cd-on-p-2} and \ref{col:cd-on-p-3} for $L^\infty$ solutions. Since the proof of such a result is slightly more delicate, we provide its details below.

\begin{corollary}
\label{col:cd-on-p-Linfty}
Let $(A_k)_{k \in \mathbb N}$ be a sequence of matrices in $\mathcal M_d(\mathbb R)$ converging to some $A \in \mathcal M_d(\mathbb R)$, $\tau$ be a delay function satisfying \ref{hypo:tau-infimum}, \ref{hypo:tau-measurable}, and \ref{hypo:null}, $h$ be its associated largest delay function, and, for each $k \in \mathbb N$, $x_k \in L^\infty_{\mathrm{loc}}(\lvert -h(0), +\infty), \mathbb R^d)$ be an $L^\infty$ solution of \eqref{eq:diff-eqn-N1-time-k} with initial condition $x_{0, k} \in L^\infty(\lvert -h(0), 0], \mathbb R^d)$. Assume that there exists $x_0 \in L^\infty(\lvert -h(0), 0], \mathbb R^d)$ such that $\lim_{k \to +\infty} \norm{x_{0, k} - x_0}_{L^\infty(\lvert -h(0), 0], \mathbb R^d)} = 0$. Then, for every $T > 0$, we have $\lim_{k \to +\infty} \norm{x_{k} - x}_{L^\infty(\lvert -h(0), T], \mathbb R^d)} = 0$, where $x \in L^\infty_{\mathrm{loc}}(\lvert -h(0), +\infty), \mathbb R^d)$ is the unique $L^\infty$ solution of \eqref{eq:diff-eqn-N1-time} with initial condition $x_0$. In addition, if $A$ satisfies \ref{hypo:spr-A-less-1}, then $\lim_{k \to +\infty} \norm{x_{k} - x}_{L^\infty(\lvert -h(0), +\infty), \mathbb R^d)} = 0$.
\end{corollary}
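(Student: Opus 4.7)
The plan is to combine the representation formula of Theorem~\ref{thm:explicit-ae} with the null-set preservation property \ref{hypo:null} so as to upgrade the pointwise estimates used in the proof of Theorem~\ref{thm:cd-on-p-1} into essential-supremum estimates. I would begin by invoking Theorem~\ref{thm:exist-L-infty} to obtain the unique $L^\infty$ solution $x$ of \eqref{eq:diff-eqn-N1-time} with initial condition $x_0$, and then Theorem~\ref{thm:explicit-ae} to write, for almost every $t \in \mathbb R_+$,
\[
x_k(t) = A_k^{\mathbf n(t)} x_{0, k}(\sigma_{\mathbf n(t)}(t)), \qquad x(t) = A^{\mathbf n(t)} x_0(\sigma_{\mathbf n(t)}(t)),
\]
where $\mathbf n \colon \mathbb R \to \mathbb N$ is the function from Corollary~\ref{coro:integer-iterations}.

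The crucial observation is the following: if $N \subset \lvert -h(0), 0)$ is Lebesgue null, then $\{t \in \mathbb R_+ \suchthat \sigma_{\mathbf n(t)}(t) \in N\}$ is also null, because it is contained in $\bigcup_{n \in \mathbb N^\ast} \sigma_n^{-1}(N)$, and \ref{hypo:null} together with \eqref{eq:rec-sigman} and an immediate induction (exactly as in the proof of Theorem~\ref{thm:explicit-ae}) yield $\mathcal L(\sigma_n^{-1}(N)) = 0$ for every $n$. Choosing representatives of $x_{0, k}$ and $x_0$ so that $\abs{x_{0, k}(s) - x_0(s)} \leq \norm{x_{0, k} - x_0}_{L^\infty(\lvert -h(0), 0], \mathbb R^d)}$ and $\abs{x_0(s)} \leq \norm{x_0}_{L^\infty(\lvert -h(0), 0], \mathbb R^d)}$ off countably many null sets $N_k, N_\infty \subset \lvert -h(0), 0)$, and applying the observation to each of these, I obtain a single null set $N_\ast \subset \mathbb R_+$ off of which, for every $k$,
\[
\abs{x_k(t) - x(t)} \leq \abs*{A_k^{\mathbf n(t)}} \norm{x_{0, k} - x_0}_{L^\infty(\lvert -h(0), 0], \mathbb R^d)} + \abs*{A_k^{\mathbf n(t)} - A^{\mathbf n(t)}} \norm{x_0}_{L^\infty(\lvert -h(0), 0], \mathbb R^d)}.
\]

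For the first assertion, I would fix $T > 0$ and use Lemma~\ref{lemma:n-loc-bounded} to obtain $N_T$ with $\mathbf n(t) \leq N_T$ on $(-\infty, T]$; then $\max_{n \in \llbracket 0, N_T\rrbracket} \abs*{A_k^n}$ is bounded uniformly in $k$ (since $A_k \to A$ implies $\abs{A_k}$ is bounded), while $\max_{n \in \llbracket 0, N_T\rrbracket} \abs*{A_k^n - A^n} \to 0$ as $k \to +\infty$, giving the claim $\norm{x_k - x}_{L^\infty(\lvert -h(0), T], \mathbb R^d)} \to 0$. For the second assertion, under \ref{hypo:spr-A-less-1}, I would pick a norm on $\mathbb R^d$ in which $\abs{A} < 1$ and invoke Lemma~\ref{lemma:Akn-cv-uniform} to conclude that $\abs*{A_k^n} \leq 1$ for $k$ large and every $n$, while $A_k^n \to A^n$ uniformly in $n \in \mathbb N$; the same display then yields $\norm{x_k - x}_{L^\infty(\lvert -h(0), +\infty), \mathbb R^d)} \to 0$.

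The main subtlety is precisely the crucial observation above: the essential suprema control $\abs{x_{0, k}(s) - x_0(s)}$ and $\abs{x_0(s)}$ only off null sets of $s$, and it is \ref{hypo:null}, applied iteratively to the $\sigma_n$, that allows these null sets to be transported through the change of variable $s = \sigma_{\mathbf n(t)}(t)$ while remaining null in $t$. Once this is available, the remainder of the argument closely parallels the proof of Theorem~\ref{thm:cd-on-p-1}.
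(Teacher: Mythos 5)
Your argument is correct, but it handles the measure-theoretic core differently from the paper. The paper never transports null sets through the iterated delay maps: it selects everywhere-defined, measurable representatives $\overline x_0$, $\overline x_{0,k}$ whose pointwise suprema coincide with the $L^\infty$ norms and for which the convergence holds everywhere, builds the corresponding everywhere-defined solutions via Theorem~\ref{thm:exist-general}, identifies them as representatives of the $L^\infty$ solutions via Theorems~\ref{thm:exist-measurable} and \ref{thm:exist-ae}, and then simply invokes Theorem~\ref{thm:cd-on-p-1}\ref{item:cd-uniform-compact} and \ref{item:cd-uniform-all} as a black box, the uniform pointwise convergence immediately giving the $L^\infty$ statements. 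You instead stay at the level of equivalence classes, use the a.e.\ representation formula of Theorem~\ref{thm:explicit-ae}, and re-derive the estimate from the proof of Theorem~\ref{thm:cd-on-p-1}, with the new ingredient being your ``crucial observation'' that \ref{hypo:null}, iterated through \eqref{eq:rec-sigman} exactly as in the proof of Theorem~\ref{thm:explicit-ae}, pushes the exceptional null sets of the initial data forward to a null set in $t$; this is sound, and it buys you independence from the representative-selection step at the cost of duplicating the algebraic estimates that the paper reuses wholesale (in essence, both routes ultimately lean on \ref{hypo:null}, yours explicitly, the paper's through the uniqueness statements of Theorems~\ref{thm:exist-ae} and \ref{thm:exist-L-infty}). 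Two cosmetic points you should still record: the norms in the statement are taken over $\lvert -h(0), T]$ and $\lvert -h(0), +\infty)$, so you should note that on $\lvert -h(0), 0)$ one has $x_k - x = x_{0,k} - x_0$ a.e., which is controlled by hypothesis; and in the global case the bound $\abs{A_k^n} \leq 1$ is not literally the statement of Lemma~\ref{lemma:Akn-cv-uniform} --- argue either via $\abs{A_k} < 1$ for $k$ large in the chosen norm, or settle for the uniform bound $\abs{A_k^n} \leq \abs{A^n} + 1$ furnished by the uniform convergence, which is all the estimate needs.
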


\begin{proof}
In this proof, for sake of clarity, we make an explicit difference between functions defined everywhere and equivalence classes of functions by almost everywhere equality. We hence consider $x_k$, $x_{0, k}$, and $x_0$ as equivalence classes of functions. Since $\lim_{k \to +\infty} \norm{x_{0, k} - x_0}_{L^\infty(\lvert -h(0), 0], \mathbb R^d)} = 0$, one may take a representative $\overline x_0\colon \lvert -h(0), 0) \to \mathbb R^d$ of $x_0$ and, for each $k \in \mathbb N$, a representative $\overline x_{0, k}\colon \lvert -h(0), 0) \to \mathbb R^d$ of $x_{0, k}$ such that $\overline x_0$ and $\overline x_{0, k}$ are $(\mathfrak L, \mathfrak B)$-measurable and
\[
\begin{aligned}
\sup_{t \in \lvert -h(0), 0)} \abs*{\overline x_0(t)} & = \norm{x_0}_{L^\infty(\lvert -h(0), 0], \mathbb R^d)}, \\
\sup_{t \in \lvert -h(0), 0)} \abs*{\overline x_{0, k}(t)} & = \norm{x_{0, k}}_{L^\infty(\lvert -h(0), 0], \mathbb R^d)}, \\
\lim_{k \to +\infty} \sup_{t \in \lvert -h(0), 0)} \abs{\overline x_{0, k}(t) - \overline x_0(t)} & = 0.
\end{aligned}
\]

Let $\overline x\colon \lvert -h(0), +\infty) \to \mathbb R^d$ be the unique solution of \eqref{eq:diff-eqn-N1-time} with initial condition $\overline x_{0}$ and, for $k \in \mathbb N$, let $\overline x_k\colon \lvert -h(0), +\infty) \to \mathbb R^d$ be the unique solution of \eqref{eq:diff-eqn-N1-time-k} with initial condition $\overline x_{0, k}$. The existence of these solutions is asserted by Theorem~\ref{thm:exist-general} and, by Theorems~\ref{thm:exist-measurable} and \ref{thm:exist-ae}, we deduce that $\overline x$ and $\overline x_k$ are measurable a.e.\ solutions and that $\overline x_k$ is a representative of $x_k$. We denote by $x$ the equivalence class of $\overline x$ with respect to almost everywhere equality.

By Theorem~\ref{thm:cd-on-p-1}\ref{item:cd-uniform-compact}, for every $T > 0$, we have
\[
\lim_{k \to +\infty} \sup_{t \in \lvert -h(0), T]} \abs*{\overline x_k(t) - \overline x(t)} = 0.
\]
In particular, we deduce that $x \in L^\infty_{\mathrm{loc}}(\lvert -h(0), +\infty), \mathbb R^d)$, showing that $x$ is an $L^\infty$ solution of \eqref{eq:diff-eqn-N1-time} with initial condition $x_0$ (which is unique by Theorem~\ref{thm:exist-L-infty}), and $\lim_{k \to +\infty} \norm{x_{k} - x}_{L^\infty(\lvert -h(0), T], \mathbb R^d)} = 0$. The last part of the statement follows similarly from Theorem~\ref{thm:cd-on-p-1}\ref{item:cd-uniform-all}.
\end{proof}

We finally turn to the counterpart of Corollaries~\ref{col:cd-on-p-2}, \ref{col:cd-on-p-3}, and \ref{col:cd-on-p-Linfty} for $L^p$ solutions for $p \in [1, +\infty)$. Contrarily to the former results, we can no longer rely on Theorem~\ref{thm:cd-on-p-1} to obtain a result on continuous dependence on parameters in $L^p$ norm, since assuming convergence in $L^p$ of the sequence of initial conditions $(x_{0, k})_{k \in \mathbb N}$ to some $x_0$ is not sufficient to ensure the convergence of $(x_{0, k}(t))_{k \in \mathbb N}$ to $x_0(t)$ for almost every $t$.

\begin{theorem}
\label{thm:cd-on-p-Lp}
Let $p \in [1, +\infty)$, $(A_k)_{k \in \mathbb N}$ be a sequence of matrices in $\mathcal M_d(\mathbb R)$ converging to some $A \in \mathcal M_d(\mathbb R)$, $\tau$ be a delay function satisfying \ref{hypo:tau-infimum} and \ref{hypo:radon-nik-bdd}, $h$ be its associated largest delay function, and, for each $k \in \mathbb N$, $x_k \in L^p_{\mathrm{loc}}(\lvert -h(0), +\infty), \mathbb R^d)$ be an $L^p$ solution of \eqref{eq:diff-eqn-N1-time-k} with initial condition $x_{0, k} \in L^p(\lvert -h(0), 0], \mathbb R^d)$. Assume that there exists $x_0 \in L^p(\lvert -h(0), 0], \mathbb R^d)$ such that $\lim_{k \to +\infty} \norm{x_{0, k} - x_0}_{L^p(\lvert -h(0), 0], \mathbb R^d)} = 0$. Then, for every $T > 0$, we have $\lim_{k \to +\infty} \norm{x_{k} - x}_{L^p(\lvert -h(0), T], \mathbb R^d)} = 0$, where $x \in L^p_{\mathrm{loc}}(\lvert -h(0), +\infty), \mathbb R^d)$ is the unique $L^p$ solution of \eqref{eq:diff-eqn-N1-time} with initial condition $x_0$. In addition, if \ref{hypo:radon-nik-and-rho} is satisfied, then $\lim_{k \to +\infty} \norm{x_{k} - x}_{L^p(\lvert -h(0), +\infty), \mathbb R^d)} = 0$.
\end{theorem}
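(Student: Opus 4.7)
The plan is to rely on the explicit representation of $L^p$ solutions from Theorem~\ref{thm:explicit-ae}. For each $k$ and almost every $t$, we have $x_k(t) = A_k^{\mathbf n(t)} x_{0,k}(\sigma_{\mathbf n(t)}(t))$ and $x(t) = A^{\mathbf n(t)} x_0(\sigma_{\mathbf n(t)}(t))$, hence
\[
x_k(t) - x(t) = A_k^{\mathbf n(t)} (x_{0,k} - x_0)(\sigma_{\mathbf n(t)}(t)) + \bigl(A_k^{\mathbf n(t)} - A^{\mathbf n(t)}\bigr) x_0(\sigma_{\mathbf n(t)}(t)).
\]
Using $\abs{a+b}^p \leq 2^{p-1}(\abs{a}^p + \abs{b}^p)$ and partitioning the integration domain in $\mathbb R_+$ by the level sets $D_n \setminus D_{n+1}$ of $\mathbf n$ (on which $\sigma_{\mathbf n(t)}(t) = \sigma_n(t) \in \lvert -h(0), 0)$ by Lemma~\ref{lemma:properties_n}\ref{item:sigma-nt-negative}), I reduce the estimation of $\norm{x_k - x}_{L^p}^p$ to bounding, for every $n \geq 1$, integrals of the form $\int_{D_n \setminus D_{n+1}} \abs{f(\sigma_n(t))}^p \, dt$ for $f$ supported in $\lvert -h(0), 0)$.

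The main technical step, to be established by induction on $n$, is the iterated change-of-variable estimate
\[
\int_{D_n \setminus D_{n+1}} \abs{f(\sigma_n(t))}^p \, dt \leq K^n \int_{-h(0)}^0 \abs{f(s)}^p \, ds,
\]
where $K$ is an $L^\infty$ bound on the Radon--Nikodym derivative of either $(\sigma_1\rvert_{[0,T]})_\# \mathcal L$ (in the local case, yielding a $T$-dependent $K_T$, finite by \ref{hypo:radon-nik-bdd}) or of ${\sigma_1}_\# \mathcal L$ itself (in the global case, $K = \norm{\varphi}_{L^\infty(\mathbb R, \mathbb R)}$). The induction uses $\sigma_n = \sigma_{n-1} \circ \sigma_1$ from Lemma~\ref{lemma:caract-Dn-sigman} together with $\sigma_1(D_n \setminus D_{n+1}) \subset D_{n-1} \setminus D_n$, which is a consequence of Lemma~\ref{lemma:D_n-sigma1}: substituting $s = \sigma_1(t)$ rewrites the integral as $\int \abs{f(\sigma_{n-1}(s))}^p \, d(\sigma_1\rvert_{D_n \setminus D_{n+1}})_\# \mathcal L(s)$, whose measure is dominated by $K \cdot \mathcal L$, so one falls back onto the $(n{-}1)$-th instance.

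For the first assertion, Lemma~\ref{lemma:n-loc-bounded} caps $\mathbf n(t)$ at some $N_T$ on $[0, T]$, yielding the finite bound
\[
\norm{x_k - x}_{L^p([-h(0), T])}^p \leq \norm{x_{0,k} - x_0}_{L^p}^p + 2^{p-1} \sum_{n=1}^{N_T} K_T^n \bigl[\abs{A_k^n}^p \norm{x_{0,k} - x_0}_{L^p}^p + \abs{A_k^n - A^n}^p \norm{x_0}_{L^p}^p\bigr],
\]
each term of which vanishes as $k \to +\infty$. For the global assertion under \ref{hypo:radon-nik-and-rho}, the sum runs to $n = +\infty$; exactly as in the proof of Lemma~\ref{lemma:x-in-Lp}, I fix a norm on $\mathbb R^d$ with $\norm{\varphi}_{L^\infty(\mathbb R, \mathbb R)} \abs{A}^p < 1$, pick $\rho_\ast \in (\abs{A}, \norm{\varphi}_{L^\infty(\mathbb R, \mathbb R)}^{-1/p})$, and obtain $\abs{A_k} < \rho_\ast$ for $k$ large. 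The first series is then bounded by a convergent geometric series of ratio $\norm{\varphi}_{L^\infty(\mathbb R, \mathbb R)} \rho_\ast^p < 1$ and vanishes together with $\norm{x_{0,k} - x_0}_{L^p}^p$, while the second has each term tending to $0$ (since $A_k^n \to A^n$ for every fixed $n$) and is dominated by $2^p (\norm{\varphi}_{L^\infty(\mathbb R, \mathbb R)} \rho_\ast^p)^n$, so Lebesgue's dominated convergence (over the counting measure on $\mathbb N^\ast$) concludes. The main obstacle is the iterated change-of-variable bound: it requires carefully tracking the sets $D_n \setminus D_{n+1}$ as the domains of integration and verifying that the pushforward of $\mathcal L\rvert_{D_n \setminus D_{n+1}}$ under $\sigma_1$ is dominated by the relevant Radon--Nikodym derivative, which is the place where hypotheses \ref{hypo:radon-nik-bdd} and \ref{hypo:radon-nik-and-rho} enter decisively.
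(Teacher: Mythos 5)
Your proposal is correct, but it follows a genuinely different route from the paper's proof. You work from the representation formula of Theorem~\ref{thm:explicit-ae}, split $x_k - x$ as $A_k^{\mathbf n(t)}(x_{0,k}-x_0)\circ\sigma_{\mathbf n(t)} + (A_k^{\mathbf n(t)}-A^{\mathbf n(t)})\,x_0\circ\sigma_{\mathbf n(t)}$, and reduce everything to an iterated change-of-variables bound $\int_{D_n\setminus D_{n+1}}\abs{f(\sigma_n(t))}^p\diff t \le K^n \norm{f}_{L^p(\lvert -h(0),0),\mathbb R^d)}^p$, proved by induction via $\sigma_n=\sigma_{n-1}\circ\sigma_1$ and the domination $(\sigma_1\rvert_E)_\#\mathcal L\le K\mathcal L$ (with $K=K_T$ locally under \ref{hypo:radon-nik-bdd}, $K=\norm{\varphi}_{L^\infty(\mathbb R,\mathbb R)}$ globally under \ref{hypo:radon-nik-and-rho}); this part is sound, since the image of each slice under $\sigma_1$ stays in $[0,T]\cap(D_{n-1}\setminus D_n)$ (or in $\lvert -h(0),0)$ at the last step), so the same constant applies at every iteration. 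The paper instead never invokes the representation formula here: it defines the slice integrals $I_k^n$ (resp.\ $J_k^n$) of $\abs{x_k-x}^p$ and obtains a one-step recursion by applying the two difference equations once and changing variables once, then sums over $n$. The main difference shows in the global case: because the paper splits $A_k x_k - A x$ at every step of the recursion, the constant from the elementary inequality compounds, which forces the $(1+\delta)^{p-1}$/$\bigl(1+\frac1\delta\bigr)^{p-1}$ Jensen refinement to keep the contraction factor $c<1$, plus an a priori finiteness argument (Lemma~\ref{lemma:x-in-Lp} applied to both equations) to justify subtracting $c\sum_n J_k^n$ from both sides. Your decomposition is performed only once, so the crude $2^{p-1}$ constant sits outside the geometric series of ratio $\norm{\varphi}_{L^\infty(\mathbb R,\mathbb R)}\rho_\ast^p<1$, and the conclusion follows by dominated convergence in $n$ with no rearrangement and no need for Lemma~\ref{lemma:x-in-Lp}; this is a small but genuine simplification, and it also yields an explicit quantitative bound in terms of $\norm{x_{0,k}-x_0}_{L^p}$ and $\abs{A_k^n-A^n}$. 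Both arguments use exactly the same hypotheses, and your use of Lemma~\ref{lemma:n-loc-bounded} to truncate at $N_T$ in the local case matches the paper's.
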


\begin{proof}
Let $x \in L^p_{\mathrm{loc}}(\lvert -h(0), +\infty), \mathbb R^d)$ be the $L^p$ solution of \eqref{eq:diff-eqn-N1-time} with initial condition $x_0$, which exists and is unique thanks to Theorem~\ref{thm:exist-L-p}. Let $(D_n)_{n \in \mathbb N}$ be the nonincreasing sequence of sets from Definition~\ref{def:Dn-sigman}.

Let $T > 0$ be given and $N_T > 0$ be as in the statement of Lemma~\ref{lemma:n-loc-bounded}. Set $\overline \sigma_1 = \sigma_1\rvert_{[0, T]}$ and denote by $\varphi_T$ the Radon--Nikodym derivative of the measure ${\overline\sigma_1}_{\#}\mathcal L$ with respect to $\mathcal L$, which exists and is bounded thanks to \ref{hypo:radon-nik-bdd}. For $(k, n) \in \mathbb N^2$, define $I_k^n$ by
\[
I_k^n = \int_{\lvert -h(0), T] \cap (D_n \setminus D_{n+1})} \abs*{x_k(t) - x(t)}^p \diff t
\]
and remark that, by Lemma~\ref{lemma:n-loc-bounded}, if $n > N_T$, we then have $\lvert -h(0), T] \cap (D_n \setminus D_{n+1}) = \emptyset$, and thus $I_k^n = 0$. In particular, this implies that
\begin{equation}
\label{eq:estim-Lp-sum}
\norm{x_k - x}_{L^p(\lvert -h(0), T], \mathbb R^d)}^p = \sum_{n = 0}^{N_T} I_k^n.
\end{equation}

Note that, for every $k \in \mathbb N$, we have
\[
I_k^0 = \int_{-h(0)}^0 \abs*{x_k(t) - x(t)}^p \diff t = \norm*{x_{0, k} - x_0}_{L^p(\lvert -h(0), 0], \mathbb R^d)}^p.
\]
Moreover, for $k \in \mathbb N$ and $n \in \llbracket 1, N_T\rrbracket$, we have
\[
I_k^n = \int_{[0, T] \cap (D_n \setminus D_{n+1})} \abs*{A_k x_k(\overline\sigma_1(t)) - A x(\overline\sigma_1(t))}^p \diff t,
\]
where we used \eqref{eq:diff-eqn-N1-time}, \eqref{eq:diff-eqn-N1-time-k}, and the fact that $D_n \subset \mathbb R_+$ for $n \in \mathbb N^\ast$. Using Lemma~\ref{lemma:D_n-sigma1}, one can verify that $[0, T] \cap (D_n \setminus D_{n+1}) \subset \overline\sigma_1^{-1}\left(\lvert -h(0), T] \cap (D_{n-1} \setminus D_{n})\right)$, and thus
\begin{align*}
I_k^n & \leq \int_{\overline\sigma_1^{-1}\left(\lvert -h(0), T] \cap (D_{n-1} \setminus D_{n})\right)} \abs*{A_k x_k(\overline\sigma_1(t)) - A x(\overline\sigma_1(t))}^p \diff t \displaybreak[0] \\
& = \int_{\lvert -h(0), T] \cap (D_{n-1} \setminus D_{n})} \abs*{A_k x_k(t) - A x(t)}^p \diff {\overline\sigma_1}_{\#}\mathcal L(t) \displaybreak[0] \\
& = \int_{\lvert -h(0), T] \cap (D_{n-1} \setminus D_{n})} \varphi_T(t) \abs*{A_k x_k(t) - A x(t)}^p \diff t \displaybreak[0] \\
& \leq 2^{p-1} \norm{\varphi_T}_{L^\infty(\lvert -h(0), T], \mathbb R)} \left[\abs*{A_k}^p I_k^{n-1} + \abs{A_k - A}^p \int_{\lvert -h(0), T] \cap (D_{n-1} \setminus D_{n})} \abs*{x(t)}^p \diff t\right].
\end{align*}
An immediate inductive argument on $n$ yields that there exists a constant $C > 1$, depending only on $p$, $N_T$, $\norm{\varphi_T}_{L^\infty(\lvert -h(0), T], \mathbb R)}$, and $\sup_{k \in \mathbb N} \abs{A_k}$, such that, for every $k \in \mathbb N$ and $n \in \llbracket 1, N_T\rrbracket$, we have
\begin{align*}
I_k^n & \leq C I_k^{0} + C \abs{A_k - A}^p \sum_{j = 1}^n \int_{\lvert -h(0), T] \cap (D_{j-1} \setminus D_{j})} \abs*{x(t)}^p \diff t \displaybreak[0] \\
& \leq C I_k^0 + C \abs{A_k - A}^p \norm{x}_{L^p(\lvert -h(0), T], \mathbb R^d)}^p.
\end{align*}
Inserting into \eqref{eq:estim-Lp-sum}, we deduce that
\begin{align*}
\norm{x_k - x}_{L^p(\lvert -h(0), T], \mathbb R^d)}^p & \leq C (N_T + 1) \norm*{x_{0, k} - x_0}_{L^p(\lvert -h(0), 0], \mathbb R^d)}^p \\
& \hphantom{\leq {}} + C (N_T + 1) \abs{A_k - A}^p \norm{x}_{L^p(\lvert -h(0), T], \mathbb R^d)}^p,
\end{align*}
yielding that
\[
\lim_{k \to +\infty} \norm{x_k - x}_{L^p(\lvert -h(0), T], \mathbb R^d)}^p = 0,
\]
as required.

To prove the last part of the statement, assume that \ref{hypo:radon-nik-and-rho} is satisfied, let $\varphi$ be as in that assumption, and choose a norm $\abs{\cdot}$ in $\mathbb R^d$ such that $\norm{\varphi}_{L^\infty(\mathbb R, \mathbb R)} \abs{A}^p < 1$. Since $A_k \to A$ as $k \to +\infty$, there exists $K \in \mathbb N$ such that $\norm{\varphi}_{L^\infty(\mathbb R, \mathbb R)} \sup_{\substack{k \in \mathbb N \\ k \geq K}} \abs{A_k}^p < 1$. In particular, $\norm{\varphi}_{L^\infty(\mathbb R, \mathbb R)} \rho(A_k)^p < 1$ for every $k \in \mathbb N$ such that $k \geq K$. We fix $\delta > 0$ such that
\[
(1 + \delta)^{p-1} \norm{\varphi}_{L^\infty(\mathbb R, \mathbb R)} \sup_{\substack{k \in \mathbb N \\ k \geq K}} \abs{A_k}^p < 1,
\]
and we denote by $c \in (0, 1)$ the left-hand side of the above inequality.

We now refine the proof of the first part of the statement as follows. For $(k, n) \in \mathbb N^2$, define $J_k^n$ by
\[
J_k^n = \int_{\lvert -h(0), +\infty) \cap (D_n \setminus D_{n+1})} \abs*{x_k(t) - x(t)}^p \diff t
\]
and note that, by Lemma~\ref{lemma:intersect-Dn-empty}, we have
\begin{equation}
\label{eq:estim-Lp-sum-infty}
\norm{x_k - x}_{L^p(\lvert -h(0), +\infty), \mathbb R^d)}^p = \sum_{n = 0}^{+\infty} J_k^n.
\end{equation}
In addition, applying Lemma~\ref{lemma:x-in-Lp} to \eqref{eq:diff-eqn-N1-time} and to \eqref{eq:diff-eqn-N1-time-k}, we deduce that $x \in L^p(\lvert -h(0), +\infty), \mathbb R^d)$ and $x_k \in L^p(\lvert -h(0), +\infty), \mathbb R^d)$ for every $k \in \mathbb N$ with $k \geq K$. In particular, for all such $k$, the left-hand side of \eqref{eq:estim-Lp-sum-infty} is finite, and hence so is the sum in the right-hand side of \eqref{eq:estim-Lp-sum-infty}.

As before, for every $k \in \mathbb N$, we have
\begin{equation}
\label{eq:Jk0}
J_k^0 = \norm*{x_{0, k} - x_0}_{L^p(\lvert -h(0), 0], \mathbb R^d)}^p.
\end{equation}
Moreover, proceeding as in the proof of the first part of the statement, for $k \in \mathbb N$ with $k \geq K$ and $n \in \mathbb N^\ast$, we have
\begin{align*}
J_k^n & = \int_{[0, +\infty) \cap (D_n \setminus D_{n+1})} \abs*{A_k x_k(\overline\sigma_1(t)) - A x(\overline\sigma_1(t))}^p \diff t \\
& \leq \int_{\sigma_1^{-1}\left(\lvert -h(0), +\infty) \cap (D_{n-1} \setminus D_{n})\right)} \abs*{A_k x_k(\overline\sigma_1(t)) - A x(\overline\sigma_1(t))}^p \diff t \displaybreak[0] \\
& = \int_{\lvert -h(0), +\infty) \cap (D_{n-1} \setminus D_{n})} \abs*{A_k x_k(t) - A x(t)}^p \diff {\sigma_1}_{\#}\mathcal L(t) \displaybreak[0] \\
& = \int_{\lvert -h(0), +\infty) \cap (D_{n-1} \setminus D_{n})} \varphi(t) \abs*{A_k x_k(t) - A x(t)}^p \diff t \displaybreak[0] \\
& \leq c J_k^{n-1} + \left(1 + \frac{1}{\delta}\right)^{p-1} \norm{\varphi}_{L^\infty(\mathbb R, \mathbb R)} \abs{A_k - A}^p \int_{\lvert -h(0), +\infty) \cap (D_{n-1} \setminus D_{n})} \abs*{x(t)}^p \diff t,
\end{align*}
where we have used the inequality $(a + b)^p \leq (1 + \delta)^{p - 1} a^p + \left(1 + \frac{1}{\delta}\right)^{p - 1} b^p$ for $a$ and $b$ in $\mathbb R_+$, which is a consequence of Jensen's inequality. We now take the sum of the above inequality over $n \in \mathbb N^\ast$ and add $J_k^0$ on both sides, which gives, for $k \in \mathbb N$ with $k \geq K$,
\[
\sum_{n=0}^{+\infty} J_k^n \leq c \sum_{n=0}^{+\infty} J_k^n + \left(1 + \frac{1}{\delta}\right)^{p-1} \norm{\varphi}_{L^\infty(\mathbb R, \mathbb R)} \abs{A_k - A}^p \norm{x}_{L^p(\lvert -h(0), +\infty), \mathbb R^d)}^p + J_k^0.
\]
Using the facts that $\sum_{n=0}^{+\infty} J_k^n < +\infty$ and $c \in (0, 1)$ and combining with \eqref{eq:estim-Lp-sum-infty} and \eqref{eq:Jk0}, we finally obtain that, for every $k \in \mathbb N$ with $k \geq K$, we have
\begin{multline*}
\norm{x_k - x}_{L^p(\lvert -h(0), +\infty), \mathbb R^d)}^p \leq \left(1 + \frac{1}{\delta}\right)^{p-1} \frac{\norm{\varphi}_{L^\infty(\mathbb R, \mathbb R)} \norm{x}_{L^p(\lvert -h(0), +\infty), \mathbb R^d)}^p}{1 - c} \abs{A_k - A}^p \\ + \frac{1}{1 - c} \norm*{x_{0, k} - x_0}_{L^p(\lvert -h(0), 0], \mathbb R^d)}^p.
\end{multline*}
The conclusion follows by letting $k \to +\infty$.
\end{proof}

\section{Asymptotic behavior of solutions}
\label{sec:stability}

Now that we have established results on the existence of solutions of \eqref{eq:diff-eqn-N1-time} in Section~\ref{sec:well-posed}, we turn to the analysis of their asymptotic behavior, with the aim of providing sufficient (and, whenever possible, necessary) conditions for convergence of solutions to $0$.

\subsection{Convergence to the origin}
\label{sec:asymptotic}

Our first result provides sufficient conditions for solutions of \eqref{eq:diff-eqn-N1-time} with bounded initial condition to converge to $0$.

\begin{theorem}
\label{thm:cv-to-0}
Let $A \in \mathcal M_d(\mathbb R)$ satisfy \ref{hypo:spr-A-less-1}, $\tau$ be a delay function satisfying \ref{hypo:tau-infimum} and \ref{hypo:delay-infinity}, $h$ be its associated largest delay function, and $x\colon \lvert -h(0), +\infty) \to \mathbb R^d$ be a solution of \eqref{eq:diff-eqn-N1-time} with bounded initial condition $x_0\colon \lvert -h(0), 0) \to \mathbb R^d$. Then $x(t) \to 0$ as $t \to +\infty$.
\end{theorem}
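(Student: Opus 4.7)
The plan is to use the representation formula from Theorem~\ref{theorem-1}, which gives $x(t) = A^{\mathbf n(t)} x_0(\sigma_{\mathbf n(t)}(t))$ for every $t \in \mathbb R_+$. Since $x_0$ is bounded, say by $M$, and since \ref{hypo:spr-A-less-1} allows us to pick a norm $\abs{\cdot}$ on $\mathbb R^d$ with $\abs{A} < 1$, we have the pointwise estimate
\[
\abs{x(t)} \leq \abs{A}^{\mathbf n(t)} M \qquad \text{for every } t \in \mathbb R_+.
\]
Since $\abs{A} < 1$, the result will follow as soon as we prove that $\mathbf n(t) \to +\infty$ as $t \to +\infty$.

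To prove this, the key observation is that, by Corollary~\ref{coro:integer-iterations} and the nonincreasingness of $(D_n)_{n \in \mathbb N}$ from Lemma~\ref{lemma:Dn-decreasing}, for every $N \in \mathbb N$ we have $\mathbf n(t) \geq N$ if and only if $t \in D_N$. Hence it is enough to show that, for every $N \in \mathbb N$, the set $D_N$ contains a tail interval $[T_N, +\infty)$ for some $T_N \in \mathbb R$. I would prove this by induction on $N$: the cases $N = 0$ and $N = 1$ are trivial because $D_0 = \mathbb R$ and $D_1 = \mathbb R_+$. For the inductive step, assuming $D_N \supset [T_N, +\infty)$, I would use \ref{hypo:delay-infinity}, which ensures that $\sigma_1(t) \to +\infty$ as $t \to +\infty$, to pick $T_{N+1} \geq 0$ such that $\sigma_1(t) \geq T_N$ for all $t \geq T_{N+1}$; then $\sigma_1(t) \in D_N$, and hence $t \in D_{N+1}$ by the definition \eqref{eq:defi-Dn}.

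Combining the two parts, for every $N \in \mathbb N$ there exists $T_N$ such that $\mathbf n(t) \geq N$ for $t \geq T_N$, i.e., $\mathbf n(t) \to +\infty$ as $t \to +\infty$. Plugging into the estimate $\abs{x(t)} \leq \abs{A}^{\mathbf n(t)} M$ yields $\abs{x(t)} \to 0$, as desired.

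The only nontrivial step is the induction showing that $D_N$ contains a half-line, and the main obstacle is to resist the temptation to argue directly from the definition of $\mathbf n$: one really needs to iterate \ref{hypo:delay-infinity} through the nested structure of the $D_N$, using that \ref{hypo:tau-infimum} is already folded into the very definition of $\mathbf n$ via Lemma~\ref{lemma:intersect-Dn-empty} and Corollary~\ref{coro:integer-iterations}.
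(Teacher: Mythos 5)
Your proposal is correct, but it takes a different route from the paper. The paper uses the representation formula of Theorem~\ref{theorem-1} only to get the uniform bound $\abs{x(t)} \leq M$, i.e.\ $\limsup_{t \to +\infty} \abs{x(t)} < +\infty$, and then concludes by a limsup argument: from $\abs{x(t)} \leq \abs{A}\,\abs{x(t-\tau(t))}$ and \ref{hypo:delay-infinity} one gets $\limsup_{t\to+\infty}\abs{x(t)} \leq \abs{A} \limsup_{t\to+\infty}\abs{x(t)}$, which forces the limsup to vanish since $\abs{A} < 1$ and the limsup is finite. You instead quantify the decay through $\abs{x(t)} \leq \abs{A}^{\mathbf n(t)} M$ and prove that $\mathbf n(t) \to +\infty$, via the equivalence $\mathbf n(t) \geq N \iff t \in D_N$ (which indeed follows from Corollary~\ref{coro:integer-iterations} and Lemma~\ref{lemma:Dn-decreasing}) and an induction showing each $D_N$ contains a half-line, iterating \ref{hypo:delay-infinity} through the nested sets; all steps check out, including the requirement $t \in \mathbb R_+$ in \eqref{eq:defi-Dn}, which your choice $T_{N+1} \geq 0$ handles. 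The trade-off: your argument yields strictly more information, namely the pointwise estimate $\abs{x(t)} \leq \abs{A}^{\mathbf n(t)} M$ together with the divergence of $\mathbf n$, which is exactly the mechanism the paper exploits later for exponential decay (Theorem~\ref{thm:exp-to-0} under \ref{hypo:lower-bound-n}); the paper's limsup argument is shorter, invokes \ref{hypo:delay-infinity} only once, and avoids any analysis of the sets $D_N$.
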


\begin{proof}
Since $\rho(A) < 1$, there exists a norm $\abs{\cdot}$ in $\mathbb R^d$ such that $\abs{A} < 1$.

We first prove that $\limsup_{t \to +\infty} \abs{x(t)} < +\infty$. Let $M > 0$ be such that $\abs{x_0(t)} \leq M$ for every $t \in \lvert -h(0), 0)$. By Theorem~\ref{theorem-1}, we then have, for every $t \in \mathbb R_+$,
\[
\abs{x(t)} \leq \abs{A^{\mathbf n(t)}} \abs{x_0(\sigma_{\mathbf n(t)}(t))} \leq M,
\]
where $\mathbf n$ and $(\sigma_n)_{n \in \mathbb N}$ are defined as in Section~\ref{sec:explicit-formula}. Hence $\limsup_{t \to +\infty} \abs{x(t)} \leq M$.

For every $t \geq 0$, using \eqref{eq:diff-eqn-N1-time}, we have \(\abs{x(t)} \leq \abs{A} \abs{x(t - \tau(t))}\). In particular,
\[
\limsup_{t \to +\infty}\abs{x(t)} \leq \abs{A} \limsup_{t \to +\infty}\abs{x(t - \tau(t))},
\]
and, thanks to \ref{hypo:delay-infinity}, we deduce that
\[
\limsup_{t \to +\infty}\abs{x(t)} \leq \abs{A} \limsup_{t \to +\infty}\abs{x(t)}.
\]
Since $\abs{A} < 1$ and $\limsup_{t \to +\infty} \abs{x(t)} < +\infty$, we deduce that
\[\limsup_{t \to +\infty}\abs{x(t)} = 0,\]
yielding the conclusion.
\end{proof}

Note that convergence to $0$ in Theorem~\ref{thm:cv-to-0} holds in the sense of the pointwise value $x(t)$ converging to $0$ in the topology of $\mathbb R^d$. If we consider a continuous solution of \eqref{eq:diff-eqn-N1-time}, we also have convergence to $0$ in the norm of $\mathsf C_t^{\mathrm b}$.

\begin{corollary}
\label{Corol-1-C}
Let $A \in \mathcal M_d(\mathbb R)$ satisfy \ref{hypo:spr-A-less-1}, $\tau$ be a delay function satisfying \ref{hypo:tau-infimum} and \ref{hypo:delay-infinity}, $h$ be its associated largest delay function, and $x \in C(\lvert -h(0), +\infty), \mathbb R^d)$ be a continuous solution of \eqref{eq:diff-eqn-N1-time} with bounded initial condition $x_0 \in \mathsf C_0^{\mathrm b}$. Then $\norm{x_t}_{\mathsf C_t^{\mathrm b}} \to 0$ as $t \to +\infty$.
\end{corollary}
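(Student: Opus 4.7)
The plan is to combine the pointwise decay $x(t) \to 0$ from Theorem~\ref{thm:cv-to-0} with the observation that, under \ref{hypo:delay-infinity}, the left endpoint of the sliding window $\lvert t - h(t), t]$ tends to $+\infty$, so that the supremum defining $\norm{x_t}_{\mathsf C_t^{\mathrm b}}$ is eventually taken over a region where $\abs{x}$ is uniformly small.

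First I would verify that $\lim_{t \to +\infty}(t - h(t)) = +\infty$. By Definition~\ref{def:h}, $t - h(t) = \inf_{s \in [t, +\infty)} \sigma_1(s)$. By \ref{hypo:delay-infinity}, for every $M > 0$ there exists $T_M > 0$ such that $\sigma_1(s) \geq M$ whenever $s \geq T_M$; hence, for $t \geq T_M$, $\inf_{s \geq t} \sigma_1(s) \geq M$, i.e., $t - h(t) \geq M$. In particular, $h(t) \in \mathbb R_+^\ast$ for all $t$ large enough, so $\lvert t - h(t), t] = [t - h(t), t]$ for such $t$, and $x_t$ is well defined as an element of $\mathsf C_t^{\mathrm b}$ (its boundedness follows from the first step of the proof of Theorem~\ref{thm:cv-to-0}, which bounds $\abs{x}$ on all of $\lvert -h(0), +\infty)$ by $\sup \abs{x_0}$).

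Second, given $\varepsilon > 0$, I would invoke Theorem~\ref{thm:cv-to-0} to find $T_1 > 0$ with $\abs{x(s)} < \varepsilon$ for every $s \geq T_1$, and then apply the first step to obtain $T_2 \geq T_1$ such that $t - h(t) \geq T_1$ for every $t \geq T_2$. For any such $t$, the interval $[t - h(t), t]$ is contained in $[T_1, +\infty)$, whence
\[
\norm{x_t}_{\mathsf C_t^{\mathrm b}} = \sup_{s \in [t - h(t), t]} \abs{x(s)} \leq \varepsilon,
\]
which yields $\norm{x_t}_{\mathsf C_t^{\mathrm b}} \to 0$ as $t \to +\infty$.

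There is no genuine obstacle here: once Theorem~\ref{thm:cv-to-0} is known, the only ingredient needed is the elementary fact that \ref{hypo:delay-infinity} forces $t - h(t)$ to escape to $+\infty$, which is immediate from the definition of $h$.
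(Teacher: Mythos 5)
Your proposal is correct and follows essentially the same route as the paper's proof: show that $t - h(t) = \inf_{s \geq t} \sigma_1(s) \to +\infty$ using \ref{hypo:delay-infinity} and Definition~\ref{def:h}, then combine this with the pointwise convergence from Theorem~\ref{thm:cv-to-0} to control the supremum over $\lvert t - h(t), t]$. The only difference is that you spell out the final step in $\varepsilon$--$T$ form, which the paper leaves implicit.
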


\begin{proof}
By Definition~\ref{def:h} and assumption \ref{hypo:delay-infinity}, we have $t - h(t) = \inf_{s \in [t, +\infty)} \sigma_1(s) \to +\infty$ as $t \to +\infty$. Hence, by Theorem~\ref{thm:cv-to-0}, we have
\[
\norm{x_t}_{\mathsf C_t^{\mathrm b}} = \sup_{s \in \lvert -h(t), 0]} \abs{x_t(s)} = \sup_{s \in \lvert t - h(t), t]} \abs{x(s)} \to 0
\]
as $t \to +\infty$.
\end{proof}

The same idea used in the above proof also works for regulated solutions, yielding the following result.

\begin{corollary}
\label{Prop-1-C}
Let $A \in \mathcal M_d(\mathbb R)$ satisfy \ref{hypo:spr-A-less-1}, $\tau$ be a delay function satisfying \ref{hypo:tau-infimum} and \ref{hypo:delay-infinity}, $h$ be its associated largest delay function, and $x \in G(\lvert -h(0), +\infty), \mathbb R^d)$ be a regulated solution of \eqref{eq:diff-eqn-N1-time} with bounded initial condition $x_0 \in \mathsf G_0^{\mathrm b}$. Then $\norm{x_t}_{\mathsf G_t^{\mathrm b}} \to 0$ as $t \to +\infty$.
\end{corollary}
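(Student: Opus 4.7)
The plan is to mirror the proof of Corollary~\ref{Corol-1-C} almost verbatim, since the argument there does not actually rely on continuity of $x$: it only uses the pointwise convergence $x(t) \to 0$ supplied by Theorem~\ref{thm:cv-to-0} (which applies to any solution with bounded initial condition, regardless of regularity), together with the fact that the expression for $\norm{x_t}_{\mathsf G_t^{\mathrm b}}$ is the supremum of $\abs{x(s)}$ over an interval whose left endpoint tends to $+\infty$.

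Concretely, the first step is to observe that, by Definition~\ref{def:h}, $h(t) = t - \inf_{s \in [t, +\infty)} \sigma_1(s)$, so
\[
t - h(t) = \inf_{s \in [t, +\infty)} \sigma_1(s).
\]
Since $\sigma_1(s) \to +\infty$ as $s \to +\infty$ by assumption \ref{hypo:delay-infinity}, and the right-hand side is monotone nondecreasing in $t$, we conclude that $t - h(t) \to +\infty$ as $t \to +\infty$.

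The second step is to apply Theorem~\ref{thm:cv-to-0}: under assumptions \ref{hypo:spr-A-less-1}, \ref{hypo:tau-infimum}, and the boundedness of $x_0$ (which is ensured since $x_0 \in \mathsf G_0^{\mathrm b}$), we have $\abs{x(s)} \to 0$ as $s \to +\infty$. Combining this with the fact that $t - h(t) \to +\infty$, we obtain
\[
\norm{x_t}_{\mathsf G_t^{\mathrm b}} = \sup_{s \in \lvert -h(t), 0]} \abs{x_t(s)} = \sup_{s \in \lvert t - h(t), t]} \abs{x(s)} \xrightarrow[t \to +\infty]{} 0,
\]
which is the desired conclusion.

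There is essentially no obstacle: the only ingredient that could in principle be regularity-sensitive is the existence and identity of the solution itself, but that is provided by Theorem~\ref{thm:exist-regulated} under \ref{hypo:tau-infimum} and \ref{hypo:regulated} --- and in any case the statement simply assumes we are given a regulated solution, so the hypotheses of Theorem~\ref{thm:cv-to-0} are readily available. The proof will therefore be essentially a one-paragraph copy of that of Corollary~\ref{Corol-1-C} with $\mathsf C$ replaced by $\mathsf G$ throughout.
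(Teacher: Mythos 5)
Your proposal is correct and coincides with the paper's argument: the paper proves this corollary by simply noting that the proof of Corollary~\ref{Corol-1-C} (pointwise convergence from Theorem~\ref{thm:cv-to-0} plus $t - h(t) \to +\infty$ from \ref{hypo:delay-infinity}) carries over verbatim to regulated solutions, exactly as you do.
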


A similar result also holds true for solutions in $L^\infty$, but one must take additional care due to the fact that solutions in $L^\infty$ satisfy \eqref{eq:diff-eqn-N1-time} only for a.e.\ $t \in \mathbb R_+$.

\begin{corollary}
\label{coro:cv-to-0-Linfty}
Let $A \in \mathcal M_d(\mathbb R)$ satisfy \ref{hypo:spr-A-less-1}, $\tau$ be a delay function satisfying \ref{hypo:tau-infimum}, \ref{hypo:null}, and \ref{hypo:delay-infinity}, $h$ be its associated largest delay function, and $x \in L^\infty_{\mathrm{loc}}(\lvert -h(0), +\infty),\allowbreak \mathbb R^d)$ be an $L^\infty$ solution of \eqref{eq:diff-eqn-N1-time} with bounded initial condition $x_0 \in L^\infty(\lvert -h(0), 0),\allowbreak \mathbb R^d)$. Then $\norm{x_t}_{L^\infty(\lvert -h(t), 0), \mathbb R^d)} \to 0$ as $t \to +\infty$.
\end{corollary}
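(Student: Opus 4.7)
The plan is to reduce the statement to the pointwise convergence result of Theorem~\ref{thm:cv-to-0} by selecting a convenient pointwise representative of the $L^\infty$ solution $x$. Since $x_0 \in L^\infty(\lvert -h(0), 0), \mathbb R^d)$, one can pick a bounded $(\mathfrak L, \mathfrak B)$-measurable representative $\overline x_0\colon \lvert -h(0), 0) \to \mathbb R^d$ of $x_0$. Using \ref{hypo:tau-infimum}, Theorem~\ref{thm:exist-general} produces a unique everywhere-defined solution $\overline x\colon \lvert -h(0), +\infty) \to \mathbb R^d$ of \eqref{eq:diff-eqn-N1-time} with initial condition $\overline x_0$, and, thanks to \ref{hypo:tau-infimum} and \ref{hypo:null}, Theorem~\ref{thm:exist-ae} ensures that $\overline x$ is also an a.e.\ solution of \eqref{eq:diff-eqn-N1-time}. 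The uniqueness part of Theorem~\ref{thm:explicit-ae} then forces $\overline x$ to agree almost everywhere with any measurable representative of $x$, so $\overline x$ is itself a representative of the equivalence class $x$.

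Next, \ref{hypo:spr-A-less-1}, \ref{hypo:tau-infimum}, \ref{hypo:delay-infinity} and the boundedness of $\overline x_0$ make Theorem~\ref{thm:cv-to-0} directly applicable to $\overline x$, yielding $\overline x(t) \to 0$ as $t \to +\infty$. Fix $\varepsilon > 0$ and pick $T_0 > 0$ with $\abs{\overline x(s)} \leq \varepsilon$ for every $s \geq T_0$. To transfer this pointwise decay to the $L^\infty$ norm of $x_t$ on the (possibly unbounded) interval $\lvert -h(t), 0)$, I would invoke the identity $t - h(t) = \inf_{s \geq t} \sigma_1(s)$ from Definition~\ref{def:h}, which combined with \ref{hypo:delay-infinity} gives $t - h(t) \to +\infty$ as $t \to +\infty$. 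Hence there exists $T_1 \geq T_0$ such that $t - h(t) \geq T_0$ whenever $t \geq T_1$. For any such $t$, the interval $\lvert t - h(t), t]$ lies in $[T_0, +\infty)$, so $\abs{\overline x_t(s)} = \abs{\overline x(t + s)} \leq \varepsilon$ for every $s \in \lvert -h(t), 0]$. Since $\overline x_t$ represents $x_t$, one concludes $\norm{x_t}_{L^\infty(\lvert -h(t), 0), \mathbb R^d)} \leq \varepsilon$, as required.

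The only mildly delicate point is the opening reduction—passing from an $L^\infty$ solution, which only satisfies \eqref{eq:diff-eqn-N1-time} almost everywhere, to an everywhere-defined solution to which the pointwise Theorem~\ref{thm:cv-to-0} can be applied. Once that representative has been selected, the remainder of the argument is structurally parallel to the proofs of Corollaries~\ref{Corol-1-C} and \ref{Prop-1-C} for the continuous and regulated cases.
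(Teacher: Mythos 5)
Your proposal is correct and follows essentially the same route as the paper: select a bounded representative of $x_0$, take the everywhere-defined solution from Theorem~\ref{thm:exist-general}, apply Theorem~\ref{thm:cv-to-0} for pointwise decay, use $t - h(t) \to +\infty$ from \ref{hypo:delay-infinity} to pass to the supremum over $\lvert t - h(t), t)$, and identify this solution with $x$ almost everywhere via Theorem~\ref{thm:explicit-ae}. The only cosmetic difference is that the paper invokes Theorem~\ref{thm:explicit-ae} directly for the a.e.\ identification, whereas you route it through Theorem~\ref{thm:exist-ae} plus the uniqueness clause of Theorem~\ref{thm:explicit-ae}, which amounts to the same thing.
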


\begin{proof}
Up to modifying $x_0$ on a set of Lebesgue measure zero, there exists $M > 0$ such that $\abs{x_0(t)} \leq M$ for every $t \in \lvert -h(0), 0)$. Denote by $\widetilde x\colon \lvert -h(0), +\infty) \to \mathbb R^d$ the unique solution of \eqref{eq:diff-eqn-N1-time} with initial condition $x_0$, whose existence is guaranteed by Theorem~\ref{thm:exist-general}. By Theorem~\ref{thm:cv-to-0}, we have $\widetilde x(t) \to 0$ as $t \to +\infty$. In addition, as in the proof of Corollary~\ref{Corol-1-C}, we have $t - h(t) \to +\infty$ as $t \to +\infty$, and thus $\sup_{s \in \lvert t - h(t), t)} \abs{\widetilde x(s)} \to 0$ as $t \to +\infty$. The conclusion follows since, by Theorem~\ref{thm:explicit-ae}, we have $\widetilde x = x$ a.e.\ in $\lvert -h(0), +\infty)$.
\end{proof}

To obtain convergence to $0$ of solutions, Theorem~\ref{thm:cv-to-0} requires assumptions \ref{hypo:spr-A-less-1} and \ref{hypo:delay-infinity}. We now show that \ref{hypo:spr-A-less-1} is also a necessary condition to have convergence of solutions to $0$, and the same is also true of \ref{hypo:delay-infinity} under the additional assumption that $A$ is not nilpotent.

\begin{proposition}
\label{prop:necessary-asympt}
Let $A \in \mathcal M_d(\mathbb R)$, $\tau$ be a delay function satisfying \ref{hypo:tau-infimum}, and $h$ be its associated largest delay function. Assume that, for every solution $x\colon \lvert -h(0), +\infty)\allowbreak \to \mathbb R^d$ of \eqref{eq:diff-eqn-N1-time} with bounded initial condition, we have $x(t) \to 0$ as $t \to +\infty$. Then \ref{hypo:spr-A-less-1} holds true. If, in addition, $A$ is not nilpotent, then \ref{hypo:delay-infinity} also holds true.
\end{proposition}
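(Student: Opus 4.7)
The plan is to prove both assertions by contradiction, in each case exhibiting a bounded (indeed, constant) initial condition whose corresponding solution does not converge to $0$. The representation formula $x(t) = A^{\mathbf n(t)} w$ from Theorem~\ref{theorem-1}, applied to the constant initial condition $x_0 \equiv w$, reduces everything to controlling the orbit $(A^k w)_{k \in \mathbb N}$ together with the growth of $\mathbf n(t)$.

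For the first assertion, I would suppose that $\rho(A) \geq 1$ and pick $\lambda \in \sigma(A)$ with $|\lambda| \geq 1$. If $\lambda \in \mathbb R$, take a nonzero real eigenvector $w$; then $|A^k w| = |\lambda|^k |w| \geq |w|$ for every $k$. If $\lambda \notin \mathbb R$, take a complex eigenvector $v$ and set $w = \Real v$. Since $A$ is real and $\lambda$ is non-real, the vectors $\Real v$ and $\Imag v$ are linearly independent in $\mathbb R^d$, so the quadratic form $(a,b) \mapsto |a \Real v + b \Imag v|^2$ is positive definite on $\mathbb R^2$ with some least eigenvalue $\mu > 0$. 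Expanding $A^k w = \Real(\lambda^k) \Real v - \Imag(\lambda^k) \Imag v$ and using $\Real(\lambda^k)^2 + \Imag(\lambda^k)^2 = |\lambda|^{2k} \geq 1$ yields $|A^k w|^2 \geq \mu$ for every $k$. In either case, since $\mathbf n(t) \geq 1$ by Lemma~\ref{lemma:properties_n}\ref{item:prop-n-nonzero}, the solution satisfies $|x(t)| \geq \sqrt{\mu}$ (or $|w|$) for all $t \geq 0$, contradicting the hypothesis.

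For the second assertion, I would suppose that $A$ is not nilpotent and that \ref{hypo:delay-infinity} fails, so that there exist $M > 0$ and a sequence $t_n \to +\infty$ with $\sigma_1(t_n) \leq M$. Non-nilpotency provides a nonzero eigenvalue $\lambda \in \sigma(A)$, and the construction from Part~1 produces $w \in \mathbb R^d \setminus \{0\}$ with $A^k w \neq 0$ for every $k \geq 0$ (the estimate $|A^k w|^2 \geq \mu |\lambda|^{2k}$ is now strictly positive but no longer uniform in $k$). The key observation is that $\mathbf n(t_n)$ stays bounded: by Lemma~\ref{lemma:properties_n}\ref{item:n_sigma_1} we have $\mathbf n(t_n) = \mathbf n(\sigma_1(t_n)) + 1$, and Lemma~\ref{lemma:n-loc-bounded} applied with $T = M$ gives an upper bound $N_M$ on $\mathbf n$ throughout $(-\infty, M]$, hence $\mathbf n(t_n) \leq N_M + 1$. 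Consequently $x(t_n) = A^{\mathbf n(t_n)} w$ lies in the finite set $\{A^k w : 1 \leq k \leq N_M + 1\}$ of nonzero vectors, so $|x(t_n)|$ is bounded below by a positive constant, contradicting $x(t_n) \to 0$.

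The main technical step I expect to require care is the passage from a possibly complex eigenvalue to a real-valued initial condition. The positive-definiteness argument on the two-dimensional real invariant subspace spanned by $\Real v$ and $\Imag v$ is the cleanest way I see to extract either a uniform (Part~1) or pointwise (Part~2) nonvanishing of $|A^k w|$ without invoking the Jordan form; everything else is a direct combination of the representation formula and the two elementary lemmas on $\mathbf n$.
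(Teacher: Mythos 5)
Your proof is correct, and it shares the paper's skeleton: constant initial condition built from an eigenvector, the representation formula of Theorem~\ref{theorem-1} reducing everything to $A^{\mathbf n(t)}$, and Lemma~\ref{lemma:n-loc-bounded} (together with Lemma~\ref{lemma:properties_n}\ref{item:n_sigma_1}) to control $\mathbf n$ when $\sigma_1$ stays bounded along a sequence. The one genuine difference is how you handle non-real eigenvalues: the paper simply allows the solution to take values in $\mathbb C^d$ (a footnote observes the theory extends verbatim), takes the constant initial condition equal to the complex eigenvector $v$, notes that $\Real x$ and $\Imag x$ are real solutions with bounded initial data, and reads off $\lambda^{\mathbf n(t)} \to 0$ directly, whence $\abs{\lambda} < 1$ and, in the non-nilpotent case, $\mathbf n(\sigma_1(t)) \to +\infty$ and then $\sigma_1(t) \to +\infty$ by Lemma~\ref{lemma:n-loc-bounded}; your argument instead stays entirely in $\mathbb R^d$ by taking $w = \Real v$ and bounding $\abs{A^k w}^2 \geq \mu \abs{\lambda}^{2k}$ via positive definiteness of the Gram form on the invariant plane spanned by $\Real v$ and $\Imag v$, and it is framed as a contradiction rather than a direct deduction. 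Your route avoids invoking complex-valued solutions at the cost of the small linear-algebra lemma (linear independence of $\Real v, \Imag v$ and the uniform lower bound $\mu$); the paper's route is shorter but relies on the complexification remark. Both are complete; the only cosmetic points in yours are that the constant $M$ obtained from negating \ref{hypo:delay-infinity} should be taken positive (e.g.\ replace it by $\max\{M,1\}$) before applying Lemma~\ref{lemma:n-loc-bounded}, and that existence of the solution with constant initial condition is guaranteed by Theorem~\ref{thm:exist-general}, which is worth citing explicitly.
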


\begin{proof}
Let $\lambda \in \sigma(A)$ and $v \in \mathbb C^d \setminus \{0\}$ be an eigenvector of $A$ associated with $\lambda$. Consider the solution\footnote{Although we have so far only considered solutions taking values in $\mathbb R^d$, it is immediate to verify that the previous results extend with the same proofs to solutions taking values in $\mathbb C^d$.} $x\colon \lvert -h(0), +\infty) \to \mathbb C^d$ of \eqref{eq:diff-eqn-N1-time} with constant initial condition equal to $v$ and note that, due to the linearity of \eqref{eq:diff-eqn-N1-time}, the functions $t \mapsto \Real x(t)$ and $t \mapsto \Imag x(t)$ are solutions of \eqref{eq:diff-eqn-N1-time} taking values in $\mathbb R^d$ and with bounded initial conditions. Hence, by assumption, $\Real x(t) \to 0$ and $\Imag x(t) \to 0$ as $t \to +\infty$, showing that $x(t) \to 0$ as $t \to +\infty$. On the other hand, using Theorem~\ref{theorem-1}, we compute $x(t) = \lambda^{\mathbf{n}(t)} v$ for every $t \in \mathbb R_+$, hence $\lambda^{\mathbf n(t)} \to 0$ as $t \to +\infty$, implying that $\abs{\lambda} < 1$. As a consequence, $A$ satisfies \ref{hypo:spr-A-less-1}.

If, in addition, $A$ is not nilpotent, then there exists $\lambda \in \sigma(A)$ with $\lambda \neq 0$. Defining $v \in \mathbb C^d \setminus \{0\}$ and $x\colon \lvert -h(0), +\infty) \to \mathbb C^d$ from $\lambda$ as before and recalling that $x(t) \to 0$ as $t \to +\infty$ and $x(t) = \lambda^{\mathbf{n}(t)} v$ for every $t \in \mathbb R_+$, we obtain in particular that $x(\sigma_1(t)) = \lambda^{\mathbf{n}(\sigma_1(t))} v$ for every $t \in \mathbb R_+$. Indeed, this is a consequence of the previous fact if $\sigma_1(t) \geq 0$, and it is trivially verified if $\sigma_1(t) < 0$ since $\mathbf n(s) = 0$ for every $s \in \mathbb R_-^\ast$. Thus, for every $t \in \mathbb R_+$, we have $x(t) = A x(\sigma_1(t)) = \lambda^{\mathbf{n}(\sigma_1(t)) + 1} v$, showing that $\lambda^{\mathbf{n}(\sigma_1(t)) + 1} \to 0$ as $t \to +\infty$ since $v \neq 0$. As $\lambda \neq 0$, we deduce that $\mathbf{n}(\sigma_1(t)) \to +\infty$ as $t \to +\infty$, and thus, by Lemma~\ref{lemma:n-loc-bounded}, we conclude that $\sigma_1(t) \to +\infty$ as $t \to +\infty$, as required.
\end{proof}

A natural question is whether Corollary~\ref{coro:cv-to-0-Linfty} remains true if $L^\infty$ is replaced with $L^p$ for some $p \in [1, +\infty)$. Our next example shows that this is not the case: in dimension $d = 1$, for every $p \in [1, +\infty)$ and $A \in (-1, 1) \setminus \{0\}$, there exists a delay function $\tau$ satisfying \ref{hypo:tau-infimum}, \ref{hypo:radon-nik-bdd}, and \ref{hypo:delay-infinity}, such that, for every initial condition $x_0 \in L^p(\lvert -h(0), 0], \mathbb R)$, the unique solution $x \in L^p_{\mathrm{loc}}(\lvert -h(0), +\infty), \mathbb R)$ of \eqref{eq:diff-eqn-N1-time} does not converge to $0$ in $L^p$ norm as $t \to +\infty$.

\begin{example}\label{example-3.1}
Consider \eqref{eq:diff-eqn-N1-time} in dimension $d = 1$ and let $p \in [1, +\infty)$, $A \in (-1, 1) \setminus \{0\}$, $\alpha \in (1 - \abs{A}^{p}, 1)$, and $\tau$ be the delay function given for $t \in \mathbb R_+$ by $\tau(t) = \alpha t + 1$. Note that $\sigma_1(t) = (1 - \alpha) t - 1$ for every $t \in \mathbb R_+$ and that $\tau$ satisfies \ref{hypo:tau-infimum}, \ref{hypo:radon-nik-bdd}, and \ref{hypo:delay-infinity}. In addition, since $\sigma_1$ is nondecreasing, an immediate computation using Definition~\ref{def:h} shows that the largest delay function associated with $\tau$ is $h = \tau$.

Let $x_0 \in L^p([-1, 0), \mathbb R) \setminus \{0\}$ and $x$ be the unique $L^p$ solution of \eqref{eq:diff-eqn-N1-time} with initial condition $x_0$, which exists thanks to Theorem~\ref{thm:exist-L-p}. For every $t \geq \frac{1}{1 - \alpha}$, we compute
\begin{align*}
\norm{x_t}_{L^p([-h(t), 0), \mathbb R)}^p & = \int_{t - h(t)}^t \abs{x(s)}^p \diff s = \abs{A}^p \int_{\sigma_1(t)}^t \abs{x(s - \tau(s))}^p \diff s \\
& = \frac{\abs{A}^p}{1 - \alpha} \int_{\sigma_1(t) - \tau(\sigma_1(t))}^{\sigma_1(t)} \abs{x(s)}^p \diff s = \frac{\abs{A}^p}{1 - \alpha} \norm*{x_{\sigma_1(t)}}_{L^p([-h(\sigma_1(t)), 0), \mathbb R)}^p.
\end{align*}
Let $t_0 = 0$ and $t_n = \frac{1 + t_{n-1}}{1 - \alpha}$ for $n \in \mathbb N^\ast$. Noticing that $\sigma_1(t_n) = t_{n-1}$ and that $t_n = \sum_{k=1}^n \frac{1}{(1 - \alpha)^k}$ for every $n \in \mathbb N^\ast$, we deduce that $t_n \to +\infty$ as $n \to +\infty$ and thus
\[
\norm{x_{t_n}}_{L^p([-h(t_n), 0), \mathbb R)}^p = \left(\frac{\abs{A}^p}{1 - \alpha}\right)^n \norm{x_0}_{L^p([-1, 0), \mathbb R)}^p \to +\infty
\]
as $n \to +\infty$, since $\frac{\abs{A}^p}{1 - \alpha} > 1$ and $\norm{x_0}_{L^p([-1, 0), \mathbb R)} > 0$. Hence the conclusion of Corollary~\ref{coro:cv-to-0-Linfty} does not hold when $L^\infty$ is replaced by $L^p$.

One might wonder whether the previous divergence to $+\infty$ of the norm is related to the fact that $h(t_n) = \tau(t_n) \to +\infty$ as $n \to +\infty$, i.e., the norm $\norm{x_{t_n}}_{L^p([-h(t_n), 0), \mathbb R)}$ is being computed in intervals whose sizes tend to $+\infty$ as $n \to +\infty$. To show that this is not the case, let $\beta \in (0, 1/p)$ be such that $(1 - \alpha)^{\beta p} < \abs{A}^p$, which exists since $1 - \alpha < \abs{A}^p$, and consider the initial condition $x_0$ given by $x_0(s) = \frac{1}{\abs{s}^\beta}$, which belongs to $L^p([-1, 0), \mathbb R)$ since $\beta < 1/p$. For every $t \geq \frac{1}{1 - \alpha}$, $n \in \mathbb N$, and $a \in (0, h(t)]$, we have
\begin{align*}
\norm{x_t}_{L^p([-a, 0), \mathbb R)}^p & = \abs{A}^p \int_{t - a}^t \abs{x(s - \tau(s))}^p \diff s = \frac{\abs{A}^p}{1 - \alpha} \int_{\sigma_1(t-a)}^{\sigma_1(t)} \abs{x(s)}^p \diff s \\
& = \frac{\abs{A}^p}{1 - \alpha} \int_{\sigma_1(t) - a(1 - \alpha)}^{\sigma_1(t)} \abs{x(s)}^p \diff s = \frac{\abs{A}^p}{1 - \alpha} \norm*{x_{\sigma_1(t)}}_{L^p([-a(1 - \alpha), 0), \mathbb R)}^p,
\end{align*}
and thus, by an immediate inductive argument, we deduce that
\[
\norm{x_{t_n}}_{L^p([-1, 0), \mathbb R)}^p = \left(\frac{\abs{A}^p}{1 - \alpha}\right)^n \norm*{x_0}_{L^p([-(1 - \alpha)^n, 0), \mathbb R)}^p,
\]
where $(t_n)_{n \in \mathbb N}$ is the same sequence as before. In addition,
\[
\norm*{x_0}_{L^p([-(1 - \alpha)^n, 0), \mathbb R)}^p = \int_{-(1 - \alpha)^n}^{0} \frac{1}{\abs{s}^{\beta p}} \diff s = \frac{(1 - \alpha)^{n(1-\beta p)}}{1 - \beta p},
\]
and hence
\[
\norm{x_{t_n}}_{L^p([-1, 0), \mathbb R)}^p = \frac{1}{1 - \beta p} \left(\frac{\abs{A}}{(1 - \alpha)^{\beta}}\right)^{p n} \to +\infty
\]
as $n \to +\infty$, since $\frac{\abs{A}}{(1 - \alpha)^\beta} > 1$.
\end{example}

Our next result provides sufficient conditions for convergence to the origin in $L^p$ with $p \in [1, +\infty)$.

\begin{theorem}
\label{Thm-1-L-P}
Let $p \in [1, +\infty)$, $A \in \mathcal M_d(\mathbb R)$, $\tau$ be a delay function satisfying \ref{hypo:tau-infimum}, \ref{hypo:delay-infinity}, and \ref{hypo:radon-nik-and-rho}, and $h$ be its associated largest delay function. Let $x \in L^p_{\mathrm{loc}}(\lvert -h(0),\allowbreak +\infty),\allowbreak \mathbb R^d)$ be an $L^p$ solution of \eqref{eq:diff-eqn-N1-time} with initial condition $x_0 \in L^p(\lvert -h(0),\allowbreak 0),\allowbreak \mathbb R^d)$. Then $x \in L^p(\lvert -h(0), +\infty), \mathbb R^d)$ and $\norm{x_t}_{L^p(\lvert -h(t), 0), \mathbb R^d)} \allowbreak\to 0$ as $t \to +\infty$.
\end{theorem}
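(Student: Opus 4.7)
The plan is to leverage Lemma~\ref{lemma:x-in-Lp} and the fact, already used in the proof of Corollary~\ref{Corol-1-C}, that $t - h(t) \to +\infty$ as $t \to +\infty$ under \ref{hypo:delay-infinity}.

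First, I would invoke Lemma~\ref{lemma:x-in-Lp} directly: assumptions~\ref{hypo:tau-infimum} and \ref{hypo:radon-nik-and-rho} are exactly what is required, and they yield $x \in L^p(\lvert -h(0), +\infty), \mathbb R^d)$. This settles the global $L^p$ integrability statement.

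Next, recall from Definition~\ref{def:h} that $t - h(t) = \inf_{s \in [t, +\infty)} \sigma_1(s)$, and observe that this quantity is a nondecreasing function of $t$ that tends to $+\infty$ as $t \to +\infty$ thanks to \ref{hypo:delay-infinity} (this is exactly the observation used in the proof of Corollary~\ref{Corol-1-C}). For every $t \geq 0$, by definition of $x_t$ and a change of variables, we have
\[
\norm{x_t}_{L^p(\lvert -h(t), 0), \mathbb R^d)}^p = \int_{t - h(t)}^{t} \abs{x(s)}^p \diff s \leq \int_{t - h(t)}^{+\infty} \abs{x(s)}^p \diff s.
\]
Since $s \mapsto \abs{x(s)}^p$ belongs to $L^1(\lvert -h(0), +\infty), \mathbb R)$ by the first part, the absolute continuity of the Lebesgue integral (equivalently, the dominated convergence theorem applied to $\mathbbm 1_{[t - h(t), +\infty)}(s) \abs{x(s)}^p$ as $t \to +\infty$) ensures that the right-hand side tends to $0$ as $t \to +\infty$. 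This yields the desired conclusion.

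There is no significant obstacle here: once Lemma~\ref{lemma:x-in-Lp} is available, the statement reduces to the fact that the tail of an integrable function tends to zero, combined with $t - h(t) \to +\infty$. The essential work had already been done in establishing \ref{hypo:radon-nik-and-rho} as the right quantitative integrability hypothesis (as shown by the counterexample in Example~\ref{example-3.1}, which demonstrates that \ref{hypo:spr-A-less-1} and \ref{hypo:delay-infinity} alone do not suffice in $L^p$ for $p < +\infty$).
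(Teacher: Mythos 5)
Your proposal is correct and follows essentially the same route as the paper's own proof: invoke Lemma~\ref{lemma:x-in-Lp} for $x \in L^p(\lvert -h(0), +\infty), \mathbb R^d)$, then use $t - h(t) \to +\infty$ under \ref{hypo:delay-infinity} to conclude that $\norm{x_t}_{L^p(\lvert -h(t), 0), \mathbb R^d)}$ is dominated by the tail $\norm{x}_{L^p(\lvert t - h(t), +\infty), \mathbb R^d)}$, which vanishes. The only cosmetic difference is that you spell out the absolute-continuity/dominated-convergence justification for the vanishing tail, which the paper leaves implicit.
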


\begin{proof}
The fact that $x \in L^p(\lvert -h(0), +\infty), \mathbb R^d)$ was already shown in Lemma~\ref{lemma:x-in-Lp}. Thanks to \ref{hypo:delay-infinity}, it follows from Definition~\ref{def:h} that $\lim_{t \to +\infty} t - h(t) = +\infty$, and thus, using the fact that $x \in L^p(\lvert -h(0), +\infty), \mathbb R^d)$, we deduce that
\[
\norm{x_t}_{L^p(\lvert -h(t), 0), \mathbb R^d)} = \norm{x(\cdot)}_{L^p(\lvert t-h(t), t), \mathbb R^d)} \leq \norm{x(\cdot)}_{L^p(\lvert t-h(t), +\infty), \mathbb R^d)} \xrightarrow[t \to +\infty]{} 0,
\]
as required.
\end{proof}

\begin{remark}
Under the assumptions of Theorem~\ref{Thm-1-L-P}, we have $\norm{\varphi}_{L^\infty(\mathbb R, \mathbb R)} \geq 1$, and in particular \ref{hypo:spr-A-less-1} is necessarily satisfied. Indeed, by \ref{hypo:delay-infinity}, there exists $T_0 \in \mathbb R_+^\ast$ such that $\sigma_1(t) \geq 0$ for every $t \in [T_0, +\infty)$. On the other hand, thanks to the definition of $\sigma_1$, we have $\sigma_1(t) < t$ for every $t \in \mathbb R_+$. In particular, for every $t \in [T_0, +\infty)$, we have $[T_0, t] \subset \sigma_1^{-1}([0, t])$. We also have
\[
\mathcal L(\sigma_1^{-1}([0, t])) = \int_0^t \diff {\sigma_1}_{\#} \mathcal L(s) = \int_0^t \varphi(s) \diff s \leq t \norm{\varphi}_{L^\infty(\mathbb R, \mathbb R)}.
\]
Hence, for every $t \in [T_0, +\infty)$, we have
\[
\norm{\varphi}_{L^\infty(\mathbb R, \mathbb R)} \geq \frac{1}{t} \mathcal L(\sigma_1^{-1}([0, t])) \geq \frac{1}{t} \mathcal L([T_0, t]) = \frac{t - T_0}{t} \xrightarrow[t \to +\infty]{} 1,
\]
showing that $\norm{\varphi}_{L^\infty(\mathbb R, \mathbb R)} \geq 1$. Together with \ref{hypo:radon-nik-and-rho}, one deduces that $\rho(A) < 1$.
\end{remark}

\begin{remark}
Recalling Remark~\ref{remk:radon-nikodym-derivative-tau}, one observes that, if the delay function $\tau$ is differentiable and there exists $\alpha \in (0, 1)$ such that $\tau'(t) \leq \alpha$ for every $t \in \mathbb R_+$, then $\sigma_1$ is a diffeomorphism between $\mathbb R_+$ and its image, and the Radon--Nikodym derivative of ${\sigma_1}_{\#}\mathcal L$ with respect to $\mathcal L$ is bounded by $\frac{1}{1 - \alpha}$. In this case, \ref{hypo:radon-nik-and-rho} is satisfied as soon as $\frac{\rho(A)^p}{1 - \alpha} < 1$.
\end{remark}

\begin{remark}
Example~\ref{example-3.1} illustrates the importance of \ref{hypo:radon-nik-and-rho} in Theorem~\ref{Thm-1-L-P}. Indeed, the delay function $\tau$ from Example~\ref{example-3.1} satisfies \ref{hypo:tau-infimum}, \ref{hypo:tau-measurable}, \ref{hypo:null}, and \ref{hypo:delay-infinity}, and we also have that ${\sigma_1}_{\#} \mathcal L$ is $\sigma$-finite. Recalling Remark~\ref{remk:radon-nikodym-derivative-tau}, the Radon--Nikodym derivative $\varphi$ of ${\sigma_1}_{\#}\mathcal L$ with respect to $\mathcal L$ is the function $\varphi$ given by $\varphi(t) = \frac{1}{1 - \alpha}$ if $t \geq -1$ and $\varphi(t) = 0$ otherwise. In particular, it is bounded, but we have $\norm{\varphi}_{L^\infty(\mathbb R, \mathbb R)} \rho(A)^p = \frac{\abs{A}^p}{1 - \alpha} > 1$, and the conclusion of Theorem~\ref{Thm-1-L-P} does not hold in that example.
\end{remark}

\subsection{Exponential stability}
\label{sec:exponential}

We now turn to the analysis of the exponential stability of \eqref{eq:diff-eqn-N1-time}, i.e., whether its solutions converge exponentially to zero in a given norm. The following technical assumption will be crucial in the analysis carried out here.

\begin{hypothesis}
Let $\tau$ be a delay function.
\begin{hypolist}
\item\label{hypo:lower-bound-n} The function $\tau$ satisfies \ref{hypo:tau-infimum} and there exist $\alpha > 0$ and $\beta \in \mathbb R$ such that $\mathbf n(t) \geq \alpha t + \beta$ for every $t \in \mathbb R_+$, where $\mathbf n\colon \mathbb R \to \mathbb N$ is the function defined from $\tau$ in Corollary~\ref{coro:integer-iterations}.
\end{hypolist}
\end{hypothesis}

Hypothesis~\ref{hypo:lower-bound-n} is not easy to verify in practice, since $\mathbf n$ may be difficult to compute explicitly. However, it is satisfied in the important particular case of a bounded delay function $\tau$, as stated in the following result.

\begin{proposition}
\label{prop:tau-bounded-implies-n-lower-bounded}
Let $\tau$ be a delay function satisfying \ref{hypo:delay-bounded} and assume that $\tau$ is upper bounded by some positive constant $\tau_{\max}$. Then $\tau$ satisfies \ref{hypo:lower-bound-n} with $\alpha = \frac{1}{\tau_{\max}}$ and $\beta = 0$.
\end{proposition}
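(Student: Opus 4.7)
The plan is to show that each application of $\sigma_1$ can decrease the argument by at most $\tau_{\max}$, so that at least $t/\tau_{\max}$ iterations are required to drive $\sigma_k(t)$ below zero. Concretely, I would first establish the inequality
\[
\sigma_k(t) \geq t - k \tau_{\max}, \qquad \text{for every } k \in \llbracket 0, \mathbf{n}(t) \rrbracket,
\]
by induction on $k$. The base case $k = 0$ is immediate since $\sigma_0(t) = t$, and for the inductive step, when $k < \mathbf{n}(t)$ one has $\sigma_k(t) \in \mathbb{R}_+$ by Lemma~\ref{lemma:properties_n}\ref{item:sigma-k-nonnegative}, so $\tau(\sigma_k(t))$ is defined and bounded above by $\tau_{\max}$, giving
\[
\sigma_{k+1}(t) = \sigma_k(t) - \tau(\sigma_k(t)) \geq \sigma_k(t) - \tau_{\max} \geq t - (k+1)\tau_{\max}.
\]

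Then I would specialize to $k = \mathbf{n}(t)$ and combine with Lemma~\ref{lemma:properties_n}\ref{item:sigma-nt-negative}, which provides $\sigma_{\mathbf{n}(t)}(t) < 0$. This yields $0 > t - \mathbf{n}(t) \tau_{\max}$, hence $\mathbf{n}(t) > t/\tau_{\max} \geq t/\tau_{\max}$, delivering the conclusion with $\alpha = 1/\tau_{\max}$ and $\beta = 0$.

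No serious obstacle is expected: the argument reduces to a clean telescoping estimate made possible by Lemma~\ref{lemma:properties_n}, which both guarantees that $\tau$ gets evaluated at nonnegative arguments along the orbit up to index $\mathbf{n}(t) - 1$ and pins down the sign change at index $\mathbf{n}(t)$. The only subtlety worth flagging is that \ref{hypo:lower-bound-n} (and the very definition of $\mathbf{n}$ via Corollary~\ref{coro:integer-iterations}) requires \ref{hypo:tau-infimum}, which, although not stated explicitly alongside \ref{hypo:delay-bounded} in the proposition, is implicitly in force throughout this section.
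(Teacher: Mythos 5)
Your argument is correct, and it reaches the same quantitative bound as the paper, but by a dual route: you run the induction along the orbit, proving $\sigma_k(t) \geq t - k\tau_{\max}$ for $k \in \llbracket 0, \mathbf n(t)\rrbracket$, whereas the paper inducts on the sets $D_n$ themselves, proving $D_n \supset [(n-1)\tau_{\max}, +\infty)$ and then concluding from $t \notin D_{\mathbf n(t)+1}$ that $t < \mathbf n(t)\tau_{\max}$. The content is the same (each application of $\sigma_1$ decreases the argument by at most $\tau_{\max}$), but the dependencies differ: your version leans on Lemma~\ref{lemma:properties_n}\ref{item:sigma-k-nonnegative} and \ref{lemma:properties_n}\ref{item:sigma-nt-negative}, and implicitly on the composition identity \eqref{eq:rec-sigman} (to write $\sigma_{k+1}(t) = \sigma_1(\sigma_k(t))$, which is legitimate since $t \in D_{\mathbf n(t)} \subset D_{k+1}$ by Lemma~\ref{lemma:Dn-decreasing}, but deserves an explicit citation), while the paper's proof uses only the definition \eqref{eq:defi-Dn} of the sets $D_n$ and of $\mathbf n$, making it slightly more self-contained. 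What your formulation buys is the explicit intermediate estimate $\sigma_{\mathbf n(t)}(t) \geq t - \mathbf n(t)\tau_{\max}$, which is arguably more transparent; what the paper's buys is independence from Lemma~\ref{lemma:properties_n}. Your flag about \ref{hypo:tau-infimum} is also apt: \ref{hypo:delay-bounded} alone does not imply it, yet it is needed for $\mathbf n$ to be defined (Corollary~\ref{coro:integer-iterations}) and is part of \ref{hypo:lower-bound-n}; the paper's own proof relies on it implicitly in exactly the same way, so this is an imprecision of the statement rather than of your argument. (Minor typo: your chain ``$\mathbf n(t) > t/\tau_{\max} \geq t/\tau_{\max}$'' should simply read $\mathbf n(t) > t/\tau_{\max}$, hence $\mathbf n(t) \geq t/\tau_{\max}$.)
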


\begin{proof}
Let $(D_n)_{n \in \mathbb N}$ be the sequence of sets from Definition~\ref{def:Dn-sigman}. We claim that, for every $n \in \mathbb N$, we have $D_n \supset [(n-1) \tau_{\max}, +\infty)$. Indeed, since $D_0 = \mathbb R$ and $D_1 = \mathbb R_+$, the previous property clearly holds for $n \in \{0, 1\}$. Assume now that $n \in \mathbb N$ is such that $D_n \supset [(n-1) \tau_{\max}, +\infty)$ and take $t \in [n \tau_{\max}, +\infty)$. Then $t - \tau(t) \in [(n-1) \tau_{\max}, +\infty) \subset D_n$ and, using \eqref{eq:defi-Dn}, we deduce that $t \in D_{n+1}$, yielding the desired result.

Now, for $t \in \mathbb R_+$, it follows from the definition of $\mathbf n$ that $t \notin D_{\mathbf n(t) + 1}$, implying that $t \notin [\mathbf n(t) \tau_{\max}, +\infty)$. Hence $t < \mathbf n(t) \tau_{\max}$, yielding the conclusion.
\end{proof}

Even though \ref{hypo:delay-bounded} is a sufficient condition for \ref{hypo:lower-bound-n}, it is not necessary, as shown in the following example.

\begin{example}
\label{expl:exponential-unbounded}
Let $\tau\colon \mathbb R_+ \to \mathbb R_+^\ast$ be the delay function defined by
\[
\tau(t) = \begin{dcases*}
k & if there exists $k \in \mathbb N^\ast$ such that $t \in [2^k, 2^k + 1)$, \\
1 & otherwise.
\end{dcases*}
\]
An illustration of the graph of $\tau$ is provided in Figure~\ref{fig:tau-expl}. Note that $\tau$ satisfies \ref{hypo:tau-infimum} and \ref{hypo:delay-infinity}, but not \ref{hypo:delay-bounded}.

\begin{figure}[ht]
\centering
\begin{tikzpicture}[x=0.2cm, y=0.2cm]
\draw[black!15!white, step=0.2cm] (-0.4, -0.4) grid (67.4, 6.4);
\draw[->, thick] (-1, 0) -- (68, 0) node[right] {$t$};
\draw[->, thick] (0, -1) -- (0, 7) node[left] {$\tau$};
\foreach \i in {5, 10, ..., 65} {
  \node[below] at ({\i}, 0) {\scriptsize $\i$};
}
\node[below left] at (0, 0) {\scriptsize $0$};
\node[left] at (0, 5) {\scriptsize $5$};

\draw[blue, thick] (0, 1) -- (2, 1);
\foreach \k in {1, 2, ..., 6} {
	\draw[blue, thick] ({2^\k}, {\k}) -- ({2^\k + 1}, {\k});
	\draw[blue, thick] ({2^\k + 1}, 1) -- ({min(2^(\k + 1), 67.4)}, 1);
}
\end{tikzpicture}
\caption{Illustration of the graph of the delay function $\tau$ from Example~\ref{expl:exponential-unbounded}.}
\label{fig:tau-expl}
\end{figure}
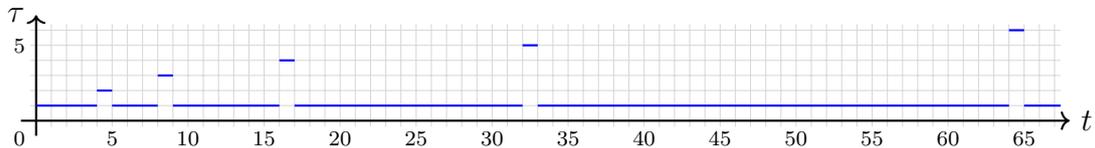

Let $\mathbf n$ be the function defined from $\tau$ as in Corollary~\ref{coro:integer-iterations} and note that, by Lemma~\ref{lemma:properties_n}\ref{item:n_sigma_1}, for every $t \in \mathbb R_+$, we have
\begin{equation}
\label{eq:n-recursive}
\mathbf n(t) = \begin{dcases*}
\mathbf n(t - k) + 1 & if there exists $k \in \mathbb N^\ast$ such that $t \in [2^k, 2^k + 1)$, \\
\mathbf n(t - 1) + 1 & otherwise.
\end{dcases*}
\end{equation}

We claim that
\begin{equation}
\label{eq:n-explicit}
\mathbf n(t) = \begin{dcases*}
0 & if $t < 0$, \\
1 & if $t \in [0, 1)$, \\
\floor{t} - \frac{1}{2}\left(\floor{\log_2 t} - 2\right)\left(\floor{\log_2 t} + 1\right) & if $t \geq 1$.
\end{dcases*}
\end{equation}
It follows from the definition of $\mathbf n$ that $\mathbf n(t) = 0$ for $t < 0$, and one easily deduces that $\mathbf n(t) = 1$ for $t \in [0, 1)$ using \eqref{eq:n-recursive}. To obtain the expression of $\mathbf n(t)$ for $t \geq 1$, we show by strong induction on $\ell$ that, for every $\ell \in \mathbb N^\ast$,
\begin{equation}
\label{eq:n-induction}
\mathbf n(t) = \floor{t} - \frac{1}{2}\left(\floor{\log_2 t} - 2\right)\left(\floor{\log_2 t} + 1\right) \qquad \text{for every } t \in [\ell, \ell + 1).
\end{equation}
Indeed, for $t \in [1, 2)$, we have $\mathbf n(t) = \mathbf n(t - 1) + 1 = 2$, while, on the other hand, $\floor{\log_2 t} = 0$, and thus \eqref{eq:n-induction} is satisfied for $\ell = 1$. Assume now that $\ell_0 \in \mathbb N^\ast$ with $\ell_0 \geq 2$ is such that \eqref{eq:n-induction} is satisfied for every $\ell \in \llbracket 1, \ell_0-1\rrbracket$ and take $t \in [\ell_0, \ell_0 + 1)$. Let $k_0 = \floor{\log_2 \ell_0}$. If $\ell_0 \in \llbracket 2^{k_0} + 1, 2^{k_0 + 1} - 1\rrbracket$, then $t \in [2^{k_0} + 1, 2^{k_0 + 1})$ and thus $\floor{\log_2 t} = \floor{\log_2 (t-1)} = k_0$. Using \eqref{eq:n-recursive} and \eqref{eq:n-induction} with $\ell = \ell_0 - 1$, we have
\begin{align*}
\mathbf n(t) & = \mathbf n(t-1) + 1 = \floor{t - 1} - \frac{1}{2}\left(k_0 - 2\right)\left(k_0 + 1\right) + 1 \\
& = \floor{t} - \frac{1}{2}\left(k_0 - 2\right)\left(k_0 + 1\right),
\end{align*}
showing \eqref{eq:n-induction} for $\ell_0$. Otherwise, we have $\ell_0 = 2^{k_0}$. In this case, $t - k_0 \in [2^{k_0} - k_0, 2^{k_0} - k_0 + 1) \subset [2^{k_0 - 1}, 2^{k_0})$ since $k_0 \geq 1$, and thus $\floor{\log_2 t} = k_0$ and $\floor{\log_2 (t-k_0)} = k_0 - 1$. Hence, using \eqref{eq:n-recursive} and \eqref{eq:n-induction} with $\ell = \ell_0 - k_0 \geq 1$, we have
\begin{align*}
\mathbf n(t) & = \mathbf n(t - k_0) + 1 = \floor{t - k_0} - \frac{1}{2}\left(k_0 - 3\right)k_0 + 1 \\
 & = \floor{t - k_0} - \frac{1}{2}\left(k_0 - 2\right)\left(k_0 + 1\right) + k_0 \\
 & = \floor{t} - \frac{1}{2}\left(k_0 - 2\right)\left(k_0 + 1\right),
\end{align*}
showing \eqref{eq:n-induction} for $\ell_0$ and concluding the proof of the inductive step.

Using \eqref{eq:n-explicit}, one can now prove that
\begin{equation}
\label{eq:n-lower-bound}
\mathbf n(t) \geq \frac{t}{2} - 1 \qquad \text{for every } t \in \mathbb R.
\end{equation}
Indeed, this is immediate for $t \in (-\infty, 1]$. For $t \geq 1$, since $\floor{t} \geq t - 1$, it suffices to prove that $\left(\floor{\log_2 t} - 2\right)\left(\floor{\log_2 t} + 1\right) \leq t$. We have $\left(\floor{\log_2 t} - 2\right)\left(\floor{\log_2 t} + 1\right) \leq \left(\floor{\log_2 t}\right)^2 - 2 \leq (\log_2 t)^2 - 2$ for every $t \geq 1$, and we can prove that $(\log_2 t)^2 - 2 \leq t$ for every $t \geq 1$ thanks to an analysis of the sign changes of the derivative of $t \mapsto t - (\log_2 t)^2 + 2$, yielding \eqref{eq:n-lower-bound}. Hence, $\tau$ satisfies \ref{hypo:lower-bound-n}.
\end{example}

Our first main result of this section is the following theorem, showing that \ref{hypo:tau-infimum}, \ref{hypo:spr-A-less-1}, and \ref{hypo:lower-bound-n} are sufficient conditions for the exponential convergence to $0$ of $\abs{x(t)}$ for solutions with bounded initial condition.

\begin{theorem}
\label{thm:exp-to-0}
Let $A \in \mathcal M_d(\mathbb R)$ satisfy \ref{hypo:spr-A-less-1}, $\tau$ be a delay function satisfying \ref{hypo:tau-infimum} and \ref{hypo:lower-bound-n}, and $h$ be its associated largest delay function. There exist positive constants $C$ and $\gamma$ such that, for every solution $x\colon \lvert -h(0), +\infty) \to \mathbb R^d$ of \eqref{eq:diff-eqn-N1-time} with a bounded initial condition $x_0\colon \lvert -h(0), 0) \to \mathbb R^d$, we have
\[\abs{x(t)} \leq C e^{-\gamma t} \sup_{s \in \lvert -h(0), 0)} \abs{x_0(s)} \qquad \text{ for every } t \in \mathbb R_+.\]
\end{theorem}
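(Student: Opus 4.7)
The plan is to combine the explicit representation formula from Theorem~\ref{theorem-1} with a well-chosen norm on $\mathbb{R}^d$ that contracts $A$, and then to translate the \emph{linear} lower bound on $\mathbf{n}(t)$ provided by \ref{hypo:lower-bound-n} into an \emph{exponential} decay rate for $\abs{A^{\mathbf n(t)}}$.

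More precisely, first I would recall from Theorem~\ref{theorem-1} that the unique solution with bounded initial condition $x_0$ satisfies
\[
x(t) = A^{\mathbf{n}(t)}\, x_0(\sigma_{\mathbf{n}(t)}(t)) \qquad \text{for every } t \in \mathbb{R}_+.
\]
Since the right-hand side is evaluated at $\sigma_{\mathbf n(t)}(t) \in \lvert -h(0), 0)$ (by Lemma~\ref{lemma:properties_n}\ref{item:sigma-nt-negative}), denoting $M = \sup_{s \in \lvert -h(0), 0)}\abs{x_0(s)}$ we obtain at once the pointwise bound $\abs{x(t)} \leq \abs{A^{\mathbf{n}(t)}}\, M$. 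The task then reduces to controlling $\abs{A^{\mathbf n(t)}}$.

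Next, using \ref{hypo:spr-A-less-1}, I would invoke the standard fact (cited already in the proof of Lemma~\ref{lemma:x-in-Lp} via \cite[Lemma~5.6.10]{Horn2013Matrix}) that there exists a norm $\abs{\cdot}_\ast$ on $\mathbb{R}^d$ in which $\abs{A}_\ast < 1$; set $r = \abs{A}_\ast \in [0, 1)$ and note that $\abs{A^n}_\ast \leq r^n$ for every $n \in \mathbb{N}$. Applying \ref{hypo:lower-bound-n}, we get $\mathbf n(t) \geq \alpha t + \beta$, and since the map $k \mapsto r^k$ is nonincreasing (treating the case $r=0$ separately, where the estimate is trivial for $t$ large and handled by bounding by, say, $1$ for small $t$), this yields
\[
\abs{A^{\mathbf n(t)}}_\ast \leq r^{\mathbf n(t)} \leq r^{\alpha t + \beta} = r^\beta e^{-\gamma t}, \qquad \gamma := -\alpha \ln r > 0.
\]
Combining with the previous bound gives $\abs{x(t)}_\ast \leq r^\beta e^{-\gamma t} M_\ast$, where $M_\ast$ is computed in the norm $\abs{\cdot}_\ast$.

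Finally, to state the conclusion in an arbitrary norm $\abs{\cdot}$ on $\mathbb{R}^d$, I would appeal to equivalence of norms: there exist $c_1, c_2 > 0$ with $c_1 \abs{\cdot} \leq \abs{\cdot}_\ast \leq c_2 \abs{\cdot}$, which absorbs into the multiplicative constant and yields the desired estimate with $C = (c_2/c_1)\, r^\beta$ and the same $\gamma$. Neither the choice of norm nor the constants depend on $x_0$, so the bound is uniform over all bounded initial conditions, as required. No step presents a substantial obstacle: the only subtlety is the case $r = 0$ (corresponding to nilpotent $A$), where one can either agree that $r^{\alpha t + \beta}$ is interpreted as $0$ for all $t$ large enough that $\mathbf n(t) \geq 1$, or simply pick any $r' \in (0,1)$ and a norm with $\abs{A}_\ast \leq r'$, which is still possible since $\rho(A) = 0 < r'$.
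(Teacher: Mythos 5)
Your proposal is correct and follows essentially the same route as the paper: the representation formula of Theorem~\ref{theorem-1}, a norm adapted to $A$ with $\abs{A} < 1$ (guaranteed by \ref{hypo:spr-A-less-1}), the linear lower bound on $\mathbf n(t)$ from \ref{hypo:lower-bound-n} converted into the exponential rate $\gamma = -\alpha \ln \abs{A}$, and equivalence of norms to absorb the change of norm into the constant $C$. The only (harmless) slip is the aside identifying $r = 0$ with nilpotent $A$ — the induced norm vanishes only when $A = 0$ — but your fallback of choosing a norm with $\abs{A}_\ast \leq r'$ for any $r' \in (0,1)$ covers that case anyway.
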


\begin{proof}
Let $\alpha$ and $\beta$ be the constants from \ref{hypo:lower-bound-n}, and let $\abs{\cdot}$ be a norm in $\mathbb R^d$ such that $\abs{A} < 1$, which exists thanks to \ref{hypo:spr-A-less-1}. Using Theorem~\ref{theorem-1}, \ref{hypo:lower-bound-n}, and the fact that $\abs{A} < 1$, we have, for every $t \in \mathbb R_+$,
\[
\abs{x(t)} \leq \abs{A}^{\mathbf n(t)} \abs{x_0(\sigma_{\mathbf n(t)} (t))} \leq \abs{A}^{\alpha t + \beta} \sup_{s \in \lvert -h(0), 0)} \abs{x_0(s)},
\]
yielding the conclusion with $C = \abs{A}^\beta > 0$ and $\gamma = -\alpha \ln\abs{A} > 0$. Note that, thanks to the equivalence of all norms in $\mathbb R^d$, the conclusion also holds true if $\abs{\cdot}$ is replaced by any other norm in $\mathbb R^d$, up to modifying the constant $C$.
\end{proof}

\begin{remark}
Hypothesis~\ref{hypo:lower-bound-n} is important in the proof of Theorem~\ref{thm:exp-to-0} in order to bound $\abs{A}^{\mathbf n(t)}$ by an exponential in $t$. However, it is not necessary: if, for instance, we replace \ref{hypo:lower-bound-n} by the more general lower bound $\mathbf n(t) \geq \alpha t + \beta - \ln p(t)$ for some $\alpha > 0$, $\beta \in \mathbb R$, and a polynomial $p$ satisfying $p(t) \geq 1$ for every $t \in \mathbb R_+$, then $\abs{A}^{\mathbf n(t)}$ can be upper bounded by $C_0 p(t)^{\nu} e^{-\gamma_0 t}$, with $C_0 = \abs{A}^\beta > 0$, $\nu = -\ln\abs{A} > 0$, and $\gamma_0 = \alpha \nu > 0$, and $C_0 p(t)^{\nu} e^{-\gamma_0 t}$ can be upper bounded by $C e^{-\gamma t}$ for any choice of $\gamma \in (0, \gamma_0)$, by choosing $C = \max_{t \in \mathbb R_+} C_0 p(t)^\nu e^{-(\gamma_0 - \gamma) t}$.
\end{remark}

As a consequence of Theorem~\ref{thm:exp-to-0}, we obtain the following criterion for the exponential convergence to $0$ of continuous solutions of \eqref{eq:diff-eqn-N1-time}.

\begin{corollary}
\label{coro:exp-cont}
Let $A \in \mathcal M_d(\mathbb R)$ satisfy \ref{hypo:spr-A-less-1}, $\tau$ be a delay function satisfying \ref{hypo:tau-infimum} and \ref{hypo:delay-bounded}, and $h$ be its associated largest delay function. Then there exist positive constants $C$ and $\gamma$ such that, for every continuous solution $x \in C(\lvert -h(0), +\infty), \mathbb R^d)$ of \eqref{eq:diff-eqn-N1-time} with a bounded initial condition $x_0 \in \mathsf C_0^{\mathrm b}$, we have
\[\norm{x_t}_{\mathsf C_t^{\mathrm b}} \leq C e^{-\gamma t} \norm{x_0}_{\mathsf C_0^{\mathrm b}} \qquad \text{ for every } t \in \mathbb R_+.\]
\end{corollary}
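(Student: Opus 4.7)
The plan is to reduce the corollary to Theorem~\ref{thm:exp-to-0} via Proposition~\ref{prop:tau-bounded-implies-n-lower-bounded}, and then upgrade the pointwise estimate to a norm estimate on the history segment by exploiting the fact that \ref{hypo:delay-bounded} forces the largest delay function $h$ to be globally bounded.

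First, since $\tau$ is bounded, say $\tau(t) \leq \tau_{\max}$ for every $t \in \mathbb R_+$, Proposition~\ref{prop:tau-bounded-implies-n-lower-bounded} yields \ref{hypo:lower-bound-n}. Combined with \ref{hypo:spr-A-less-1} and \ref{hypo:tau-infimum}, Theorem~\ref{thm:exp-to-0} then produces constants $C_0 > 0$ and $\gamma > 0$ such that, for every solution $x$ of \eqref{eq:diff-eqn-N1-time} with bounded initial condition $x_0$,
\[
\abs{x(t)} \leq C_0 e^{-\gamma t} \norm{x_0}_{\mathsf C_0^{\mathrm b}} \qquad \text{for every } t \in \mathbb R_+.
\]
Next, I would observe that Definition~\ref{def:h} and the bound $\tau \leq \tau_{\max}$ give, for every $t \in \mathbb R_+$,
\[
h(t) = t - \inf_{s \in [t, +\infty)}(s - \tau(s)) \leq t - (t - \tau_{\max}) = \tau_{\max},
\]
so $t - h(t) \geq t - \tau_{\max}$.

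Now I would compute $\norm{x_t}_{\mathsf C_t^{\mathrm b}} = \sup_{u \in \lvert t - h(t), t]} \abs{x(u)}$ and split on whether $t \geq \tau_{\max}$ or not. In the first case $t - h(t) \geq 0$, so every $u \in \lvert t - h(t), t]$ is nonnegative and the pointwise estimate above applies, giving
\[
\norm{x_t}_{\mathsf C_t^{\mathrm b}} \leq \sup_{u \geq t - \tau_{\max}} C_0 e^{-\gamma u} \norm{x_0}_{\mathsf C_0^{\mathrm b}} = C_0 e^{\gamma \tau_{\max}} e^{-\gamma t} \norm{x_0}_{\mathsf C_0^{\mathrm b}}.
\]
In the case $0 \leq t < \tau_{\max}$, I would split the supremum further between $u \in \lvert t - h(t), 0)$, on which $x$ coincides with $x_0$ and is therefore bounded by $\norm{x_0}_{\mathsf C_0^{\mathrm b}}$, and $u \in [0, t]$, on which the pointwise estimate gives a bound by $C_0 \norm{x_0}_{\mathsf C_0^{\mathrm b}}$. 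Hence $\norm{x_t}_{\mathsf C_t^{\mathrm b}} \leq \max(1, C_0) \norm{x_0}_{\mathsf C_0^{\mathrm b}} \leq \max(1, C_0) e^{\gamma \tau_{\max}} e^{-\gamma t} \norm{x_0}_{\mathsf C_0^{\mathrm b}}$ since $e^{-\gamma t} \geq e^{-\gamma \tau_{\max}}$. Setting $C = \max(1, C_0) e^{\gamma \tau_{\max}}$ covers both cases and yields the conclusion.

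There is no genuine obstacle here: everything follows by assembling Proposition~\ref{prop:tau-bounded-implies-n-lower-bounded} and Theorem~\ref{thm:exp-to-0}, together with the elementary observation that $h$ is bounded by $\tau_{\max}$. The only minor subtlety is handling the initial transient $t \in [0, \tau_{\max})$, which is dealt with by the trivial bound on $x_0$ and an adjustment of the multiplicative constant.
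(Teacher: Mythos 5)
Your proposal is correct and follows essentially the same route as the paper: apply Proposition~\ref{prop:tau-bounded-implies-n-lower-bounded} to get \ref{hypo:lower-bound-n}, invoke Theorem~\ref{thm:exp-to-0} for the pointwise bound, and use $h(t) \leq \tau_{\max}$ to pass to the $\mathsf C_t^{\mathrm b}$ norm. The only cosmetic difference is that the paper extends the pointwise estimate to $t \in \lvert -h(0), 0)$ by taking $C_0 \geq 1$ instead of your case split on $t \geq \tau_{\max}$, which amounts to the same bookkeeping.
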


\begin{proof}
Since $\tau$ satisfies \ref{hypo:tau-infimum} and \ref{hypo:delay-bounded}, it follows from Proposition~\ref{prop:tau-bounded-implies-n-lower-bounded} that \ref{hypo:lower-bound-n} also holds true. In addition, setting $\tau_{\max} = \sup_{t \in \mathbb R_+} \tau(t)$, we have from Definition~\ref{def:h} that $h(t) \leq \tau_{\max}$ for every $t \in \mathbb R_+$. Let $C_0$ and $\gamma$ denote the constants whose existence is asserted in Theorem~\ref{thm:exp-to-0} and assume, with no loss of generality, that $C_0 \geq 1$. Since $x$ is a continuous solution of \eqref{eq:diff-eqn-N1-time} with a bounded initial condition $x_0$, it follows from Theorem~\ref{thm:exp-to-0} that
\[\abs{x(t)} \leq C_0 e^{-\gamma t} \norm{x_0}_{\mathsf C_0^{\mathrm b}}\]
for every $t \in \mathbb R_+$ and, since $C_0 \geq 1$ and $\gamma > 0$, the above inequality trivially holds true also for $t \in \lvert -h(0), 0)$. Then, for every $t \in \mathbb R_+$, we have
\[
\norm{x_t}_{\mathsf C_t^{\mathrm b}} = \sup_{s \in \lvert t - h(t), t]} \abs{x(s)} \leq C_0 e^{-\gamma (t - h(t))} \norm{x_0}_{\mathsf C_0^{\mathrm b}} \leq C e^{-\gamma t} \norm{x_0}_{\mathsf C_0^{\mathrm b}},
\]
with $C = C_0 e^{\gamma \tau_{\max}}$.
\end{proof}

Assumption~\ref{hypo:delay-bounded} is useful in Corollary~\ref{coro:exp-cont} since it ensures that the function $h$ from Definition~\ref{def:h} is upper bounded by $\tau_{\max}$, and hence, in the above proof, one can bound $C_0 e^{-\gamma (t - h(t))}$ by $C_0 e^{\gamma \tau_{\max}} e^{-\gamma t}$. However, \ref{hypo:delay-bounded} is not necessary for the conclusions of Corollary~\ref{coro:exp-cont} to hold true, as illustrated in the following example.

\begin{example}
Let $\tau$ be the delay function from Example~\ref{expl:exponential-unbounded}, $h$ be its associated largest delay function, and $A \in \mathcal M_d(\mathbb R)$ be a matrix satisfying \ref{hypo:spr-A-less-1}. Note that $A$ and $\tau$ satisfy the assumptions of Theorem~\ref{thm:exp-to-0}, but not those of Corollary~\ref{coro:exp-cont}. We will prove, however, that the conclusion of Corollary~\ref{coro:exp-cont} remains true in this case.

Note that, by the definition of $\tau$, we have, for every $t \in \mathbb R_+$,
\begin{equation}
\label{eq:estim-inf}
\inf_{s \in [t, +\infty)} (s - \tau(s)) = \min\left(\inf_{s \in [t, +\infty) \setminus \bigcup_{k=1}^\infty [2^k, 2^k+1)} (s - 1), \inf_{\substack{(s, k) \in [t, +\infty) \times \mathbb N^\ast \\ \floor{s} = 2^k}} (s - k)\right).
\end{equation}
We clearly have
\begin{equation}
\label{eq:estim-inf-1}
\inf_{s \in [t, +\infty) \setminus \bigcup_{k=1}^\infty [2^k, 2^k+1)} (s - 1) \geq t - 1.
\end{equation}
On the other hand, for every $(s, k) \in [t, +\infty) \times \mathbb N^\ast$ with $\floor{s} = 2^k$, we have $s \geq 2$ and $k = \log_2 \floor{s} < \log_2 (s+1)$, and thus $s - k > s - \log_2 (s+1) \geq 2 - \log_2 3 > 0$ since $z \mapsto z - \log_2 (z+1)$ is increasing in $[1, +\infty)$. We then have
\begin{equation}
\label{eq:estim-inf-2}
\inf_{\substack{(s, k) \in [t, +\infty) \times \mathbb N^\ast \\ \floor{s} = 2^k}} (s - k) \geq t - \log_2 (t+1).
\end{equation}
Indeed, if $t \geq 1$, this follows from the fact that $z \mapsto z - \log_2 (z+1)$ is increasing in $[1, +\infty)$ while, if $t \in [0, 1)$, the inequality is trivial since $t - \log_2 (t+1) \leq 0$ but the left-hand side of \eqref{eq:estim-inf-2} is positive.

Inserting \eqref{eq:estim-inf-1} and \eqref{eq:estim-inf-2} into \eqref{eq:estim-inf} and using Definition~\ref{def:h}, we deduce that
\[
h(t) \leq 1 + \log_2(t+1) \qquad \text{ for every } t \in \mathbb R_+.
\]
Hence, applying Theorem~\ref{thm:exp-to-0} and proceeding as in the proof of Corollary~\ref{coro:exp-cont}, we deduce that, for every continuous solution $x \in C(\lvert -h(0), +\infty), \mathbb R^d)$ of \eqref{eq:diff-eqn-N1-time} with a bounded initial condition $x_0 \in \mathsf C_0^{\mathrm b}$, we have
\[\norm{x_t}_{\mathsf C_t^{\mathrm b}} \leq C_0 e^{-\gamma_0 (t - h(t))} \norm{x_0}_{\mathsf C_0^{\mathrm b}} \qquad \text{ for every } t \in \mathbb R_+,\]
where $C_0$ and $\gamma_0$ are the constants from Theorem~\ref{thm:exp-to-0}. For every $t \in \mathbb R_+$, we have $C_0 e^{-\gamma_0 (t - h(t))} \leq C_0 e^{\gamma_0} (t+1)^{\frac{1}{\ln 2}} e^{-\gamma_0 t} \leq C e^{-\gamma t}$, with an arbitrary $\gamma \in (0, \gamma_0)$ and $C > 0$ given by $C = C_0 e^{\gamma_0} \max_{t \in \mathbb R_+} (t+1)^{\frac{1}{\ln 2}} e^{-(\gamma_0 - \gamma) t}$. Hence, the conclusion of Corollary~\ref{coro:exp-cont} holds true.
\end{example}

The proof of Corollary~\ref{coro:exp-cont} can be straightforwardly adapted to regulated solutions, yielding the following result.

\begin{corollary}
\label{coro:exp-G}
Let $A \in \mathcal M_d(\mathbb R)$ satisfy \ref{hypo:spr-A-less-1}, $\tau$ be a delay function satisfying \ref{hypo:tau-infimum} and \ref{hypo:delay-bounded}, and $h$ be its associated largest delay function. Then there exist positive constants $C$ and $\gamma$ such that, for every regulated solution $x \in G(\lvert -h(0), +\infty), \mathbb R^d)$ of \eqref{eq:diff-eqn-N1-time} with a bounded initial condition $x_0 \in \mathsf G_0^{\mathrm b}$, we have
\[\norm{x_t}_{\mathsf G_t^{\mathrm b}} \leq C e^{-\gamma t} \norm{x_0}_{\mathsf G_0^{\mathrm b}} \qquad \text{ for every } t \in \mathbb R_+.\]
\end{corollary}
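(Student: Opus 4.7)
The plan is to mirror the proof of Corollary~\ref{coro:exp-cont} essentially verbatim. The key observation is that Theorem~\ref{thm:exp-to-0} provides a pointwise bound on $\abs{x(t)}$ for \emph{any} solution of \eqref{eq:diff-eqn-N1-time} with bounded initial condition; it does not use continuity of $x$ anywhere (indeed, its proof only uses the representation formula of Theorem~\ref{theorem-1} together with \ref{hypo:lower-bound-n}). Hence replacing ``continuous'' by ``regulated'' everywhere makes no difference to the argument, and the fact that the norm on $\mathsf G^b_t$ is just the sup norm (same formula as on $\mathsf C^b_t$) means the final step is unchanged.

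Concretely, I would first invoke Proposition~\ref{prop:tau-bounded-implies-n-lower-bounded} to deduce \ref{hypo:lower-bound-n} from \ref{hypo:tau-infimum} and \ref{hypo:delay-bounded}, which unlocks Theorem~\ref{thm:exp-to-0}. Setting $\tau_{\max} = \sup_{t \in \mathbb R_+} \tau(t)$, Definition~\ref{def:h} immediately gives $h(t) \leq \tau_{\max}$ for every $t \in \mathbb R_+$. I would then let $C_0$ and $\gamma$ be the constants from Theorem~\ref{thm:exp-to-0} (without loss of generality $C_0 \geq 1$), so that
\[
\abs{x(t)} \leq C_0 e^{-\gamma t} \norm{x_0}_{\mathsf G_0^b} \qquad \text{for every } t \in \mathbb R_+,
\]
and observe that this inequality also holds for $t \in \lvert -h(0), 0)$ because on that range $e^{-\gamma t} > 1 \geq 1/C_0$ while $\abs{x(t)} = \abs{x_0(t)} \leq \norm{x_0}_{\mathsf G_0^b}$.

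Finally, for every $t \in \mathbb R_+$ I would take the supremum of the previous estimate over $s \in \lvert t - h(t), t]$, which is precisely $\norm{x_t}_{\mathsf G_t^b}$, and bound $e^{-\gamma(t-h(t))} \leq e^{\gamma \tau_{\max}} e^{-\gamma t}$, yielding
\[
\norm{x_t}_{\mathsf G_t^b} \leq C_0 e^{-\gamma(t-h(t))} \norm{x_0}_{\mathsf G_0^b} \leq C e^{-\gamma t} \norm{x_0}_{\mathsf G_0^b}
\]
with $C = C_0 e^{\gamma \tau_{\max}}$. There is no genuine obstacle here: the whole point is that Theorem~\ref{thm:exp-to-0} is stated for arbitrary (not necessarily continuous) solutions, so the regulated case is handled with no extra work beyond substituting the norm $\norm{\cdot}_{\mathsf G_t^b}$ for $\norm{\cdot}_{\mathsf C_t^b}$.
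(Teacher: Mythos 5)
Your proposal is correct and is exactly the paper's intended argument: the paper proves Corollary~\ref{coro:exp-G} by noting that the proof of Corollary~\ref{coro:exp-cont} adapts verbatim, which is precisely what you do (Proposition~\ref{prop:tau-bounded-implies-n-lower-bounded} to get \ref{hypo:lower-bound-n}, Theorem~\ref{thm:exp-to-0} for the pointwise bound, $h(t) \leq \tau_{\max}$, then the supremum over $\lvert t - h(t), t]$ with $C = C_0 e^{\gamma \tau_{\max}}$). No gaps.
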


We also have the following counterpart of Corollaries~\ref{coro:exp-cont} and \ref{coro:exp-G} for solutions in $L^\infty$.

\begin{corollary}
\label{coro:exp-Linfty}
Let $A \in \mathcal M_d(\mathbb R)$ satisfy \ref{hypo:spr-A-less-1}, $\tau$ be a delay function satisfying \ref{hypo:tau-infimum}, \ref{hypo:null}, and \ref{hypo:delay-bounded}, and $h$ be its associated largest delay function. Then there exist positive constants $C$ and $\gamma$ such that, for every $L^\infty$ solution $x \in L^\infty_{\mathrm{loc}}(\lvert -h(0), +\infty), \mathbb R^d)$ of \eqref{eq:diff-eqn-N1-time} with an initial condition $x_0 \in L^\infty(\lvert -h(0), 0), \mathbb R^d)$, we have
\[\norm{x_t}_{L^\infty(\lvert -h(t), 0), \mathbb R^d)} \leq C e^{-\gamma t} \norm{x_0}_{L^\infty(\lvert -h(0), 0), \mathbb R^d)} \qquad \text{ for every } t \in \mathbb R_+.\]
\end{corollary}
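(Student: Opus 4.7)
The plan is to reduce to the pointwise estimate already established in Theorem~\ref{thm:exp-to-0} by choosing a suitably regular representative of the initial condition, and then transfer the resulting bound to the $L^\infty$ norm using the uniqueness result for a.e.\ solutions.

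First, observe that the hypotheses of Theorem~\ref{thm:exp-to-0} are satisfied: \ref{hypo:spr-A-less-1} and \ref{hypo:tau-infimum} are assumed directly, and, by Proposition~\ref{prop:tau-bounded-implies-n-lower-bounded}, assumption \ref{hypo:delay-bounded} implies \ref{hypo:lower-bound-n}. Let $C_0, \gamma > 0$ be the constants produced by Theorem~\ref{thm:exp-to-0}, and let $\tau_{\max} = \sup_{t \in \mathbb R_+} \tau(t) < +\infty$, so that Definition~\ref{def:h} gives $h(t) \leq \tau_{\max}$ for every $t \in \mathbb R_+$.

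Next, given an $L^\infty$ solution $x \in L^\infty_{\mathrm{loc}}(\lvert -h(0), +\infty), \mathbb R^d)$ with initial condition $x_0 \in L^\infty(\lvert -h(0), 0), \mathbb R^d)$, I would pick a bounded $(\mathfrak L, \mathfrak B)$-measurable representative $\overline x_0\colon \lvert -h(0), 0) \to \mathbb R^d$ of $x_0$ such that $\sup_{s \in \lvert -h(0), 0)} \abs{\overline x_0(s)} = \norm{x_0}_{L^\infty(\lvert -h(0), 0), \mathbb R^d)}$. By Theorem~\ref{thm:exist-general} there is a unique everywhere-defined solution $\widetilde x\colon \lvert -h(0), +\infty) \to \mathbb R^d$ of \eqref{eq:diff-eqn-N1-time} with initial condition $\overline x_0$, and Theorem~\ref{thm:exp-to-0} applied to $\widetilde x$ yields
\[
\abs{\widetilde x(t)} \leq C_0 e^{-\gamma t} \norm{x_0}_{L^\infty(\lvert -h(0), 0), \mathbb R^d)} \qquad \text{for every } t \in \mathbb R_+.
\]
Since this bound also holds trivially on $\lvert -h(0), 0)$ (up to replacing $C_0$ with $\max\{C_0, 1\}$ and using that $\abs{\overline x_0(s)} \leq \norm{x_0}_{L^\infty}$), we obtain a pointwise exponential bound for $\widetilde x$ on the whole interval $\lvert -h(0), +\infty)$.

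Finally, by Theorem~\ref{thm:exist-ae} (whose assumptions \ref{hypo:tau-infimum} and \ref{hypo:null} are in force), $\widetilde x$ is also an a.e.\ solution of \eqref{eq:diff-eqn-N1-time} with initial condition $x_0$, and by the uniqueness assertion in Theorem~\ref{thm:explicit-ae} we have $\widetilde x(s) = x(s)$ for almost every $s \in \lvert -h(0), +\infty)$. Therefore, for every $t \in \mathbb R_+$,
\[
\norm{x_t}_{L^\infty(\lvert -h(t), 0), \mathbb R^d)} = \esssup_{s \in \lvert t - h(t), t)} \abs{x(s)} \leq \sup_{s \in \lvert t - h(t), t)} \abs{\widetilde x(s)} \leq C_0 e^{-\gamma(t - h(t))} \norm{x_0}_{L^\infty(\lvert -h(0), 0), \mathbb R^d)}.
\]
Using $h(t) \leq \tau_{\max}$, this gives the desired bound with $C = C_0 e^{\gamma \tau_{\max}}$. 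The only mild subtlety is ensuring the essential sup over a null-modification of $x$ is actually controlled by the pointwise sup of $\widetilde x$, which is immediate from the a.e.\ equality $x = \widetilde x$.
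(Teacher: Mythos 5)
Your proposal is correct and follows essentially the same route the paper sketches: pick a bounded measurable representative of $x_0$, apply Theorem~\ref{thm:exp-to-0} (via Proposition~\ref{prop:tau-bounded-implies-n-lower-bounded}) to the everywhere-defined solution, transfer the bound as in Corollary~\ref{coro:exp-cont} using $h(t) \leq \tau_{\max}$, and conclude by the a.e.\ uniqueness from Theorem~\ref{thm:explicit-ae}. No gaps.
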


The proof of Corollary~\ref{coro:exp-Linfty} is slightly more subtle than that of Corollaries~\ref{coro:exp-cont} and \ref{coro:exp-G}, since $L^\infty$ solutions only satisfy \eqref{eq:diff-eqn-N1-time} for almost every $t \in \mathbb R_+$. It is omitted here since it can be done through an easy adaptation of the argument already provided in the proof of Corollary~\ref{coro:cv-to-0-Linfty}: one can select a measurable bounded function $\widetilde x_0$ in the equivalence class of $x_0$ with respect to almost everywhere equality and consider the solution $\widetilde x$ of \eqref{eq:diff-eqn-N1-time} with initial condition $\widetilde x_0$, to which one can apply Theorem~\ref{thm:exp-to-0}. One obtains the exponential convergence to $0$ of $\norm{\widetilde x_t}_{L^\infty(\lvert -h(t), 0), \mathbb R^d)}$ by proceeding as in the proof of Corollary~\ref{coro:exp-cont}, and the conclusion of Corollary~\ref{coro:exp-Linfty} follows since $x = \widetilde x$ almost everywhere thanks to Theorem~\ref{thm:explicit-ae}.

Even though \ref{hypo:tau-infimum}, \ref{hypo:spr-A-less-1}, and \ref{hypo:delay-bounded} are sufficient to obtain exponential stability of \eqref{eq:diff-eqn-N1-time} for continuous and regulated solutions, and also for solutions in $L^\infty$ if one also assumes \ref{hypo:null}, they are not sufficient to ensure the exponential stability of \eqref{eq:diff-eqn-N1-time} in $L^p$ for $p \in [1, +\infty)$, as illustrated by the following example.

\begin{example}
Consider \eqref{eq:diff-eqn-N1-time} in dimension $d = 1$ and let $p \in [1, +\infty)$, $A \in (-1, 1) \setminus \{0\}$, and $\alpha \in (1 - \abs{A}^{p}, 1)$. Let $(t_n)_{n \in \mathbb N}$ be the sequence given by $t_n = \frac{n}{1 - \alpha}$ for $n \in \mathbb N$. Let $\tau$ be the delay function defined by $\tau(t) = \alpha (t - t_n) + 1$ if $t \in [t_n, t_{n+1})$ for some $n \in \mathbb N$. It is easy to verify that $\tau$ is well-defined, satisfies \ref{hypo:tau-infimum}, \ref{hypo:radon-nik-bdd}, and \ref{hypo:delay-bounded}, and in addition $\tau(t) \in [1, \frac{1}{1 - \alpha})$ for every $t \in \mathbb R_+$. We denote by $h$ the largest delay function associated with $\tau$.

For $n \in \mathbb N$ and $t \in [t_n, t_{n+1})$, we have $\sigma_1(t) = (1 - \alpha)(t - t_n) + t_n - 1$. Thus, $\sigma_1$ is increasing, since it is affine and increasing in any interval of the form $[t_n, t_{n+1})$, $n \in \mathbb N$, and its jump at $t_{n+1}$ is $\sigma_1(t_{n+1}) - \sigma_1(t_{n+1}^-) = t_{n+1} - 1 - t_n = \frac{1}{1 - \alpha} - 1 > 0$. In particular, it follows from Definition~\ref{def:h} that $h = \tau$. Using the above expression of $\sigma_1$, we also obtain easily that, for every $n \in \mathbb N$ and $t \in \mathbb R_+$, we have $t \in [t_n, t_{n+1})$ if and only if $\sigma_1(t) \in (t_{n-1}, t_n)$, with the convention $t_{-1} = -\infty$.

We claim that, for every $L^p$ solution $x$ of \eqref{eq:diff-eqn-N1-time}, $n \in \mathbb N$, and $k \in \llbracket 0, n\rrbracket$, we have
\begin{equation}
\label{eq:expl-Lp}
\int_{\sigma_1(t_n)}^{t_n} \abs{x(s)}^p \diff s = \left(\frac{\abs{A}^p}{1 - \alpha}\right)^k \int_{t_{n - k} - (1 - \alpha)^k}^{t_{n-k}} \abs{x(s)}^p \diff s.
\end{equation}
Indeed, this is true for $k = 0$ since $\sigma_1(t_n) = t_n - 1$. Assuming that $k \in \llbracket 0, n-1\rrbracket$ is such that \eqref{eq:expl-Lp} holds true, we remark that $t_{n-k} - (1 - \alpha)^k \geq t_{n-k} - \frac{1}{1 - \alpha} = t_{n - k - 1}$, and thus, for every $s \in (t_{n - k} - (1 - \alpha)^k, t_{n-k})$, we have in particular $s \in (t_{n-k-1}, t_{n-k})$, and thus $\sigma_1(s) = (1 - \alpha)(s - t_{n-k-1}) + t_{n-k-1} - 1$. Using \eqref{eq:diff-eqn-N1-time} and performing a change of variables, we obtain that
\begin{align*}
\int_{t_{n - k} - (1 - \alpha)^k}^{t_{n-k}} \abs{x(s)}^p \diff s & = \abs{A}^p \int_{t_{n - k} - (1 - \alpha)^k}^{t_{n-k}} \abs{x((1 - \alpha)(s - t_{n-k-1}) + t_{n-k-1} - 1)}^p \diff s \\
& = \frac{\abs{A}^p}{1 - \alpha} \int_{t_{n - k - 1} - (1 - \alpha)^{k+1}}^{t_{n-k-1}} \abs{x(s)}^p \diff s,
\end{align*}
and thus, inserting into \eqref{eq:expl-Lp}, we deduce that \eqref{eq:expl-Lp} holds with $k$ replaced by $k+1$, yielding the conclusion.

As in Example~\ref{example-3.1}, take $\beta \in (0, 1/p)$ such that $(1 - \alpha)^{\beta p} < \abs{A}^p$ and let $x_0\colon [-1, 0) \to \mathbb R$ be given for $s \in [-1, 0)$ by $x_0(s) = \frac{1}{\abs{s}^\beta}$. Thanks to the choice of $\beta$, we have $x_0 \in L^p([-1, 0), \mathbb R)$. Let $x$ be the unique $L^p$ solution of \eqref{eq:diff-eqn-N1-time} with initial condition $x_0$, which exists thanks to Theorem~\ref{thm:exist-L-p}. Given $n \in \mathbb N$, we apply \eqref{eq:expl-Lp} to $x$ with $k = n$, obtaining that
\begin{align*}
\norm{x_{t_n}}_{L^p([-h(t_n), 0), \mathbb R)}^p & = \int_{\sigma_1(t_n)}^{t_n} \abs{x(s)}^p \diff s = \left(\frac{\abs{A}^p}{1 - \alpha}\right)^n \int_{-(1 - \alpha)^n}^{0} \abs{x_0(s)}^p \diff s \\
& = \frac{1}{1 - \beta p} \left(\frac{\abs{A}^p}{(1 - \alpha)^{\beta p}}\right)^n \xrightarrow[n \to +\infty]{} +\infty.
\end{align*}
This shows that $\norm{x_t}_{L^p([-h(t), 0), \mathbb R)}^p$ cannot converge exponentially to $0$ as $t \to +\infty$.
\end{example}

Our last result in this section provides sufficient conditions for exponential stability in $L^p$.

\begin{theorem}
\label{thm:exp-Lp}
Let $p \in [1, +\infty)$, $A \in \mathcal M_d(\mathbb R)$, $\tau$ be a delay function satisfying \ref{hypo:tau-infimum}, \ref{hypo:radon-nik-and-rho}, and \ref{hypo:delay-bounded}, and $h$ be its associated largest delay function. Then there exist positive constants $C$ and $\gamma$ such that, for every $L^p$ solution $x \in L^p_{\mathrm{loc}}(\lvert -h(0), +\infty),\allowbreak \mathbb R^d)$ with initial condition $x_0 \in L^p(\lvert -h(0), 0), \mathbb R^d)$, we have $x \in L^p(\lvert -h(0), +\infty), \mathbb R^d)$ and
\begin{equation}
\label{eq:estim-exp-Lp}
\norm{x_t}_{L^p(\lvert -h(t), 0), \mathbb R^d)} \leq C e^{-\gamma t} \norm{x_0}_{L^p(\lvert -h(0), 0), \mathbb R^d)} \qquad \text{ for every } t \in \mathbb R_+.
\end{equation}
\end{theorem}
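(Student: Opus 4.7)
The plan is to combine the layerwise $L^p$ estimate that appears inside the proof of Lemma~\ref{lemma:x-in-Lp} with the linear lower bound on $\mathbf n$ supplied by Proposition~\ref{prop:tau-bounded-implies-n-lower-bounded} via \ref{hypo:delay-bounded}. First I fix a norm $\abs{\cdot}$ on $\mathbb R^d$ satisfying $\abs{A}^p \norm{\varphi}_{L^\infty(\mathbb R, \mathbb R)} < 1$, which exists thanks to \ref{hypo:radon-nik-and-rho} (this is the same norm chosen at the start of the proof of Lemma~\ref{lemma:x-in-Lp}), and I set $c = \abs{A}^p \norm{\varphi}_{L^\infty(\mathbb R, \mathbb R)} \in (0,1)$. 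Re-running the inductive estimate of that proof on the layers $D_\ast \cap (D_n \setminus D_{n+1})$, with $D_\ast = \lvert -h(0), +\infty)$, yields, for every $n \in \mathbb N$,
\[
\int_{D_\ast \cap (D_n \setminus D_{n+1})} \abs{x(s)}^p \diff s \le c^n \norm{x_0}_{L^p(\lvert -h(0), 0), \mathbb R^d)}^p,
\]
and, by summation over $n$, also gives $x \in L^p(\lvert -h(0), +\infty), \mathbb R^d)$ with $\norm{x}_{L^p(\lvert -h(0), +\infty), \mathbb R^d)}^p \le (1-c)^{-1} \norm{x_0}_{L^p(\lvert -h(0), 0), \mathbb R^d)}^p$.

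Next I exploit \ref{hypo:delay-bounded}. Setting $\tau_{\max} = \sup_{s \in \mathbb R_+} \tau(s) < +\infty$, Proposition~\ref{prop:tau-bounded-implies-n-lower-bounded} gives $\mathbf n(s) \ge s/\tau_{\max}$ for every $s \ge 0$, and Definition~\ref{def:h} yields $h(t) \le \tau_{\max}$ for every $t \ge 0$. Thus, for $t \ge \tau_{\max}$, the interval $[t - h(t), t]$ sits inside $[t - \tau_{\max}, t] \subset \mathbb R_+$, and every $s$ in this interval satisfies $\mathbf n(s) \ge (t - \tau_{\max})/\tau_{\max}$. Letting $N_t = \lceil (t - \tau_{\max})/\tau_{\max}\rceil$, the integer $\mathbf n(s)$ lies in $[N_t, +\infty)$ for every such $s$, so $[t - h(t), t]$ is covered by the union of the layers $D_n \setminus D_{n+1}$ with $n \ge N_t$. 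Combining with the layerwise estimate above,
\[
\norm{x_t}_{L^p(\lvert -h(t), 0), \mathbb R^d)}^p \le \sum_{n \ge N_t} \int_{D_n \setminus D_{n+1}} \abs{x(s)}^p \diff s \le \frac{c^{N_t}}{1 - c} \norm{x_0}_{L^p(\lvert -h(0), 0), \mathbb R^d)}^p.
\]
Since $c \in (0, 1)$ and $N_t \ge (t - \tau_{\max})/\tau_{\max}$, the factor $c^{N_t}$ is bounded above by $c^{-1} e^{-(\abs{\log c}/\tau_{\max}) t}$, and taking $p$-th roots delivers the desired exponential bound on $[\tau_{\max}, +\infty)$ with rate $\gamma = \abs{\log c}/(p\tau_{\max})$.

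To finish, the short-time regime $t \in [0, \tau_{\max})$ is handled by brute force: $\norm{x_t}_{L^p(\lvert -h(t), 0), \mathbb R^d)} \le \norm{x}_{L^p(\lvert -h(0), +\infty), \mathbb R^d)}$, which by the first step is at most a constant multiple of $\norm{x_0}_{L^p(\lvert -h(0), 0), \mathbb R^d)}$; since $e^{-\gamma t}$ is bounded below by a positive constant on the compact interval $[0, \tau_{\max}]$, this short-time bound can be absorbed by enlarging the constant $C$. A final equivalence-of-norms argument transfers the inequality from $\abs{\cdot}$ to any chosen norm on $\mathbb R^d$, at the cost of enlarging $C$ once more. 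The only real subtlety is the clean alignment of the geometric decay of the layerwise $L^p$ mass (controlled by $c$) with the linear growth $\mathbf n(s) \ge s/\tau_{\max}$ (guaranteed by \ref{hypo:delay-bounded}); once these two ingredients are matched up, the exponential estimate \eqref{eq:estim-exp-Lp} follows directly.
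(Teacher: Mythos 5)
Your proof is correct and follows essentially the same route as the paper: it combines the layerwise geometric estimate $\int_{D_\ast \cap (D_n \setminus D_{n+1})} \abs{x}^p \leq \left(\norm{\varphi}_{L^\infty(\mathbb R, \mathbb R)} \abs{A}^p\right)^n \norm{x_0}_{L^p}^p$ from the proof of Lemma~\ref{lemma:x-in-Lp} with the bound $\mathbf n(s) \geq s/\tau_{\max}$ from Proposition~\ref{prop:tau-bounded-implies-n-lower-bounded} and $h(t) \leq \tau_{\max}$, exactly as the paper does. The only differences are cosmetic: the paper treats all $t > 0$ at once with $N_t = \ceil{t/\tau_{\max}} - 1$ instead of splitting off the short-time regime $t \in [0, \tau_{\max})$, and your explicit $p$-th root (giving $\gamma = \abs{\ln c}/(p\tau_{\max})$) is, if anything, slightly more careful about the constants.
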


\begin{proof}
Note that the assumptions of Theorem~\ref{Thm-1-L-P} are satisfied and thus, in particular, the conclusion that $x \in L^p(\lvert -h(0), +\infty), \mathbb R^d)$ follows from that theorem. Note also that, thanks to \ref{hypo:delay-bounded}, it follows from Definition~\ref{def:h} that $h(t) \leq \tau_{\max}$ for every $t \in \mathbb R_+$, where $\tau_{\max} = \sup_{t \in \mathbb R_+} \tau(t)$.

Proceeding as in the proof of Theorem~\ref{Thm-1-L-P} and using its notations, we obtain the inequality
\[
\int_{D_\ast \cap (D_n \setminus D_{n+1})} \abs{x(t)}^p \diff t \leq \left(\norm{\varphi}_{L^\infty(\mathbb R, \mathbb R)} \abs{A}^p\right)^n \norm{x_0}_{L^p(\lvert -h(0), 0), \mathbb R^d)}^p
\]
for every $n \in \mathbb N$.

Note that the inequality in \eqref{eq:estim-exp-Lp} is trivially true for $t = 0$ if one chooses $C \geq 1$. Take $t > 0$ and set $I_t = (t - h(t), t)$. Using Proposition~\ref{prop:tau-bounded-implies-n-lower-bounded}, we have $\mathbf n(s) \geq \frac{s}{\tau_{\max}} \geq \frac{t - h(t)}{\tau_{\max}} \geq \frac{t}{\tau_{\max}} - 1$ for every $s \in I_t$ and thus, in particular, $I_t \subset D_{N_t}$, where $N_t = \ceil*{\frac{t}{\tau_{\max}}} - 1 \in \mathbb N$. Hence
\begin{align*}
\norm{x_t}_{L^p(\lvert -h(t), 0), \mathbb R^d)}^p & = \int_{I_t} \abs{x(s)}^p \diff s = \sum_{n = N_t}^{\infty} \int_{I_t \cap (D_n \setminus D_{n+1})} \abs{x(s)}^p \diff s \\
& \leq \sum_{n = N_t}^{\infty} \int_{D_\ast \cap (D_n \setminus D_{n+1})} \abs{x(s)}^p \diff s \\
& \leq \sum_{n = N_t}^{\infty} \left(\norm{\varphi}_{L^\infty(\mathbb R, \mathbb R)} \abs{A}^p\right)^n \norm{x_0}_{L^p(\lvert -h(0), 0), \mathbb R^d)}^p \\
& = \frac{\left(\norm{\varphi}_{L^\infty(\mathbb R, \mathbb R)} \abs{A}^p\right)^{N_t}}{1 - \norm{\varphi}_{L^\infty(\mathbb R, \mathbb R)} \abs{A}^p} \norm{x_0}_{L^p(\lvert -h(0), 0), \mathbb R^d)}^p,
\end{align*}
and, since $\norm{\varphi}_{L^\infty(\mathbb R, \mathbb R)} \abs{A}^p < 1$ and $N_t \geq \frac{t}{\tau_{\max}} - 1$, we obtain the conclusion with $C = \left[\norm{\varphi}_{L^\infty(\mathbb R, \mathbb R)} \abs{A}^p (1 - \norm{\varphi}_{L^\infty(\mathbb R, \mathbb R)} \abs{A}^p)\right]^{-1} \geq 1$ and $\gamma = -\frac{\ln \left(\norm{\varphi}_{L^\infty(\mathbb R, \mathbb R)} \abs{A}^p\right)}{\tau_{\max}} > 0$.
\end{proof}

\section{Applications}
\label{sec:6}

In this section, we describe some applications of the previous results to other contexts, by exploring relations between the difference equation \eqref{eq:diff-eqn-N1-time} and other classes of systems.

\subsection{Difference equation with a single state-de\-pen\-dent delay}\label{sec:6.1}

In this section, we consider the following equation
\begin{equation}\label{equation-SD-1}
x(t) = A x(t - \tau(t, x_t)),
\end{equation}
where $A \in \mathcal M_d(\mathbb R)$ and $\tau(t, x_t)$ is a delay that depends both on the time $t$ and the state $x_t$. Equations with state-dependent delays have a wide range of applications, including in electrodynamics, mechanics, control theory, neural networks, and biology, and their analysis has attracted much research effort in recent years \cite{Hartung2006Functional, Driver1984Neutral, Hernandez2023Explicit, Hartung2021Differentiability, Henriquez2023Existence, Hernandez2020Well}. Even important elementary questions, such as existence and uniqueness of solutions, are highly nontrivial for systems with state-dependent delays, and much work remains to be done for other questions such as stability analysis, with some recent results provided, for instance, in \cite{Xu2021Exponential, BekiarisLiberis2013Compensation, Li2019Algebraic, Li2020Lyapunov, Fiter2013Stability}.

Here, we are interested just in illustrating that some stability results for \eqref{equation-SD-1} can be deduced from those for \eqref{eq:diff-eqn-N1-time} obtained in Section~\ref{sec:stability}. Questions on the well-posedness of \eqref{equation-SD-1}, even though interesting and challenging, and sharper stability results for \eqref{equation-SD-1}, are out of the scope of the present section.

The result we provide here on \eqref{equation-SD-1} is the following one, providing sufficient conditions for the exponential convergence to zero of solutions of \eqref{equation-SD-1}.

\begin{theorem}
\label{thm:state-dep}
Let $A \in \mathcal M_d(\mathbb R)$, $I \subset \mathbb R_+^\ast$ be a compact interval, $\tau_{\min} = \inf I > 0$, $\tau_{\max} = \sup I < +\infty$, and $\tau\colon \mathbb R_+ \times \mathsf L \to I$, where $\mathsf L = C([-\tau_{\max}, 0], \mathbb R^d)$ (respectively, $\mathsf L = G([-\tau_{\max}, 0], \mathbb R^d)$). If $A$ satisfies \ref{hypo:spr-A-less-1}, then there exist positive constants $C$ and $\gamma$ such that, for every continuous (respectively, regulated) solution $x$ of \eqref{equation-SD-1}, denoting by $x_0$ its initial condition, we have
\[
\norm{x_t}_{\mathsf L} \leq C e^{-\gamma t} \norm{x_0}_{\mathsf L} \qquad \text{ for every } t \in \mathbb R_+.
\]
\end{theorem}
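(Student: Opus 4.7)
The plan is to reduce the state-dependent problem to the time-dependent setting already treated in Section~\ref{sec:stability}. Given any continuous (resp.\ regulated) solution $x$ of \eqref{equation-SD-1} with initial condition $x_0 \in \mathsf L$, define the delay function $\widetilde\tau\colon \mathbb R_+ \to I$ by $\widetilde\tau(t) = \tau(t, x_t)$. Because $\widetilde\tau$ takes values in $I \subset [\tau_{\min}, \tau_{\max}]$, it automatically satisfies \ref{hypo:tau-infimum} and \ref{hypo:delay-bounded}. Tautologically, $x(t) = A x(t - \tau(t, x_t)) = A x(t - \widetilde\tau(t))$, so $x$ is a continuous (resp.\ regulated) solution of the linear time-dependent difference equation $y(t) = Ay(t - \widetilde\tau(t))$. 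If $h$ denotes the largest delay function associated with $\widetilde\tau$, then Definition~\ref{def:h} together with the bound $\widetilde\tau \leq \tau_{\max}$ gives $h(t) \leq \tau_{\max}$ for every $t \in \mathbb R_+$, so the restriction of $x_0$ to $\lvert -h(0), 0]$ lies in $\mathsf C_0^{\mathrm b}$ (resp.\ $\mathsf G_0^{\mathrm b}$).

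Next, apply Corollary~\ref{coro:exp-cont} (resp.\ Corollary~\ref{coro:exp-G}) to this linear equation. Those corollaries require only \ref{hypo:tau-infimum}, \ref{hypo:spr-A-less-1}, and \ref{hypo:delay-bounded}, all of which hold. We obtain constants $C_0, \gamma > 0$ such that
\[
\norm{x_t}_{\mathsf C_t^{\mathrm b}} \leq C_0 e^{-\gamma t} \norm{x_0}_{\mathsf C_0^{\mathrm b}} \qquad \text{for every } t \in \mathbb R_+
\]
(and likewise for the regulated norm). The crucial observation is that these constants are \emph{uniform in the solution $x$}: tracing through the proofs of Theorem~\ref{thm:exp-to-0}, Proposition~\ref{prop:tau-bounded-implies-n-lower-bounded}, and Corollary~\ref{coro:exp-cont}, one finds that $\gamma$ and $C_0$ depend only on the matrix $A$, on a fixed norm on $\mathbb R^d$ in which $\abs{A} < 1$, and on $\tau_{\max}$, none of which involve $x$.

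The last step is to upgrade the bound from the $\mathsf C_t^{\mathrm b}$ norm (sup over $[-h(t), 0]$) to the $\mathsf L$ norm (sup over $[-\tau_{\max}, 0]$). For $t \geq \tau_{\max}$, every $u \in [t - \tau_{\max}, t]$ satisfies $u \geq 0$, and evaluating the previous estimate at time $u$ gives
\[
\abs{x(u)} \leq \norm{x_u}_{\mathsf C_u^{\mathrm b}} \leq C_0 e^{-\gamma u} \norm{x_0}_{\mathsf C_0^{\mathrm b}} \leq C_0 e^{\gamma \tau_{\max}} e^{-\gamma t} \norm{x_0}_{\mathsf L},
\]
so $\norm{x_t}_{\mathsf L} \leq C_0 e^{\gamma \tau_{\max}} e^{-\gamma t} \norm{x_0}_{\mathsf L}$. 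For $t \in [0, \tau_{\max})$, the same inequality (with a possibly larger multiplicative constant) holds trivially since $e^{-\gamma t} \geq e^{-\gamma \tau_{\max}}$ and $\norm{x_t}_{\mathsf L} \leq \max(1, C_0) \norm{x_0}_{\mathsf L}$ by applying the previous bound pointwise. Taking $C = \max(1, C_0) e^{\gamma \tau_{\max}}$ yields the announced estimate. The only delicate point in the argument is the uniformity of $C_0, \gamma$ in $\widetilde\tau$, and this is handled purely by book-keeping in the proofs of the results already at our disposal; no new analytic work is needed.
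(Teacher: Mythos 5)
Your argument is correct and follows essentially the same route as the paper: freeze the solution $x$ to obtain the time-dependent delay $\widetilde\tau(t)=\tau(t,x_t)$, use compactness of $I$ to get \ref{hypo:tau-infimum} and \ref{hypo:delay-bounded}, observe (by tracing Theorem~\ref{thm:exp-to-0} and Proposition~\ref{prop:tau-bounded-implies-n-lower-bounded}) that the exponential constants depend on $\widetilde\tau$ only through $\tau_{\max}$ and hence are uniform in $x$, and then pass to the sup over a window of length $\tau_{\max}$. The only cosmetic difference is that you route the estimate through Corollary~\ref{coro:exp-cont} (resp.\ \ref{coro:exp-G}) and then convert the $\mathsf C_t^{\mathrm b}$-norm bound to the $\mathsf L$-norm, whereas the paper applies the pointwise bound of Theorem~\ref{thm:exp-to-0} directly before taking the supremum.
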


\begin{proof}
Let $x$ be a continuous (respectively, regulated) solution of \eqref{equation-SD-1} and consider the function $\tau_\ast\colon \mathbb R_+ \to I$ given by $\tau_\ast(t) = \tau(t, x_t)$. Since $\tau_\ast$ takes values in the compact set $I \subset \mathbb R_+^\ast$, it clearly satisfies assumptions \ref{hypo:tau-infimum} and \ref{hypo:delay-bounded}. Let $h_\ast$ denote the largest delay function associated with $\tau_\ast$. By Theorem~\ref{thm:exp-to-0}, there exist positive constants $C_0$ and $\gamma$ such that
\begin{equation}
\label{eq:estim-exp-SD-proof}
\abs{x(t)} \leq C_0 e^{-\gamma t} \norm{x_0}_{\mathsf L} \qquad \text{ for every } t \in \mathbb R_+.
\end{equation}
In addition, the proof of Theorem~\ref{thm:exp-to-0}, together with the statement of Proposition~\ref{prop:tau-bounded-implies-n-lower-bounded}, shows that the constants $C_0$ and $\gamma$ can be chosen to depend on $\tau_\ast$ only through $\tau_{\max}$, and, in particular, they are independent of the solution $x$. We increase if needed the constant $C_0$ in order to have $C_0 \geq 1$, and hence the inequality in \eqref{eq:estim-exp-SD-proof} trivially holds true for all $t \in [-\tau_{\max}, 0)$ as well. We then have
\[
\norm{x_t}_{\mathsf L} = \sup_{s \in [t - \tau_{\max}, t]} \abs{x(s)} \leq \sup_{s \in [t - \tau_{\max}, t]} C_0 e^{-\gamma s} \norm{x_0}_{\mathsf L} \leq C e^{-\gamma t} \norm{x_0}_{\mathsf L},
\]
with $C = C_0 e^{\gamma \tau_{\max}}$.
\end{proof}

\begin{remark}
Theorem~\ref{thm:state-dep} addresses only the case of continuous or regulated solutions of \eqref{equation-SD-1}, and its strategy relies on fixing first a solution $x$ and then considering the time-dependent delay $\tau_\ast(t) = \tau(t, x_t)$, to which the results of Section~\ref{sec:stability} can be applied. One may follow a similar strategy for $L^p$ solutions of \eqref{equation-SD-1}, but the major difficulty is that the stability results from Section~\ref{sec:stability} concerning $L^p$ solutions all require the time-dependent delay to satisfy \ref{hypo:null}, and it is not straightforward to verify whether $\tau_\ast$ satisfies it or not for a given solution $x$ of \eqref{equation-SD-1}. Even though this could possibly be verified in some simple examples, providing general assumptions on the state-dependent delay $\tau$ ensuring that $\tau_\ast$ satisfies \ref{hypo:null} for every solution $x$ of \eqref{equation-SD-1} is a difficult and interesting question, which is out of the scope of the present paper.
\end{remark}

\subsection{Transport equation}\label{sec:6.2}

Consider the transport equation
\begin{equation}
\label{eq:transport}
\left\{
\begin{aligned}
& \partial_t u(t, x) + \lambda(t, x) \partial_x u(t, x) = 0, & \quad & t > 0, x \in (0, 1), \\
& u(t, 0) = A u(t, 1), & & t \geq 0, \\
& u(0, x) = u_0(x), & & x \in [0, 1],
\end{aligned}
\right.
\end{equation}
where $u\colon \mathbb R_+ \times [0, 1] \to \mathbb R^d$ is the unknown function, $u_0\colon [0, 1] \to \mathbb R^d$ is given, $A \in \mathcal M_d(\mathbb R)$, and $\lambda\colon \mathbb R_+ \times [0, 1] \to \mathbb R_+^\ast$. Let us first recall what is meant by a classical solution of \eqref{eq:transport}.

\begin{definition}
A function $u \in C^0(\mathbb R_+ \times [0, 1], \mathbb R^d) \cap C^1(\mathbb R_+^\ast \times (0, 1), \mathbb R^d)$ is said to be a \emph{classical solution} of \eqref{eq:transport} with \emph{initial condition} $u_0$ if all three equalities of \eqref{eq:transport} are satisfied in their respective domains.
\end{definition}

Note that a necessary condition for the existence of a classical solution is that $u_0$ is continuous with $u_0(0) = A u_0(1)$.

We now want to relate classical solutions of \eqref{eq:transport} with solutions of a certain difference equation of the form \eqref{eq:diff-eqn-N1-time}. For that purpose, we consider for simplicity the following assumption on $\lambda$.

\begin{hypothesis}\mbox{}
\begin{hypolist}
\item\label{hypo:lambda} There exist positive constants $\lambda_{\min}$, $\lambda_{\max}$, and $L$, with $\lambda_{\min} \leq \lambda_{\max}$, such that $\lambda \in C^0(\mathbb R^2, [\lambda_{\min}, \lambda_{\max}])$, $\lambda$ is differentiable with respect to its second variable $x$, and $\partial_x \lambda \in C^0(\mathbb R^2, [-L, L])$.
\end{hypolist}
\end{hypothesis}

Recall that, thanks to the classical theory of ordinary differential equations, under assumption \ref{hypo:lambda}, the flow of the differential equation $x'(t) = \lambda(t, x(t))$ is a well-defined $C^1$ function. More precisely, we have the following result.

\begin{lemma}
\label{lem:flow}
Let $\lambda$ be a function satisfying \ref{hypo:lambda}. Then there exists a unique function $\Phi \in C^1(\mathbb R^3, \mathbb R)$ satisfying the following properties.
\begin{enumerate}
\item\label{item:diff-eqn-flow} $\partial_t \Phi(t, t_0, x_0) = \lambda(t, \Phi(t, t_0, x_0))$ for every $(t, t_0, x_0) \in \mathbb R^3$.
\item $\Phi(t_0, t_0, x_0) = x_0$ for every $(t_0, x_0) \in \mathbb R^2$.
\end{enumerate}
In addition, $\Phi$ also satisfies the following properties.
\begin{enumerate}[resume]
\item\label{item:semigroup} $\Phi(t_2, t_1, \Phi(t_1, t_0, x_0)) = \Phi(t_2, t_0, x_0)$ for every $(t_0, t_1, t_2, x_0) \in \mathbb R^4$.
\item\label{item:flow-reverse} For every $(t_0, t_1, x_0, x_1) \in \mathbb R^4$, we have $\Phi(t_1, t_0, x_0) = x_1$ if and only if $\Phi(t_0, t_1,\allowbreak x_1) = x_0$.
\item\label{item:deriv-x0} $\partial_{x_0} \Phi(t, t_0, x_0) = \exp\left(\int_{t_0}^t \partial_x \lambda(s, \Phi(s, t_0, x_0)) \diff s\right)$ for every $(t, t_0, x_0) \in \mathbb R^3$.
\item\label{item:deriv-t0} $\partial_{t_0} \Phi(t, t_0, x_0) = -\lambda(t_0, x_0) \partial_{x_0} \Phi(t, t_0, x_0)$ for every $(t, t_0, x_0) \in \mathbb R^3$.
\item\label{item:flow-increasing} For every $(t_0, x_0) \in \mathbb R^2$, the function $t \mapsto \Phi(t, t_0, x_0)$ is increasing and, for every $t \in \mathbb R$,
\[
\lambda_{\min} \abs{t - t_0} \leq \abs{\Phi(t, t_0, x_0) - x_0} \leq \lambda_{\max} \abs{t - t_0}.
\]
\end{enumerate}
\end{lemma}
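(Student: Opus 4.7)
The plan is to build the flow $\Phi$ by invoking the classical Cauchy--Lipschitz theorem, then obtain each of the listed properties either from uniqueness or by differentiating standard ODE identities. Under \ref{hypo:lambda}, the function $\lambda$ is continuous on $\mathbb R^2$ and uniformly Lipschitz with respect to its second variable (with Lipschitz constant $L$), since $\partial_x \lambda$ is bounded. Hence for every $(t_0, x_0) \in \mathbb R^2$ there is a unique maximal solution $t \mapsto \Phi(t, t_0, x_0)$ of the Cauchy problem $x'(t) = \lambda(t, x(t))$, $x(t_0) = x_0$, and, because $\abs{\lambda} \leq \lambda_{\max}$, this solution cannot blow up in finite time and is therefore defined on all of $\mathbb R$. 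Defining $\Phi(t, t_0, x_0)$ this way gives a function satisfying \ref{item:diff-eqn-flow} and property~(b) by construction.

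For the regularity, I would recall that under the same assumption the classical smooth dependence theorem for ODEs applies: $\Phi$ is jointly continuous in $(t, t_0, x_0)$, the map $x_0 \mapsto \Phi(t, t_0, x_0)$ is $C^1$ with derivative $J(t, t_0, x_0) = \partial_{x_0}\Phi(t, t_0, x_0)$ solving the variational equation
\[
\partial_t J(t, t_0, x_0) = \partial_x \lambda\bigl(t, \Phi(t, t_0, x_0)\bigr) J(t, t_0, x_0), \qquad J(t_0, t_0, x_0) = 1,
\]
which integrates into the exponential formula of \ref{item:deriv-x0}. Joint $C^1$ regularity in all three variables then follows since $\partial_t \Phi = \lambda(t,\Phi)$ and $\partial_{x_0}\Phi$ are both continuous in $(t, t_0, x_0)$, and $\partial_{t_0}\Phi$ will be shown continuous below.

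The semigroup property \ref{item:semigroup} is the standard consequence of uniqueness: both sides, viewed as functions of $t_2$, solve the same Cauchy problem at time $t_1$ with value $\Phi(t_1, t_0, x_0)$, so they coincide. Taking $t_2 = t_0$ in \ref{item:semigroup} yields $\Phi(t_0, t_1, \Phi(t_1, t_0, x_0)) = x_0$, which is exactly \ref{item:flow-reverse}. To derive \ref{item:deriv-t0}, I differentiate the identity $\Phi\bigl(t, t_0, \Phi(t_0, s, x_0)\bigr) = \Phi(t, s, x_0)$ with respect to $t_0$ at a fixed $s$: the chain rule gives
\[
\partial_{t_0}\Phi(t, t_0, x_0) + \partial_{x_0}\Phi(t, t_0, x_0)\,\partial_{t_0}\Phi(t_0, s, x_0)\big|_{\text{eval}} = 0,
\]
and using $\partial_t \Phi(t_0, s, x_0) = \lambda(t_0, \Phi(t_0, s, x_0)) = \lambda(t_0, x_0)$ after setting $\Phi(t_0, s, x_0) = x_0$ (i.e.\ $s = t_0$), one obtains $\partial_{t_0} \Phi(t, t_0, x_0) = -\lambda(t_0, x_0)\,\partial_{x_0} \Phi(t, t_0, x_0)$, establishing \ref{item:deriv-t0}. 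This also shows continuity of $\partial_{t_0}\Phi$, completing the $C^1$ regularity.

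Finally, property~\ref{item:flow-increasing} follows directly from \ref{item:diff-eqn-flow} and \ref{hypo:lambda}: since $\lambda_{\min} \leq \lambda(t, \Phi(t,t_0,x_0)) \leq \lambda_{\max}$, integrating $\partial_t \Phi$ from $t_0$ to $t$ yields both strict monotonicity and the linear two-sided bound on $\abs{\Phi(t, t_0, x_0) - x_0}$. I do not anticipate any substantial obstacle: the result is essentially a repackaging of standard ODE theorems (global existence, uniqueness, smooth dependence on data). The only mildly delicate step is the derivation of \ref{item:deriv-t0}, which requires choosing the right identity to differentiate; using the semigroup relation $\Phi(t, t_0, \Phi(t_0, s, x_0)) = \Phi(t, s, x_0)$ as above is the cleanest route.
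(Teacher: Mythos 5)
Your proposal is correct and takes essentially the same route as the paper, which in fact omits the proof of this lemma altogether, stating it is an immediate consequence of classical ODE results (citing Hale's book, Chapter~1) --- precisely the Picard--Lindelincludegraphics theorems you invoke: local existence/uniqueness from the Lipschitz bound $\abs{\partial_x\lambda}\leq L$, global existence from $\abs{\lambda}\leq\lambda_{\max}$, the group property from uniqueness, the variational equation for \ref{item:deriv-x0}, and integration of $\partial_t\Phi\in[\lambda_{\min},\lambda_{\max}]$ for \ref{item:flow-increasing}. The one small caveat is that your chain-rule derivation of \ref{item:deriv-t0} presupposes differentiability of $\Phi$ with respect to its second argument, which is itself part of the classical smooth-dependence theorem (or can be obtained from the reverse relation \ref{item:flow-reverse} via the implicit function theorem, since $\partial_{x_0}\Phi>0$), so that step should be read as computing the formula for a derivative whose existence is quoted from the classical theory rather than as establishing it.
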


The function $\Phi$ from the statement of Lemma~\ref{lem:flow} is referred to in the sequel as the \emph{flow} of $\lambda$. The proof of Lemma~\ref{lem:flow} can be obtained as an immediate consequence of classical results for ordinary differential equations, such as those from \cite[Chapter~1]{Hale1969Ordinary}, and is thus omitted here.

In order to relate solutions of \eqref{eq:transport} with solutions of a difference equation of the form \eqref{eq:diff-eqn-N1-time}, we need the following technical result.

\begin{lemma}
\label{lem:lambda}
Let $\lambda$ satisfy \ref{hypo:lambda} and $\Phi$ be the flow of $\lambda$. Then the following assertions hold true.
\begin{enumerate}
\item\label{item:T0} There exists a unique $T_0 > 0$ such that $\Phi(-T_0, 0, 1) = 0$.

\item\label{item:X0} The function $X_0 \in C^1(\mathbb R^2, \mathbb R)$ defined for $(t, x) \in \mathbb R^2$ by $X_0(t, x) = \Phi(0, t, x)$ satisfies the following properties.
\begin{enumerate}[label={\textnormal{(\roman*)}}, ref={\theenumi-(\roman*)}, leftmargin=*]
\item\label{item:X0-decr} $\partial_t X_0(t, x) < 0$ for every $(t, x) \in \mathbb R^2$.
\item For every $t \in [-T_0, 0]$, we have $X_0(t, 0) \in [0, 1]$.
\end{enumerate}

\item\label{item:R} There exists a function $R \in C^1(\mathbb R^2, \mathbb R)$ such that $\Phi(R(t, x), t, x) = 0$ for every $(t, x) \in \mathbb R^2$ and, in addition, $R$ satisfies the following properties.
\begin{enumerate}[label={\textnormal{(\roman*)}}, ref={\theenumi-(\roman*)}, leftmargin=*]
\item\label{item:pde-R} For every $(t, x) \in \mathbb R^2$, we have $\partial_t R(t, x) + \lambda(t, x) \partial_x R(t, x) = 0$.
\item\label{item:R-incr-t} For every $x \in \mathbb R$, $t \mapsto R(t, x)$ is increasing.
\item\label{item:R-decr-x} For every $t \in \mathbb R$, $x \mapsto R(t, x)$ is decreasing, with $R(t, 0) = t$. In particular, $R(t, x) < t$ for every $x > 0$.
\item\label{item:R-bounds} For every $(t, x) \in \mathbb R^2$, we have
\[
\frac{\abs{x}}{\lambda_{\max}} \leq \abs{t - R(t, x)} \leq \frac{\abs{x}}{\lambda_{\min}}.
\]
\item\label{item:range-R} For every $(t, x) \in \mathbb R_+ \times [0, 1]$, we have $R(t, x) \in [-T_0, +\infty)$.
\item\label{item:range-R0} For every $x \in [0, 1]$, we have $R(0, x) \in [-T_0, 0]$.
\item\label{item:R-X0} For every $t \in \mathbb R_+$, if $R(t, 1) < 0$, then $X_0(t, 1) \in (0, 1]$.
\item\label{item:X0_1_0} For every $t \in \mathbb R$, we have $X_0(t, 1) = X_0(R(t, 1), 0)$.
\end{enumerate}

\item\label{item:tau} The function $\tau\colon \mathbb R \to \mathbb R$ defined for $t \in \mathbb R$ by $\tau(t) = t - R(t, 1)$ satisfies the following properties.
\begin{enumerate}[label={\textnormal{(\roman*)}}, ref={\theenumi-(\roman*)}, leftmargin=*]
\item\label{item:tau-bounds} $\tau \in C^1\left(\mathbb R, \left[\frac{1}{\lambda_{\max}}, \frac{1}{\lambda_{\min}}\right]\right)$
\item\label{item:tau-prime} There exists a constant $\alpha \in (0, 1)$ such that $\tau'(t) \leq \alpha$ for every $t \in \mathbb R$.
\item\label{item:delay-incr} The function $t \mapsto t - \tau(t)$ is increasing.
\item\label{item:delay-lower} We have $t - \tau(t) \geq -T_0$ for every $t \geq 0$.
\item\label{item:diffeo} For every $t \geq 0$, $R(t, \cdot)$ is a diffeomorphism between $(0, 1)$ and $(t - \tau(t), t)$, with inverse given by $s \mapsto \Phi(t, s, 0)$. In addition, there exist positive constants $\beta_0, \beta_1$ such that, for every $(t, x) \in \mathbb R_+ \times (0, 1)$, we have $\beta_0 \leq \abs{\partial_x R(t, x)} \leq \beta_1$.
\end{enumerate}
\end{enumerate}
\end{lemma}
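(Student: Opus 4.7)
The plan is to build the four items sequentially, with Lemma~\ref{lem:flow} (especially the monotonicity of $\Phi$ in its first argument and the two-sided estimate $\lambda_{\min}\abs{t-t_0}\leq\abs{\Phi(t,t_0,x_0)-x_0}\leq\lambda_{\max}\abs{t-t_0}$) as the single workhorse. For \ref{item:T0}, I would look at $t\mapsto \Phi(t,0,1)$: by \ref{item:flow-increasing} it is strictly increasing, equals $1$ at $t=0$, and satisfies $\Phi(t,0,1)\leq 1-\lambda_{\min}\abs{t}$ for $t<0$, so by continuity and strict monotonicity it has a unique zero $-T_0$ in $[-1/\lambda_{\min},-1/\lambda_{\max}]$.

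For \ref{item:X0}, the formula $X_0(t,x)=\Phi(0,t,x)$ gives $X_0\in C^1$ directly from Lemma~\ref{lem:flow}, and the sign of $\partial_t X_0$ comes from combining \ref{item:deriv-t0} with \ref{item:deriv-x0}, since $\partial_{x_0}\Phi$ is a positive exponential and $\lambda>0$. To see $X_0(t,0)\in[0,1]$ for $t\in[-T_0,0]$, I note $X_0(0,0)=0$, and, via \ref{item:flow-reverse} applied to the definition of $T_0$, one has $X_0(-T_0,0)=\Phi(0,-T_0,0)=1$; the monotonicity from \ref{item:X0-decr} then squeezes $X_0(t,0)$ into $[0,1]$. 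For \ref{item:R}, the implicit-function approach is natural: fix $(t,x)$ and observe that $s\mapsto\Phi(s,t,x)$ is a $C^1$ bijection from $\mathbb R$ onto $\mathbb R$ (surjectivity follows from the lower bound in \ref{item:flow-increasing}, since $\lambda_{\min}>0$), so there is a unique $R(t,x)$ with $\Phi(R(t,x),t,x)=0$, and since $\partial_1\Phi=\lambda(R,0)>0$, the implicit function theorem gives $R\in C^1$.

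Items \ref{item:pde-R}--\ref{item:X0_1_0} are then direct computations. Differentiating the defining identity in $t$ and $x$ and using \ref{item:diff-eqn-flow}, \ref{item:deriv-t0}, \ref{item:deriv-x0}, I get
\[
\partial_t R(t,x)=\frac{\lambda(t,x)\partial_{x_0}\Phi(R,t,x)}{\lambda(R,0)},\qquad \partial_x R(t,x)=-\frac{\partial_{x_0}\Phi(R,t,x)}{\lambda(R,0)},
\]
from which \ref{item:pde-R}, \ref{item:R-incr-t}, \ref{item:R-decr-x} are immediate. The bounds \ref{item:R-bounds} come from $\abs{\Phi(R,t,x)-x}=\abs{x}$ and the two-sided estimate of \ref{item:flow-increasing}. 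For \ref{item:range-R}--\ref{item:range-R0}, monotonicity of $R$ in both variables reduces everything to computing $R(0,1)=-T_0$, which is the definition of $T_0$. Property \ref{item:R-X0} uses only $X_0(t,1)=\Phi(0,t,1)$ together with strict monotonicity of the flow in its first argument and $\Phi(R(t,1),t,1)=0<\Phi(0,t,1)\leq\Phi(t,t,1)=1$ when $R(t,1)<0\leq t$. Finally, \ref{item:X0_1_0} is the semigroup identity \ref{item:semigroup} evaluated at the triple $(0,R(t,1),t)$.

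The only step that needs real quantitative work is \ref{item:tau-prime}, and this is what I would flag as the main obstacle. The bounds \ref{item:tau-bounds} follow from \ref{item:R-bounds} with $x=1$, \ref{item:delay-incr} and \ref{item:delay-lower} are restatements of \ref{item:R-incr-t} and \ref{item:range-R}, and \ref{item:diffeo} follows from inverting $R(t,\cdot)$ using \ref{item:flow-reverse} (which gives precisely $s\mapsto\Phi(t,s,0)$ as the inverse) plus the explicit formula for $\partial_x R$ above. For \ref{item:tau-prime}, I have $\tau'(t)=1-\partial_t R(t,1)$, so I need a uniform \emph{positive} lower bound on $\partial_t R(t,1)$. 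Using \ref{item:deriv-x0} with the uniform bound $\abs{\partial_x\lambda}\leq L$ and $\abs{t-R(t,1)}\leq 1/\lambda_{\min}$ from \ref{item:R-bounds}, one gets $\partial_{x_0}\Phi(R,t,1)\geq e^{-L/\lambda_{\min}}$; combined with $\lambda(t,1)\geq\lambda_{\min}$ and $\lambda(R,0)\leq\lambda_{\max}$, this yields
\[
\partial_t R(t,1)\geq \frac{\lambda_{\min}}{\lambda_{\max}}\,e^{-L/\lambda_{\min}}>0,
\]
so \ref{item:tau-prime} holds with $\alpha=1-(\lambda_{\min}/\lambda_{\max})e^{-L/\lambda_{\min}}\in(0,1)$. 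The same exponential estimate, applied to $\partial_x R$, gives the two-sided bound $\beta_0\leq\abs{\partial_x R(t,x)}\leq\beta_1$ on $\mathbb R_+\times(0,1)$ needed in \ref{item:diffeo}, closing the argument.
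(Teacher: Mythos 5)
Your proposal is correct and follows essentially the same route as the paper: the implicit function theorem for $R$, the explicit formulas for $\partial_t R$ and $\partial_x R$, and the exponential estimate coming from $\abs{\partial_x \lambda} \leq L$ and $\abs{t - R(t,x)} \leq 1/\lambda_{\min}$, which produce the same constants $\alpha$, $\beta_0$, $\beta_1$ as in the paper's proof. The only minor deviation is in \ref{item:R-X0}, where you argue directly from the monotonicity of $s \mapsto \Phi(s, t, 1)$ rather than via the semigroup identity and the decreasingness of $s \mapsto \Phi(0, s, 0)$ as the paper does; both arguments are valid.
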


\begin{proof}
Item~\ref{item:T0} is an immediate consequence of Lemma~\ref{lem:flow}\ref{item:flow-increasing}, the intermediate value theorem, and the facts that $\Phi(0, 0, 1) = 1$ and $\lim_{t \to -\infty}\Phi(t, 0, 1) = -\infty$.

Note that, by Lemma~\ref{lem:flow}\ref{item:deriv-x0} and \ref{item:deriv-t0}, for every $(t, x) \in \mathbb R^2$, we have $\partial_t X_0(t, x) = -\lambda(t, x) \partial_{x_0} \Phi(0, t, x) < 0$. This also shows that $t \mapsto X_0(t, x)$ is decreasing for every $x \in \mathbb R$. Since $X_0(0, 0) = 0$ and $X_0(-T_0, 0) = \Phi(0, -T_0, 0) = 1$ by Lemma~\ref{lem:flow}\ref{item:flow-reverse}, we deduce that $X_0(t, 0) \in [0, 1]$ for every $t \in [-T_0, 0]$, concluding the proof of \ref{item:X0}.

By Lemma~\ref{lem:flow}\ref{item:flow-increasing}, for every $(t, x) \in \mathbb R^2$, the function $s \mapsto \Phi(s, t, x)$ is increasing with $\lim_{s \to -\infty} \Phi(s, t, x) = -\infty$ and $\lim_{s \to +\infty} \Phi(s, t, x) = +\infty$, and thus there exists a unique $R(t, x) \in \mathbb R$ such that $\Phi(R(t, x), t, x) = 0$. The fact that $(t, x) \mapsto R(t, x)$ defines a $C^1$ function follows as an immediate consequence of the implicit function theorem and the fact that $\partial_t \Phi(t, t_0, x_0) > 0$ for every $(t, t_0, x_0) \in \mathbb R^3$. In addition, the implicit function theorem, combined with Lemma~\ref{lem:flow}\ref{item:diff-eqn-flow}, \ref{item:deriv-x0}, and \ref{item:deriv-t0} also yields that, for every $(t, x) \in \mathbb R^2$,
\begin{align}
\partial_t R(t, x) & = \frac{\lambda(t, x)}{\lambda(R(t, x), 0)} \exp\left(\int_{t}^{R(t, x)} \partial_x \lambda(s, \Phi(s, t, x)) \diff s\right), \label{eq:diff-t-R}\\
\partial_x R(t, x) & = -\frac{1}{\lambda(R(t, x), 0)} \exp\left(\int_{t}^{R(t, x)} \partial_x \lambda(s, \Phi(s, t, x)) \diff s\right), \label{eq:diff-x-R}
\end{align}
which implies \ref{item:pde-R}, \ref{item:R-incr-t}, and that $x \mapsto R(t, x)$ is decreasing. Since $\Phi(R(t, 0),\allowbreak t, 0) = 0$, we deduce that $R(t, 0) = t$, and thus \ref{item:R-decr-x} also holds true, while item \ref{item:R-bounds} follows easily from Lemma~\ref{lem:flow}\ref{item:flow-increasing}.

Combining \ref{item:R-incr-t} and \ref{item:R-decr-x}, we deduce that $\min_{(t, x) \in \mathbb R_+ \times [0, 1]} R(t, x) = R(0, 1) \allowbreak = - T_0$, yielding \ref{item:range-R}. Moreover, \ref{item:range-R0} follows immediately from the fact that $R(0, 0) = 0$ and $R(0, 1) = -T_0$.

To prove \ref{item:R-X0}, take $t \in \mathbb R_+$ such that $R(t, 1) < 0$. Since $t \geq 0$ and $s \mapsto X_0(s, 1)$ is decreasing by \ref{item:X0-decr}, we have that $X_0(t, 1) \leq X_0(0, 1) = 1$. On the other hand, using Lemma~\ref{lem:flow}\ref{item:semigroup}, we have $X_0(t, 1) = \Phi(0, R(t, 1), \Phi(R(t, 1), t, 1)) = \Phi(0, R(t, 1), 0)$ and, by Lemma~\ref{lem:flow}\ref{item:deriv-t0}, we have that $s \mapsto \Phi(0, s, 0)$ is decreasing. Thus, since $R(t, 1) < 0$, we have $X_0(t, 1) = \Phi(0, R(t, 1), 0) > \Phi(0, 0, 0) = 0$, yielding the conclusion of \ref{item:R-X0}.

To conclude the proof of \ref{item:R}, we use Lemma~\ref{lem:flow}\ref{item:semigroup} to deduce that, for every $t \in \mathbb R$, we have $X_0(t, 1) = \Phi(0, t, 1) = \Phi(0, R(t, 1), \Phi(R(t, 1), t, 1)) = \Phi(0, R(t, 1), 0) = X_0(R(t, 1), 0)$, yielding \ref{item:X0_1_0}.

Concerning the function $\tau$, note that \ref{item:tau-bounds} is an immediate consequence of \ref{item:R-decr-x} and \ref{item:R-bounds}. In addition, combining \ref{item:R-bounds} and \eqref{eq:diff-t-R}, we have
\[
\tau'(t) = 1 - \partial_t R(t, 1) \leq 1 - \frac{\lambda_{\min}}{\lambda_{\max}} e^{-\frac{L}{\lambda_{\min}}} \in (0, 1),
\]
yielding the conclusion of \ref{item:tau-prime}. Item~\ref{item:delay-incr} is an immediate consequence of \ref{item:R-incr-t}, while \ref{item:delay-lower} is a particular case of \ref{item:range-R}. Finally, \ref{item:diffeo} follows from \eqref{eq:diff-x-R} and the facts that $R(t, 0) = t$ (from \ref{item:R-decr-x}) and $R(t, 1) = t - \tau(t)$, with $\beta_0 = \frac{1}{\lambda_{\max}} e^{-\frac{L}{\lambda_{\min}}}$ and $\beta_1 = \frac{1}{\lambda_{\min}} e^{\frac{L}{\lambda_{\min}}}$.
\end{proof}

Using Lemma~\ref{lem:lambda}, we deduce the following equivalence between classical solutions of \eqref{eq:transport} and solutions of a difference equation of the form \eqref{eq:diff-eqn-N1-time}.

\begin{proposition}
\label{prop:transport}
Let $A \in \mathcal M_d(\mathbb R)$, $\lambda$ satisfy \ref{hypo:lambda}, and $T_0$, $X_0$, $R$, and $\tau$ be as in the statement of Lemma~\ref{lem:lambda}. If $u \in C^0(\mathbb R_+ \times [0, 1], \mathbb R^d) \cap C^1(\mathbb R_+^\ast \times (0, 1), \mathbb R^d)$ is a classical solution of \eqref{eq:transport} with initial condition $u_0$, then the function $v \in C^0([-T_0, +\infty), \mathbb R^d)$ defined for $t \in [-T_0, +\infty)$ by
\begin{equation}
\label{eq:def-v}
v(t) = \begin{dcases*}
u(t, 0), & if $t \geq 0$, \\
u_0(X_0(t, 0)), & if $t \in [-T_0, 0)$
\end{dcases*}
\end{equation}
is a continuous solution of
\begin{equation}
\label{eq:diff-transport}
\left\{
\begin{aligned}
v(t) & = A v(t - \tau(t)), & \qquad & t \geq 0, \\
v(t) & = v_0(t), & & t \in [-T_0, 0),
\end{aligned}
\right.
\end{equation}
with $v_0(t) = u_0(X_0(t, 0))$ for $t \in [-T_0, 0)$, and in addition $v \in C^1((-T_0, +\infty), \mathbb R^d)$. Conversely, if $v \in C^0([-T_0, +\infty), \mathbb R^d) \cap C^1((-T_0, +\infty), \mathbb R^d)$ is a continuous solution of \eqref{eq:diff-transport}, then the function $u\colon \mathbb R_+ \times [0, 1] \to \mathbb R^d$ defined by $u(t, x) = v(R(t, x))$ is a classical solution of \eqref{eq:transport} with initial condition $u_0(x) = v_0(R(0, x))$.

In addition, for every $t \geq 0$, we have
\begin{equation}
\label{eq:transp-norm-infty}
\norm{u(t, \cdot)}_{L^\infty([0, 1], \mathbb R^d)} = \norm{v_t}_{L^\infty([-\tau(t), 0], \mathbb R^d)}
\end{equation}
and, for every $t \geq 0$ and $p \in [1, +\infty)$, we have
\begin{equation}
\label{eq:transp-norm-p}
\beta_1^{-1/p} \norm{v_t}_{L^p([-\tau(t), 0], \mathbb R^d)} \leq \norm{u(t, \cdot)}_{L^p([0, 1], \mathbb R^d)} \leq \beta_0^{-1/p} \norm{v_t}_{L^p([-\tau(t), 0], \mathbb R^d)},
\end{equation}
where $\beta_0$ and $\beta_1$ are the constants from Lemma~\ref{lem:lambda}\ref{item:diffeo}.
\end{proposition}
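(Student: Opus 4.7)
The plan is to exploit the method of characteristics. Lemma~\ref{lem:lambda} has been organized precisely so that $R(t,x)$ records the time at which the characteristic through $(t,x)$ reaches the lateral boundary $\{x=0\}$, while $X_0(t,x)$ records where that same characteristic was at time $0$. The single fact driving everything is that any classical solution $u$ of \eqref{eq:transport} is constant along characteristics: along $s \mapsto \Phi(s,t_0,x_0)$ one has $\frac{d}{ds}u(s,\Phi(s,t_0,x_0)) = (\partial_t u + \lambda\,\partial_x u)(s,\Phi(s,t_0,x_0)) = 0$. Both directions of the equivalence, as well as the norm comparisons, follow from this identity combined with the elementary properties of $R$, $X_0$, and $\tau$ collected in Lemma~\ref{lem:lambda}.

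For the forward direction, I would define $v$ by \eqref{eq:def-v} and verify \eqref{eq:diff-transport} for $t \geq 0$. The boundary condition already gives $v(t) = u(t,0) = A u(t,1)$, so only $u(t,1) = v(t-\tau(t))$ remains. When $t - \tau(t) \geq 0$, constancy along the characteristic through $(t,1)$ together with $\tau(t) = t - R(t,1)$ yields $u(t,1) = u(R(t,1),0) = v(t-\tau(t))$ directly. When $t - \tau(t) < 0$, I trace the characteristic further back to time $0$, obtaining $u(t,1) = u_0(X_0(t,1))$; Lemma~\ref{lem:lambda}\ref{item:R-X0} confirms $X_0(t,1) \in (0,1]$ so this evaluation is legitimate, and Lemma~\ref{lem:lambda}\ref{item:X0_1_0} rewrites it as $u_0(X_0(R(t,1),0)) = v_0(t-\tau(t)) = v(t-\tau(t))$. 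Continuity of $v$ at $t=0$ is $u_0(X_0(0,0)) = u_0(0) = u(0,0)$, and the $C^1$ regularity on $(-T_0,+\infty)$ follows from that of $u$, $u_0$, and the flow $\Phi$.

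For the converse, given $v$ I set $u(t,x) = v(R(t,x))$. Lemma~\ref{lem:lambda}\ref{item:range-R} ensures $v$ is evaluated in its domain of definition, and the chain rule together with Lemma~\ref{lem:lambda}\ref{item:pde-R} yields $\partial_t u + \lambda\,\partial_x u = v'(R)\bigl[\partial_t R + \lambda\,\partial_x R\bigr] = 0$. Since $R(t,0) = t$, we have $u(t,0) = v(t)$, which combined with \eqref{eq:diff-transport} and $R(t,1) = t - \tau(t)$ gives $u(t,0) = Av(t-\tau(t)) = A u(t,1)$, so the boundary condition is satisfied; the initial condition is built in by construction. For the norm equivalences, \eqref{eq:transp-norm-infty} is immediate since $x \mapsto R(t,x)$ is a continuous bijection of $[0,1]$ onto $[t-\tau(t),t]$ by Lemma~\ref{lem:lambda}\ref{item:diffeo}. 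For \eqref{eq:transp-norm-p} I would perform the change of variables $s = R(t,x)$ in $\int_0^1 |v(R(t,x))|^p\,\diff x$, obtaining $\int_{t-\tau(t)}^{t}|v(s)|^p\,|\partial_x R|^{-1}\,\diff s$ after inversion via $x = \Phi(t,s,0)$, and invoke the uniform bounds $\beta_0 \leq |\partial_x R| \leq \beta_1$ from Lemma~\ref{lem:lambda}\ref{item:diffeo} to deliver the two-sided estimate.

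The main obstacle is the forward direction in the regime $t - \tau(t) < 0$, where one must follow the characteristic emanating from $(t,1)$ past the lateral boundary $\{x=0\}$ and onto the initial line $\{t=0\}$; showing that the resulting value coincides with $v(t-\tau(t))$ requires the interplay of Lemma~\ref{lem:lambda}\ref{item:R-X0} and \ref{item:X0_1_0}. Once this identity is in place, the remainder of the argument is bookkeeping.
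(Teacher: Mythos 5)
Your overall route coincides with the paper's: constancy of $u$ along the characteristics of $\Phi$, the case split according to the sign of $R(t,1)=t-\tau(t)$ in the forward direction (with Lemma~\ref{lem:lambda}\ref{item:R-X0} and \ref{item:X0_1_0} handling the case $R(t,1)<0$ exactly as in the paper), the chain rule combined with Lemma~\ref{lem:lambda}\ref{item:pde-R} and \ref{item:range-R} for the converse, and the change of variables $s=R(t,x)$ with the bounds $\beta_0\leq\abs{\partial_x R}\leq\beta_1$ from Lemma~\ref{lem:lambda}\ref{item:diffeo} for \eqref{eq:transp-norm-p}. (A small point: in the forward direction the norm identities use $u(t,x)=v(R(t,x))$ for all $x\in(0,1)$, not only $x=1$; this follows by the same tracing argument, and is worth a sentence.)

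There is, however, a genuine gap in your treatment of the regularity claim $v\in C^1((-T_0,+\infty),\mathbb R^d)$: you assert that it ``follows from that of $u$, $u_0$, and the flow $\Phi$'', but neither of these objects has the regularity you are invoking. A classical solution is only assumed to be $C^1$ on the \emph{open} set $\mathbb R_+^\ast\times(0,1)$ and continuous on $\mathbb R_+\times[0,1]$, and $u_0$ is merely continuous; yet \eqref{eq:def-v} evaluates $u$ precisely on the lateral boundary $x=0$ (for $t\geq 0$) and evaluates $u_0$ (for $t\in[-T_0,0)$), where no differentiability is available. In fact, the $C^1$ smoothness of $v$ is a statement about propagation of the interior regularity of $u$ to its boundary trace, so it cannot be read off from the hypotheses; it requires an extra argument. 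The paper supplies it by sliding the evaluation point off the boundary along characteristics: setting $\xi(s)=\frac{1}{s+1}$, it shows via the intermediate value theorem and \eqref{eq:deriv-St}-type monotonicity that for each $t\in(-T_0,+\infty)$ there is a unique $S(t)\in\mathbb R_+^\ast$ with $\Phi(S(t),t,0)=\xi(S(t))$, that $S$ is $C^1$ by the implicit function theorem, and then, using constancy of $u$ along characteristics, that $v(t)=u(S(t),\xi(S(t)))$ for every $t\in(-T_0,+\infty)$; since $(S(t),\xi(S(t)))\in\mathbb R_+^\ast\times(0,1)$, this exhibits $v$ as a composition of $C^1$ maps. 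Some device of this kind (or an equivalent way of rewriting $v$ through interior values of $u$) is indispensable; as written, your regularity step does not go through.
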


\begin{proof}
Let $u \in C^0(\mathbb R_+ \times [0, 1], \mathbb R^d) \cap C^1(\mathbb R_+^\ast \times (0, 1), \mathbb R^d)$ be a classical solution of \eqref{eq:transport} with initial condition $u_0$ and $v$ be defined from $u$ and $u_0$ as in \eqref{eq:def-v}. Clearly, $v$ is continuous, since $v(0) = u(0, 0) = u_0(0)$ and $v(0^-) = u_0(X_0(0, 0)) = u_0(0)$. By definition, we have $v(t) = v_0(t)$ for every $t \in [-T_0, 0)$. A straightforward computation using Lemma~\ref{lem:flow}\ref{item:diff-eqn-flow} shows that, for every $(t, t_0, x_0) \in \mathbb R^3$ such that $(t, \Phi(t, t_0, x_0)) \in \mathbb R_+^\ast \times (0, 1)$, we have
\begin{equation}
\label{eq:characteristics}
\frac{\diff}{\diff t} u(t, \Phi(t, t_0, x_0)) = 0.
\end{equation}
In particular, for every $t \geq 0$, if $R(t, 1) \geq 0$, then $(s, \Phi(s, R(t, 1), 0)) \in \mathbb R_+^\ast \times (0, 1)$ for every $s \in (R(t, 1), t)$, and thus $u(t, 1) = u(t, \Phi(t, R(t, 1), 0)) = u(R(t, 1), 0) = u(t - \tau(t), 0)$. Hence, $v(t) = u(t, 0) = A u(t, 1) = A v(t - \tau(t))$, establishing the first case of \eqref{eq:diff-transport} for every $t \geq 0$ such that $R(t, 1) \geq 0$.

Now, let $t \geq 0$ be such that $R(t, 1) < 0$, which implies, by Lemma~\ref{lem:lambda}\ref{item:R-X0}, that $X_0(t, 1) \in (0, 1]$. In addition, for every $s \in (0, t)$, we have $(s, \Phi(s, 0, X_0(t, 1))) \in \mathbb R_+^\ast \times (0, 1)$ since $\Phi(t, 0, X_0(t, 1)) = 1$. Hence, using also Lemma~\ref{lem:lambda}\ref{item:X0_1_0}, we have $u(t, 1) = u(t, \Phi(t, 0, X_0(t, 1))) = u(0, X_0(t, 1)) = u_0(X_0(t, 1)) = u_0(X_0(R(t, 1), 0)) = v_0(t - \tau(t))$, showing that $u(t, 0) = A u(t, 1) = A v(t - \tau(t))$, and concluding thus the proof that $v$ satisfies \eqref{eq:diff-transport}.

To show that $v$ belongs to $C^1((-T_0, +\infty), \mathbb R^d)$, let $\xi\colon\mathbb R_+ \to (0, 1]$ be the function given for $t \in \mathbb R_+^\ast$ by $\xi(t) = \frac{1}{t + 1}$. We claim that, for every $t \in (-T_0, +\infty)$, there exists a unique $S(t) \in \mathbb R_+^\ast$ such that $\Phi(S(t), t, 0) = \xi(S(t))$. Indeed, on the one hand, we have $\Phi(0, -T_0, 0) = 1$ and, since by Lemma~\ref{lem:flow}\ref{item:deriv-t0} the function $s \mapsto \Phi(0, s, 0)$ is decreasing, we deduce that $\Phi(0, t, 0) < 1$ for every $t \in (-T_0, +\infty)$. Thus, $\Phi(0, t, 0) - \xi(0) < 0$ for every $t \in (-T_0, +\infty)$. On the other hand, $\lim_{s \to +\infty} \Phi(s, t, 0) - \xi(s) = +\infty$, and thus the existence of $S(t)$ follows from the intermediate value theorem. In addition, for every $s \in \mathbb R_+^\ast$, we have
\begin{equation}
\label{eq:deriv-St}
\frac{\diff}{\diff s} \left[\Phi(s, t, 0) - \xi(s)\right] = \lambda(s, \Phi(s, t, 0)) + \frac{1}{(t+1)^2} > 0,
\end{equation}
thus $s \mapsto \Phi(s, t, 0) - \xi(s)$ is increasing in $\mathbb R_+^\ast$, yielding the uniqueness of $S(t)$. We also obtain from \eqref{eq:deriv-St} and the implicit function theorem that $S \in C^1((-T_0, +\infty), \mathbb R_+^\ast)$.

For $t \geq 0$, using \eqref{eq:characteristics}, we have that $v(t) = u(t, 0) = u(S(t), \Phi(S(t), t, 0)) = u(S(t), \xi(S(t)))$, since $(s, \Phi(s, t, 0)) \in \mathbb R_+^\ast \times (0, 1)$ for every $s \in (t, S(t))$. For $t \in (-T_0, 0)$, still using \eqref{eq:characteristics}, we have that
\begin{align*}
v(t) & = u(0, X_0(t, 0)) = u(S(t), \Phi(S(t), 0, X_0(t, 0))) \\
& = u(S(t), \Phi(S(t), t, 0)) = u(S(t), \xi(S(t)))
\end{align*}
since $(s, \Phi(s, 0, X_0(t, 0))) \in \mathbb R_+^\ast \times (0, 1)$ for every $s \in (0, S(t))$. Hence, we have $v(t) = u(S(t), \xi(S(t)))$ for every $t \in (-T_0, +\infty)$ and, since $S \in C^1((-T_0, +\infty), \mathbb R_+^\ast)$, $\xi \in C^1(\mathbb R_+^\ast, (0, 1))$, and $u \in C^1(\mathbb R_+^\ast \times (0, 1), \mathbb R^d)$, we deduce that $v \in C^1((-T_0, +\infty), \mathbb R^d)$.

Conversely, let $v \in C^0([-T_0, +\infty), \mathbb R^d) \cap C^1((-T_0, +\infty), \mathbb R^d)$ be a continuous solution of \eqref{eq:diff-transport} with initial condition $v_0$ and define $u\colon \mathbb R_+ \times [0, 1] \to \mathbb R^d$ by $u(t, x) = v(R(t, x))$ and $u_0\colon [0, 1] \to \mathbb R^d$ by $u_0(x) = v_0(R(0, x))$. Since $R \in C^1(\mathbb R^2, \mathbb R)$, we have $u \in C^0(\mathbb R_+ \times [0, 1], \mathbb R^d) \cap C^1(\mathbb R_+^\ast \times (0, 1), \mathbb R^d)$ and, using Lemma~\ref{lem:lambda}\ref{item:pde-R}, it is immediate to verify that $\partial_t u(t, x) + \lambda(t, x) \partial_x u(t, x) = 0$ for every $(t, x) \in \mathbb R_+^\ast \times (0, 1)$. For $x \in [0, 1]$, we have $u(0, x) = v(R(0, x)) = u_0(x)$ and, for $t \geq 0$, we have $u(t, 0) = v(R(t, 0)) = v(t) = A v(R(t, 1)) = u(t, 1)$. Hence, $u$ is a classical solution of \eqref{eq:transport} with initial condition $u_0$.

Let us finally show \eqref{eq:transp-norm-infty} and \eqref{eq:transp-norm-p}. Let $p \in [1, +\infty)$, $t \geq 0$, and $x \in (0, 1)$. We have $u(t, x) = v(R(t, x))$, and thus, using Lemma~\ref{lem:lambda}\ref{item:diffeo}, we deduce that
\[
\norm{u(t, \cdot)}_{L^\infty([0, 1], \mathbb R^d)} = \sup_{x \in (0, 1)} \abs{v(R(t, x))} = \sup_{s \in (t - \tau(t), t)} \abs{v(s)} = \norm{v_t}_{L^\infty([t - \tau(t), t], \mathbb R^d)},
\]
yielding \eqref{eq:transp-norm-infty}, and
\[
\norm{u(t, \cdot)}_{L^p([0, 1], \mathbb R^d)}^p = \int_0^1 \abs{v(R(t, x))}^p \diff x = \int_{t - \tau(t)}^{t} \frac{\abs{v(s)}^p}{\abs{\partial_x R(t, \raisebox{\depth}{\scalebox{1}[-1]{R}}(t, s))}} \diff s,
\]
where $\raisebox{\depth}{\scalebox{1}[-1]{R}}(t, \cdot)$ is the inverse of $R(t, \cdot)$. Hence \eqref{eq:transp-norm-p} follows from Lemma~\ref{lem:lambda}\ref{item:diffeo}.
\end{proof}

Using Proposition~\ref{prop:transport} and the results from Section~\ref{sec:exponential}, we obtain at once the following result.

\begin{corollary}
\label{coro:appli-transport}
Let $A \in \mathcal M_d(\mathbb R)$ satisfy \ref{hypo:spr-A-less-1} and $\lambda$ satisfy \ref{hypo:lambda}. Then there exist positive constants $C$ and $\gamma$ such that, for every classical solution $u \in C^0(\mathbb R_+ \times [0, 1], \mathbb R^d) \cap C^1(\mathbb R_+^\ast \times (0, 1), \mathbb R^d)$ of \eqref{eq:transport} with initial condition $u_0$, we have
\begin{equation}
\label{eq:transport-Linfty}
\norm{u(t, \cdot)}_{L^\infty([0, 1], \mathbb R^d)} \leq C e^{-\gamma t} \norm{u_0}_{L^\infty([0, 1], \mathbb R^d)} \qquad \text{ for every } t \in \mathbb R_+.
\end{equation}

Moreover, let $\alpha$ be the constant from Lemma~\ref{lem:lambda}\ref{item:tau-prime}, $p \in [1, +\infty)$, and assume that $\rho(A)^p < 1-\alpha$. Then there exist positive constants $C$ and $\gamma$ such that, for every classical solution $u \in C^0(\mathbb R_+ \times [0, 1], \mathbb R^d) \cap C^1(\mathbb R_+^\ast \times (0, 1), \mathbb R^d)$ of \eqref{eq:transport} with initial condition $u_0$, we have
\begin{equation}
\label{eq:transport-Lp}
\norm{u(t, \cdot)}_{L^p([0, 1], \mathbb R^d)} \leq C e^{-\gamma t} \norm{u_0}_{L^p([0, 1], \mathbb R^d)} \qquad \text{ for every } t \in \mathbb R_+.
\end{equation}
\end{corollary}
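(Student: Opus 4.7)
The overall strategy is to reduce both assertions to stability results already established for the difference equation \eqref{eq:diff-eqn-N1-time} via Proposition~\ref{prop:transport}. Given a classical solution $u$ of \eqref{eq:transport}, I would first produce the associated continuous function $v \in C^0([-T_0, +\infty), \mathbb R^d) \cap C^1((-T_0, +\infty), \mathbb R^d)$ solving \eqref{eq:diff-transport} from \eqref{eq:def-v}. The delay function $\tau$ supplied by Lemma~\ref{lem:lambda}\ref{item:tau} satisfies \ref{hypo:tau-infimum} and \ref{hypo:delay-bounded} by Lemma~\ref{lem:lambda}\ref{item:tau-bounds}, since $\tau$ takes values in $[1/\lambda_{\max}, 1/\lambda_{\min}]$. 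Moreover, Lemma~\ref{lem:lambda}\ref{item:delay-incr} shows that $\sigma_1 = \id - \tau$ is increasing, hence the largest delay function $h$ coincides with $\tau$ on $\mathbb R_+$, and in particular $h(0) = \tau(0) = T_0$.

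For \eqref{eq:transport-Linfty}, since $v$ is continuous with bounded initial condition $v_0$, Corollary~\ref{coro:exp-cont} yields positive constants $C_1, \gamma$, depending only on $A$ and on $\tau_{\max} = 1/\lambda_{\min}$, such that
\[
\norm{v_t}_{\mathsf C_t^{\mathrm b}} \leq C_1 e^{-\gamma t} \norm{v_0}_{\mathsf C_0^{\mathrm b}} \qquad \text{for every } t \in \mathbb R_+.
\]
For continuous functions the $\mathsf C^{\mathrm b}$-norm on $[-\tau(t), 0]$ coincides with the $L^\infty$-norm there, so by \eqref{eq:transp-norm-infty} the left-hand side equals $\norm{u(t, \cdot)}_{L^\infty([0,1], \mathbb R^d)}$. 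For the right-hand side, recall $v_0(s) = u_0(X_0(s, 0))$ for $s \in [-T_0, 0)$ and, by Lemma~\ref{lem:lambda}\ref{item:X0}, $X_0(\cdot, 0)$ is a continuous bijection from $[-T_0, 0]$ onto $[0, 1]$; hence $\norm{v_0}_{\mathsf C_0^{\mathrm b}} = \norm{u_0}_{L^\infty([0,1], \mathbb R^d)}$, yielding \eqref{eq:transport-Linfty}.

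For \eqref{eq:transport-Lp}, the additional verification needed is \ref{hypo:radon-nik-and-rho}. Here I would invoke Remark~\ref{remk:radon-nikodym-derivative-tau}: since $\tau$ is of class $C^1$ with $\tau'(t) \leq \alpha < 1$ by Lemma~\ref{lem:lambda}\ref{item:tau-prime}, the Radon--Nikodym derivative $\varphi$ of ${\sigma_1}_\# \mathcal L$ with respect to $\mathcal L$ exists and satisfies $\norm{\varphi}_{L^\infty(\mathbb R, \mathbb R)} \leq \frac{1}{1 - \alpha}$. The standing assumption $\rho(A)^p < 1 - \alpha$ then gives $\norm{\varphi}_{L^\infty(\mathbb R, \mathbb R)} \rho(A)^p < 1$, so \ref{hypo:radon-nik-and-rho} holds. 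Since $v$ is continuous, it belongs in particular to $L^p_{\mathrm{loc}}$, and Theorem~\ref{thm:exp-Lp} supplies constants $C_2, \gamma > 0$ with
\[
\norm{v_t}_{L^p([-\tau(t), 0), \mathbb R^d)} \leq C_2 e^{-\gamma t} \norm{v_0}_{L^p([-T_0, 0), \mathbb R^d)}.
\]
Applying \eqref{eq:transp-norm-p} at time $t$ (upper bound) and at time $0$ (lower bound, noting $h(0) = \tau(0) = T_0$), and absorbing the constants $\beta_0, \beta_1$ from Lemma~\ref{lem:lambda}\ref{item:diffeo}, I obtain $\norm{u(t, \cdot)}_{L^p} \leq (\beta_1/\beta_0)^{1/p} C_2 e^{-\gamma t} \norm{u_0}_{L^p}$, which is \eqref{eq:transport-Lp}.

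The proof is essentially a bookkeeping exercise once Proposition~\ref{prop:transport} is in hand; the only genuine verification point is \ref{hypo:radon-nik-and-rho}, where the precise form of the bound $\rho(A)^p < 1 - \alpha$ is dictated by the estimate $\norm{\varphi}_{L^\infty} \leq (1-\alpha)^{-1}$ coming from Remark~\ref{remk:radon-nikodym-derivative-tau}. No obstacle more serious than tracking constants through the two norm-equivalences in Proposition~\ref{prop:transport} is anticipated.
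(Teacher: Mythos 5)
Your proposal is correct and follows essentially the same route as the paper: reduce to the difference equation via Proposition~\ref{prop:transport}, verify \ref{hypo:tau-infimum}, \ref{hypo:delay-bounded}, and (for the $L^p$ case) \ref{hypo:radon-nik-and-rho} through Lemma~\ref{lem:lambda} and Remark~\ref{remk:radon-nikodym-derivative-tau}, then transfer the exponential estimates back through \eqref{eq:transp-norm-infty} and \eqref{eq:transp-norm-p}. The only cosmetic difference is that for the $L^\infty$ bound the paper invokes Corollary~\ref{coro:exp-Linfty} (after checking \ref{hypo:radon-nik-bdd}), whereas you apply Corollary~\ref{coro:exp-cont} to the continuous solution $v$ and use $\norm{v_0}_{\mathsf C_0^{\mathrm b}} = \norm{u_0}_{L^\infty([0,1],\mathbb R^d)}$; both routes are valid.
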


\begin{proof}
Let $\tau$ be defined from $\lambda$ as in Lemma~\ref{lem:lambda}\ref{item:tau} and note that, by item \ref{item:tau-bounds} of that lemma, $\tau$ satisfies \ref{hypo:tau-infimum} and \ref{hypo:delay-bounded}. In addition, Lemma~\ref{lem:lambda}\ref{item:delay-incr} implies, from Definition~\ref{def:h}, that the largest delay function $h$ associated with $\tau$ is $h = \tau$. Moreover, combining Lemma~\ref{lem:lambda}\ref{item:tau-prime} and Remark~\ref{remk:radon-nikodym-derivative-tau}, we deduce that $\tau$ satisfies \ref{hypo:radon-nik-bdd}. Then \eqref{eq:transport-Linfty} is a consequence of Corollary~\ref{coro:exp-Linfty}.

We also note that, combining once again Lemma~\ref{lem:lambda}\ref{item:tau-prime} and Remark~\ref{remk:radon-nikodym-derivative-tau}, the fact that $\rho(A)^p < 1-\alpha$ implies that \ref{hypo:radon-nik-and-rho} is satisfied, and thus \eqref{eq:transport-Lp} is a consequence of Theorem~\ref{thm:exp-Lp}.
\end{proof}

\bibliographystyle{abbrv}
\bibliography{Bib}

\end{document}